\renewcommand{\bar}{\overline}
\renewcommand{\tilde}{\widetilde}
\newcommand{\spa}[2]{\operatorname{span}_{#1}\ensuremath{\left\{#2\right\}}}
\newcommand{\inv}{^{-1}}
\newcommand{\iso}{\cong}
\DeclareMathOperator{\Tr}{Tr}
\DeclareMathOperator{\End}{End}
\DeclareMathOperator{\ch}{ch}
\DeclareMathOperator{\wt}{wt}
\DeclareMathOperator{\ad}{ad}
\DeclareMathOperator{\height}{ht}
\newcommand{\Suppess}{\ensuremath{\operatorname{Supp}_{\operatorname{ess}}}}
\newcommand{\tensor}{\otimes}
\renewcommand{\phi}{\varphi}
\renewcommand{\epsilon}{\varepsilon}
\newcommand{\setC}{\mathbb{C}}
\newcommand{\setN}{\mathbb{N}}
\newcommand{\setZ}{\mathbb{Z}}
\newcommand{\setQ}{\mathbb{Q}}
\newtheorem{thm}{Theorem}[section]
\newtheorem{prop}[thm]{Proposition}
\newtheorem{lemma}[thm]{Lemma}
\newtheorem{cor}[thm]{Corollary}
\newtheorem{defn}[thm]{Definition}
\theoremstyle{nonumberplain}
\newtheorem{proof}{Proof}
\begin{document}

\title{Irreducible quantum group modules with finite dimensional
  weight spaces. II} \author{Dennis Hasselstrøm Pedersen} \date{}

\maketitle

\begin{abstract}
  We classify the simple quantum group modules with finite dimensional
  weight spaces when the quantum parameter $q$ is transcendental and
  the Lie algebra is not of type $G_2$. This is part $2$ of the
  story. The first part being~\cite{DHP1}. In~\cite{DHP1} the
  classification is reduced to the classification of torsion free
  simple modules. In this paper we follow the procedures
  of~\cite{Mathieu} to reduce the classification to the classification
  of infinite dimensional admissible simple highest weight modules. We
  then classify the infinite dimensional admissible simple highest
  weight modules and show among other things that they only exist for
  types $A$ and $C$.  Finally we complete the classification of simple
  torsion free modules for types $A$ and $C$ completing the
  classification of the simple torsion free modules.
\end{abstract}

\tableofcontents

\section{Introduction}
\label{sec:intr-notat}

This is part 2 of the classification of simple quantum group modules
with finite dimensional weight spaces. In this paper we focus on the
non root of unity case.  Let $\mathfrak{g}$ be a simple Lie
algebra. Let $q\in \setC$ be a non root of unity and let $U_q$ be the
quantized enveloping algebra over $\setC$ with $q$ as the quantum
parameter (defined below). We want to classify all simple weight
modules for $U_q$ with finite dimensional weight spaces. In the
papers~\cite{Fernando} and~\cite{Mathieu} this is done for
$\mathfrak{g}$-modules. Fernando proves in~\cite{Fernando} that the
classification of simple $\mathfrak{g}$ weight modules with finite
dimensional weight spaces essentially boils down to classifying two
classes of simple modules: Finite dimensional simple modules over a
reductive Lie algebra and so called 'torsion free' simple modules over
a simple Lie algebra. The classification of finite dimensional modules
is well known in the classical case (as well as the quantum group
case) so the remaining problem is to classify the torsion free simple
modules. O. Mathieu classifies all torsion free
$\mathfrak{g}$-modules in~\cite{Mathieu}. The classification
uses the concept of a $\mathfrak{g}$ coherent family which are huge
$\mathfrak{g}$ modules with weight vectors for every possible weight,
see~\cite[Section~4]{Mathieu}. Mathieu shows that every torsion free
simple module is a submodule of a unique irreducible semisimple
coherent family~\cite[Proposition~4.8]{Mathieu} and each of these
irreducible semisimple coherent families contains an admissible simple
highest weight module as well~\cite[Proposition~6.2
ii)]{Mathieu}. This reduces the classification to the classification
of admissible simple highest weight modules.  In this paper we will
follow closely the methods described in~\cite{Mathieu}. We will focus
only on the case when $q$ is not a root of unity. The root of unity
case is studied in~\cite{DHP1}. Some of the results
of~\cite{Mathieu} translate directly to the quantum group case but in
several cases there are obstructions that need to be handled
differently. In particular the case by case classification in types A
and C is done differently. This is because our analog of
$\mathcal{EXT}(L)$ given an admissible simple infinite dimensional
module $L$ is slightly different from the classical case see
e.g. Section~\ref{sec:an-example-u_qm}. The proof when reducing to
types A and C in~\cite{Fernando} and~\cite{Mathieu} uses some
algebraic geometry to show that torsion free modules can only exist in
types A and C. In this paper we show that infinite dimensional
admissible simple highest weight modules only exist in types A and C
and use this fact to show that torsion free modules can not exist for
modules other than types A and C. For this we have to restrict to
transcendental $q$. Specifically we use Theorem~\ref{thm:integral}. If
this theorem is true for a general non-root-of-unity $q$ we can remove
this restriction. The author is not aware of such a result in the
litterature.

\subsection{Main results}
To classify simple weight modules with finite dimensional modules we
follow the procedures of S. Fernando and O. Mathieu in~\cite{Fernando}
and~\cite{Mathieu}. The analog of the reduction done
in~\cite{Fernando} is taken care of in the quantum group case
in~\cite{DHP1} so what remains is to classify the torsion free
modules. We will first recall some results from~\cite{DHP1}
and~\cite{DHP-twist} concerning the reduction and some formulas for
commutating root vectors. This is recalled in
Section~\ref{sec:nonroot-unity-case} and
Section~\ref{sec:u_a-calculations}. In
Section~\ref{sec:class-tors-free-1} we do some prelimary calculations
concerning Ore localization and certain 'twists' of modules necessary
to define the 'Coherent families' of
Section~\ref{sec:coherent-families}. Here we don't define the concept
of a general coherent family but instead directly define the analog of
coherent irreducible semisimple extensions $\mathcal{EXT}(L)$ of an
admissible simple infinite dimensional module $L$. In analog with the
classical case we show that for any admissible simple infinite
dimensional module $L$, $\mathcal{EXT}(L)$ contains a submodule
isomorphic to a simple highest weight module, see
Theorem~\ref{thm:EXT_contains_highest_weight}. We also prove a result
in the other direction: If $\mathfrak{g}$ is such that there exists a
simple infinite dimensional admissible module $L$ then there exists a
torsion free $U_q(\mathfrak{g})$-module, see
Theorem~\ref{thm:existence_of_torsion_free_modules}. So the existence
of torsion free modules over the quantized enveloping algebra of a
specific $\mathfrak{g}$ is equivalent to the existence of an
admissible infinite dimensional highest weight simple module over
$U_q(\mathfrak{g})$. Using this we show that torsion free modules
exist only for types $A$ and $C$ in the
Sections~\ref{sec:class-admiss-simple}, \ref{sec:rank-2-calculations},
\ref{sec:class-admiss-modul}, \ref{sec:quantum-shale-weyl} and
\ref{sec:class-admiss-modul-1} where we also classify the admissible
simple highest weight modules which are infinite dimensional. Finally
in Section~\ref{sec:type-a_n-calc} and Section~\ref{sec:type-c_n-calc}
we complete the classification in types $A$ and $C$, respectively, by
showing exactly which submodules of $\mathcal{EXT}(L(\lambda))$ are
torsion free for a $\lambda$ of a specific form see
Theorem~\ref{thm:clas_of_b_such_that_twist_is_torsion_free} and
Theorem~\ref{thm:clas_C}.

\subsection{Acknowledgements}
I would like to thank my advisor Henning H. Andersen for great
supervision and many helpful comments and discussions and Jacob
Greenstein for introducing me to this problem when I was visiting him
at UC Riverside in the fall of 2013. The authors research was
supported by the center of excellence grant 'Center for Quantum
Geometry of Moduli Spaces' from the Danish National Research
Foundation (DNRF95).

\subsection{Notation}
We will fix some notation: We denote by $\mathfrak{g}$ a fixed simple
Lie algebra over the complex numbers $\setC$. We assume $\mathfrak{g}$
is not of type $G_2$ to avoid unpleasant computations.

Fix a triangular decomposition of $\mathfrak{g}$: $\mathfrak{g} =
\mathfrak{g}^- \oplus \mathfrak{h} \oplus \mathfrak{g}^+$: Let
$\mathfrak{h}$ be a maximal toral subalgebra and let $\Phi \subset
\mathfrak{h}^*$ be the roots of $\mathfrak{g}$ relative to
$\mathfrak{h}$. Choose a simple system of roots $\Pi =
\{\alpha_1,\dots,\alpha_n\} \subset \Phi$. Let $\Phi^+$
(resp. $\Phi^-$) be the positive (resp. negative) roots. Let
$\mathfrak{g}^{\pm}$ be the positive and negative part of
$\mathfrak{g}$ corresponding to the simple system $\Pi$. Let $W$ be
the Weyl group generated by the simple reflections $s_i :=
s_{\alpha_i}$. For a $w\in W$ let $l(w)$ be the length of $W$ i.e. the
smallest amount of simple reflections such that $w=s_{i_1}\cdots
s_{i_{l(w)}}$. Let $(\cdot|\cdot)$ be a standard $W$-invariant
bilinear form on $\mathfrak{h}^*$ and $\left<\alpha,\beta^\vee\right>
= \frac{2(\alpha|\beta)}{(\beta|\beta)}$. Since $(\cdot|\cdot)$ is
standard we have $(\alpha|\alpha)=2$ for any short root $\alpha\in
\Phi$ and since $\mathfrak{g}$ is not of type $G_2$ we have
$(\beta|\beta)=4$ for any long root $\beta\in \Phi$. Let
$Q=\spa{\setZ}{\alpha_1,\dots,\alpha_n}$ denote the root lattice and
$\Lambda=\spa{\setZ}{\omega_1,\dots,\omega_n}\subset \mathfrak{h}^*$
the integral lattice where $\omega_i\in \mathfrak{h}^*$ is the
fundamental weights defined by $(\omega_i|\alpha_j)=\delta_{ij}$.

Let $U_v=U_v(\mathfrak{g})$ be the corresponding quantized enveloping
algebra defined over $\mathbb{Q}(v)$, see e.g.~\cite{Jantzen} with
generators $E_\alpha,F_\alpha,K_\alpha^{\pm 1}$, $\alpha\in\Pi$ and
certain relations which can be found in chapter~4
of~\cite{Jantzen}. We define $v_\alpha = v^{(\alpha|\alpha)/2}$
(i.e. $v_\alpha = v$ if $\alpha$ is a short root and $v_\alpha = v^2$
if $\alpha$ is a long root) and for $n\in\setZ$, $[n]_v
=\frac{v^n-v^{-n}}{v-v\inv}$.  Let $[n]_\alpha := [n]_{v_\alpha} =
\frac{v_\alpha^n-v_\alpha^{-n}}{v_\alpha-v_\alpha\inv}$. We omit the
subscripts when it is clear from the context. For later use we also
define the quantum binomial coefficients: For $r\in \setN$ and $a\in
\setZ$:
\begin{equation*}
  {a \brack r}_v = \frac{[a][a-1]\cdots [a-r+1]}{[r]!}
\end{equation*}
where $[r]! := [r][r-1]\cdots [2][1]$. Let $A=\setZ[v,v\inv]$ and let
$U_A$ be Lusztigs $A$-form, i.e. the $A$ subalgebra generated by the
divided powers $E_\alpha^{(n)}:=\frac{1}{[n]_\alpha!}E_\alpha^{n}$,
$F_\alpha^{(n)}:=\frac{1}{[n]_\alpha!}F_\alpha^{n}$ and $K_\alpha^{\pm
  1}$, $\alpha\in\Pi$.

Let $q\in \setC^*=\setC\backslash\{0\}$ be a nonzero complex number
that is not a root of unity and set $U_q = U_A \tensor_A \setC_q$
where $\setC_q$ is the $A$-algebra $\setC$ where $v$ is sent to $q$.

We have a triangular decomposition of Lusztigs $A$-form $U_A = U_A^-
\tensor U_A^0 \tensor U_A^+$ with $U_A^-=\left<
  F_\alpha^{(n)}|\alpha\in \Pi,n\in \setN \right>\in U_A$,
$U_A^+=\left< E_\alpha^{(n)}|\alpha\in \Pi,n\in \setN \right>\in U_A$
and $U_A^0 = \left< K_\alpha^{\pm 1}, { K_\alpha ; c \brack
    r}|\alpha\in \Pi, c\in \setZ, r\in \setN\right>$ where
\begin{equation*}
  {K_\alpha ; c \brack r} := \prod_{j=1}^r \frac{ K_\alpha v_\alpha^{c-j+1}-K_\alpha\inv v_\alpha^{-c+j-1}}{v_\alpha^{j}-v_\alpha^{-j}}.
\end{equation*}
We have the corresponding triangular decomposition of $U_q$: $U_q =
U_q^- \tensor U_q^0 \tensor U_q^+$ with $U_q^{\pm} = U_A^{\pm}
\tensor_A \setC_q$ and $U_q^0 = U_A^0 \tensor_A \setC_q$.

For a $q\in \setC^*$ define ${a \brack r}_q$ as the image of ${a \brack
  r}_v$ in $\setC_q$.  We will omit the subscript from the notation when
it is clear from the context.  $q_\beta\in \setC$ and $[n]_\beta\in
\setC$ are defined as the image of $v_\beta\in A$ and $[n]_\beta\in
A$, respectively abusing notation. Similarly, we will abuse notation
and write ${K_\alpha ; c \brack r}$ also for the image of ${K_\alpha ;
  c \brack r}\in U_A$ in $U_q$. Define for $\mu\in Q$, $K_\mu =
\prod_{i=1}^n K_{\alpha_i}^{a_i}$ if $\mu = \sum_{i=1}^n a_i \alpha_i$
with $a_i\in \setZ$.

There is a braid group action on $U_v$ which we will describe now. We
use the definition from~\cite[Chapter~8]{Jantzen}. The definition is
slightly different from the original in~\cite[Theorem~3.1]{MR1066560}
(see \cite[Warning~8.14]{Jantzen}). For each simple reflection $s_i$
there is a braid operator that we will denote by $T_{s_i}$ satisfying
the following: $T_{s_i}:U_v\to U_v$ is a $\setQ(v)$ automorphism. For
$i\neq j \in \{1,\dots,n\}$
\begin{align*}
  T_{s_i}(K_\mu)=&K_{s_i(\mu)}
  \\
  T_{s_i}(E_{\alpha_i}) =& -F_{\alpha_i}K_{\alpha_i}
  \\
  T_{s_i}(F_{\alpha_i})=& - K_{\alpha_i}\inv E_{\alpha_i}
  \\
  T_{s_i}(E_{\alpha_j})=&
  \sum_{i=0}^{-\left<\alpha_j,\alpha_i^\vee\right>} (-1)^i
  v_{\alpha_i}^{-i} E_{\alpha_i}^{(r-i)}E_{\alpha_j}E_{\alpha_i}^{(i)}
  \\
  T_{s_i}(F_{\alpha_j})=&
  \sum_{i=0}^{-\left<\alpha_j,\alpha_i^\vee\right>} (-1)^i
  v_{\alpha_i}^{i} E_{\alpha_i}^{(i)}E_{\alpha_j}E_{\alpha_i}^{(r-i)}.
\end{align*}
The inverse $T_{s_i}\inv$ is given by conjugating with the
$\setQ$-algebra anti-automorphism $\Psi$
from~\cite[section~1.1]{MR1066560} defined as follows:
\begin{align*}
  \Psi(E_{\alpha_i}) = E_{\alpha_i}, \quad \Psi(F_{\alpha_i}) =
  F_{\alpha_i}, \quad \Psi(K_{\alpha_i}) = K_{\alpha_i}\inv, \quad
  \Psi(q) = q.
\end{align*}
The braid operators $T_{s_i}$ satisfy braid relations so we can define
$T_w$ for any $w\in W$: Choose a reduced expression of $w$:
$w=s_{i_1}\cdots s_{i_n}$. Then $T_w = T_{s_{i_1}}\cdots T_{s_{i_n}}$
and $T_w$ is independent of the chosen reduced expression, see
e.g.~\cite[Theorem~3.2]{MR1066560}. We have
$T_w(K_\mu)=K_{w(\mu)}$. The braid group operators restrict to
automorphisms $U_A\to U_A$ and extend to automorphisms $U_q\to U_q$.

Let $M$ be a $U_q$-module and $\lambda: U_q^0 \to \setC$ a character
(i.e. an algebra homomorphism into $\setC$). Then
\begin{equation*}
  M_\lambda = \{ m\in M | \forall u\in U_q^0, u m = \lambda(u)m\}.
\end{equation*}
Let $X$ denote the set of characters $U_q^0\to \setC$. Since
$U_q^0\iso \setC[X_1^{\pm 1},\dots,X_n^{\pm 1}]$ we can identify $X$
with $(\setC^*)^n$ by $U_q^0\ni \lambda \mapsto
(\lambda(K_{\alpha_1}),\dots,\lambda(K_{\alpha_n}))\in (\setC^*)^n$.

\subsection{Basic definitions}
\begin{defn}
  \label{sec:intr-notat-2}
  Let $\wt M$ denote all the weights of $M$, i.e. $\wt M = \{
  \lambda\in X | M_\lambda \neq 0 \}$.
  
  For $\mu\in \Lambda$ and $b\in \setC^*$ define the character $b^\mu$
  by $b^\mu(K_\alpha) = b^{(\mu|\alpha)}$, $\alpha\in \Pi$. In
  particular for $b=q$ we get $q^\mu(K_\alpha)=q^{(\mu|\alpha)}$. We
  say that $M$ only has integral weights if $\lambda(K_\alpha)\in \pm
  q_\alpha^\setZ$ for all $\lambda \in \wt M$, $\alpha\in \Pi$.
\end{defn}
There is an action of $W$ on $X$. For $\lambda\in X$ define $w\lambda$
by
\begin{equation*}
  (w\lambda)(u) = \lambda(T_{w\inv}(u))
\end{equation*}
Note that $w q^\mu = q^{w(\mu)}$.

\begin{defn}
  \label{sec:intr-notat-1}
  Let $M$ be a $U_q$-module and $w\in W$. Define the twisted module
  ${^w}M$ by the following:
  
  As a vector space ${^w}M=M$ but the action is given by twisting with
  $w\inv$: For $m\in {^w}M$ and $u \in U_q$:
  \begin{equation*}
    u\cdot m = T_{w\inv}(u)m.
  \end{equation*}
  
  We also define ${^{\bar{w}}}M$ to be the inverse twist, i.e. for
  $m\in {^{\bar{w}}}M$, $u\in U_q$:
  \begin{equation*}
    u \cdot m = T_{w\inv}\inv(u) m.
  \end{equation*}
  Hence for any $U_q$-module ${^{\bar{w}}}(^{w}M) = M =
  {^w}(^{\bar{w}}M)$.
\end{defn}
Note that $\wt {^w}M = w(\wt M)$ and that ${^w}(^{w'}M)\iso {^{ww'}}M$
for $w,w'\in W$ with $l(ww')=l(w)+l(w')$ because the braid operators
$T_w$ satisfy braid relations. Also ${^{\bar{w}}}(^{\bar{w'}}M) \iso
{^{\bar{w'w}}}M$

\begin{defn}
  \label{def:1}
  We define the category $\mathcal{F}=\mathcal{F}(\mathfrak{g})$ as
  the full subcategory of $U_q-\operatorname{Mod}$ such that for every
  $M\in \mathcal{F}$ we have
  \begin{enumerate}
  \item $M$ is finitely generated as a $U_q$-module.
  \item $M = \bigoplus_{\lambda\in X} M_\lambda$ and $\dim M_\lambda <
    \infty$.
  \end{enumerate}
\end{defn}

Note that the assignment $M\mapsto {^w}M$ is an endofunctor on
$\mathcal{F}$ (in fact an auto-equivalence).

Let $w_0$ be the longest element in $W$ and let $s_{i_1}\cdots
s_{i_N}$ be a reduced expression of $w_0$. We define root vectors
$E_\beta$ and $F_\beta$ for any $\beta\in \Phi^+$ by the following:

First of all set
\begin{equation*}
  \beta_{j} = s_{i_1}\cdots s_{i_{j-1}}(\alpha_{i_j}), \, \text{ for } i=1,\dots,N
\end{equation*}
Then $\Phi^+ = \{\beta_1,\dots,\beta_N\}$. Set
\begin{equation*}
  E_{\beta_j} = T_{s_{i_1}}\cdots T_{s_{i_{j-1}}}(E_{\alpha_{i_j}})
\end{equation*}
and
\begin{equation*}
  F_{\beta_j} = T_{s_{i_1}}\cdots T_{s_{i_{j-1}}}(F_{\alpha_{i_j}})
\end{equation*}
In this way we have defined root vectors for each
$\beta\in\Phi^+$. These root vectors depend on the reduced expression
chosen for $w_0$ above. For a different reduced expression we might
get different root vectors. It is a fact that if $\beta\in\Pi$ then
the root vectors $E_\beta$ and $F_\beta$ defined above are the same as
the generators with the same notation
(cf. e.g.~\cite[Proposition~8.20]{Jantzen}) so the notation is not
ambigious in this case. By ``Let $E_\beta$ be a root vector'' we will
just mean a root vector constructed as above for some reduced
expression of $w_0$.

\section{Reductions}
\label{sec:nonroot-unity-case}
We recall the following results from~\cite{DHP1}.

\begin{prop}
  \label{prop:2}
  Let $\beta$ be a positive root and $E_\beta,F_\beta$ root vectors
  corresponding to $\beta$. Let $M\in\mathcal{F}$. The sets
  $M^{[\beta]}=\{m\in M| \dim \left<E_\beta\right> m < \infty \}$ and
  $M^{[-\beta]}=\{m\in M| \dim \left<F_\beta\right> m < \infty \}$ are
  submodules of $M$ and independent of the chosen root vectors
  $E_\beta$, $F_\beta$.
\end{prop}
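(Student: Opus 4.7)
The strategy is to recast membership in $M^{[\beta]}$ as a local nilpotency condition on weight vectors, and then propagate it through the action of the generators of $U_q$ using the commutation formulas recalled in Section~\ref{sec:u_a-calculations}. The identical scheme with $F_\beta$ in place of $E_\beta$ handles $M^{[-\beta]}$.

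Since $M=\bigoplus_\lambda M_\lambda$ with finite-dimensional weight spaces, it suffices to work with a weight vector $m\in M_\lambda$. The iterates $E_\beta^n m$ lie in the pairwise distinct weight spaces $M_{\lambda+n\beta}$, hence are linearly independent unless some of them vanish. Thus $\dim\langle E_\beta\rangle m<\infty$ is equivalent to $E_\beta^N m=0$ for some $N\in\setN$, which in turn forces $E_\beta^n m=0$ for every $n\ge N$. To show $M^{[\beta]}$ is $U_q$-stable, it is enough to check stability under the algebra generators $K_\mu^{\pm 1}, E_\alpha, F_\alpha$ with $\alpha\in\Pi$. For $K_\mu$ the commutation $E_\beta K_\mu=q^{-(\mu|\beta)}K_\mu E_\beta$ immediately gives $E_\beta^N(K_\mu m)=q^{-N(\mu|\beta)}K_\mu E_\beta^N m=0$. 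For $X\in\{E_\alpha, F_\alpha\}$ the commutation formulas from Section~\ref{sec:u_a-calculations} imply that the quantum adjoint action $\ad(E_\beta)$ is locally nilpotent on $X$, say $(\ad E_\beta)^{k+1}(X)=0$ for some $k=k(X,\beta)$. A $q$-Leibniz expansion
\[
E_\beta^n X=\sum_{j=0}^{k} c_{j,n}\,(\ad E_\beta)^{j}(X)\,E_\beta^{n-j}
\]
then yields $E_\beta^n(Xm)=0$ as soon as $n\ge N+k$, so $Xm\in M^{[\beta]}$.

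For independence of the chosen root vector, let $E_\beta'$ be a second root vector for $\beta$. The argument above applied to $E_\beta'$ shows that $M^{[\beta']}$ is also a $U_q$-submodule. One expands $E_\beta'$ in the PBW basis attached to the reduced expression used to define $E_\beta$: it takes the form of a nonzero scalar multiple of $E_\beta$ plus a finite sum of ordered monomials each of which is a product of at least two root vectors whose weights sum to $\beta$. Reordering $(E_\beta')^{N'}$ by repeated use of the commutation relations of Section~\ref{sec:u_a-calculations}, one verifies that for $N'$ sufficiently large compared to $N$ each summand in $(E_\beta')^{N'}m$ either contains a factor $E_\beta^n$ with $n\ge N$ or is controlled inductively via smaller root vectors acting on the already-stable $M^{[\beta]}$, and therefore vanishes on $m$. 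Symmetry yields $M^{[\beta]}=M^{[\beta']}$, and the analogous argument handles $F_\beta$. The main obstacle is precisely this last bookkeeping step, since two quantum root vectors for the same root are generally not scalar multiples of each other, and one must manage the full Levendorsky--Soibelman-type relations of Section~\ref{sec:u_a-calculations}.
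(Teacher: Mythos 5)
The submodule half of your proof is correct and is essentially the standard argument: reduce to weight vectors so that local finiteness of $\langle E_\beta\rangle m$ becomes $E_\beta^N m=0$, use the $q$-Leibniz identity (Proposition~\ref{prop:17}) together with local nilpotency of $\ad(E_\beta)$ on generators, and push the exponent past $N+k$. This matches the expected argument for Proposition~2.3 of~\cite{DHP1}, which the paper cites.

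The independence half (Lemma~2.4 of~\cite{DHP1}) is where your sketch has a genuine gap. Your plan is to expand $E_\beta'$ in the PBW basis attached to $E_\beta$ as $c\,E_\beta + (\text{cross terms})$, expand $(E_\beta')^{N'}$, reorder, and claim each summand either carries a factor $E_\beta^n$ with $n\ge N$ or is ``controlled inductively via smaller root vectors acting on the already-stable $M^{[\beta]}$.'' That last clause is not an argument. A summand built entirely from cross-term monomials (say products of $E_{\gamma_1}E_{\gamma_2}$ with $\gamma_1+\gamma_2=\beta$) applied to $m$ does land in $M^{[\beta]}$ by the submodule property you just established, but being in $M^{[\beta]}$ does not force it to be zero: $M^{[\beta]}_{\lambda q^{n\beta}}$ can be nonzero for all $n$ (coming from vectors other than $m$), so knowing $(E_\beta')^{N'}m\in M^{[\beta]}_{\lambda q^{N'\beta}}$ gives no vanishing. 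The submodule property propagates $E_\beta$-local-nilpotency to $U_q m$; it does not propagate to local nilpotency of the cross-term monomials, and without that the reordering does not terminate the way you want. You also cannot shortcut via the weight characterization of Proposition~\ref{prop:3}, since that is stated only for simple modules, a general $M\in\mathcal{F}$ need not have finite Jordan--H\"older length, and in~\cite{DHP1} that proposition comes after the present one.

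A route that avoids the bookkeeping: since any root vector has the form $E_\beta=T_w(E_\alpha)$ with $\alpha$ simple, one has, as subspaces of $M$, that $M^{[\beta]}=\bigl({}^{w^{-1}}M\bigr)^{[\alpha]}$, because the $E_\alpha$-action on ${}^{w^{-1}}M$ is exactly $E_\beta$ acting on $M$. This reduces to simple roots, where the root vector is canonical up to scalar, and the comparison between two presentations $T_w(E_\alpha)$ and $T_{w'}(E_{\alpha'})$ of the same $\beta$ can be done one braid move at a time, where the change of root vector is explicit. Your PBW approach would in effect have to re-derive this, and as written it does not.
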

\begin{proof}
  This is shown for $E_\beta$ in Proposition~2.3 and Lemma~2.4
  in~\cite{DHP1} and the proofs are the same for $F_\beta$.
\end{proof}

\begin{defn}
  Let $M\in \mathcal{F}$. Let $\beta\in\Phi$. $M$ is called
  $\beta$-free if $M^{[\beta]}=0$ and $\beta$-finite if
  $M^{[\beta]}=M$.
\end{defn}

Suppose $L\in \mathcal{F}$ is a simple module and $\beta$ a root. Then
by Proposition~\ref{prop:2} $L$ is either $\beta$-finite or
$\beta$-free.

\begin{defn}
  Let $M\in \mathcal{F}$. Define $F_M = \{\beta \in \Phi| \text{$M$ is
    $\beta$-finite}\}$ and $T_M = \{ \beta \in \Phi | \text{$M$ is
    $\beta$-free} \}$. For later use we also define $F_M^s := F_M \cap
  (-F_M)$ and $T_M^s := T_M \cap (-T_M)$ to be the symmetrical parts
  of $F_M$ and $T_M$.
\end{defn}
Note that $\Phi = F_L \cup T_L$ for a simple module $L$ and this is a
disjoint union.

\begin{defn}
  A module $M$ is called torsion free if $T_M = \Phi$.
\end{defn}

\begin{prop}
  \label{prop:3}
  Let $L\in \mathcal{F}$ be a simple module and $\beta$ a root. $L$ is
  $\beta$-free if and only if $q^{\setN \beta}\wt L \subset \wt L$.
\end{prop}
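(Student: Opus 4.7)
The forward direction follows immediately from the definitions. If $L^{[\beta]}=0$, then for any nonzero $v\in L_\lambda$ the subspace $\langle E_\beta\rangle v$ is infinite dimensional, and because $q$ is not a root of unity the vectors $E_\beta^k v$ occupy pairwise distinct weight spaces $L_{q^{k\beta}\lambda}$. Hence every $E_\beta^k v$ is nonzero and $q^{k\beta}\lambda\in\wt L$ for all $k\geq 0$.

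For the converse, assume $q^{\setN\beta}\wt L\subset\wt L$. By Proposition~\ref{prop:2}, $L^{[\beta]}$ is a submodule of the simple module $L$, so $L^{[\beta]}=0$ (the desired conclusion) or $L^{[\beta]}=L$. I plan to rule out the second case by contradiction; so assume $L^{[\beta]}=L$, so that $E_\beta$ acts locally nilpotently on $L$. Fix $\lambda\in\wt L$, noting that $L_{q^{k\beta}\lambda}\neq 0$ for all $k\geq 0$. Two preliminary observations: $L$ must also be $-\beta$-free, for otherwise both $E_\beta$ and $F_\beta$ would act locally nilpotently, every cyclic $\langle E_\beta,F_\beta,K_\beta^{\pm 1}\rangle$-submodule of $L$ would be a finite-dimensional $U_{q_\beta}(\mathfrak{sl}_2)$-module, and the $K_\beta$-weights of $L$ would be bounded, contradicting the hypothesis. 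Moreover, $\ker E_\beta\cap L_{q^{k\beta}\lambda}\neq 0$ for infinitely many $k\geq 0$, since otherwise $E_\beta:L_{q^{k\beta}\lambda}\to L_{q^{(k+1)\beta}\lambda}$ would be injective for all sufficiently large $k$, yielding a nonzero vector all of whose $E_\beta$-iterates are nonzero, contradicting local nilpotency.

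Choose a strictly increasing sequence $k_1<k_2<\cdots$ and nonzero $v_i\in \ker E_\beta\cap L_{q^{k_i\beta}\lambda}$. Since $L$ is $-\beta$-free, $F_\beta^{k_i}v_i$ is a nonzero vector of weight $\lambda$. The standard $U_{q_\beta}(\mathfrak{sl}_2)$ commutation identity yields $E_\beta^{k_i}F_\beta^{k_i}v_i=c_i v_i$ for an explicit scalar $c_i$ depending only on $k_i$ and $\lambda(K_\beta)$, and a direct calculation shows $c_i=0$ only when $\lambda(K_\beta)=\pm v_\beta^{j-1-2k_i}$ for some $1\leq j\leq k_i$, which happens for at most finitely many indices. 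After passing to a subsequence I may therefore assume $c_i\neq 0$ for all $i$. To conclude, suppose a finite relation $\sum_{i=1}^N a_i F_\beta^{k_i}v_i=0$ holds in $L_\lambda$ with $a_N\neq 0$. Applying $E_\beta^{k_N}$ annihilates every term with $i<N$, since after reducing to $c_i E_\beta^{k_N-k_i}v_i$ we have $E_\beta v_i=0$ and $k_N>k_i$, while the $i=N$ term contributes $a_N c_N v_N\neq 0$, a contradiction. Hence the vectors $F_\beta^{k_i}v_i$ are linearly independent in $L_\lambda$, forcing $\dim L_\lambda=\infty$ and contradicting $L\in\mathcal{F}$.

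The main technical hurdle is the detailed computation of the scalars $c_i$, which requires that the subalgebra $\langle E_\beta,F_\beta,K_\beta^{\pm 1}\rangle\subset U_q$ generated by the root vectors at a possibly non-simple root $\beta$ is isomorphic to $U_{q_\beta}(\mathfrak{sl}_2)$ with the standard commutation relations; this is available from~\cite[Chapter~8]{Jantzen}, but the precise form of the identity for $[E_\beta,F_\beta^k]$ should be stated before carrying out the scalar computation.
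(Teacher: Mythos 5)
The paper itself only cites \cite[Proposition~2.9]{DHP1} for this result, so there is no in-text proof to compare against. On its own terms your strategy is the right one and mostly sound: the forward direction is a clean observation, the reduction to a rank-one $U_{q_\beta}(\mathfrak{sl}_2)$ copy via the braid automorphism is legitimate, the identification of infinitely many $k$ with $\ker E_\beta\cap L_{q^{k\beta}\lambda}\neq 0$ is correct, the scalar $c_i$ vanishes only for finitely many $i$ as you say (modulo the typo $v_\beta\to q_\beta$), and the application of $E_\beta^{k_N}$ at the end correctly kills the lower terms and isolates $a_Nc_Nv_N$.

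The gap is in the preliminary claim that $L$ is $-\beta$-free. You argue: if also $L^{[-\beta]}=L$, then every cyclic $\langle E_\beta,F_\beta,K_\beta^{\pm1}\rangle$-submodule of $L$ is finite-dimensional, \emph{and therefore the $K_\beta$-weights of $L$ are bounded}. That implication does not follow. A locally finite $U_{q_\beta}(\mathfrak{sl}_2)$-module is a sum of finite-dimensional cyclic submodules, but these can have larger and larger highest weights, giving unbounded $K_\beta$-weights overall; moreover, the $K_\beta$-eigenspaces of $L$ (as opposed to the $U_q^0$-weight spaces) may well be infinite-dimensional, since many distinct $U_q^0$-weights can share a single $K_\beta$-eigenvalue, so no finiteness argument applies directly to them. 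The repair is to restrict to the single $\beta$-string $N:=\bigoplus_{k\in\setZ}L_{q^{k\beta}\lambda}$. Because $q$ is not a root of unity, the $K_\beta$-eigenvalues $q_\beta^{2k}\lambda(K_\beta)$ are pairwise distinct, so $N$ is a $U_{q_\beta}(\mathfrak{sl}_2)$-weight module with finite-dimensional weight spaces. If both $E_\beta$ and $F_\beta$ acted locally nilpotently, $N$ would be a direct sum of simple finite-dimensional modules, and finite multiplicities then force the sum to be finite, i.e.\ $N$ finite-dimensional, contradicting $L_{q^{k\beta}\lambda}\neq 0$ for all $k\geq 0$. With that substitution the conclusion you wanted — $F_\beta$ acts injectively, so $F_\beta^{k_i}v_i\neq 0$ — is secured, and the remainder of your argument stands.
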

\begin{proof}
  Proposition~2.9 in~\cite{DHP1}.
\end{proof}

\begin{prop}
  \label{prop:8}
  Let $L\in \mathcal{F}$ be a simple module. $T_L$ and $F_L$ are
  closed subsets of $Q$. That is, if $\beta,\gamma\in F_L$
  (resp. $\beta,\gamma\in T_L$) and $\beta+\gamma\in \Phi$ then
  $\beta+\gamma \in F_L$ (resp. $\beta+\gamma\in T_L$).
\end{prop}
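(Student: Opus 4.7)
For the $T_L$ part, the argument is immediate from Proposition~\ref{prop:3}. If $\beta,\gamma\in T_L$ and $\beta+\gamma\in\Phi$, then $q^{\setN\beta}\wt L\subset\wt L$ and $q^{\setN\gamma}\wt L\subset\wt L$. Since characters of $U_q^0$ multiply pointwise, $q^{n(\beta+\gamma)} = q^{n\beta}q^{n\gamma}$, so for any $\lambda\in\wt L$ and $n\in\setN$ the element $q^{n(\beta+\gamma)}\lambda = q^{n\beta}(q^{n\gamma}\lambda)$ lies in $\wt L$ by successive application of the two inclusions. Proposition~\ref{prop:3} then yields $\beta+\gamma\in T_L$.

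The $F_L$ case requires more work, since the complement of a closed subset of $\Phi$ need not be closed; so I would argue directly via local nilpotency. By Proposition~\ref{prop:2}, $L^{[\beta+\gamma]}$ is a submodule of $L$, hence by simplicity either $0$ or $L$, so it suffices to exhibit a single nonzero $v\in L$ on which $E_{\beta+\gamma}$ acts locally finitely. I would choose root vectors $E_\beta,E_\gamma$ arising from a reduced expression of $w_0$ in which $\beta$ and $\gamma$ are consecutive in the induced convex order on $\Phi^+$, so that $\beta+\gamma$ (and possibly $2\beta+\gamma$ or $\beta+2\gamma$) lies between them. A Levendorski{\v\i}--Soibelman-type commutation identity then takes the form
\[
E_\beta E_\gamma - q^{(\beta|\gamma)}E_\gamma E_\beta = cE_{\beta+\gamma} + R,
\]
with $c\neq 0$ and $R$ supported on the root vectors of the remaining positive roots of the rank-2 subsystem spanned by $\beta$ and $\gamma$. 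In the pure $A_2$ situation $R=0$ and $E_{\beta+\gamma}$ $q$-commutes with both $E_\beta$ and $E_\gamma$; expanding powers of $E_{\beta+\gamma}$ via this identity and using local nilpotence $E_\beta^m v = 0 = E_\gamma^n v$ (available from $\beta,\gamma\in F_L$), one shows that $E_{\beta+\gamma}^N v = 0$ for some $N$ depending only on $m,n$.

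The main obstacle is the non-simply-laced rank-2 cases $B_2$ and $C_2$, where $R$ contributes genuine $E_{2\beta+\gamma}$ or $E_{\beta+2\gamma}$ terms. I would dispatch these by induction on the height within the rank-2 subsystem: the highest positive root vector in the subsystem is itself an iterated $q$-commutator of $E_\beta$ and $E_\gamma$, so the same expansion argument as in the $A_2$ step gives local nilpotency for it; substituting back into $R$ then upgrades the $A_2$ argument to give local nilpotency of $E_{\beta+\gamma}$ at the chosen $v$. Since $L^{[\beta+\gamma]}$ is now a nonzero submodule, simplicity forces $L^{[\beta+\gamma]} = L$ and we conclude $\beta+\gamma\in F_L$.
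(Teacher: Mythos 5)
The $T_L$ half of your argument is correct and is exactly the clean route: since characters multiply, $q^{n(\beta+\gamma)}\lambda = q^{n\beta}(q^{n\gamma}\lambda)$, and two applications of Proposition~\ref{prop:3} finish it.

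For $F_L$ the overall strategy is right, and appealing to Proposition~\ref{prop:2} to reduce to exhibiting a single $v$ with $E_{\beta+\gamma}$ locally nilpotent at $v$ is the key move. But there are two real gaps. First, you tacitly take $\beta,\gamma\in\Phi^+$; the statement is for all of $\Phi$, and $F_L$ is not symmetric or $W$-stable, so the mixed-sign case ($\beta>0>\gamma$) has to be handled. It can be: the rank-two subsystem $\Phi'=\Phi\cap(\setZ\beta+\setZ\gamma)$ has a Weyl group element $w$ with $w\beta,w\gamma\in\Phi^+$, and since $F_{{}^{w}L}=w(F_L)$ one passes to ${}^{w}L$ -- but this needs to be said. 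Second, and more importantly, the sentence "expanding powers of $E_{\beta+\gamma}$ \ldots one shows that $E_{\beta+\gamma}^N v=0$" does not follow from $E_\beta^m v=0=E_\gamma^n v$ by degree bookkeeping alone: in the PBW basis $E_\beta^a E_{\beta+\gamma}^b E_\gamma^c$ the degree-$N(\beta+\gamma)$ part of $U^+_{\beta,\gamma}v$ still contains the spanning terms with $c<n$, $b>N-n$, which those two relations do not visibly kill. The fix is to use your freedom in choosing $v$: replace $v$ by the last nonzero $E_\gamma$-power, so that $E_\gamma v=0$ exactly. Then in the $A_2$ case the two $q$-commutations $E_{\beta+\gamma}E_\gamma=q E_\gamma E_{\beta+\gamma}$ and $E_{\beta+\gamma}E_\beta=q^{-1}E_\beta E_{\beta+\gamma}$ force $E_{\beta+\gamma}$ to literally commute with $E_\gamma E_\beta$, so $E_{\beta+\gamma}^N v=c^{-N}(E_\gamma E_\beta)^N v$, and a short induction with the quantum Serre relation gives $(E_\gamma E_\beta)^N v=\tfrac{1}{[N]!}E_\gamma^N E_\beta^N v$, which vanishes once $N\ge m$. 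The $B_2$/$C_2$ induction you sketch then also needs this normalization of $v$ and a bit more care (the commutator $[E_\gamma,E_{2\beta+\gamma}]_q\propto E_{\beta+\gamma}^2$ feeds back into the estimate), but the shape is plausible. With these refinements your route should close up; it is the quantum analogue of Fernando's closure lemma, and is presumably close to what \cite{DHP1} does (the paper itself only cites Propositions~2.10--2.11 there, so I cannot compare directly).
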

\begin{proof}
  Proposition~2.10 and Proposition~2.11 in~\cite{DHP1}.
\end{proof}

\begin{thm}
  \label{thm:Lemire}
  Let $\lambda\in X$.  There is a $1-1$ correspondence between simple
  $U_q$-modules with weight $\lambda$ and simple $(U_q)_0$-modules
  with weight $\lambda$ given by: For $V$ a $U_q$-module, $V_\lambda$
  is the corresponding simple $(U_q)_0$-module.
\end{thm}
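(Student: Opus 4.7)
My proof plan follows the classical strategy of F.~Lemire, which transfers to the quantum setting with essentially notational changes. First I would make explicit the setup: $(U_q)_0$ denotes the zero weight part of $U_q$ under the adjoint $U_q^0$-action, so $U_q^0 \subset (U_q)_0$ sits in the center of $(U_q)_0$. Since any simple $(U_q)_0$-module $W$ is annihilated by $\ker\chi$ for some character $\chi$ of $U_q^0$ (Schur/centrality), ``weight $\lambda$'' on the $(U_q)_0$ side just means $U_q^0$ acts by $\lambda$. A PBW argument via the triangular decomposition $U_q = U_q^- U_q^0 U_q^+$ yields the crucial $((U_q)_0,(U_q)_0)$-bimodule decomposition
\begin{equation*}
  U_q = \bigoplus_{\mu \in Q} (U_q)_\mu, \qquad (U_q)_\mu \cdot V_\nu \subset V_{\nu+\mu}
\end{equation*}
for any weight module $V$, with $(U_q)_0$ in degree $\mu=0$. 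This is the technical backbone of both directions.

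For the forward direction, let $V$ be a simple $U_q$-module with $V_\lambda \neq 0$. Each weight space is a $(U_q)_0$-module. Given any nonzero $(U_q)_0$-submodule $W \subseteq V_\lambda$, the above bimodule property gives $(U_q W)_\lambda = (U_q)_0 \, W = W$, while simplicity of $V$ forces $U_q W = V$, hence $V_\lambda = W$. Thus $V_\lambda$ is a simple $(U_q)_0$-module, giving the map $V \mapsto V_\lambda$.

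For the backward direction, given a simple $(U_q)_0$-module $W$ on which $U_q^0$ acts by $\lambda$, I would form the induced module $M = U_q \otimes_{(U_q)_0} W$. The bimodule decomposition gives $M = \bigoplus_\mu (U_q)_\mu \otimes_{(U_q)_0} W$, and in particular $M_\lambda = (U_q)_0 \otimes_{(U_q)_0} W \iso W$. Let $\mathcal{N} \subset M$ be the sum of all $U_q$-submodules $N$ with $N_\lambda = 0$; this is itself a submodule with $\mathcal{N}_\lambda = 0$, so it is the unique maximal such. Put $V := M/\mathcal{N}$. Then $V_\lambda \iso W$, and $V$ is simple: for any nonzero $U_q$-submodule $V' \subseteq V$, maximality of $\mathcal{N}$ forces $V'_\lambda \neq 0$, then simplicity of $W$ gives $V'_\lambda = W$, and finally $V' \supseteq U_q \cdot V'_\lambda = U_q \cdot W = V$.

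It remains to show the two constructions are mutually inverse. In one composition, $V_\lambda$ for $V$ simple recovers $V$ by the universal property, since the natural map $U_q \tensor_{(U_q)_0} V_\lambda \to V$ is surjective and its kernel contains (hence equals) the maximal submodule $\mathcal{N}$ trivial in weight $\lambda$. In the other composition, the quotient $M/\mathcal{N}$ constructed from $W$ has $\lambda$-weight space canonically isomorphic to $W$ as a $(U_q)_0$-module. I expect the main obstacle to be establishing the shift-of-weight bimodule fact $U_q = \bigoplus_\mu (U_q)_\mu$ with the right behavior under induction and localization-free arguments; everything else is soft, but this PBW-type identification of $(U_q)_0$ as the ``zero weight slice'' is what makes both $U_q W \cap V_\lambda = W$ and $M_\lambda = W$ work cleanly.
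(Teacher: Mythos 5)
Your proof is correct and follows the standard Lemire/Fernando strategy, which is also what the paper relies on (it cites Theorem~2.21 of~\cite{DHP1} for this, labeled as \texttt{thm:Lemire}): use the $Q$-grading $U_q = \bigoplus_{\mu}(U_q)_\mu$ coming from the adjoint $U_q^0$-action to see that $V_\lambda$ is a simple $(U_q)_0$-module, and for the inverse induce from $(U_q)_0$ and pass to the unique maximal quotient on which the $\lambda$-weight space survives. The one point worth making explicit, which you use implicitly, is that the characters $\lambda q^\mu$ for $\mu\in Q$ are pairwise distinct (this needs $q$ not a root of unity), so that $(U_q)_\mu V_\lambda$ lands in a weight space disjoint from $V_\lambda$ when $\mu\neq 0$; without this the identification $(U_q W)_\lambda = (U_q)_0 W$ and $M_\lambda\cong W$ would both fail.
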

\begin{proof}
  Theorem~2.21 in~\cite{DHP1}.
\end{proof}

\begin{thm}
  \label{thm:classification}
  Let $L\in\mathcal{F}$ be a simple $U_q(\mathfrak{g})$-module. Then
  there exists a $w\in W$, subalgebras
  $U_q(\mathfrak{p}),U_q(\mathfrak{l}),U_q(\mathfrak{u}),U_q(\mathfrak{u}^-)$
  of $U_q$ with $U_q = U_q(\mathfrak{u}^-) U_q(\mathfrak{p})$,
  $U_q(\mathfrak{p})= U_q(\mathfrak{l})U_q(\mathfrak{u})$ and a simple
  $U_q(\mathfrak{l})$-module $N$ such that ${^w}L$ is the unique
  simple quotient of $U_q\tensor_{U_q(\mathfrak{p})}N$ where $N$ is
  considered a $U_q(\mathfrak{p})$-module with $U_q({\mathfrak{u}})$
  acting trivially.

  Furthermore there exists subalgebras $U_{fr},U_{fin}$ of
  $U_q(\mathfrak{l})$ such that $U_q(\mathfrak{l})\iso U_{fr}\tensor
  U_{fin}$ and simple $U_{fr}$ and $U_{fin}$ modules $X_{fr}$ and
  $X_{fin}$ where $X_{fin}$ is finite dimensional and $X_{fr}$ is
  torsion free such that $N \iso X_{fin}\tensor X_{fr}$ as a
  $U_{fr}\tensor U_{fin}$-module.

  $U_{fr}$ is the quantized enveloping algebra of a semisimple Lie
  algebra $\mathfrak{t}=\mathfrak{t}_1\oplus \cdots \oplus
  \mathfrak{t}_r$ where $\mathfrak{t}_1,\dots,\mathfrak{t}_r$ are some
  simple Lie algebras. There exists simple torsion free
  $U_q(\mathfrak{t}_i)$-modules $X_i$, $i=1,\dots,r$ such that
  $X_{fr}\iso X_1\tensor \cdots \tensor X_r$ as
  $U_q(\mathfrak{t}_1)\tensor \cdots \tensor
  U_q(\mathfrak{t}_r)$-modules.
\end{thm}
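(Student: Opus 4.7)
This is the quantum analogue of Fernando's reduction~\cite{Fernando}, and the necessary building blocks are already in place: the closure property of Proposition~\ref{prop:8}, the weight criterion of Proposition~\ref{prop:3}, and the reduction to $(U_q)_0$-modules in Theorem~\ref{thm:Lemire}. According to the introduction, the statement was established in~\cite{DHP1}; my plan would follow the classical template.

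First I would use Proposition~\ref{prop:8} to note that $F_L^s$ is a closed symmetric root subsystem of $\Phi$, while $F_L\setminus F_L^s$ is closed and contains no symmetric pair; standard root-system combinatorics then produces $w\in W$ with $w(F_L\setminus F_L^s)\subset \Phi^+$. Replacing $L$ by ${^w}L$, I set $\mathfrak{l}$ equal to the Levi with root system $F_L^s$, let $\mathfrak{u}$ (resp.\ $\mathfrak{u}^-$) be spanned by the root vectors for $\Phi^+\setminus F_L^s$ (resp.\ $\Phi^-\setminus F_L^s$) and put $\mathfrak{p}=\mathfrak{l}\oplus\mathfrak{u}$. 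The quantum subalgebras $U_q(\mathfrak{l}),U_q(\mathfrak{u}),U_q(\mathfrak{u}^-),U_q(\mathfrak{p})$ are then the subalgebras of $U_q$ generated by the corresponding root vectors (together with $U_q^0$ where appropriate), and a PBW argument gives $U_q=U_q(\mathfrak{u}^-)U_q(\mathfrak{p})$ and $U_q(\mathfrak{p})=U_q(\mathfrak{l})U_q(\mathfrak{u})$.

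Next I would build $N$ as follows. Every $E_\beta$ with $\beta\in\Phi^+\setminus F_L^s\subset F_L$ acts locally finitely on $L$ by Proposition~\ref{prop:2}, so the subspace $N_0=\{m\in L\mid E_\beta m=0 \text{ for all } \beta\in\Phi^+\setminus F_L^s\}$ is nonzero: start with any weight vector and push it up by $E_\beta$'s until one lands in $N_0$. Since $\mathfrak{l}$ normalizes $\mathfrak{u}$, $N_0$ is stable under $U_q(\mathfrak{l})$ and becomes a $U_q(\mathfrak{p})$-module with $U_q(\mathfrak{u})$ acting trivially. Pick a simple $U_q(\mathfrak{l})$-submodule $N\subset N_0$ (existence uses Theorem~\ref{thm:Lemire} on a weight space). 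The canonical map $U_q\tensor_{U_q(\mathfrak{p})}N\to L$ is nonzero, hence surjective by simplicity of $L$, and $L$ is the unique simple quotient because $U_q=U_q(\mathfrak{u}^-)U_q(\mathfrak{p})$ forces any proper submodule of $U_q\tensor_{U_q(\mathfrak{p})}N$ to miss $1\tensor N$.

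Finally I would decompose the Levi: $F_L^s$ splits uniquely into irreducible components $\Delta_1,\dots,\Delta_k$, giving $U_q(\mathfrak{l})\iso U_q^0\cdot \bigotimes_i U_q(\mathfrak{l}_i)$. On each $\Delta_i$, Proposition~\ref{prop:8} forces all roots to lie entirely in $F_L$ or entirely in $T_L$, so grouping accordingly yields $U_q(\mathfrak{l})\iso U_{fin}\tensor U_{fr}$ and a tensor factorization $N\iso X_{fin}\tensor X_{fr}$ with $X_{fin}$ finite dimensional and $X_{fr}$ torsion free; further splitting $U_{fr}$ into its simple factors $\mathfrak{t}_i$ gives $X_{fr}\iso X_1\tensor\cdots\tensor X_r$. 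The hardest part is showing that the simple $U_q(\mathfrak{l})$-module $N$ really factors as such a tensor product; this needs Theorem~\ref{thm:Lemire} together with the commuting-factor structure of $U_q(\mathfrak{l})$, and care is required because the central toral part of $\mathfrak{l}$ must be distributed via a character between the factors.
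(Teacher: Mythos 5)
Your plan identifies the right template (Fernando's reduction), but there is a genuine error in setting up the parabolic that makes your construction of $N$ collapse. You take the Levi $\mathfrak{l}$ to have root system $F_L^s$ and put the root vectors for $\Phi^+\setminus F_L^s$ into the nilradical $\mathfrak{u}$. However, for a simple $L$ one has $\Phi = F_L \sqcup T_L$ (disjoint), so $\Phi^+\setminus F_L^s$ contains $\Phi^+\cap T_L^s$. For $\beta\in T_L$ the root vector $E_\beta$ is \emph{not} locally finite on $L$ — in fact $L$ is $\beta$-free, so $E_\beta$ acts injectively on $L$. Consequently your $N_0=\{m\in L\mid E_\beta m=0\ \text{for all}\ \beta\in\Phi^+\setminus F_L^s\}$ is zero whenever $T_L^s\neq\emptyset$, and this is precisely the interesting case: if $T_L^s=\emptyset$ the torsion-free factor $U_{fr}$ is trivial and the theorem degenerates. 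So the construction of $N$ does not get off the ground exactly where it needs to.

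The fix is to take the Levi to have the \emph{larger} root system $F_L^s\cup T_L^s$. By the closure of $F_L$ and $T_L$ (Proposition~\ref{prop:8}) together with Fernando's Lemma~4.16 (as already used in Lemma~\ref{lemma:9} and Lemma~\ref{lemma:6}), $P_L=T_L^s\cup F_L$ and $P_L^-=T_L\cup F_L^s$ are opposite parabolic subsets with common Levi subsystem $F_L^s\cup T_L^s$. After twisting by $w$ so that $w(F_L\setminus F_L^s)\subset\Phi^+$ and $w(T_L\setminus T_L^s)\subset\Phi^-$, the positive roots of the nilradical are $\Phi^+\setminus(F_L^s\cup T_L^s) = F_L\setminus F_L^s$ (up to replacing $L$ by $^wL$), all of which lie in $F_L$; now $E_\beta$ for $\beta$ in the nilradical really does act locally finitely, and the Engel-type argument produces a nonzero $N_0$. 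The Levi decomposes into irreducible components, and the closure property shows that on each component the roots lie entirely in $F_L^s$ (contributing to $U_{fin}$) or entirely in $T_L^s$ (contributing to $U_{fr}$); only with the corrected Levi is there any $T_L^s$-part to give rise to torsion-free factors. The remaining subtlety you flag — proving that a simple $U_q(\mathfrak{l})$-module with finite-dimensional weight spaces factors as an outer tensor product of simple modules over the commuting tensor factors, with the toral center handled by a character — is real and needs an argument (Theorem~\ref{thm:Lemire} restricted to a weight space, plus a Clifford-theory or density argument for the factorization), but it is not the fatal gap; the wrong Levi is.
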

\begin{proof}
  Theorem~2.23 in~\cite{DHP1}.
\end{proof}

So the problem of classifying simple modules in $\mathcal{F}$ is
reduced to the problem of classifying finite dimensional simple
modules and classifying simple torsion free modules of
$U_q(\mathfrak{t})$ where $\mathfrak{t}$ is a simple Lie algebra.

\section{$U_A$ calculations}
\label{sec:u_a-calculations}
In this section we recall from~\cite{DHP-twist} some formulas for
commuting root vectors with each other that will be used later
on. Recall that $A=\setZ[v,v\inv]$ where $v$ is an indeterminate and
$U_A$ is the $A$-subspace of $U_v$ generated by the divided powers
$E_{\alpha}^{(n)}$ and $F_\alpha^{(n)}$, $n\in\setN$.
\begin{defn}
  \label{sec:twisting-functors-2}
  Let $x\in (U_q)_\mu$ and $y\in (U_q)_\gamma$ then we define
  \[ [x,y]_q:=xy-q^{-(\mu|\gamma)}yx
  \]
\end{defn}

\begin{thm}
  \label{thm:DP}
  Suppose we have a reduced expression of $w_0 = s_{i_1}\cdots
  s_{i_N}$ and define root vectors
  $F_{\beta_1},\dots,F_{\beta_N}$. Let $i<j$. Let $A=\setZ[q,q\inv]$
  and let $A'$ be the localization of $A$ in $[2]$ if the Lie algebra
  contains any $B_n,C_n$ or $F_4$ part. Then
  \begin{equation*}
    [F_{\beta_j},F_{\beta_i}]_q=F_{\beta_j}F_{\beta_i}-q^{-(\beta_i|\beta_j)}F_{\beta_i}F_{\beta_j}\in \spa{A'}{F_{\beta_{j-1}}^{a_{j-1}}\cdots F_{\beta_{i+1}}^{a_{i+1}}}
  \end{equation*}
\end{thm}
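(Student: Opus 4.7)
The plan is to prove this by induction on the gap $j-i$, first reducing to a rank-$2$ calculation via the braid group. Since each braid operator $T_{s_k}$ is a $\setQ(v)$-algebra automorphism of $U_v$, it preserves the $q$-bracket $[\,,\,]_q$ on weight vectors. Applying $T_{s_{i_1}}\inv\cdots T_{s_{i_{i-1}}}\inv$ to the sought identity therefore sends $F_{\beta_k}$ (for $k\geq i$) to the root vectors attached to the reduced tail expression $s_{i_i}\cdots s_{i_N}$, so we may assume $i=1$, i.e.\ $\beta_i=\alpha_{i_1}$ is simple.

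For the base case $j-i=1$, the claim reads $[F_{\beta_2},F_{\alpha_{i_1}}]_q = 0$. Both $F_{\beta_2}=T_{s_{i_1}}(F_{\alpha_{i_2}})$ and $F_{\alpha_{i_1}}$ lie in the rank-$2$ subalgebra generated by $\alpha_{i_1}$ and $\alpha_{i_2}$, reducing the check to the rank-$2$ root systems $A_1\times A_1$, $A_2$, and $B_2/C_2$ ($G_2$ is excluded by hypothesis). Each can be verified by direct computation using the quantum Serre relations; the denominator $[2]$ that appears in the rank-$2$ identities for $B_2/C_2$ is precisely what forces the localization at $[2]$ when $\mathfrak{g}$ has a $B_n$, $C_n$ or $F_4$ component.

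For the inductive step $j-i\geq 2$, I would invoke Lusztig's theorem that the ordered monomials $F_{\beta_1}^{a_1}\cdots F_{\beta_N}^{a_N}$ form a PBW basis of $U_q^-$. Expanding $[F_{\beta_j},F_{\beta_i}]_q$ uniquely in this basis, weight considerations force every monomial with nonzero coefficient to satisfy $\sum_k a_k\beta_k = \beta_i+\beta_j$. The convexity of the ordering $\beta_1<\cdots<\beta_N$ induced by the reduced expression — namely, that if $\beta_k = m\beta_l + n\beta_r$ with $m,n>0$ and $l<r$ then necessarily $l<k<r$ — then confines the indices $k$ with $a_k>0$ to the range $i<k<j$, and in particular excludes $a_i>0$ or $a_j>0$ from the expansion.

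The main obstacle, as in all such PBW-style calculations, is tracking integrality of coefficients: one must verify that no denominators beyond powers of $[2]$ (and only in the non-simply-laced cases) enter the expansion. I would address this by carrying out the entire braid-group reduction inside Lusztig's $A$-form $U_A$, using that each $T_{s_k}$ restricts to an $A$-algebra automorphism, and invoking localization at $[2]$ only at the step where the rank-$2$ $B_2/C_2$ identities are applied. Together with the PBW basis theorem for the $A'$-form $U_{A'}^-$, this confines the coefficients of the expansion to $A'$ as claimed.
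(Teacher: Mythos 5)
The paper does not actually prove this theorem; it cites it from Levendorskii--Soibelman and from a companion paper that follows De Concini--Procesi. So the comparison is really with those proofs.

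Your reduction to $i=1$ via the braid operators and your base case $j-i=1$ are both sound, but the ``inductive step'' has a genuine gap, and in fact it is not an induction at all: no inductive hypothesis is used. You argue that once you expand $[F_{\beta_j},F_{\beta_i}]_q$ in the PBW basis, weight considerations plus the convexity of the ordering force the nonzero monomials to be supported on $\{i+1,\dots,j-1\}$. That inference fails. Weight considerations only constrain the total degree $\sum_k a_k\beta_k = \beta_i+\beta_j$, and the convexity property you quote (that $\beta_k=m\beta_l+n\beta_r$ with $m,n>0$, $l<r$ forces $l<k<r$) constrains a \emph{single} root written as a combination of two others; it does not rule out writing $\beta_i+\beta_j$ as a nonnegative integer combination of $\beta_k$'s with some $k\leq i$ or $k\geq j$. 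Concretely, in $A_3$ with $w_0=s_1s_2s_3s_1s_2s_1$ one has $\beta_1=\alpha_1,\ \beta_2=\alpha_1+\alpha_2,\ \beta_3=\alpha_1+\alpha_2+\alpha_3,\ \beta_4=\alpha_2,\ \beta_5=\alpha_2+\alpha_3,\ \beta_6=\alpha_3$, and the weight $\beta_1+\beta_5=\alpha_1+\alpha_2+\alpha_3$ is also $\beta_2+\beta_6$ and $\beta_1+\beta_4+\beta_6$, both of which involve $\beta_6$, outside the interval $(1,5)$. So the support of the PBW expansion is not determined by weight plus convexity; it is precisely the content of the theorem, and the actual proofs establish it by a careful induction through the reduced word that repeatedly uses the rank-$2$ relations and the behavior of the braid operators, not by reading it off the basis. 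Separately, your integrality argument via ``the PBW basis theorem for $U_{A'}^-$'' is not quite available as stated: the $A$-form $U_A^-$ is spanned by the \emph{divided-power} PBW monomials $F_{\beta_1}^{(a_1)}\cdots F_{\beta_N}^{(a_N)}$, not by the undivided ones, so writing the commutator in the undivided monomials and asserting $A'$-coefficients requires a further argument (this is exactly where the $[2]$-denominators get tracked).
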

\begin{proof}
  \cite[Proposition~5.5.2]{Levendorski-Soibelman}. A proof
  following~\cite[Theorem~9.3]{DP} can also be found
  in~\cite[Theorem~2.9]{DHP-twist}.
\end{proof}

\begin{defn}
  Let $u\in U_A$ and $\beta\in \Phi^+$.  Define
  $\ad(F_\beta^i)(u)
  :=[[\dots[u,F_\beta]_q\dots]_q,F_\beta]_q$ and
  $\tilde{\ad}(F_\beta^i)(u)
  :=[F_\beta,[\dots,[F_\beta,u]_q\dots]]_q$ where the
  commutator is taken $i$ times from the left and right respectively.
\end{defn}

\begin{prop}
  \label{prop:17}
  Let $a\in\setN$, $u\in (U_A)_\mu$ and
  $r=\left<\mu,\beta^\vee\right>$. In $U_A$ we have the identities
  \begin{align*}
    u F_{\beta}^{a} =& \sum_{i=0}^a v_\beta^{(i-a)(r+i)} {a\brack
      i}_\beta F_\beta^{a-i}\ad(F_\beta^{i})(u)
    \\
    =& \sum_{i=0}^{a} (-1)^i v_\beta^{a(r+i)-i} {a\brack i}_\beta
    F_\beta^{a-i} \tilde{\ad}(F_\beta^{i})(u)
  \end{align*}
\end{prop}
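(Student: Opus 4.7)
My plan is to prove both identities by induction on $a$. The case $a=0$ is trivial, and the case $a=1$ is just the definition of $\ad(F_\beta)(u)$ (respectively $\tilde{\ad}(F_\beta)(u)$) solved for $uF_\beta$: the sum in the first identity collapses to $v_\beta^{-r} F_\beta u + \ad(F_\beta)(u)$, which is exactly $[u,F_\beta]_q$ rearranged, once one uses that $u$ has weight $\mu$ and $F_\beta$ has weight $-\beta$ in Definition~\ref{sec:twisting-functors-2}.

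For the inductive step of the first identity I would multiply the formula for $a$ on the right by $F_\beta$ and commute $F_\beta$ past each $\ad(F_\beta^i)(u)$. Since $\ad(F_\beta^i)(u) \in (U_A)_{\mu - i\beta}$, the $a=1$ base case applied to this element (with $r$ replaced by $r - 2i$) gives
\[
  \ad(F_\beta^i)(u)\,F_\beta \;=\; v_\beta^{-(r-2i)} F_\beta\,\ad(F_\beta^i)(u) + \ad(F_\beta^{i+1})(u).
\]
Substituting this and reindexing the $\ad(F_\beta^{i+1})$-contribution by $j = i+1$, the coefficient of $F_\beta^{a+1-i}\ad(F_\beta^i)(u)$ becomes an explicit two-term combination of ${a \brack i}_\beta$ and ${a \brack i-1}_\beta$ with specific powers of $v_\beta$. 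The relevant one of the two standard $q$-Pascal identities
\[
  {a+1 \brack i}_\beta \;=\; v_\beta^{-i}\,{a \brack i}_\beta + v_\beta^{a+1-i}\,{a \brack i-1}_\beta \;=\; v_\beta^{i}\,{a \brack i}_\beta + v_\beta^{i-a-1}\,{a \brack i-1}_\beta
\]
then collapses these two terms into ${a+1 \brack i}_\beta$, and a short exponent check produces the claimed power $v_\beta^{(i-(a+1))(r+i)}$.

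The second identity is proved by the same induction, but commuting $F_\beta$ across $\tilde{\ad}(F_\beta^i)(u)$ \emph{from the left} rather than across $\ad(F_\beta^i)(u)$ from the right. This swaps the roles of the two $q$-Pascal terms and produces the alternating sign $(-1)^i$ together with the mirror exponent $a(r+i)-i$; the induction machinery is otherwise identical. The main obstacle, and essentially the only step that requires care, is the exponent bookkeeping: one must verify that the shifts coming from the inductive hypothesis, the weight correction $r \mapsto r-2i$, and the chosen $q$-Pascal identity combine to the exponent stated in the proposition. No deeper input beyond the definition of $[\cdot,\cdot]_q$ is needed.
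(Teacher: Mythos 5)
The plan by induction on $a$ is the natural one, and the paper also proves this by commuting $F_\beta$ past $\ad(F_\beta^i)(u)$ and invoking a $q$-Pascal identity (the argument lives in the cited source \cite{DHP-twist}). But your ``short exponent check'' does not in fact close. With your substitution ``$r$ replaced by $r-2i$'' the recursion you set up is $C_i^{(a+1)} = v_\beta^{-(r-2i)}\,C_i^{(a)} + C_{i-1}^{(a)}$, and if one plugs in the claimed $C_i^{(a)} = v_\beta^{(i-a)(r+i)}{a\brack i}_\beta$ and factors out $v_\beta^{(i-a-1)(r+i)}$, the two surviving exponents are $3i$ and $a+1-i$, so you would need $v_\beta^{3i}{a\brack i}_\beta + v_\beta^{a+1-i}{a\brack i-1}_\beta = {a+1\brack i}_\beta$, which is not a $q$-Pascal identity. (Concretely, for $a=2$, $i=1$ your recursion gives coefficient $v_\beta^{-r+1}[2]_\beta$ for $F_\beta\,\ad(F_\beta)(u)$, whereas the proposition asserts $v_\beta^{-r-1}[2]_\beta$.) What one actually needs is the recursion $C_i^{(a+1)} = v_\beta^{-(r+2i)}C_i^{(a)} + C_{i-1}^{(a)}$, i.e.\ $r\mapsto r+2i$, not $r\mapsto r-2i$, and then the first $q$-Pascal identity ${a+1\brack i}_\beta = v_\beta^{-i}{a\brack i}_\beta + v_\beta^{a+1-i}{a\brack i-1}_\beta$ closes the induction.

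The source of the discrepancy is the sign convention for the $Q$-degree. You have taken $F_\beta$ to sit in degree $-\beta$, so that $\ad(F_\beta^i)(u)\in (U_A)_{\mu-i\beta}$ and $r$ drops by $2i$. But the way Proposition~\ref{prop:17} is actually used in this paper --- look at the proof of Proposition~\ref{prop:25}, where $u=F_{\beta_k}^n$ and the $r$ fed into prop:17 is $n\langle\beta_k,\beta_j^\vee\rangle$, not $-n\langle\beta_k,\beta_j^\vee\rangle$ --- shows that $(U_A)_\mu$ is being read with $F_\beta$ assigned degree $+\beta$. Under that reading $\ad(F_\beta^i)(u)\in(U_A)_{\mu+i\beta}$, the relevant $r$ becomes $r+2i$, and the bookkeeping works. (It also makes your $a=1$ base case, $uF_\beta = v_\beta^{-r}F_\beta u + \ad(F_\beta)(u)$, match Definition~\ref{sec:twisting-functors-2}; with the degree-$(-\beta)$ convention you would instead get $uF_\beta = v_\beta^{+r}F_\beta u + \ad(F_\beta)(u)$, which contradicts the $a=1$ case of the proposition itself.) So the fix is simply to flip the sign of your weight adjustment; everything else in the outline is fine.
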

\begin{proof}
  Proposition~2.13 in~\cite{DHP-twist}.
\end{proof}


Let $s_{i_1}\dots s_{i_N}$ be a reduced expression of $w_0$ and
construct root vectors $F_{\beta_i}$, $i=1,\dots,N$. In the next lemma
$F_{\beta_i}$ refers to the root vectors constructed as such. In
particular we have an ordering of the root vectors.

\begin{lemma}
  \label{lemma:22}
  Let $n\in \setN$. Let $1\leq j<k\leq N$.
  
  $\ad(F_{\beta_j}^{i})(F_{\beta_k}^{n})=0$ and
  $\tilde{\ad}(F_{\beta_k}^{i})(F_{\beta_j}^{n})=0$ for $i\gg 0$.
\end{lemma}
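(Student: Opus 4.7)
My plan is to proceed by strong induction on the ``distance'' $k-j$ in the convex order. The two assertions for $\ad$ and $\tilde{\ad}$ are structurally symmetric, so I will describe the argument for $\ad(F_{\beta_j}^i)(F_{\beta_k}^n)$ in detail and indicate at the end what changes for $\tilde{\ad}$.

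First I would establish a twisted Leibniz rule for $\ad(F_{\beta_j})$. Expanding $[uv,F_{\beta_j}]_q$ using Definition~\ref{sec:twisting-functors-2} gives, for $u\in (U_q)_\mu$ and $v\in (U_q)_\nu$,
\[\ad(F_{\beta_j})(uv) = u\ad(F_{\beta_j})(v) + q^{-(\nu|\beta_j)}\ad(F_{\beta_j})(u)v.\]
Iterating this yields a quantum Leibniz formula
\[\ad(F_{\beta_j}^i)(uv) = \sum_{a+b=i} c_{a,b}\,\ad(F_{\beta_j}^a)(u)\,\ad(F_{\beta_j}^b)(v)\]
with coefficients $c_{a,b}$ built from $q$-binomial coefficients and powers of $q$. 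Only the qualitative consequence is needed: if $\ad(F_{\beta_j}^a)(u)=0$ for $a\geq A$ and $\ad(F_{\beta_j}^b)(v)=0$ for $b\geq B$, then $\ad(F_{\beta_j}^i)(uv)=0$ for every $i\geq A+B-1$, because each summand in the expansion is forced to contain a vanishing factor.

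The base case $k-j=1$ is immediate from Theorem~\ref{thm:DP}: the index range $(j,k)$ is empty and the $q$-commutator has weight $-\beta_k-\beta_j\neq 0$, so $[F_{\beta_k},F_{\beta_j}]_q=0$ and hence $\ad(F_{\beta_j})(F_{\beta_k})=0$; the Leibniz rule extends this to $\ad(F_{\beta_j})(F_{\beta_k}^n)=0$ for every $n$. For the inductive step, assume the claim for all $(j',k')$ with $k'-j'<k-j$. Theorem~\ref{thm:DP} expresses
\[\ad(F_{\beta_j})(F_{\beta_k})=[F_{\beta_k},F_{\beta_j}]_q\in\spa{A'}{F_{\beta_{k-1}}^{a_{k-1}}\cdots F_{\beta_{j+1}}^{a_{j+1}}}\]
as a finite $A'$-linear combination of PBW monomials in $\{F_{\beta_l}:j<l<k\}$. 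For each factor $F_{\beta_l}^{a_l}$ appearing, the distance $l-j$ is strictly less than $k-j$, so the inductive hypothesis produces an $I_{l,a_l}$ with $\ad(F_{\beta_j}^i)(F_{\beta_l}^{a_l})=0$ for $i\geq I_{l,a_l}$. Applying the Leibniz rule factor by factor, and then taking the maximum over the finitely many PBW monomials occurring, yields $I_0$ with $\ad(F_{\beta_j}^i)\bigl(\ad(F_{\beta_j})(F_{\beta_k})\bigr)=0$ for $i\geq I_0$, i.e.\ $\ad(F_{\beta_j}^{i+1})(F_{\beta_k})=0$ for such $i$. A last application of the Leibniz rule, writing $F_{\beta_k}^n$ as an $n$-fold product of $F_{\beta_k}$, then gives $\ad(F_{\beta_j}^i)(F_{\beta_k}^n)=0$ for $i\gg 0$.

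The hardest part will be the bookkeeping of the twisted $q$-coefficients in the iterated Leibniz formula and confirming that at every step the strong inductive hypothesis is invoked with strictly decreasing parameters. The assertion for $\tilde{\ad}(F_{\beta_k}^i)(F_{\beta_j}^n)$ is handled symmetrically: one derives the dual Leibniz rule $\tilde{\ad}(F_{\beta_k})(uv)=\tilde{\ad}(F_{\beta_k})(u)v+q^{(\beta_k|\wt u)}u\tilde{\ad}(F_{\beta_k})(v)$, notes that $\tilde{\ad}(F_{\beta_k})(F_{\beta_j})=[F_{\beta_k},F_{\beta_j}]_q$ lies in the same span provided by Theorem~\ref{thm:DP}, and runs strong induction on $k-j$ in exactly the same fashion.
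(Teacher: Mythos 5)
Your proof is correct, and it supplies a self-contained argument for a statement the paper itself disposes of with a bare citation to Lemma~2.16 of~\cite{DHP-twist}, so there is no in-text proof here to compare against. The strategy — derive the twisted Leibniz rule for $\ad(F_{\beta_j})$ from Definition~\ref{sec:twisting-functors-2}, feed in the Levendorski--Soibelman relation (Theorem~\ref{thm:DP}) as the one-step input, and run a strong induction on $k-j$ — is exactly the natural route given the toolkit the paper sets up, and every step checks out: the Leibniz identity $\ad(F_{\beta_j})(uv)=u\,\ad(F_{\beta_j})(v)+q^{-(\wt v|\beta_j)}\ad(F_{\beta_j})(u)\,v$ follows directly from the definition of $[\,\cdot\,,\cdot\,]_q$; the iterated version really does take the form $\sum_{a+b=i}c_{a,b}\,\ad(F_{\beta_j}^a)(u)\,\ad(F_{\beta_j}^b)(v)$ (with coefficients depending on weights, but only the shape matters); the base case $k-j=1$ is correctly handled by the weight argument (the span in Theorem~\ref{thm:DP} is then just $A'$, and $-\beta_j-\beta_k\neq 0$ forces the $q$-commutator to vanish); and in the inductive step every intermediate index $l$ with $j<l<k$ has $l-j<k-j$, so the hypothesis applies. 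The passage from $\ad(F_{\beta_j}^i)(F_{\beta_k})=0$ to $\ad(F_{\beta_j}^i)(F_{\beta_k}^n)=0$ and the symmetric treatment of $\tilde{\ad}(F_{\beta_k}^i)(F_{\beta_j}^n)$ via the dual Leibniz rule are both sound. The only thing worth flagging is cosmetic: it would be cleaner to state explicitly that the Leibniz rule applies because all elements in play are weight vectors, which is what makes $[\,\cdot\,,\cdot\,]_q$ and hence $\ad$, $\tilde{\ad}$ well-defined on them.
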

\begin{proof}
  Lemma~2.16 in~\cite{DHP-twist}.
\end{proof}

\section{Ore localization and twists of localized modules}
\label{sec:class-tors-free-1}
In this section we present some results towards classifying simple
torsion free modules following~\cite{Mathieu}.

We need the equivalent of Lemma~3.3 in~\cite{Mathieu}. The proofs are
essentially the same but for completeness we include most of the
proofs here.

\begin{defn}
  A cone $C$ is a finitely generated submonoid of the root lattice $Q$
  containing $0$.  If $L$ is a simple module define the cone of $L$,
  $C(L)$, to be the submonoid of $Q$ generated by $T_L$.
\end{defn}

\begin{lemma}
  \label{lemma:11}
  Let $L\in\mathcal{F}$ be an infinite dimensional simple module. Then
  the group generated by the submonoid $C(L)$ is $Q$.
\end{lemma}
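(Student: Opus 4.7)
The plan is to argue by contradiction: assume $H:=\langle C(L)\rangle = \setZ T_L$ is a proper subgroup of $Q$, and deduce that $L$ must be finite dimensional.

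First I would observe that every root $\alpha\in\Phi\setminus H$ lies in $F_L^s:=F_L\cap(-F_L)$. Indeed, $T_L\subseteq H$ forces $\alpha\notin T_L$, hence $\alpha\in F_L$ by the decomposition $\Phi=F_L\sqcup T_L$; since $H$ is a subgroup, $-\alpha\notin H$ as well, so $-\alpha\in F_L$ by the same argument. By Proposition~\ref{prop:8}, $F_L$ is closed under root addition, hence $F_L^s$ is a closed symmetric subset of $\Phi$, i.e.\ a closed sub-root system of $\Phi$, and it contains $\Phi\setminus H$.

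The crux is to bootstrap this to $F_L^s=\Phi$ using the connectedness of the Dynkin diagram of $\mathfrak{g}$. Since $\Pi$ generates $Q$ and $H\subsetneq Q$, there is a simple root $\alpha^*\in\Pi\setminus H\subseteq F_L^s$. For any $\alpha\in\Pi\cap H$, connectedness provides a shortest path $\alpha=\alpha_{i_0},\alpha_{i_1},\dots,\alpha_{i_k}=\alpha^*$ in the Dynkin diagram; minimality forces $\alpha_{i_1},\dots,\alpha_{i_{k-1}}\in H$. A standard root-system fact (proved by induction on path length: for adjacent simples $\left<\alpha_{i_{j}},\alpha_{i_{j+1}}^\vee\right><0$, and since Dynkin diagrams have no cycles the pairing of the partial sum with the next coroot stays negative) ensures that both $\gamma_1:=\alpha_{i_1}+\cdots+\alpha_{i_k}$ and $\gamma_2:=\alpha+\gamma_1$ lie in $\Phi$. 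Each of $\gamma_1,\gamma_2$ is congruent to $\alpha^*\pmod{H}$, hence lies in $\Phi\setminus H\subseteq F_L^s$. Since $F_L^s$ is symmetric and closed and $\gamma_2+(-\gamma_1)=\alpha\in\Phi$, closure gives $\alpha\in F_L^s$. Iterating over $\Pi$ yields $\Pi\subseteq F_L^s$, and closure then forces $F_L^s=\Phi$.

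Finally, $F_L^s=\Phi$ means $T_L=\emptyset$, so every generator $E_\alpha, F_\alpha$ for $\alpha\in\Pi$ acts locally nilpotently on $L$. Combined with $L$ being simple in $\mathcal{F}$, this makes $L$ an integrable simple $U_q$-module, hence finite dimensional by the classification of integrable simple modules (see~\cite{Jantzen}, Chapter~5), contradicting the hypothesis. The main obstacle I anticipate is the middle step of upgrading $\Phi\setminus H\subseteq F_L^s$ to $F_L^s=\Phi$: once the ``sums along a Dynkin path are roots'' identity is in hand the rest is formal, but the path construction has to be arranged with enough care to ensure both $\gamma_1$ and $\gamma_2$ land outside $H$.
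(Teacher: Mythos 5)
Your proof is correct in its strategy and genuinely different from the paper's. The paper splits into two cases according to whether $T_L\cap(-F_L)$ is empty: in the symmetric case it argues via connected components of the Dynkin diagram, while in the asymmetric case it invokes Lemma~4.16 of Fernando to realize $T_L\cap(-F_L)$ as the roots of the nilradical of a parabolic and then uses $\mathfrak{g}=\mathfrak{v}^++\mathfrak{v}^-+[\mathfrak{v}^+,\mathfrak{v}^-]$. You instead run a single contradiction argument: put $H=\langle T_L\rangle\subsetneq Q$, observe that $\Phi\setminus H\subseteq F_L^s$ because $H$ is a subgroup and $\Phi=F_L\sqcup T_L$, note $F_L^s$ is closed and symmetric by Proposition~\ref{prop:8}, and bootstrap across the (tree) Dynkin diagram to force $\Pi\subseteq F_L^s$, hence $F_L^s=\Phi$ and $T_L=\emptyset$, which kills infinite-dimensionality. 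This trades Fernando's parabolic machinery for the elementary ``sum along a connected subdiagram is a root'' fact, and avoids the case split entirely; it is arguably more self-contained.

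One wrinkle, which you yourself flag: the sentence ``minimality forces $\alpha_{i_1},\dots,\alpha_{i_{k-1}}\in H$'' does not follow for a shortest path to a \emph{fixed} $\alpha^*$ — in a tree the path to $\alpha^*$ is unique and its interior nodes are whatever they are. The correct formulation is to take, for each $\alpha\in\Pi\cap H$, the path from $\alpha$ to the node of $\Pi\setminus H$ \emph{nearest to} $\alpha$ (equivalently, the shortest path from $\alpha$ to the set $\Pi\setminus H$); minimality then does force every interior node into $H$, and with that choice the congruences $\gamma_1\equiv\gamma_2\equiv\alpha^*\pmod H$ and hence $\gamma_1,\gamma_2\in F_L^s$ go through, giving $\alpha=\gamma_2-\gamma_1\in F_L^s$ by closure and symmetry. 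With this adjustment the argument is complete. As a minor stylistic point, once $F_L^s=\Phi$ you actually have every root vector $E_\beta,F_\beta$ ($\beta\in\Phi^+$, not only $\beta$ simple) acting locally nilpotently, so finite-dimensionality follows directly from a PBW argument on a highest-weight vector, without needing to quote the integrable-module classification.
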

Compare~\cite{Mathieu} Lemma~3.1
\begin{proof}
  First consider the case where $T_L \cap (-F_L) = \emptyset$. Then in
  this case we have $\Phi = T_L^s \cup F_L^s$. We claim that $T_L^s$
  and $F_L^s$ correspond to different connected components of the
  Dynkin diagram: Suppose $\alpha \in F_L^s$ is a simple root and
  suppose $\alpha'\in \Pi$ is a simple root that is connected to
  $\alpha$ in the Dynkin diagram. So $\alpha+\alpha'$ is a root. There
  are two possibilities. Either $\alpha+\alpha' \in F_L$ or
  $\alpha+\alpha'\in T_L$. If $\alpha+\alpha'\in F_L$: Since $F_L^s$
  is symmetric we have $-\alpha\in F_L^s$ and since $F_L$ is closed
  (Proposition~\ref{prop:8}) $\alpha' = \alpha+\alpha' + (-\alpha) \in
  F_L$. If $\alpha+\alpha'\in T_L$ and $\alpha' \in T_L$ then we get
  similarly $\alpha \in T_L$ which is a contradiction. So $\alpha'\in
  F_L$. We have shown that if $\alpha\in F_L$ then any simple root
  connected to $\alpha$ is in $F_L$ also. So $F_L$ and $T_L$ contains
  different connected components of the Dynkin diagram. Since $L$ is
  simple and infinite we must have $\Phi = T_L^s$ and therefore
  $C(L)=Q$.
  
  Next assume $T_L \cap (-F_L) \neq \emptyset$. By Lemma~4.16
  in~\cite{Fernando} $P_L=T_L^s\cup F_L$ and $P_L^-=T_L\cup F_L^s$ are
  two opposite parabolic subsystems of $\Phi$. So we have that
  $T_L\cap (-F_L)$ and $(-T_L)\cap F_L$ must be the roots
  corresponding to the nilradicals $\mathfrak{v}^\pm$ of two opposite
  parabolic subalgebras $\mathfrak{p}^\pm$ of $\mathfrak{g}$. Since we
  have $\mathfrak{g}=\mathfrak{v}^+ + \mathfrak{v}^- +
  [\mathfrak{v}^+,\mathfrak{v}^-]$ we get that $T_L\cap (-F_L)$
  generates $Q$. Since $C(L)$ contains $T_L\cap (-F_L)$ it generates
  $Q$.
\end{proof}

We define $\rho$ and $\delta$ like in~\cite[Section~3]{Mathieu}:
\begin{defn}
  Let $x\geq 0$ be a real number. Define $\rho(x) =
  \operatorname{Card}B(x)$ where $B(x)=\{\mu\in Q| \sqrt{(\mu|\mu)}
  \leq x\}$

  Let $M$ be a weight module with support lying in a single $Q$-coset,
  say $q^Q \lambda:=\{q^\mu \lambda|\mu\in Q\}$. The density of $M$ is
  $\delta(M) = {\lim \inf}_{x\to \infty} \rho(x)\inv \sum_{\mu\in
    B(x)} \dim M_{q^{\mu}\lambda} $

  For a cone $C$ we define $\delta(C) = {\lim \inf}_{x\to \infty}
  \rho(x)\inv \operatorname{Card}(C \cap B(x))$
\end{defn}

\begin{lemma}
  \label{lemma:12}
  There exists a real number $\epsilon > 0$ such that
  $\delta(L)>\epsilon$ for all infinite dimensional simple modules
  $L$.
\end{lemma}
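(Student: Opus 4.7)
The plan is to bound $\delta(L)$ from below by $\delta(C(L))$, then to bound $\delta(C(L))$ from below uniformly, using Lemma~\ref{lemma:11} and the finiteness of $\Phi$.

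First I would compare $\delta(L)$ with the density of the cone $C(L)$. Fix a weight $\lambda_0\in\wt L$. By Proposition~\ref{prop:3}, $q^{\setN\beta}\lambda_0\subset \wt L$ for every $\beta\in T_L$; iterating and combining with closure of $T_L$ under addition (Proposition~\ref{prop:8}), one sees that for every $\mu\in C(L)$ one has $q^\mu\lambda_0\in\wt L$. Writing $\lambda_0 = q^{\nu_0}\lambda$ where $\lambda$ is the reference weight used in the definition of $\delta(L)$, this gives $\dim L_{q^{\nu_0+\mu}\lambda}\geq 1$ for every $\mu\in C(L)$. Since $|B(x)\mathbin{\triangle}(B(x)+\nu_0)|/\rho(x)\to 0$ as $x\to\infty$, the shift by $\nu_0$ does not affect the liminf, so we obtain $\delta(L)\geq \delta(C(L))$.

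Next I would show that any finitely generated submonoid $C\subset Q$ whose group is all of $Q$ satisfies $\delta(C)>0$. Since $C$ generates $Q$ as a group, one can extract from a finite set of generators of $C$ a subfamily $c_1,\dots,c_n$ which is a $\setQ$-basis of $Q\tensor\setQ$. Then
\begin{equation*}
  C\supset \sum_{i=1}^n \setN c_i,
\end{equation*}
and the right-hand side is the set of lattice points in the closed simplicial cone $\sum_{i=1}^n \setR_{\geq 0} c_i$ inside the Euclidean space $Q\tensor\setR$. This cone is $n$-dimensional, hence has strictly positive Lebesgue measure per unit ball, so the number of lattice points it contains in $B(x)$ grows like $cx^n$ for some $c>0$; on the other hand $\rho(x)\sim dx^n$. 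Thus $\delta(C)\geq c/d>0$.

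Finally, by Lemma~\ref{lemma:11}, for every infinite dimensional simple module $L\in\mathcal{F}$ the group generated by $C(L)$ equals $Q$, so the above gives $\delta(L)\geq\delta(C(L))>0$. Since $C(L)$ is generated by $T_L\subset\Phi$ and $\Phi$ is finite, there are only finitely many cones $C(L)$ that can arise. Taking $\epsilon$ to be the minimum of $\delta(C(L))$ over this finite set of cones (all of which have positive density) yields the desired uniform lower bound.

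The main obstacle here is the lattice-point counting in the second paragraph: one must make sure that the liminf, not merely the limsup, of $|C\cap B(x)|/\rho(x)$ is positive, which is why it is crucial to find a full-rank simplicial sub-cone inside $C$ rather than just work with $C$ abstractly.
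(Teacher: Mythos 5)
Your proof is correct and follows essentially the same route as the paper: bound $\delta(L)$ below by $\delta(C(L))$, observe that only finitely many cones can arise since $T_L\subset\Phi$, invoke Lemma~\ref{lemma:11}, and take the minimum over this finite set. The one substantive difference is that the paper simply asserts that any cone generating $Q$ has positive density, whereas you actually prove it by extracting a full-rank simplicial subcone $\sum_i\setN c_i\subset C$ and counting lattice points there; that is a genuine and useful gap-fill rather than a different approach. (Your closing remark about liminf vs.\ limsup is a bit of a non-issue once you have a full-rank simplicial subcone, since the lattice-point count in such a cone has an actual limit $cx^n$, not merely a limsup; but the caution does no harm.)
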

\begin{proof}
  Note that since $q^{C(L)} \lambda \subset \wt L$ for all $\lambda\in
  \wt L$ we have $\delta(L) \geq \delta(C(L))$.

  Since $C(L)$ is the cone generated by $T_L$ and $T_L\subset \Phi$ (a
  finite set) there can only be finitely many different cones.

  Since there are only finitely many different cones attached to
  infinite simple dimensional modules and since any cone $C$ that
  generates $Q$ has $\delta(C)>0$ we conclude via Lemma~\ref{lemma:11}
  that there exists an $\epsilon>0$ such that $\delta(L)>\epsilon$ for
  all infinite dimensional simple modules.
\end{proof}

\begin{defn}
  A module $M\in \mathcal{F}$ is called admissible if its weights are
  contained in a single coset of $X / q^Q$ and if the dimensions of
  the weight spaces are uniformly bounded. $M$ is called admissible of
  degree $d$ if $d$ is the maximal dimension of the weight spaces in
  $M$.
\end{defn}

Of course all finite dimensional simple modules are admissible but the
interesting admissible modules are the infinite dimensional simple
ones. In particular simple torsion free modules are admissible. We
show later that each infinite dimensional admissible simple module $L$
gives rise to a 'coherent family' $\mathcal{EXT}(L)$ containing at
least one torsion free module and at least one simple highest weight
module that is admissible of the same degree.

\begin{lemma}
  \label{lemma:10}
  Let $M\in \mathcal{F}$ be an admissible module. Then $M$ has finite
  Jordan-Hölder length.
\end{lemma}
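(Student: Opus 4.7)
The plan is to combine the density estimate of Lemma~\ref{lemma:12} with the uniform weight multiplicity bound to produce a uniform upper bound on the length of every finite chain of submodules of $M$. Let $d$ be the admissibility degree of $M$, so $\dim M_\mu \leq d$ for every weight $\mu$, and let $C$ be the unique $q^Q$-coset containing $\wt M$, which I identify with $Q$ after choosing a base point.

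Given a finite chain $0 = M_0 \subsetneq M_1 \subsetneq \cdots \subsetneq M_n \subseteq M$ with simple subquotients $L_i = M_i/M_{i-1}$, additivity of dimensions in short exact sequences yields $\sum_i \dim (L_i)_\mu \leq \dim M_\mu \leq d$ for every weight $\mu$. Summing over $\mu \in B(x)$, dividing by $\rho(x)$, and taking $\liminf_{x \to \infty}$ gives
\[
\sum_{i=1}^n \delta(L_i) \leq d.
\]
Each $L_i$ is again an admissible simple module, so by Lemma~\ref{lemma:12} those $L_i$ which are infinite dimensional each contribute at least the fixed $\epsilon > 0$ to this sum; hence at most $d/\epsilon$ of the $L_i$ are infinite dimensional.

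To control the remaining (finite dimensional) simple subquotients, for which the density vanishes, I would exhibit a single weight $\mu_0 \in C$ that lies in $\wt V$ for every finite dimensional simple $U_q$-module $V$ with weights in $C$; then each such subquotient contributes at least $1$ to $\dim M_{\mu_0} \leq d$, bounding their number by $d$. The natural choice is a minimal dominant representative of the coset: for the trivial coset one takes $\mu_0 = 0$, using that $\nu \in \Lambda^+ \cap Q$ forces $\nu \in Q^+$ (positivity of the inverse Cartan matrix) and that $0$ lies in the convex hull of $W\nu$; for a nontrivial coset one takes $\mu_0$ to be a minuscule fundamental weight in that coset, which exists in every coset of $\Lambda/Q$ since minuscule weights generate $\Lambda/Q$ in every type except $G_2$ (excluded here by hypothesis).

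The main obstacle is verifying the common weight assertion in the finite dimensional case: one must show that $\nu - \mu_0 \in Q^+$ for every dominant weight $\nu$ in the same coset as $\mu_0$, and that $\mu_0$ lies in the convex hull of $W\nu$. Both should follow uniformly from the positivity of the inverse Cartan matrix together with the defining properties of a minuscule weight, and then transfer to the quantum setting via Lusztig's parametrization of finite dimensional simple $U_q$-modules by a sign character together with a classical dominant weight. Combining the two bounds, every chain of submodules of $M$ has length at most $d/\epsilon + d$, and so $M$ has finite Jordan--Hölder length.
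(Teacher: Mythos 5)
Your proof follows the same two-pronged strategy as the paper's (which simply points to Mathieu's Lemma~3.3): bound the number of infinite-dimensional composition factors via the density estimate of Lemma~\ref{lemma:12}, and bound the number of finite-dimensional factors by exhibiting a weight they must all share, then use admissibility to cap the total. The superadditivity of $\liminf$ gives $\sum_i \delta(L_i) \le \delta(M) \le d$ exactly as you say, so the infinite-dimensional count is correct. The one place your argument is thinner than it should be is the finite-dimensional side. The fact you need --- for each coset $c \in \Lambda/Q$ there is a unique dominant representative $\xi_c$ (equal to $0$ or a minuscule fundamental weight) satisfying $\xi_c \le \mu$ for \emph{every} dominant $\mu \in c$, hence $\xi_c$ is a weight of $L_\setC(\mu)$ --- is indeed standard (Bourbaki, Lie VIII, \S 7), but the mechanism you cite does not prove it for nontrivial cosets: positivity of $C^{-1}$ immediately gives $\Lambda^+ \cap Q \subset Q^+$ (the case $\xi_c = 0$), but when $\xi_c$ is minuscule the estimate $\langle \mu - \xi_c, \alpha_i^\vee\rangle \ge -1$ only yields $a_i \ge -\text{(positive)}$, not $a_i \ge 0$, so the conclusion does not "follow uniformly from the positivity of the inverse Cartan matrix" as stated. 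Since the fact itself is classical you should just cite it rather than sketch a proof that does not close. Mathieu's original formulation sidesteps having to pin down $\xi_c$ by instead taking $A = \sum_{\lambda \in Y} \dim M_\lambda$ over the finite set $Y$ of all sign-twisted $q^\mu$ with $|\langle\mu,\alpha^\vee\rangle| \le 1$, which contains every candidate minimal weight at once; your version is tighter ($d$ rather than $A$) at the cost of needing the coset-representative lemma. One further small point worth making explicit: you work with chains having simple subquotients, so you should note that the resulting uniform bound also implies $M$ is Noetherian and Artinian (since any strictly increasing or decreasing chain with nonzero links yields, after passing to simple subquotients of each link via Theorem~\ref{thm:Lemire}, a chain of the type you bound), which is what guarantees a composition series actually exists.
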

\begin{proof}
  The length of $M$ is bounded by $A+\delta(M)/ \epsilon$ where $A=
  \sum_{\lambda\in Y} \dim M_\lambda$ and $Y=\{\nu \in X | \, \nu =
  \sigma q^\mu, |\left< \mu,\alpha^\vee\right>|\leq 1,
  \sigma(K_{\alpha})\in\{\pm 1\}\, \text{ for all } \alpha\in \Pi\}$.
  Check~\cite[Lemma~3.3]{Mathieu} for details. Here we use the fact
  that finite dimensional simple quantum group modules have the same
  character as their corresponding Lie algebra simple modules. This is
  proved for transcendental $q$ in~\cite[Theorem~5.15]{Jantzen} and
  for general non-roots-of-unity in~\cite[Corollary~7.7]{APW}.
\end{proof}

\begin{lemma}
  \label{lemma:27}
  Let $\beta$ be a positive root and let $F_\beta$ be a corresponding
  root vector. The set $\{F_\beta^{n}|n\in \setN \}$ is an Ore subset
  of $U_q$.
\end{lemma}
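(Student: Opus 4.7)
The plan is to verify the right Ore condition: for every $u\in U_q$ and every $n\in\setN$, produce $a\in\setN$ and $u'\in U_q$ with $uF_\beta^a = F_\beta^n u'$. The left Ore condition would follow from the symmetric version of Proposition~\ref{prop:17} applied to $F_\beta^a u$, derived by iterating the identity $F_\beta u = \tilde{\ad}(F_\beta)(u) + q^{-(\beta|\wt u)}\,uF_\beta$.

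The key tool is Proposition~\ref{prop:17}:
\[
uF_\beta^a = \sum_{i=0}^a (-1)^i v_\beta^{a(r+i)-i}{a\brack i}_\beta F_\beta^{a-i}\,\tilde{\ad}(F_\beta^i)(u).
\]
If I can find $M=M(u)$ such that $\tilde{\ad}(F_\beta^i)(u)=0$ for $i>M$, then for $a\geq n+M$ each surviving summand has $a-i\geq n$, so I may factor $F_\beta^n$ out on the left of every term, yielding the desired identity at once.

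I would fix a reduced expression of $w_0$ with $F_\beta=F_{\beta_N}$ appearing last in the PBW ordering. Lemma~\ref{lemma:22} then gives $\tilde{\ad}(F_\beta^i)(F_{\beta_j}^k)=0$ for $i\gg 0$ when $j<N$, and the $q$-Leibniz identity $\tilde{\ad}(F_\beta)(xy)=\tilde{\ad}(F_\beta)(x)y+q^{-(\beta|\wt x)}x\,\tilde{\ad}(F_\beta)(y)$ (immediate from the definition) propagates this to all of the subalgebra $U_q^{--}\subseteq U_q^-$ generated by the $F_{\beta_j}$ with $j<N$. For the remaining generators of the triangular decomposition a direct computation suffices: one finds $\tilde{\ad}(F_\beta^i)(K_\alpha)\in\setC\, K_\alpha F_\beta^i$ and $\tilde{\ad}(F_\beta^i)(F_\beta^k)\in\setC\, F_\beta^{k+i}$, so the corresponding terms from Proposition~\ref{prop:17} already lie in $F_\beta^a U_q \subseteq F_\beta^n U_q$ as soon as $a\geq n$, even though $\tilde{\ad}(F_\beta)$ fails to be nilpotent on these elements.

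The main obstacle is the positive part $U_q^+$: here the commutator $[E_\gamma,F_\beta]$ is nonzero---for $\gamma=\beta$ simple it produces the Cartan correction $(K_\beta-K_\beta^{-1})/(q_\beta-q_\beta^{-1})$, and for non-simple $\gamma$ one gets lower-order $F$-term corrections governed by Theorem~\ref{thm:DP}-style relations. I would handle this by induction on the PBW-degree of the $U_q^+$-component of $u$: push $F_\beta^a$ past one $E$-factor at a time, each step enlarging the exponent by a controlled amount and inserting $K$- or lower-degree $F$-terms, which are handled by the previously established cases. Iterating through the PBW monomial closes the induction and yields the Ore relation for arbitrary $u\in U_q$.
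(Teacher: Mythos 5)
Your approach is genuinely different from the paper's, and it has a real gap in the $U_q^+$ part.

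The paper's proof is much shorter and structurally cleaner: it cites the simple-root case (from a reference) and then, for a non-simple $\beta = w(\alpha)$, transports the Ore property through the braid automorphism $T_w\colon U_q \to U_q$, using that $T_w$ is an algebra automorphism carrying $\{F_\alpha^n\}$ onto $\{F_\beta^n\}$. An algebra automorphism preserves the Ore condition verbatim, so there is nothing left to check. You instead attempt a direct verification via Proposition~\ref{prop:17} and Lemma~\ref{lemma:22}, for arbitrary $\beta$ all at once. That is a much harder task, and you are in effect re-deriving the simple-root case from scratch as well.

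Your handling of the $U_q^-$ and $U_q^0$ parts is sound: Lemma~\ref{lemma:22} supplies nilpotency of $\tilde{\ad}(F_\beta)$ on the $F_{\beta_j}$ with $j<N$, the $q$-Leibniz rule spreads this to products of such $F$'s, and for $K_\alpha$ and powers of $F_\beta$ the terms fail to vanish but carry enough extra $F_\beta$-powers to compensate, exactly as you note. The problem is the $E$-side. You assert that pushing $F_\beta^a$ past an $E$-factor inserts only ``$K$- or lower-degree $F$-terms,'' but this is not correct in general. Already classically, $[e_\gamma, f_\beta]$ is $\pm e_{\gamma - \beta}$ whenever $\gamma - \beta \in \Phi^+$, i.e.\ a positive root vector, not a Cartan or negative one; the quantum analogue likewise produces $E$-terms (tensored with $K$'s) when $\gamma - \beta$ is a positive root. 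So a single commutation step can create new $E$-factors of lower height, and the induction must be organized to control exactly those. Your appeal to ``Theorem~\ref{thm:DP}-style relations'' is also misplaced: that theorem governs $F$-$F$ commutators with respect to a convex PBW ordering of $U_q^-$, and it says nothing about $E_\gamma F_\beta$. The statement is still provable by a direct induction (on height or on some convex ordering of both triangular halves), but as written your plan omits the case that actually causes trouble. The paper's route through $T_w$ avoids this entirely, which is precisely why the authors chose it.
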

\begin{proof}
  A proof can be found in~\cite{HHA-kvante} for $\beta$ a simple
  root. If $\beta$ is not simple then $F_\beta$ is defined as
  $T_w(F_\alpha)$ for some $w\in W$ and some $\alpha\in \Pi$. Since
  $S:= \{F_\alpha^{n}|n\in \setN\}$ is an Ore subset of $U_q$ we get
  for any $n\in \setN$ and $u\in U_q$ that
  \begin{equation*}
    F_\alpha^{n} U_q \cap u S \neq \emptyset.
  \end{equation*}
  Let $u'\in U_q$ and set $u= T_w\inv(u')$, then from the above
  \begin{equation*}
    \emptyset \neq T_w(F_\alpha^{n}) T_w(U_q) \cap T_w(u) T_w(S) = F_\beta^n U_q \cap u' T_w(S).
  \end{equation*}
  Since $T_w(S) = \{ F_\beta^n | n\in \setN \}$ we have proved the
  lemma.
\end{proof}

We denote the Ore localization of $U_q$ in the above set by
$U_{q(F_\beta)}$.

\begin{lemma}
  \label{lemma:37}
  Let $p$ be Laurent polynomial. If
  \begin{equation*}
    p(q^{r_1},\dots,q^{r_n})=0
  \end{equation*}
  for all $r_1,\dots,r_n \in \setN$ then $p=0$.
\end{lemma}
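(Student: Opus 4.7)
The plan is to proceed by induction on the number of variables $n$, using the fact that $q$ is not a root of unity so that the set $\{q^r \mid r\in \setN\}$ is an infinite subset of $\setC^*$.

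For the base case $n=1$, suppose $p(x)=\sum_{k=-N}^{N}a_k x^k$ is a Laurent polynomial in one variable vanishing on every $q^r$, $r\in\setN$. Multiplying by $x^N$ turns this into an ordinary polynomial $\tilde{p}(x)=x^N p(x)$ of degree at most $2N$. Since $q$ is not a root of unity, the values $q^r$, $r\in \setN$, are pairwise distinct, so $\tilde p$ has infinitely many zeros. Hence $\tilde p\equiv 0$, so $p\equiv 0$.

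For the inductive step, assume the statement holds for Laurent polynomials in $n-1$ variables. Write
\begin{equation*}
  p(x_1,\dots,x_n) = \sum_{k=-N}^{N} p_k(x_1,\dots,x_{n-1}) x_n^k,
\end{equation*}
where each $p_k$ is a Laurent polynomial in $n-1$ variables. Fix arbitrary $r_1,\dots,r_{n-1}\in \setN$ and consider the one-variable Laurent polynomial $f(x_n):=\sum_k p_k(q^{r_1},\dots,q^{r_{n-1}}) x_n^k$. By hypothesis $f(q^{r_n})=0$ for every $r_n\in\setN$, so by the base case $f\equiv 0$, which means $p_k(q^{r_1},\dots,q^{r_{n-1}})=0$ for every $k$ and every choice of $r_1,\dots,r_{n-1}\in\setN$. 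The inductive hypothesis then gives $p_k=0$ for each $k$, so $p=0$.

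There is no real obstacle here; the only thing to verify carefully is that non-root-of-unity $q$ already suffices (transcendence is not needed for this particular lemma), which is immediate since distinct non-negative integer powers of a non-root-of-unity element of $\setC^*$ are pairwise distinct.
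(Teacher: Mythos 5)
Your proof is correct and takes essentially the same approach as the paper: induction on the number of variables, with the one-variable case settled by noting that $\{q^r : r\in\setN\}$ is infinite. The only cosmetic difference is that you extract the coefficient Laurent polynomials $p_k$ in the inductive step, whereas the paper first specializes $x_1 = q^{r_1}$, applies induction in the remaining $n-1$ variables, and then applies the one-variable case in $x_1$; both packagings are equivalent.
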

\begin{proof}
  If $n=1$ we have a Laurent polynomial of one variable with
  infinitely many zero-points so $p=0$. Let $n>1$, then for constant
  $r_1 \in \setN$, $p(q^{r_1},-,\dots,-)$ is a Laurent polynomial in
  $n-1$ variables equal to zero in $(q^{r_2},\dots,q^{r_n})$ for all
  $r_2,\dots,r_n\in\setN$ so by induction $p(q^{r_1},c_2,\dots,c_n)=0$
  for all $c_2,\dots,c_n$. Now for arbitrary $c_2,\dots,c_n\in
  \setC^*$ we get $p(-,c_2,\dots,c_n)$ is a Laurent polynomial in one
  variable that is zero for all $q^{r_1}$, $r_1\in \setN$ hence
  $p(c_1,\dots,c_n)=0$ for all $c_1\in \setC^*$.
\end{proof}

The next lemma is crucial for the rest of the results in this
paper. We will use this result again and again.

\begin{lemma}
  \label{lemma:9}
  Let $\beta\in \Phi^+$ and let $F_\beta$ be a corresponding root
  vector. There exists automorphisms $\phi_{F_{\beta}
    ,b}:U_{q(F_\beta)}\to U_{q(F_\beta)}$ for each $b\in \setC^*$ such
  that $\phi_{F_{\beta} ,q^i}(u)=F_\beta^{-i} u F_\beta^{i}$ for
  $i\in\setZ$ and such that for $u\in U_{q(F_\beta)}$ the map $\setC^*
  \to U_{q(F_\beta)}$, $b\mapsto \phi_{F_{\beta} ,b}(u)$ is of the
  form $b \mapsto p(b)$ for some Laurent polynomial $p \in
  U_{q(F_\beta)} [X,X\inv]$.  Furthermore for $b,b'\in \setC^*$,
  $\phi_{F_\beta,b}\circ \phi_{F_\beta,b'} = \phi_{F_\beta,bb'}$.
\end{lemma}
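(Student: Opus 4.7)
The plan is to construct $\phi_{F_\beta,b}$ by Laurent-polynomial interpolation of the integer-power conjugation $u\mapsto F_\beta^{-i}uF_\beta^i$, $i\in\setZ$. The key bootstrap is Lemma~\ref{lemma:37}: any element of $U_{q(F_\beta)}$ that depends polynomially on $b\in\setC^*$ and vanishes for every $b=q^i$, $i\in\setN$, vanishes identically.

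First I would reduce to the case $\beta\in\Pi$. Writing $\beta=w(\alpha)$ with $\alpha\in\Pi$ and $w\in W$ chosen so that $F_\beta=T_w(F_\alpha)$, the braid automorphism $T_w$ sends the Ore set $\{F_\alpha^n : n\in\setN\}$ to $\{F_\beta^n : n\in\setN\}$ and therefore extends to an algebra isomorphism $U_{q(F_\alpha)}\iso U_{q(F_\beta)}$. Once $\phi_{F_\alpha,b}$ is in hand, $\phi_{F_\beta,b}:=T_w\circ\phi_{F_\alpha,b}\circ T_w\inv$ inherits every required property.

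For $\beta\in\Pi$ I would define $\phi_{F_\beta,b}$ on algebra generators of $U_{q(F_\beta)}$ and extend multiplicatively. For each generator $u$ the task is to exhibit a Laurent polynomial $p_u(X)\in U_{q(F_\beta)}[X,X\inv]$ with $p_u(q^i)=F_\beta^{-i}uF_\beta^i$ for every $i\in\setZ$. The easy cases are: $p_{K_\alpha}(X)=X^{-(\alpha|\beta)}K_\alpha$; $p_{F_\beta}=F_\beta$ and $p_{F_\beta\inv}=F_\beta\inv$; and $p_{E_\alpha}=E_\alpha$ for $\alpha\in\Pi\setminus\{\beta\}$, since then $E_\alpha$ and $F_\beta$ commute. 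For $F_\alpha$ with $\alpha\in\Pi\setminus\{\beta\}$, the quantum Serre relations imply $\ad(F_\beta^n)(F_\alpha)=0$ for $n\geq 1-\left<\alpha,\beta^\vee\right>$, so Proposition~\ref{prop:17} expresses $F_\alpha F_\beta^a$ as a finite sum whose coefficients are visibly Laurent polynomials in $q_\beta^a$; multiplying by $F_\beta^{-a}$ on the left yields $p_{F_\alpha}$. The main obstacle is the generator $E_\beta$, where $\ad(F_\beta^n)(E_\beta)$ does \emph{not} vanish for large $n$. Here I would work inside the $U_{q_\beta}(\mathfrak{sl}_2)$-subalgebra generated by $E_\beta,F_\beta,K_\beta^{\pm 1}$ and prove by induction on $i\geq 0$ the closed formula
\begin{equation*}
F_\beta^{-i}E_\beta F_\beta^i=E_\beta+\frac{1}{q_\beta-q_\beta\inv}\left(\sum_{j=1}^i q_\beta^{-2j}K_\beta-\sum_{j=1}^i q_\beta^{2j}K_\beta\inv\right)F_\beta\inv,
\end{equation*}
and analogously for $i<0$. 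The two geometric sums evaluate to Laurent polynomials in $q_\beta^i=b^{(\beta|\beta)/2}$, which, since $(\beta|\beta)/2\in\{1,2\}$ (as $\mathfrak{g}$ is not of type $G_2$), are Laurent polynomials in $b$, delivering $p_{E_\beta}$.

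With the generators handled, it remains to verify that the assignment extends to an algebra automorphism satisfying the composition law. For any defining relation $R(u_1,\ldots,u_k)=0$ of $U_{q(F_\beta)}$ expressed in the generators, $R(\phi_{F_\beta,b}(u_1),\ldots,\phi_{F_\beta,b}(u_k))$ is by construction a Laurent polynomial in $b$; at $b=q^i$ it equals $F_\beta^{-i}R(u_1,\ldots,u_k)F_\beta^i=0$, so Lemma~\ref{lemma:37} forces it to vanish identically. Thus $\phi_{F_\beta,b}$ is a well-defined algebra endomorphism of $U_{q(F_\beta)}$ for every $b\in\setC^*$. The same two-variable bootstrap applied to $\phi_{F_\beta,b}\circ\phi_{F_\beta,b'}(u)-\phi_{F_\beta,bb'}(u)$ (zero whenever $b,b'\in q^\setN$) delivers the composition law, and specialising $b'=b\inv$ exhibits $\phi_{F_\beta,b\inv}$ as a two-sided inverse, completing the automorphism property.
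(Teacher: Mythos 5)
Your proof is correct and follows essentially the same route as the paper's: reduce to $\beta$ simple via conjugation by $T_w$; write explicit Laurent-polynomial formulas for the images of the generators (with the $\mathfrak{sl}_2$-subalgebra computation for $E_\beta$ and Proposition~\ref{prop:17} for $F_\alpha$, $\alpha\neq\beta$); and then invoke Lemma~\ref{lemma:37} to propagate the defining relations and the composition law from $b\in q^{\setN}$ to all $b\in\setC^*$. The only differences are cosmetic — you present the $E_\beta$ formula via geometric sums where the paper gives the closed form directly, and you apply the density lemma to relations $R$ collectively while the paper spells out the generator computations one by one.
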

\begin{proof}
  We can assume $\beta$ is simple since if $F_\beta=T_w(F_{\alpha'})$
  for some $\alpha'\in\Pi$ then we can just define the homomorphism on
  $T_w(E_{\alpha}),T_w(K_\alpha^{\pm 1}),T_w(F_\alpha)$ for
  $\alpha\in\Pi$ i.e. in this case we define $\phi_{F_{\beta} ,b}(u) =
  T_w( \phi_{\alpha',b}(T_w\inv(u)))$ where we extend $T_w$ to a
  homomorphism $T_w:U_{q(F_{\alpha'})}\to U_{q(F_\beta)}$ by
  $T_w(F_{\alpha'}\inv) = F_\beta\inv$.
  
  So $\beta$ is assumed simple. For $b\in \setC^*$ define
  $b_\beta=b^{(\beta|\beta)/2}$ i.e. $b_\beta=b$ if $\beta$ is short
  and $b_\beta=b^2$ when $\beta$ is long. We will define the map on
  the generators $E_\alpha,K_\alpha,F_\alpha$ for $\alpha\in \Pi$. If
  $\alpha=\beta$ the map is defined as follows:
  \begin{align*}
    \phi_{F_{\beta} ,b}(F_\beta^{\pm 1}) =& F_\beta^{\pm 1}
    \\
    \phi_{F_{\beta} ,b}(K_\beta^{\pm 1}) =& b_\beta^{\mp 2}
    K_\beta^{\pm 1}
    \\
    \phi_{F_{\beta} ,b}(E_\beta) =& E_\beta + F_\beta\inv
    \frac{(b_\beta-b_\beta\inv)(q_\beta b_\beta\inv K_\beta -
      q_\beta\inv b_\beta K_\beta\inv)}{(q_\beta-q_\beta\inv)^{2}}.
  \end{align*}

  Assume $\alpha\neq \beta$. Let $r=\left<\alpha,\beta^\vee
  \right>$. Note that $\operatorname{ad}(F_\beta^{-r+1})(F_\alpha)=0$
  because this is one of the defining relations of $U_q$. We define
  the map as follows:
  \begin{align*}
    \phi_{F_{\beta} ,b}(F_\alpha) =& \sum_{i=0}^{-r}
    b_\beta^{-r-i}q_\beta^{i(i+r)} \prod_{t=1}^i \frac{b_\beta
      q_{\beta}^{1-t}-b_\beta\inv q_{\beta}^{t-1}}{q_\beta^t -
      q_\beta^{-t}} F_\beta^{-i}
    \operatorname{ad}(F_\beta^i)(F_\alpha)
    \\
    \phi_{F_{\beta} ,b}(K_\alpha) =& b_\beta^{-r}K_\alpha
    =b^{-(\alpha|\beta)}K_\alpha
    \\
    \phi_{F_{\beta} ,b}(E_\alpha) =& E_\alpha.
  \end{align*}
  Note that if $b=q^{j}$ for some $j\in\setZ$ then $\prod_{t=1}^i
  \frac{b_\beta q_{\beta}^{1-t}-b_\beta\inv q_{\beta}^{t-1}}{q_\beta^t
    - q_\beta^{-t}}={j \brack i}_\beta$.  Since the map $b\mapsto
  \phi_{F_{\beta} ,b}(u)$ is of the form $b \mapsto \sum_{i=1}^r
  p_i(b) u_i$ with $p_i$ Laurent polynomial in $b$ for each generator
  of $U_q$ it is of this form for all $u\in U_q$. It's easy to check
  that $\phi_{F_{\beta} ,b}(u) = F_\beta^{-i} u F_\beta^i$ when $b=
  q^i$, $i \in \setN$. So $\phi_{F_{\beta} ,b}$ satisfies the
  generating relations of $U_q$ for $b= q^i$, $i \in \setN$. By
  Lemma~\ref{lemma:37} $\phi_{F_{\beta} ,b}$ must satisfy the
  generating relations for all $b\in \setC$.

  Consider the last claim of the lemma: Let $u\in U_q$, then by the
  above $b\mapsto \phi_{F_\beta,b}(u)$ is a Laurent polynomial and so
  $b \mapsto \phi_{F_\beta,bb'}(u)$ and
  $\phi_{F_\beta,b}(\phi_{F_\beta,b'}(u))$ for a constant $b'\in
  \setC^*$ is a Laurent polynomial as well. Now we know from above
  that for $b'=q^j$ for some $j\in \setZ$ and $i\in \setZ$:
  \begin{align*}
    \phi_{F_\beta,q^i} \circ \phi_{F_\beta,b'}(u) =& F_\beta^{-i}
    F_\beta^{-j} u F_\beta^j F_\beta^i
    \\
    =& F_\beta^{-i-j}u F_\beta^{i+j}
    \\
    =& \phi_{F_\beta,q^i q^j}(u)
  \end{align*}
  So $\phi_{F_\beta,b}(\phi_{F_\beta,q^j}(u))=\phi_{F_\beta,b q^j}(u)$
  for all $b\in \setC^*$ since both sides are Laurent polynomials in
  $b$ and they are equal in infinitely many points. In the same way we
  get the result for all $b'\in \setC$.
\end{proof}

Note that if $\beta$ is long then the above automorphism is a Laurent
polynomial in $b^2$. So if $b_1^2=b_2^2$ for $b_1,b_2\in \setC^*$ then
$\phi_{F_\beta,b_1} = \phi_{F_\beta,b_2}$. We could have defined
another automorphism $\phi'_{F_\beta,b} :=
\phi_{F_\beta,b^{2/(\beta|\beta)}}$ and proved the lemma above with
the modification that $\phi'_{F_{\beta},q_\beta^i}(u) = F_{\beta}^{-i}
u F_{\beta}^i$. The author has chosen the first option to avoid having
to write the $\beta$ in $q_\beta$ all the time in results like
Lemma~\ref{lemma:29} and Corollary~\ref{cor:6}. On the other hand this
choice means that we have to take some squareroots sometimes when
doing concrete calculations involving long roots see~e.g the proof
of~Lemma~\ref{lemma:30}. The choice of squareroot doesn't matter by
the above.

We can use the formulas in Section~\ref{sec:u_a-calculations} to find
the value of $\phi_{F_{\beta},b}(F_{\beta'})$ and
$\phi_{F_\beta,b}(E_{\beta'})$ for general root vectors $F_\beta$,
$F_{\beta'}$ and $E_{\beta'}$, $\beta,\beta'\in \Phi^+$.
\begin{prop}
  \label{prop:25}
  Let $s_{i_1}\dots s_{i_N}$ be a reduced expression of $w_0$ and
  define root vectors $F_{\beta_1},\dots,F_{\beta_N}$ and
  $E_{\beta_1},\dots,E_{\beta_N}$ using this expression
  (i.e. $F_{\beta_j}=T_{s_{i_1}}\dots
  T_{s_{i_{j-1}}}(F_{\alpha_{i_j}})$ and $E_{\beta_j}=T_{s_{i_1}}\dots
  T_{s_{i_{j-1}}}(E_{\alpha_{i_j}})$). Let $1\leq j<k\leq N$ and set
  $r=\left<\beta_k,\beta_j^\vee\right>$.
  \begin{align*}
    \phi_{F_{\beta_j},b}(F_{\beta_k}^n) =& \sum_{i\geq 0}
    q_{\beta_j}^{i(nr+i)} b_{\beta_j}^{-nr-i} \prod_{t=1}^i
    \frac{q_{\beta_j}^{1-t}b_{\beta_j} -
      q_{\beta_j}^{t-1}b_{\beta_j}\inv}{q_{\beta_j}^t-q_{\beta_j}^{-t}}
    F_{\beta_j}^{-i} \ad(F_{\beta_j}^i)(F_{\beta_k}^n)
    \\
    \phi_{F_{\beta_k},b}(F_{\beta_j}^n) =& \sum_{i\geq 0} (-1)^i
    q_{\beta_k}^{-i} b_{\beta_k}^{nr+i} \prod_{t=1}^i
    \frac{q_{\beta_k}^{1-t}b_{\beta_k} -
      q_{\beta_k}^{t-1}b_{\beta_k}\inv}{q_{\beta_k}^t-q_{\beta_k}^{-t}}
    F_{\beta_k}^{-i} \tilde{\ad}(F_{\beta_k}^i)(F_{\beta_j}^n)
    \\
    \phi_{F_{\beta_j},b}(E_{\beta_k}) =& \sum_{i\geq 0}
    b_{\beta_j}^{-i} \prod_{t=1}^i \frac{q_{\beta_j}^{1-t}b_{\beta_j}
      -
      q_{\beta_j}^{t-1}b_{\beta_j}\inv}{q_{\beta_j}^t-q_{\beta_j}^{-t}}
    F_{\beta_j}^{-i} u_i
    \\
    \phi_{F_{\beta_k},b}(E_{\beta_j}) =& \sum_{i\geq 0}
    b_{\beta_k}^{i} \prod_{t=1}^i \frac{q_{\beta_k}^{1-t}b_{\beta_k} -
      q_{\beta_k}^{t-1}b_{\beta_k}\inv}{q_{\beta_k}^t-q_{\beta_k}^{-t}}
    F_{\beta_k}^{-i} \tilde{u_i}
  \end{align*}
  for some $u_i,\tilde{u_i}\in U_q$ (independent of $b$) such that
  $u_i=\tilde{u_i}=0$ for $i\gg 0$.  In particular for any $j,k \in
  \{1,\dots,N\}$:
  \begin{align*}
    \phi_{F_{\beta_j},-1}(F_{\beta_k}) =
    (-1)^{(\beta_j|\beta_k)}F_{\beta_k}
    \\
    \phi_{F_{\beta_j},-1}(E_{\beta_k}) = E_{\beta_k}.
  \end{align*}
\end{prop}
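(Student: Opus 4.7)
My overall strategy is: both sides of each formula are Laurent polynomials in $b$ taking values in $U_{q(F_{\beta_j})}$ (resp.\ $U_{q(F_{\beta_k})}$), so by Lemma~\ref{lemma:37} it will suffice to check equality on the infinite set $\{q^a \mid a \in \setN\}$. At $b = q^a$, Lemma~\ref{lemma:9} identifies the left-hand side $\phi_{F_{\beta_j},q^a}(u)$ with the honest conjugation $F_{\beta_j}^{-a} u F_{\beta_j}^a$, and at this point I can appeal to the explicit commutation identities of Proposition~\ref{prop:17}. Finiteness of the sums on the right-hand side is guaranteed by Lemma~\ref{lemma:22} and its $\tilde{\ad}$-analog.

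In detail, for the first formula I would apply Proposition~\ref{prop:17} with $\beta = \beta_j$ and $u = F_{\beta_k}^n$ (so the weight $\mu = -n\beta_k$ contributes $\left<\mu,\beta_j^\vee\right> = -nr$ where $r = \left<\beta_k,\beta_j^\vee\right>$), rewrite $F_{\beta_k}^n F_{\beta_j}^a$ in terms of $F_{\beta_j}^{a-i}\ad(F_{\beta_j}^i)(F_{\beta_k}^n)$, and then multiply on the left by $F_{\beta_j}^{-a}$ to get $F_{\beta_j}^{-i}\ad(F_{\beta_j}^i)(F_{\beta_k}^n)$. A bookkeeping of exponents then matches, at $b_{\beta_j} = q_{\beta_j}^a$, the prefactor coming from Proposition~\ref{prop:17} with $q_{\beta_j}^{i(nr+i)}\,b_{\beta_j}^{-nr-i}$, while the specialization at $b_{\beta_j} = q_{\beta_j}^a$ of the product $\prod_{t=1}^i\frac{q_{\beta_j}^{1-t}b_{\beta_j} - q_{\beta_j}^{t-1}b_{\beta_j}^{-1}}{q_{\beta_j}^t - q_{\beta_j}^{-t}}$ is precisely ${a \brack i}_{\beta_j}$. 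Lemma~\ref{lemma:37} then extends the identity from $b = q^a$, $a\in \setN$, to all $b \in \setC^*$. The second formula follows in the same way from the $\tilde{\ad}$-form of Proposition~\ref{prop:17}, applied with $\beta = \beta_k$ and $u = F_{\beta_j}^n$. The two $E$-formulas are handled identically: conjugate $E_{\beta_k}$ (resp.\ $E_{\beta_j}$) by $F_{\beta_j}^{\pm a}$ (resp.\ $F_{\beta_k}^{\pm a}$) and observe that, by the weight grading together with a Lemma~\ref{lemma:22}-style finiteness, only finitely many terms of the form $F_{\beta_j}^{-i}u_i$ (resp.\ $F_{\beta_k}^{-i}\tilde{u}_i$) arise, with $u_i, \tilde{u}_i$ independent of $b$.

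The specialization at $b = -1$ then drops out of the general formulas: since $b_{\beta_j} = (-1)^{(\beta_j|\beta_j)/2} \in \{\pm 1\}$ one has $b_{\beta_j} = b_{\beta_j}^{-1}$, so the $t = 1$ factor of each product vanishes and only the $i = 0$ term survives. This yields $\phi_{F_{\beta_j},-1}(F_{\beta_k}) = b_{\beta_j}^{-r}F_{\beta_k}$, and the identity $b_{\beta_j}^{-r} = (-1)^{-r(\beta_j|\beta_j)/2} = (-1)^{-(\beta_j|\beta_k)} = (-1)^{(\beta_j|\beta_k)}$ matches the stated constant. For the $E$-formula the $i = 0$ coefficient $u_0$ (resp.\ $\tilde{u}_0$) must equal $E_{\beta_k}$ (resp.\ $E_{\beta_j}$); this is pinned down by the same cancellation argument specialized at $b = 1$, where one side equals $u_0$ while the other is evidently $E_{\beta_k}$.

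The main obstacle, as I see it, is simply the careful bookkeeping of quantum-integer exponents and the $r$-substitutions in Proposition~\ref{prop:17}: the $r$ there is $\left<\mu,\beta^\vee\right>$ for the specific $u$ at hand, which becomes $-n\left<\beta_k,\beta_j^\vee\right>$ for $u = F_{\beta_k}^n$, in contrast to the $r = \left<\beta_k,\beta_j^\vee\right>$ appearing in the statement of Proposition~\ref{prop:25}. A secondary technical point is the square-root convention for $b_\beta$ at long roots, but since all expressions in the proposition are polynomials in $b_\beta$ rather than $b$ itself, this causes no ambiguity, exactly as in the remark following Lemma~\ref{lemma:9}.
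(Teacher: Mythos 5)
Your treatment of the two $F$-formulas is essentially the paper's: apply Proposition~\ref{prop:17} at $b = q^a$, use Lemma~\ref{lemma:22} to truncate the sum uniformly in $a$, and then extend by Lemma~\ref{lemma:37}. That part is fine.

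The two $E$-formulas, however, contain a genuine gap. You invoke a ``Lemma~\ref{lemma:22}-style finiteness'' for the terms $F_{\beta_j}^{-i}u_i$, but Lemma~\ref{lemma:22} asserts vanishing of $\ad(F_{\beta_j}^i)(F_{\beta_k}^n)$ and $\tilde{\ad}(F_{\beta_k}^i)(F_{\beta_j}^n)$ only for \emph{negative} root vectors, and the analogous statement for $E_{\beta_k}$ is simply false. A concrete obstruction already appears in $\mathfrak{sl}_3$: take the reduced expression $s_1s_2s_1$, so $\beta_1 = \alpha_1$, $\beta_3 = \alpha_2$, and take $j=1$, $k=3$. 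Since $E_{\alpha_2}$ and $F_{\alpha_1}$ commute,
\begin{equation*}
\ad(F_{\alpha_1})(E_{\alpha_2}) = E_{\alpha_2}F_{\alpha_1} - q^{(-\alpha_2|\alpha_1)}F_{\alpha_1}E_{\alpha_2} = (1-q^{-1})E_{\alpha_2}F_{\alpha_1},
\end{equation*}
and iterating gives $\ad(F_{\alpha_1}^i)(E_{\alpha_2}) = \prod_{l=1}^{i}\left(1-q^{-(2l-1)}\right)E_{\alpha_2}F_{\alpha_1}^i \neq 0$ for every $i\geq 0$, since $q$ is not a root of unity. So applying Proposition~\ref{prop:17} naively with $u = E_{\beta_k}$ produces, as $a\to\infty$, sums with ever more nonzero terms, and there is no $a$-independent truncation of the kind Lemma~\ref{lemma:22} supplies in the $F$-case. (In this example the answer is simply $\phi_{F_{\alpha_1},b}(E_{\alpha_2}) = E_{\alpha_2}$, but that collapse comes from massive cancellation among the $\ad$-terms, not from their vanishing.) Without a uniform bound on $i$, the Laurent-polynomial extension argument cannot be run, and the $b$-independence of the candidate $u_i$ is not established either.

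The paper circumvents exactly this difficulty by a braid-operator maneuver: one writes $E_{\beta_j} = T_w(E_{\alpha_{i_j}})$ and $F_{\beta_k} = T_wT_{s_{i_j}}T_{w'}(F_{\alpha_{i_k}})$, uses $E_{\alpha_{i_j}} = T_{s_{i_j}}\left(-K_{\alpha_{i_j}}^{-1}F_{\alpha_{i_j}}\right)$, and chooses a new reduced expression of $w_0$ in which the relevant vectors become $F$-type root vectors $F'_{\beta'_N}$ and $F'_{\beta'_{k-j}}$ with $k-j<N$. The computation is thereby reduced to the $F$-$F$ commutation already settled in the first identity, where Lemma~\ref{lemma:22} \emph{does} truncate the sum. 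Your proposal is missing this reduction, which is the essential content of the proof for the last two formulas; the specialization at $b=\pm 1$ that follows is unproblematic once the four formulas are in hand. As a secondary caution, be careful with the sign of the $r$ appearing in Proposition~\ref{prop:17}: you replace it by $-nr$ when $u = F_{\beta_k}^n$, but the stated coefficients $b_{\beta_j}^{-nr-i}$ and $q_{\beta_j}^{i(nr+i)}$ come out with $+nr$ (one can check this already for $n=1$ in $\mathfrak{sl}_3$ with $\beta_j=\alpha_1$, $\beta_k=\alpha_1+\alpha_2$, where the answer is $b^{-1}F_{\beta_k}$), so the bookkeeping deserves a second look.
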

Note that the sums are finite because of Lemma~\ref{lemma:22}.
\begin{proof}
  By Proposition~\ref{prop:17} we have for any $a \in \setN$
  \begin{align*}
    F_{\beta_k}^{n} F_{\beta_j}^{a} =& \sum_{i=0}^a
    q_{\beta_j}^{(i-a)(nr+i)} {a\brack i}_{\beta_j}
    F_{\beta_j}^{a-i}\ad(F_{\beta_j}^{i})(F_{\beta_k}^n)
    \\
    =& \sum_{i=0}^\infty q_{\beta_j}^{i(nr+i)}q_{\beta_j}^{-a(nr+i)}
    \prod_{t=1}^i \frac{q_{\beta_j}^{1-t}q_{\beta_j}^a -
      q_{\beta_j}^{t-1}q_{\beta_j}^{-a}}{q_{\beta_j}^t-q_{\beta_j}^{-t}}
    F_{\beta_j}^{a-i}\ad(F_{\beta_j}^{i})(F_{\beta_k}^n).
  \end{align*}
  Here we use the fact that ${a \brack i}_{\beta_j} = 0$ for $i>a$. So
  \begin{align*}
    F_{\beta_j}^{-a}F_{\beta_k}^{n} F_{\beta_j}^{a} =& \sum_{i\geq 0}
    q_{\beta_j}^{i(nr+i)}q_{\beta_j}^{-a(nr+i)} \prod_{t=1}^i
    \frac{q_{\beta_j}^{1-t}q_{\beta_j}^a -
      q_{\beta_j}^{t-1}q_{\beta_j}^{-a}}{q_{\beta_j}^t-q_{\beta_j}^{-t}}
    F_{\beta_j}^{-i}\ad(F_{\beta_j}^{i})(F_{\beta_k}^n).
  \end{align*}
  Now using the fact that $\phi_{F_{\beta_j},q^{a}}(F_{\beta_k}^n)=
  F_{\beta_j}^{-a}F_{\beta_k}^{n} F_{\beta_j}^{a}$, the fact that
  $\phi_{F_{\beta_j},b}(F_{\beta_k}^n)$ is Laurent polynomial and
  Lemma~\ref{lemma:37} we get the first identity.  The second identity
  is shown similarly by using the second identity in
  Proposition~\ref{prop:17}.

  To prove the last two identities we need to calculate
  $F_{\beta_j}^{-a}E_{\beta_k}^n F_{\beta_j}^a$
  (resp. $F_{\beta_k}^{-a}E_{\beta_j}^n F_{\beta_k}^a$) for any
  $a\in\setN$. Let $w=s_{i_1}\cdots s_{i_{j-1}}$ and
  $w'=s_{i_{j+1}}\cdots s_{i_{k-1}}$. Then
  $E_{\beta_j}=T_w(E_{\alpha_{i_j}})$ and
  $F_{\beta_k}=T_wT_{s_{i_j}}T_{w'}(F_{\alpha_{i_k}})$. 
  \begin{align*}
    E_{\beta_j} F_{\beta_k}^a =& T_w \left(E_{\alpha_{i_j}}
      T_{s_{i_j}}T_{w'}(F_{\alpha_{i_k}}^a)\right)
    \\
    =& T_wT_{s_{i_j}}\left( -K_{\alpha_{i_j}}\inv F_{\alpha_{i_j}}
      T_{w'}(F_{\alpha_{i_k}}^a) \right).
  \end{align*}
  Expand $s_{i_{j}}\cdots s_{i_N}$ from the right to a reduced
  expression $s_{i_j}\cdots s_{i_N}s_{m_1}\cdots s_{m_{j-1}}$ of
  $w_0$. Do the same with $s_{i_{j+1}}\cdots s_{i_N}s_{m_1}\cdots
  s_{m_{j-1}}$ to get a reduced expression $s_{i_{j+1}}\cdots
  s_{i_N}s_{m_1}\cdots s_{m_{j}}$. We claim that if we use the reduced
  expression $s_{i_{j+1}}\cdots s_{i_N}s_{m_1}\cdots s_{m_{j}}$ to
  construct roots $\beta_1'\dots,\beta_N'$ and root vectors
  $F_{\beta_j'}'$ then $F_{\beta_N'}' = T_{s_{i_{j+1}}}\cdots
  T_{s_{i_N}}T_{s_{m_1}}\cdots T_{s_{m_{j-1}}}(F_{\alpha_{m_j}}) =
  F_{\alpha_{i_j}}$. This is easy to see since $\beta_N'$ is positive
  but $s_{i_j}\beta_N' = w_0(\alpha_{m_j})<0$. We have
  $T_{w'}(F_{\alpha_{i_k}}^a) = F_{\beta_{k-j}'}^a$. Since $k-j<N$ we
  can use what we just calculated above: (set $d=k-j$)
  \begin{align*}
    F_{\beta_{d}'}'^{-a}F_{\beta_N'}' F_{\beta_{d}'}'^{a} =&
    \sum_{i\geq 0} q_{\beta_{d}'}^{i(r+i)}q_{\beta_{d}'}^{-a(r+i)}
    \prod_{t=1}^i \frac{q_{\beta_{d}'}^{1-t}q_{\beta_{d}'}^a -
      q_{\beta_{d}'}^{t-1}q_{\beta_d'}^{-a}}{q_{\beta_d'}^t-q_{\beta_d'}^{-t}}
    F_{\beta_d'}'^{-i}\ad(F_{\beta_d'}'^{i})(F_{\beta_N'}').
  \end{align*}
  so
  \begin{align*}
    F_{\beta_k}^{-a} E_{\beta_j} F_{\beta_k}^a =& K_{\beta_j}T_w
    T_{s_{i_j}}\left( \sum_{i\geq 0}
      q_{\beta_{d}'}^{i(r+i)}q_{\beta_{d}'}^{-ai} \prod_{t=1}^i
      \frac{q_{\beta_{d}'}^{1-t}q_{\beta_{d}'}^a -
        q_{\beta_{d}'}^{t-1}q_{\beta_d'}^{-a}}{q_{\beta_d'}^t-q_{\beta_d'}^{-t}}
      F_{\beta_d'}'^{-i}\ad(F_{\beta_d'}'^{i})(F_{\beta_N'}') \right).
  \end{align*}
  This shows the third identity. The fourth is shown similarly.

  Setting $b=-1$ in the above formulas we get the last claim of the
  proposition.
\end{proof}

\begin{defn}
  Let $M$ be a $U_{q(F_\beta)}$-module. We define a new module
  $\phi_{F_{\beta} ,b}.M$ (with elements $\phi_{F_{\beta} ,b}.m$,
  $m\in M$) where the module structure is given by composing with the
  above automorphism $\phi_{F_{\beta} ,b}$. -- i.e. $u\phi_{F_{\beta}
    ,b}.m = \phi_{F_{\beta} ,b}.\phi_{F_{\beta} ,b}(u)m$ for all $u\in
  U_{q(F_\beta)}$, $m\in M$.
\end{defn}
Note that $\wt \phi_{F_{\beta} ,b}.M = b^{-\beta}\wt M$ where
$b^{-\beta}$ is the character such that $b^{-\beta}(K_\alpha) =
b^{-\left( \alpha| \beta \right)}$ for $\alpha\in\Pi$.

The homomorphisms from Lemma~\ref{lemma:9} preserve degree so we can
restrict to $(U_{q(F_\beta)})_0$ which we will do in the next
lemma. The twist of a $(U_{q(F_\beta)})_0$-module is defined in the
same way as the definition above.  It is an important fact of these
twists that they do not neccesarily preserve simplicity of
$U_q$-modules: If $L$ is a $U_{q(F_\beta)}$-module that is simple as a
$U_q$-module then $\phi_{F_\beta,b}.L$ can be nonsimple as a
$U_q$-module for some $b\in \setC^*$, see e.g.~Lemma~\ref{lemma:24}.

\begin{lemma}
  \label{lemma:29}
  Let $M$ be a $U_{q(F_\beta)}$-module. Let $i\in \setZ$. Then
  \begin{equation*}
    \phi_{F_{\beta} , q^i}.M \iso M
  \end{equation*}
  as $U_{q(F_\beta)}$-modules.  Furthermore for $\lambda\in \wt M$ we
  have an isomorphism of $(U_{q(F_\beta)})_0$-modules:
  \begin{equation*}
    \phi_{F_{\beta}, q^i}.M_\lambda \iso M_{q^{-i \beta} \lambda}.
  \end{equation*}
\end{lemma}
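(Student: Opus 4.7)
The plan is to write down an explicit isomorphism using the identity $\phi_{F_\beta,q^i}(u)=F_\beta^{-i}uF_\beta^{i}$ from Lemma~\ref{lemma:9}, which is exactly the conjugation formula one would exploit.

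First I would define
\[
\Psi\colon \phi_{F_\beta,q^i}.M \longrightarrow M,\qquad \phi_{F_\beta,q^i}.m \longmapsto F_\beta^{i} m.
\]
Since $F_\beta$ acts invertibly on $M$ (we are working over the localization $U_{q(F_\beta)}$), the assignment $m\mapsto \phi_{F_\beta,q^i}.(F_\beta^{-i}m)$ provides a two-sided inverse, so $\Psi$ is bijective as a $\setC$-linear map.

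Next I would check that $\Psi$ intertwines the two $U_{q(F_\beta)}$-actions. For $u\in U_{q(F_\beta)}$ and $m\in M$, the definition of the twisted action together with Lemma~\ref{lemma:9} gives
\begin{align*}
\Psi\bigl(u\cdot(\phi_{F_\beta,q^i}.m)\bigr) &= \Psi\bigl(\phi_{F_\beta,q^i}.(\phi_{F_\beta,q^i}(u)m)\bigr) \\
&= F_\beta^{i}\,\phi_{F_\beta,q^i}(u)\,m \\
&= F_\beta^{i}\,F_\beta^{-i}\,u\,F_\beta^{i}\,m \;=\; u\,F_\beta^{i}m \;=\; u\,\Psi(\phi_{F_\beta,q^i}.m),
\end{align*}
so $\Psi$ is a $U_{q(F_\beta)}$-module isomorphism, which proves the first claim.

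For the second claim I would track weights carefully. If $m\in M_\lambda$, then $\phi_{F_\beta,q^i}.m$ has weight $q^{-i\beta}\lambda$ in $\phi_{F_\beta,q^i}.M$ (consistent with $\wt \phi_{F_\beta,q^i}.M = q^{-i\beta}\wt M$ noted after Lemma~\ref{lemma:9}); on the other side, $F_\beta^{i}m\in M_{q^{-i\beta}\lambda}$ because $F_\beta$ carries weight $-\beta$. Hence $\Psi$ restricts to a bijection
\[
\phi_{F_\beta,q^i}.M_\lambda \xrightarrow{\;\sim\;} M_{q^{-i\beta}\lambda},
\]
and since $(U_{q(F_\beta)})_0$ preserves weight spaces and $\Psi$ was already shown to be $U_{q(F_\beta)}$-linear, this restriction is an isomorphism of $(U_{q(F_\beta)})_0$-modules. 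There is no real obstacle here beyond being careful that $F_\beta^{\pm i}$ really makes sense, which is exactly why we passed to the Ore localization $U_{q(F_\beta)}$.
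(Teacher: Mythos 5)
Your proof is correct and follows essentially the same route as the paper: both use the explicit map $\phi_{F_\beta,q^i}.m \mapsto F_\beta^{i}m$ together with the conjugation identity $\phi_{F_\beta,q^i}(u)=F_\beta^{-i}uF_\beta^{i}$ from Lemma~\ref{lemma:9} to verify it is a $U_{q(F_\beta)}$-homomorphism with inverse given by multiplication by $F_\beta^{-i}$. The weight-space bookkeeping you spell out at the end is implicit in the paper's one-line statement that the same map works ``in both cases.''
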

\begin{proof}
  The isomorphism in both cases is given by $\phi_{F_{\beta} , q^i}.m
  \mapsto F_\beta^im$, $\phi_{F_{\beta} , q^i}.M\to M$. The inverse is
  given by multiplying by $ F_\beta^{-i}$. By Lemma~\ref{lemma:9}: For
  $u\in U_{q(F_\beta)}$, $m\in M$; $\phi_{F_{\beta} ,q^i}(u) =
  F_\beta^{-i}uF_\beta^{i}$ so $u \phi_{F_{\beta} ,q^i}.m =
  \phi_{F_{\beta} ,q^i}.F_\beta^{-i}uF_\beta^{i}m \mapsto F_\beta^i
  F_\beta^{-i}uF_\beta^{i}m = uF_\beta^i m$. Thus the given map is a
  homomorphism.
\end{proof}

\begin{defn}
  \label{def:commuting_roots}
  Let $\Sigma\subset \Phi^+$. Then $\Sigma$ is called a set of
  commuting roots if there exists an ordering of the roots in
  $\Sigma$; $\Sigma=\{\beta_1,\dots,\beta_s\}$ such that for some
  reduced expression of $w_0$ and corresponding construction of the
  root vectors $F_\beta$ we have: $[F_{\beta_j},F_{\beta_i}]_q=0$ for
  $1\leq i < j \leq s$.

  For any subset $I\subset \Pi$, let $Q_I$ be the subgroup of $Q$
  generated by $I$, $\Phi_I$ the root system generated by $I$ ,
  $\Phi_I^+=\Phi^+ \cap \Phi_I$ and $\Phi_I^- = -\Phi_I^+$.
\end{defn}

The following three lemmas have exactly the same proofs as their
counterparts (\cite[Lemma~5.6]{DHP1}, \cite[Lemma~5.22]{DHP1} and
\cite[Lemma~5.23]{DHP1}) in the root of unity case in~\cite{DHP1}. We
include the proofs here as well for completeness.

We have the following equivalent of Lemma~4.1 in~\cite{Mathieu}:
\begin{lemma}
  \label{lemma:7}
  \begin{enumerate}
  \item Let $I\subset \Pi$ and let $\alpha\in I$. There exists a set
    of commuting roots $\Sigma'\subset \Phi_I^+$ with
    $\alpha\in\Sigma'$ such that $\Sigma'$ is a basis of $Q_I$.
  \item Let $J,F$ be subsets of $\Pi$ with $F\neq \Pi$. Let $\Sigma'
    \subset \Phi_J^+ \backslash \Phi_{J\cap F}^+$ be a set of
    commuting roots which is a basis of $Q_J$. There exists a set of
    commuting roots $\Sigma$ which is a basis of $Q$ such that
    $\Sigma' \subset \Sigma \subset \Phi^+ \backslash \Phi_F^+$
  \end{enumerate}
\end{lemma}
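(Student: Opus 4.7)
The plan is to follow Mathieu's Lemma 4.1 translated to the quantum setting via Theorem~\ref{thm:DP}. The core observation is: for a PBW ordering $\beta_1,\dots,\beta_N$ coming from a reduced expression of $w_0$, Theorem~\ref{thm:DP} places $[F_{\beta_j}, F_{\beta_i}]_q$ in the $A'$-span of ordered monomials in $F_{\beta_{i+1}},\dots,F_{\beta_{j-1}}$. Since this bracket carries weight $-(\beta_i + \beta_j)$ while any such monomial carries a weight that is a nonpositive integer combination of $\beta_{i+1},\dots,\beta_{j-1}$, the bracket vanishes whenever $\beta_i + \beta_j$ is not expressible as a nonnegative $\setZ$-combination of the intermediate roots. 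Thus, to verify that a subset $\Sigma = \{\beta_{n_1} < \dots < \beta_{n_s}\}$ is a commuting set, it suffices to check, for every pair $a<b$, that $\beta_{n_a} + \beta_{n_b}$ admits no decomposition over $\beta_{n_a+1},\dots,\beta_{n_b-1}$ with nonnegative integer coefficients.

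For part (1), I would induct on $|I|$ and first reduce to the irreducible case, since the root vectors attached to distinct connected components of $I$ automatically commute (by concatenating reduced expressions for the longest element of each component). For each irreducible type $A_n, B_n, C_n, D_n, E_n, F_4$ I would exhibit an explicit basis of $Q_I$ containing the prescribed simple root $\alpha$, paired with an explicit reduced expression of the longest element $w_0^I$ for which the resulting PBW ordering satisfies the degree criterion above. For instance in type $A_n$ the chain $\alpha_1, \alpha_1+\alpha_2, \dots, \alpha_1+\cdots+\alpha_n$, paired with the reduced expression $s_1(s_2s_1)(s_3s_2s_1)\cdots$, works; analogous constructions (relabelling simple roots so that $\alpha$ appears in the chosen chain) handle the other types. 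In each case, extending the reduced expression of $w_0^I$ to one of $w_0$ is always possible by standard Weyl group combinatorics.

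For part (2), I would induct on $|\Pi \setminus J|$. When $J = \Pi$ take $\Sigma = \Sigma'$. Otherwise pick $\alpha \in \Pi \setminus J$; one can moreover arrange $\alpha \notin F$, since at each step $\Pi \not\subset J \cup F$ (initially because $F \neq \Pi$, and each subsequent step only shrinks $\Pi \setminus (J \cup F)$ by at most the $\alpha$'s chosen outside of $F$). Apply part (1) to the enlarged parabolic $J \cup \{\alpha\}$ to select a commuting basis of $Q_{J \cup \{\alpha\}}$ extending $\Sigma'$ by roots whose support necessarily contains $\alpha$; these added roots then lie in $\Phi^+ \setminus \Phi_F^+$. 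Concatenate the reduced expressions so that $\Sigma'$ appears as a sub-PBW segment of the enlarged one, and iterate until $J = \Pi$.

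The main obstacle is the explicit type-by-type check in part (1): one must exhibit, in each irreducible Cartan type (with $G_2$ excluded globally), a reduced expression of $w_0^I$ in which the chosen commuting basis appears and for which no intermediate PBW root configuration can produce the weight $\beta_{n_a}+\beta_{n_b}$. The exceptional types $E_6, E_7, E_8, F_4$ require the most care. A secondary subtlety in part (2) is the compatibility between the reduced expression witnessing $\Sigma'$ as a commuting set in $Q_J$ and the ambient reduced expression of $w_0$; this matching is purely Weyl-group combinatorial and should transfer verbatim from the treatment in~\cite{DHP1} (Lemmas~5.6, 5.22 and~5.23), since the quantization enters only through the LS-style degree bound already isolated above.
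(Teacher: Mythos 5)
Your degree-based reading of Theorem~\ref{thm:DP} is the right key observation, and you correctly identify that the quantization enters the argument only through that theorem. However, the overall structure of your proof has two genuine gaps.

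First, you invert the paper's logical dependency in a way that does not close. You propose to prove part~(1) independently (by type-by-type explicit reduced expressions) and then deduce part~(2) by repeatedly applying part~(1) to $J\cup\{\alpha\}$ to ``extend $\Sigma'$ by roots whose support contains $\alpha$.'' But part~(1) as stated only asserts the existence of \emph{some} commuting basis of $Q_I$ containing one prescribed \emph{simple} root; it gives no mechanism for extending a \emph{given} commuting set $\Sigma'$ by one more root while preserving the commutation property. That extension step is exactly the content and the difficulty of part~(2), and the paper accordingly proves~(2) first, by a direct induction that at each step constructs $\gamma_{N_s+1}=w_s(\alpha_{s+1})$ and verifies via Theorem~\ref{thm:DP} that it $q$-commutes with the existing $\beta_1,\dots,\beta_s$ (because the intermediate PBW monomials all lie in $\Phi^+_{\{\alpha_1,\dots,\alpha_s\}}$ and hence cannot produce the $\alpha_{s+1}$-component of $\gamma_{N_s+1}$); part~(1) is then obtained as the special case $J=\{\alpha\}$, $F=\emptyset$.

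Second, your claim that ``at each step $\Pi\not\subset J\cup F$'' so one can always choose $\alpha\notin F$ is false. Already at the start it can fail: take $\Phi$ of type $A_2$ with $\Pi=\{\alpha_1,\alpha_2\}$, $J=\{\alpha_1\}$, $F=\{\alpha_2\}$; then $F\neq\Pi$ but $J\cup F=\Pi$, so the only simple root you can append is $\alpha_2\in F$. The paper's argument does \emph{not} avoid appending simple roots lying in $F$. Instead it orders the simple roots so that those in $J'=J\setminus F$ come first and each later $\alpha_s$ is connected to some earlier one, and then proves that the new root $\gamma_{N_s+1}$ has strictly positive coefficient on some $\alpha_i\in J'$ (using a short argument via~\cite{Papi}), which forces $\gamma_{N_s+1}\notin\Phi_F$ even when $\alpha_{s+1}\in F$. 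This is the step your sketch is missing. Finally, the explicit type-by-type verification you propose for part~(1) is unnecessary once part~(2) is in place, and would be substantially more work (especially in types $D$, $E$, $F$) than the paper's uniform argument.
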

\begin{proof}
  The first part of the proof is just combinatorics of the root system
  so it is identical to the first part of the proof of Lemma~4.1
  in~\cite{Mathieu}: Let us first prove assertion $2.$: If $J$ is
  empty we can choose $\alpha\in \Pi\backslash F$ and replace $J$ and
  $\Sigma'$ by $\{\alpha\}$. So assume from now on that $J\neq
  \emptyset$. Set $J' = J \backslash F$, $p= |J'|$, $q=|J|$. Let
  $J_1,\dots, J_k$ be the connected components of $J$ and set $J_i' =
  J' \cap J_i$, $F_i = F \cap J_i$, and $\Sigma_i' = \Sigma\cap
  \Phi_{J_i}$, for any $1\leq i \leq k$. Since $\Sigma'\subset \Phi_J$
  is a basis of $Q_J$, each $\Sigma_i'$ is a basis of $Q_{J_i}$. Since
  $\Sigma_i'$ lies in $\Phi_{J_i}^+ \backslash \Phi_{F_i}^+$, the set
  $J_i'=J_i\backslash F_i$ is not empty. Hence $J'$ meets every
  connected component of $J$. Therefore we can write
  $J=\{\alpha_1,\dots, \alpha_q\}$ in such a way that
  $J'=\{\alpha_1,\dots,\alpha_p\}$ and, for any $s$ with $p+1\leq
  s\leq q$, $\alpha_s$ is connected to $\alpha_i$ for some
  $i<s$. Since $\Pi$ is connected we can write $\Pi \backslash J =
  \{\alpha_{q+1},\dots, \alpha_n\}$ in such a way that for any $s\geq
  q+1$, $\alpha_s$ is connected to $\alpha_i$ for some $i$ with $1\leq
  i < s$. So $\Pi = \{\alpha_1,\dots, \alpha_n\}$ such that for $s> p$
  we have that $\alpha_s$ is connected to some $\alpha_i$ with $1\leq
  i < s$.
  
  Let $\Sigma' = \{\beta_1,\dots,\beta_q\}$. We will define
  $\beta_{q+1},\dots,\beta_l$ inductively such that for each $s\geq
  q$, $\{\beta_1,\dots,\beta_s\}$ is a commuting set of roots which is
  a basis of $\Phi_{\{\alpha_1,\dots,\alpha_s\}}$. So assume we have
  defined $\beta_1,\dots,\beta_s$. Let $w_s$ be the longest word in
  $s_{\alpha_1},\dots,s_{\alpha_s}$ and let $w_{s+1}$ be the longest
  word in $s_{\alpha_1},\dots,s_{\alpha_{s+1}}$. Choose a reduced
  expression of $w_s$ such that the corresponding root vectors
  $\{F_{\beta_k}\}_{k=1}^s$ satisfies $[F_{\beta_j},F_{\beta_i}]_q=0$
  for $i<j$. Choose a reduced expression of $w_{s+1}=w_{s}w'$ starting
  with the above reduced expression of $w_s$. Let $N_s$ be the length
  of $w_s$ and $N_{s+1}$ be the length of $w_{s+1}$. So we get an
  ordering of the roots generated by
  $\{\alpha_1,\dots,\alpha_{s+1}\}$:
  $\Phi_{\{\alpha_1,\dots,\alpha_{s+1}\}}^+=\{\gamma_1,\dots,\gamma_{N_s},\gamma_{N_s+1},\dots,\gamma_{N_{s+1}}\}$
  with
  $\Phi_{\{\alpha_1,\dots,\alpha_{s}\}}^+=\{\gamma_1,\dots,\gamma_{N_s}\}$. Consider
  $\gamma_{N_s+1}=w_s(\alpha_{s+1})$. Since $w_s$ only consists of the
  simple reflections corresponding to $\alpha_1,\dots,\alpha_s$ we
  must have that $\gamma_{N_s+1}=\alpha_{s+1}+\sum_{i=1}^s m_i
  \alpha_i$ for some coefficients $m_i\in \setN$. So
  $\{\beta_1,\dots,\beta_s,\gamma_{N_s+1}\}$ is a basis of
  $\Phi_{\{\alpha_1,\dots,\alpha_{s+1}\}}$. From Theorem~\ref{thm:DP}
  we get for $1\leq i \leq s$
  \begin{equation*}
    [F_{\gamma_{N_s+1}},F_{\beta_i}]_q \in \spa{\setC}{F_{\gamma_{N_s}}^{a_{N_s}}\cdots F_{\gamma_{2}}^{a_2}|a_i\in \setN\}}
  \end{equation*}
  But since $\{\gamma_1,\dots,\gamma_{N_s}\} =
  \Phi^+_{\{\alpha_1,\dots,\alpha_s\}}$ and since
  $\gamma_{N_s+1}=\alpha_{s+1}+\sum_{i=1}^s m_i \alpha_i$ we get
  $[F_{\gamma_{N_s+1}},F_{\beta_i}]_q=0$.

  All that is left is to show that $\gamma_{N_s+1}\not \in \Phi_F$. By
  the above we must have that $\alpha_{s+1}$ is connected to some
  $\alpha_i\in J'$. We will show that the coefficient of $\alpha_i$ in
  $\gamma_{N_s+1}$ is nonzero. Otherwise $(\gamma_{N_s+1}|\alpha_i)<0$
  and so $\gamma_{N_s+1}+\alpha_i\in
  \Phi_{\{\alpha_1,\dots,\alpha_{s+1}\}}$ and by Theorem~1
  in~\cite{Papi}, $\gamma_{N_s+1}+\alpha_i = \gamma_{j}$ for some $1 <
  j \leq s$. This is impossible since $\gamma_{N_s+1}+\alpha_i\not \in
  \Phi_{\{\alpha_1,\dots,\alpha_{s}\}}$. So we can set $\beta_{s+1} =
  \gamma_{N_s+1}$ and the induction step is finished.

  To prove assertion $1.$ it can be assumed that $I=\Pi$. Thus
  assertion $1.$ follows from assertion $2.$ with $J=\{\alpha\}$ and
  $F=\emptyset$.
\end{proof}

\begin{lemma}
  \label{lemma:6}
  Let $L\in \mathcal{F}$ be a simple module. Then there exists a $w\in
  W$ such that $w(F_L\backslash F_L^s) \subset \Phi^+$ and
  $w(T_L\backslash T_L^s) \subset \Phi^-$.
\end{lemma}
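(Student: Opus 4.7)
My plan is to reduce the lemma to a classical statement about parabolic subsystems of $\Phi$. The first observation is that the two inclusions in the lemma are equivalent: since $\Phi = F_L \cup T_L$ is a disjoint union, we have
\begin{equation*}
F_L \setminus F_L^s = F_L \cap (-T_L) \quad \text{and} \quad T_L \setminus T_L^s = T_L \cap (-F_L) = -(F_L \setminus F_L^s).
\end{equation*}
Consequently $w(T_L \setminus T_L^s) = -w(F_L \setminus F_L^s)$, so $w(T_L \setminus T_L^s) \subset \Phi^-$ holds if and only if $w(F_L \setminus F_L^s) \subset \Phi^+$. It therefore suffices to produce a single $w \in W$ establishing the first inclusion.

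Next, I would invoke Lemma~4.16 of~\cite{Fernando} (already used in the proof of Lemma~\ref{lemma:11}), which asserts that $P_L := T_L^s \cup F_L$ is a parabolic subsystem of $\Phi$. A short direct computation using $T_L \cap F_L = \emptyset$ shows that
\begin{equation*}
P_L \cap (-P_L) = T_L^s \cup F_L^s \quad \text{and} \quad P_L \setminus \bigl(P_L \cap (-P_L)\bigr) = F_L \setminus F_L^s,
\end{equation*}
so the set $F_L \setminus F_L^s$ is precisely the nilradical of the parabolic $P_L$, with Levi part $T_L^s \cup F_L^s$.

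The proof then concludes by the classical fact that every parabolic subsystem of $\Phi$ is $W$-conjugate to a standard parabolic: there exist $w \in W$ and $I \subset \Pi$ such that $w(P_L) = \Phi_I \cup \Phi^+$. Under this $w$, the Levi $T_L^s \cup F_L^s$ maps into $\Phi_I$, and the nilradical is sent to
\begin{equation*}
w(F_L \setminus F_L^s) \;\subset\; w(P_L) \setminus \Phi_I \;=\; \Phi^+ \setminus \Phi_I \;\subset\; \Phi^+,
\end{equation*}
as required. The only non-routine ingredient is the $W$-conjugacy of parabolic subsystems to a standard one, but this is classical in root-system combinatorics and does not involve any quantum-group input; the whole argument is identical to the $\mathfrak{g}$-module case. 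Hence the same $w$ also satisfies $w(T_L \setminus T_L^s) \subset \Phi^-$ by the opening observation, completing the proof.
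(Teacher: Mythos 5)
Your proof is correct and follows essentially the same route as the paper: both rely on Lemma~4.16 of~\cite{Fernando} (applicable because $F_L$ and $T_L$ are closed by Proposition~\ref{prop:8}) together with $W$-conjugacy of bases/positive systems. The paper reads Fernando's lemma as directly producing a basis $B$ with the antisymmetric parts on the correct sides, whereas you unpack it slightly differently by first identifying $F_L \setminus F_L^s$ as the nilradical of the parabolic $P_L = T_L^s \cup F_L$ and then conjugating to a standard parabolic; your opening observation that the two inclusions are equivalent (since $T_L \setminus T_L^s = -(F_L \setminus F_L^s)$) is a small, clean simplification but does not change the substance of the argument.
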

\begin{proof}
  Since $L$ is simple we have $\Phi= F_L\cup T_L$. By
  Proposition~\ref{prop:8} $F_L$ and $T_L$ are closed subsets. Then
  Lemma~4.16 in~\cite{Fernando} tells us that there exists a basis $B$
  of the root system $\Phi$ such that the antisymmetrical part of
  $F_L$ is contained in the positive roots $\Phi_B^+$ corresponding to
  the basis $B$ and the antisymmetrical part of $T_L$ is contained in
  the negative roots $\Phi_B^-$ corresponding to the basis. Since all
  bases of a root system are $W$-conjugate the claim follows.
\end{proof}

\begin{lemma}
  \label{lemma:26}
  Let $L$ be an infinite dimensional admissible simple module. Let
  $w\in W$ be such that $w(F_L\backslash F_L^s) \subset \Phi^+$. Let
  $\alpha\in \Pi$ be such that $-\alpha\in w(T_L)$ (such an $\alpha$
  always exists). Then there exists a commuting set of roots $\Sigma$
  with $\alpha\in \Sigma$ which is a basis of $Q$ such that $ -\Sigma
  \subset w(T_L)$.
\end{lemma}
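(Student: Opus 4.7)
The plan is to isolate a subset $J \subset \Pi$ with $\alpha \in J$ and $-\beta \in w(T_L)$ for every $\beta \in J$, then invoke Lemma~\ref{lemma:7} to first produce a commuting basis $\Sigma' \subset \Phi_J^+$ of $Q_J$ with $\alpha \in \Sigma'$, and finally extend $\Sigma'$ to a commuting basis $\Sigma$ of $Q$ contained in a region of $\Phi^+$ whose negatives still lie in $w(T_L)$.

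First I would analyse the structure of $w(T_L)$ using the opposite parabolic subsystems of \cite[Lemma~4.16]{Fernando}. Comparing $P_L = T_L^s \cup F_L$ and $P_L^- = T_L \cup F_L^s$ gives $T_L \backslash T_L^s = -(F_L \backslash F_L^s)$, so the hypothesis $w(F_L \backslash F_L^s) \subset \Phi^+$ automatically forces $w(T_L \backslash T_L^s) \subset \Phi^-$. Consequently $w(P_L) \supset \Phi^+$ is a standard parabolic with Levi $\Phi_I$ for some $I \subset \Pi$, and $w(F_L \backslash F_L^s) = \Phi^+ \backslash \Phi_I^+$, $w(T_L \backslash T_L^s) = \Phi^- \backslash \Phi_I^-$. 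Inside the Levi we have the partition $\Phi_I = w(T_L^s) \sqcup w(F_L^s)$ into closed symmetric subsystems. A short argument using closedness and symmetry (if $\gamma \in w(T_L^s)$ and $\delta \in w(F_L^s)$ with $\gamma + \delta \in \Phi$, the sum lies in one piece, but subtracting the opposite element from it and using closedness of that piece forces the other element into the wrong piece) shows the two pieces are mutually orthogonal, hence correspond to a partition $I = I_T \sqcup I_F$ into unions of connected components of the Dynkin diagram of $\Phi_I$, with $w(T_L^s) = \Phi_{I_T}$ and $w(F_L^s) = \Phi_{I_F}$.

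Next, set $F := I_F$ and $J := \Pi \backslash F$. For $\beta \in J$ there are two cases: if $\beta \in \Pi \backslash I$, then $\beta \in \Phi^+ \backslash \Phi_I^+ = w(F_L \backslash F_L^s)$, so $-\beta \in w(T_L \backslash T_L^s) \subset w(T_L)$; if $\beta \in I_T$, then $\beta \in \Phi_{I_T} = w(T_L^s)$, so $-\beta \in w(T_L)$ by symmetry. That $\alpha \in J$ follows since $\alpha \in I_F$ would give $\alpha \in w(F_L^s) \subset w(F_L)$ and hence $-\alpha \in w(F_L)$, contradicting $-\alpha \in w(T_L)$ and the disjointness of $w(T_L)$ and $w(F_L)$. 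The parenthetical claim that such an $\alpha$ always exists follows from the same picture: either $T_L = \Phi$ (and every simple root qualifies), or $I \subsetneq \Pi$ and any $\alpha \in \Pi \backslash I$ qualifies since then $-\alpha \in w(T_L \backslash T_L^s)$. Now apply Lemma~\ref{lemma:7}\,(1) with the role of $I$ played by $J$ and with the chosen simple root $\alpha$ to obtain a commuting set $\Sigma' \subset \Phi_J^+$ that is a basis of $Q_J$ and contains $\alpha$.

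Since $J \cap F = \emptyset$ we have $\Sigma' \subset \Phi_J^+ \backslash \Phi_{J \cap F}^+$, and $F \neq \Pi$ because $\alpha \in J$, so Lemma~\ref{lemma:7}\,(2) extends $\Sigma'$ to a commuting basis $\Sigma$ of $Q$ with $\Sigma \subset \Phi^+ \backslash \Phi_F^+$. To verify $-\Sigma \subset w(T_L)$, pick $\gamma \in \Sigma$: either $\gamma \in \Phi^+ \backslash \Phi_I^+ = w(F_L \backslash F_L^s)$, and then $-\gamma \in w(T_L \backslash T_L^s) \subset w(T_L)$, or $\gamma \in \Phi_I^+ \backslash \Phi_{I_F}^+ = \Phi_{I_T}^+ \subset w(T_L^s)$, and again $-\gamma \in w(T_L)$ by symmetry. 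The main obstacle is the orthogonal decomposition of the Levi $\Phi_I$ into the two pieces coming from $T_L^s$ and $F_L^s$; once that is in hand, the rest of the argument is a direct application of Lemma~\ref{lemma:7} together with bookkeeping on which roots lie in $w(T_L)$ versus $w(F_L)$.
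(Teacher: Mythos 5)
Your proof is correct, but it takes a genuinely different route from the paper's, so a comparison is worthwhile. The paper works directly with $L'={}^wL$, applies Lemma~\ref{lemma:7}\,(2) immediately with $J=\{\alpha\}$, $\Sigma'=\{\alpha\}$ and $F=F_{L'}^s\cap\Pi$, and then proves $-\Sigma\subset T_{L'}$ by showing $(F_{L'}^s)^-\subset\Phi_F^-$ via a direct induction on the height of $\beta\in F_{L'}^s\cap\Phi^+$ (using closedness of $F_{L'}$ and $T_{L'}$ to run the induction step). You instead establish, up front, the structural decomposition of the Levi $\Phi_I$ of the standard parabolic $w(P_L)$ as an orthogonal disjoint union $\Phi_{I_T}\sqcup\Phi_{I_F}$ with $w(T_L^s)=\Phi_{I_T}$ and $w(F_L^s)=\Phi_{I_F}$; this is exactly the assertion that the paper's height induction establishes, just packaged as a statement about the Levi rather than as an inductive verification. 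Your orthogonality argument is sound once one also notes $\gamma-\delta\notin\Phi$ (from $-\delta\in w(F_L^s)$) and invokes the standard fact that $\gamma\pm\delta\notin\Phi$ with $\gamma\neq\pm\delta$ forces $(\gamma|\delta)=0$. Two remarks on efficiency: (i) your route requires the parabolic structure of $P_L$ from Fernando's Lemma~4.16, which the paper's height induction avoids entirely (Proposition~\ref{prop:8} suffices); (ii) your detour through Lemma~\ref{lemma:7}\,(1) to build a large commuting basis $\Sigma'$ of $Q_J$ before extending it is unnecessary — Lemma~\ref{lemma:7}\,(2) applied directly with $J=\{\alpha\}=\Sigma'$ and $F=I_F$ already produces the desired $\Sigma$, which is what the paper does. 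What your approach buys is a cleaner conceptual picture of where $F$ comes from (it is the set of simple roots whose component of the Levi lies in $w(F_L^s)$), at the cost of more machinery and length.
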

\begin{proof}
  Set $L' = {^w}L$. Since $w(T_L) = T_{{^{w}}L}=T_{L'}$ we will just
  work with $L'$. Then $F_{L'}\backslash F_{L'}^s \subset \Phi^+$.
  
  Note that it is always possible to choose a simple root $\alpha\in
  -T_{L'}$ since $L'$ is infinite dimensional: If this was not
  possible we would have $\Phi^- \subset F_{L'}$. But since
  $F_{L'}\backslash F_{L'}^s \subset \Phi^+$ this would imply $F_L =
  \Phi$.

  Set $F = F_{L'}^s \cap \Pi$. Since $L'$ is infinite dimensional
  $F\neq \Pi$.  By Lemma~\ref{lemma:7} $2.$ applied with
  $J=\{\alpha\}=\Sigma'$ there exists a commuting set of roots
  $\Sigma$ that is a basis of $Q$ such that $\Sigma \subset \Phi^+
  \backslash \Phi^+_F$. Since $F_{L'}\backslash F_{L'}^s \subset
  \Phi^+$ we have $\Phi^- = T_{L'}^- \cup (F_{L'}^s)^-$.  To show
  $-\Sigma \subset T_{L'}$ we show $\left( \Phi^- \backslash \Phi^-_F
  \right)\cap F_{L'}^s=\emptyset$ or equivalently $(F_{L'}^s)^-
  \subset \Phi_F^-$.

  Assume $\beta\in F_{L'}^s\cap \Phi^+$, $\beta = \sum_{\alpha\in \Pi}
  a_\alpha \alpha$, $a_\alpha\in\setN$. The height of $\beta$ is the
  sum $\sum_{\alpha\in \Pi} a_\alpha$. We will show by induction on
  the height of $\beta$ that $-\beta\in \Phi_F^-$. If the height of
  $\beta$ is $1$ then $\beta$ is a simple root and so $\beta\in
  F$. Clearly $-\beta\in \Phi_F^-$ in this case. Assume the height of
  $\beta$ is greater than $1$. Let $\alpha'\in \Pi$ be a simple root
  such that $\beta-\alpha'$ is a root. There are two possibilities:
  $-\alpha' \in T_{L'}$ or $\pm \alpha' \in F_{L'}^s$.
  
  In the first case where $-\alpha'\in T_{L'}$ we must have $-\beta +
  \alpha' \in F_{L'}^s$ since if $-\beta + \alpha' \in T_L$ then
  $-\beta = (-\beta + \alpha') - \alpha' \in T_{L'}$. So
  $\beta-\alpha' \in F_{L'}^s$ and $\beta \in F_{L'}^s$. Since
  $F_{L'}$ is closed (Proposition~\ref{prop:8}) we get $-\alpha' =
  (\beta-\alpha') - \beta \in F_L$ which is a contradiction. So the
  first case ($-\alpha' \in T_{L'}$) is impossible.
  
  In the second case since $F_{L'}$ is closed we get $\pm (\beta -
  \alpha') \in F_{L'}$ i.e. $\beta- \alpha' \in F_{L'}^s$. By the
  induction $-(\beta-\alpha') \in \Phi_F^-$ and since $-\beta =
  -(\beta-\alpha') - \alpha'$ we are done.
\end{proof}

\begin{prop}
  \label{prop:11}
  Let $\Sigma = \{\beta_1,\dots,\beta_r\}$ be a set of commuting
  roots.  The set $\{q^a F_{\beta_1}^{a_1}\cdots
  F_{\beta_r}^{a_r}|a_i\in \setN,a\in \setZ \}$ is an Ore subset of
  $U_q$.
\end{prop}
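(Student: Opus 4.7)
The plan is to verify the two defining properties of an Ore subset of $U_q$: (a) $S$ is multiplicatively closed and contains $1$, and (b) for every $u\in U_q$ and $s\in S$ there exist $s'\in S$ and $u'\in U_q$ with $us' = su'$ (right Ore), together with the symmetric left Ore condition. The regularity hypothesis is automatic since $U_q$ is a domain.

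For (a), the commuting-roots hypothesis $[F_{\beta_j},F_{\beta_i}]_q = 0$ for $i<j$ unfolds, by the definition of $[\,,\,]_q$, to $F_{\beta_j}F_{\beta_i} = q^{-(\beta_i|\beta_j)}F_{\beta_i}F_{\beta_j}$, and by iteration to $F_{\beta_j}^a F_{\beta_i}^b = q^{-ab(\beta_i|\beta_j)}F_{\beta_i}^b F_{\beta_j}^a$. The product of two standard monomials in $S$ can therefore be rearranged into the standard form $q^{c'}F_{\beta_1}^{c_1}\cdots F_{\beta_r}^{c_r}$ by sliding each factor $F_{\beta_i}^{b_i}$ of the second monomial to the left past the higher-indexed factors of the first, the resulting scalars being absorbed into the $q^{c'}$-term. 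Hence $S\cdot S\subset S$, and clearly $1\in S$.

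For (b), iterate Lemma~\ref{lemma:27}. Given $u\in U_q$ and $s = q^c F_{\beta_1}^{a_1}\cdots F_{\beta_r}^{a_r}\in S$, the Ore property for $\{F_{\beta_1}^n\}$ gives $m_1\in\setN$ and $u_1\in U_q$ with $uF_{\beta_1}^{m_1} = F_{\beta_1}^{a_1}u_1$. Applying Lemma~\ref{lemma:27} again with $F_{\beta_2}$ to $u_1\in U_q$ yields $m_2\in\setN$ and $u_2\in U_q$ with $u_1F_{\beta_2}^{m_2} = F_{\beta_2}^{a_2}u_2$, hence $uF_{\beta_1}^{m_1}F_{\beta_2}^{m_2} = F_{\beta_1}^{a_1}F_{\beta_2}^{a_2}u_2$. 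Continuing inductively for $i = 3,\dots,r$ produces $m_i\in\setN$ and $u_i\in U_q$ such that $uF_{\beta_1}^{m_1}\cdots F_{\beta_r}^{m_r} = F_{\beta_1}^{a_1}\cdots F_{\beta_r}^{a_r}u_r$. Taking $s' = q^{-c}F_{\beta_1}^{m_1}\cdots F_{\beta_r}^{m_r}\in S$ and $u' = u_r$ verifies $us' = su'$. The left Ore condition is established in exactly the same way using the left-sided half of Lemma~\ref{lemma:27}.

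The step most worth watching is (a): without the commuting-roots assumption the ordered monomials would not close under multiplication and $S$ would fail to be multiplicative. Part (b) presents no real obstacle beyond iteration, because the already-extracted factors $F_{\beta_1}^{a_1}\cdots F_{\beta_{i-1}}^{a_{i-1}}$ on the left are left undisturbed when the Ore property of $\{F_{\beta_i}^n\}$ is applied on the right of $u_{i-1}$.
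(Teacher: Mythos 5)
Your proof is correct, and it differs in structure from the one in the paper. Both arguments bottom out at Lemma~\ref{lemma:27}, but the paper proceeds by induction on $r$: it first applies Lemma~\ref{lemma:27} to peel off the top factor $F_{\beta_r}^{a_r}$ (writing $F_{\beta_r}^{a_r}\tilde u = uF_{\beta_r}^b$), then invokes the inductive hypothesis for $S_{r-1}$ applied to $\tilde u$, and finally uses the $q$-commutation $F_{\beta_r}^{a_r}F_{\beta_1}^{a_1}\cdots F_{\beta_{r-1}}^{a_{r-1}} = q^k F_{\beta_1}^{a_1}\cdots F_{\beta_r}^{a_r}$ to recombine everything into an element of $S_r$. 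You instead make a single linear pass $i=1,\dots,r$, pushing each $F_{\beta_i}^{a_i}$ through the current residual $u_{i-1}$, so the target monomial $F_{\beta_1}^{m_1}\cdots F_{\beta_r}^{m_r}$ is built up already in standard order and no rearrangement is needed at the Ore step; the commuting-roots hypothesis enters only in your part~(a) (multiplicative closure of the set), which the paper treats implicitly via the observation $F_{\beta_r}^b S_{r-1}\subset S_r$. The net effect is the same; your version isolates exactly where commutativity of the $\beta_i$'s is used (closure under multiplication) and exactly where it is not (the Ore exchange), which is a clean point worth having made explicit. One tiny bookkeeping slip: after $uF_{\beta_1}^{m_1}\cdots F_{\beta_r}^{m_r} = F_{\beta_1}^{a_1}\cdots F_{\beta_r}^{a_r}u_r$, taking $s' = q^{-c}F_{\beta_1}^{m_1}\cdots F_{\beta_r}^{m_r}$ forces $u' = q^{-2c}u_r$ rather than $u_r$; harmless, as the scalar can be absorbed either into $s'$ or into $u'$. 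Also note that for the left Ore condition your ``same way'' iteration should run in the opposite order $i=r,\dots,1$, peeling from the right end of the ordered monomial, but this is a routine mirroring.
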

\begin{proof}
  We will prove it by induction over $r$. $r=1$ is
  Lemma~\ref{lemma:27}.
  
  Let $S_r = \{q^a F_{\beta_1}^{a_1}\cdots F_{\beta_r}^{a_r}|a_i\in
  \setN, a\in \setZ \}$. Let $a_1,\dots,a_r \in \setN$, $a\in \setZ$
  and $u\in U_q$, then we need to show that
  \begin{equation}
    \label{eq:2}
    q^a F_{\beta_1}^{a_1}\cdots F_{\beta_{r}}^{a_{r}} U_q \cap u S_{r} \neq \emptyset.
  \end{equation}
  and
  \begin{equation}
    \label{eq:3}
    U_q q^a F_{\beta_1}^{a_1}\cdots F_{\beta_{r}}^{a_{r}} \cap S_{r} u \neq \emptyset.
  \end{equation}
  By Lemma~\ref{lemma:27} there exists $\tilde{u}\in U_q$ and $b \in
  \setN$ such that
  \begin{equation}
    \label{eq:1}
    F_{\beta_r}^{a_r}\tilde{u} = u
    F_{\beta_r}^{b}.
  \end{equation}
  By induction
  \begin{equation*}
    q^a F_{\beta_1}^{a_1}\cdots F_{\beta_{r-1}}^{a_{r-1}} U_q \cap \tilde{u} S_{r-1} \neq \emptyset
  \end{equation*}
  so
  \begin{equation*}
    q^a F_{\beta_r}^{a_r} F_{\beta_1}^{a_1}\cdots F_{\beta_{r-1}}^{a_{r-1}} U_q \cap F_{\beta_r}^{a_r}\tilde{u} S_{r-1} \neq \emptyset
  \end{equation*}
  Since $\Sigma$ is a set of commuting roots $F_{\beta_r}^{a_r}
  F_{\beta_1}^{a_1}\cdots F_{\beta_{r-1}}^{a_{r-1}} = q^k
  F_{\beta_1}^{a_1}\cdots F_{\beta_{r-1}}^{a_{r-1}}F_{\beta_r}^{a_r}$
  for some $k\in \setZ$. Using this and~\eqref{eq:1} we get
  \begin{equation*}
    \emptyset \neq q^{a+k}F_{\beta_1}^{a_1}\cdots F_{\beta_{r}}^{a_{r}} U_q \cap u F_{\beta_r}^{b}S_{r-1} \subset q^a F_{\beta_1}^{a_1}\cdots F_{\beta_{r}}^{a_{r}} U_q \cap u S_{r}
  \end{equation*}
  where $F_{\beta_r}^bS_{r-1}\subset S_r$ because $F_{\beta_r}$
  q-commutes with all the other root vectors.

  \eqref{eq:3} is shown similarly.
\end{proof}

\begin{lemma}
  \label{lemma:16}
  Let $\nu \in X$ and let $\Sigma=\{\beta_1,\dots,\beta_n\}$ be a
  basis of $Q$. Then there exists $\mathbf{b}=(b_1,\dots,b_n)\in
  (\setC^*)^n$ such that
  \begin{equation*}
    \nu = b_1^{\beta_1}b_2^{\beta_2}\cdots b_n^{\beta_n}
  \end{equation*}
  and there are only finitely many different $\mathbf{b}\in
  (\setC^*)^n$ satisfying this.
\end{lemma}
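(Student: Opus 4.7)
The plan is to translate the asserted equality into a system of equations on an algebraic torus. Under the identification of $X$ with $(\setC^*)^n$ via $\lambda \mapsto (\lambda(K_{\alpha_i}))_{i=1,\dots,n}$ described in the introduction, the equation $\nu = b_1^{\beta_1}\cdots b_n^{\beta_n}$ is equivalent to the system
\begin{equation*}
\nu(K_{\alpha_i}) \;=\; \prod_{j=1}^n b_j^{(\beta_j|\alpha_i)}, \qquad i = 1,\dots,n.
\end{equation*}
Letting $M$ be the integer $n\times n$ matrix with entries $M_{ji}=(\beta_j|\alpha_i)$, the right hand side is the $i$-th coordinate of $\phi(b_1,\dots,b_n)$, where $\phi\colon (\setC^*)^n \to (\setC^*)^n$ is the algebraic group homomorphism defined by $\phi(\mathbf{b})_i = \prod_j b_j^{M_{ji}}$. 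Thus existence of $\mathbf{b}$ amounts to surjectivity of $\phi$, and the finiteness statement amounts to finiteness of $\ker\phi$; both will follow once I show $\det M \neq 0$.

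To establish $\det M \neq 0$, I would write $\beta_j = \sum_k C_{jk}\alpha_k$ with $C \in \operatorname{GL}_n(\setZ)$; since $\Pi$ and $\Sigma$ are both $\setZ$-bases of $Q$, we have $\det C = \pm 1$. Then $M = C A$, where $A = \bigl((\alpha_k|\alpha_i)\bigr)_{k,i}$ is the Gram matrix of the simple roots with respect to the non-degenerate invariant form $(\cdot\,|\,\cdot)$. Since $\det A \neq 0$, we obtain $\det M = \pm \det A \neq 0$.

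Given $\det M \neq 0$, surjectivity of $\phi$ is immediate by choosing any branch of $\log$, solving the $\setC$-linear system $M^\top (\log b_1,\dots,\log b_n)^\top = (\log \nu(K_{\alpha_i}))_{i=1,\dots,n}^\top$, and exponentiating to recover $\mathbf{b}$. The kernel of $\phi$ is canonically isomorphic to $\operatorname{Hom}(\setZ^n / M^\top \setZ^n,\, \setC^*)$, which is a finite group of order $|\det M|$, so any two valid choices of $\mathbf{b}$ differ by one of these finitely many elements. There is no serious obstacle here; the statement reduces to the standard fact that a surjective endomorphism of the algebraic torus $(\setC^*)^n$ has finite kernel, with the only quantum-group input being the identification $X \iso (\setC^*)^n$.
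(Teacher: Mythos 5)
Your proof is correct and follows essentially the same route as the paper: both reduce the multiplicative equation to a linear system by taking logarithms, rely on the nondegeneracy of the Gram matrix of the bilinear form (you factor it as $M = CA$ with $C \in \GL_n(\setZ)$; the paper works directly with $\bigl((\beta_i|\beta_j)\bigr)$), and observe that the integer matrix being invertible over $\setQ$ gives surjectivity with finite fibres. Your packaging via the kernel of an algebraic torus endomorphism, identified with $\operatorname{Hom}(\setZ^n/M^\top\setZ^n,\setC^*)$, is slightly cleaner and yields the exact count $|\det M|$ where the paper settles for a crude upper bound, but this is a presentational difference rather than a different argument.
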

\begin{proof}
  If $\gamma_1,\gamma_2\in X$ satisfy $\gamma_1(K_{\beta_i})=
  \gamma_2(K_{\beta_i})$ for $i=1,\dots,n$ then $\gamma_1=\gamma_2$
  because $\{\beta_1,\dots,\beta_n\}$ is a basis of $Q$. Since for
  $a_1,\dots,a_n\in\setC^*$, $a_1^{\beta_1}a_2^{\beta_2}\cdots
  a_n^{\beta_n}(K_{\beta_i}) =
  a_1^{\left(\beta_i|\beta_1\right)}a_2^{\left(\beta_i|\beta_2\right)}\cdots
  a_n^{\left(\beta_i|\beta_n \right)}$ we have to solve the system in
  $n$ unknown variables $x_1,\dots,x_n$:
  \begin{align*}
    x_1^{\left(\beta_1|\beta_1\right)}x_2^{\left(\beta_1|\beta_2\right)}\cdots
    x_n^{\left(\beta_1|\beta_n \right)} =& \nu(K_{\beta_1})
    \\
    x_1^{\left(\beta_2|\beta_1 \right)}x_2^{\left(\beta_2|\beta_2
      \right)}\cdots x_n^{\left(\beta_2|\beta_n \right)} =&
    \nu(K_{\beta_2})
    \\
    \vdots&
    \\
    x_1^{\left(\beta_n|\beta_1\right)}x_2^{\left( \beta_n| \beta_2
      \right)}\cdots x_n^{\left(\beta_n|\beta_n \right)} =&
    \nu(K_{\beta_n}).
  \end{align*}
  Let $c_j\in \setC$, $j=1,\dots,n$ be such that
  $\nu(K_{\beta_j})=e^{c_j}$. There is a choice here since any
  $c_j+2k\pi i$, $k\in\setZ$ could be chosen instead. Consider the
  linear system in $n$ unknowns $X_1,\dots,X_n$
  \begin{align*}
    \left(\beta_1|\beta_1 \right) X_1+\left( \beta_1| \beta_2 \right)
    X_2 \cdots \left( \beta_1 | \beta_n \right)X_n =& c_1
    \\
    \left(\beta_2|\beta_1 \right) X_1+\left( \beta_2 | \beta_2
    \right)X_2 \cdots \left( \beta_2| \beta_n \right)X_n =& c_2
    \\
    \vdots&
    \\
    \left(\beta_n|\beta_1\right) X_1+\left( \beta_n| \beta_2
    \right)X_2 \cdots \left( \beta_n| \beta_n \right)X_n =& c_n.
  \end{align*}
  This system has a unique solution $a_1,\dots,a_n\in \setC$ since the
  matrix $(\left(\beta_i|\beta_j\right))_{i,j}$ is invertible. So $x_i
  = e^{a_i}$ is a solution to the above system. Any other solution to
  the original system corresponds to making a different choice when
  taking the logarithm of $\nu(K_{\beta_i})$. So another solution
  would be of the form $x_i = e^{a_i + a'_i}$ where $a'_i$,
  $i=1,\dots,n$ is a solution to a system of the form:
  \begin{align*}
    \left(\beta_1|\beta_1 \right) X_1+\left( \beta_1| \beta_2 \right)
    X_2 \cdots \left( \beta_1 | \beta_n \right)X_n =& 2k_1\pi i
    \\
    \left(\beta_2|\beta_1 \right) X_1+\left( \beta_2 | \beta_2
    \right)X_2 \cdots \left( \beta_2| \beta_n \right)X_n =& 2k_2 \pi i
    \\
    \vdots&
    \\
    \left(\beta_n|\beta_1\right) X_1+\left( \beta_n| \beta_2
    \right)X_2 \cdots \left( \beta_n| \beta_n \right)X_n =& 2k_n \pi
    i.
  \end{align*}
  for some $k_1,\dots,k_n\in \setZ$. Since $A= (\left(\beta_i|\beta_j
  \right))_{i,j}$ is a matrix with only integer coefficients we have
  $A\inv = \frac{1}{\det A} \tilde{A}$ for some $\tilde{A}$ with only
  integer coefficients. So the solution to the system above is integer
  linear combinations in $\frac{2k_i \pi i}{\det A}$, $i=1,\dots,n$
  hence $\{(e^{a'_1},\dots,e^{a'_n})|(a'_1,\dots, a'_n) \text{ is a
    solution to the above system} \}$ has fewer than $n \det A$
  elements so it is a finite set.
\end{proof}

In the next definition we would like to compose the $\phi$'s for
different $\beta$. In particular let
$\Sigma=\{\beta_1,\dots,\beta_n\}$ be a set of commuting roots and
$F_{\beta_1},\dots,F_{\beta_n}$ corresponding root vectors. Let
$F_\Sigma:=\{q^a F_{\beta_1}^{a_1}\cdots F_{\beta_n}^{a_n}|a_i\in
\setN,a\in\setZ\}$ and let $U_{q(F_\Sigma)}$ be the Ore localization
in $F_\Sigma$. For $i<j$ we have
\begin{equation*}
  F_{\beta_i}^{-k} F_{\beta_j} F_{\beta_i}^{k} = q^{-k(\beta_i|\beta_j)} F_{\beta_j}
\end{equation*}
or equivalently $\phi_{F_{\beta_i},q^k}(F_{\beta_j}) =
\left(q^k\right)^{-(\beta_i|\beta_j)} F_{\beta_j}$. This implies
$\phi_{F_{\beta_i},b}(F_{\beta_j})=b^{-(\beta_i|\beta_j)} F_{\beta_j}$
for $b\in \setC^*$ because $b\mapsto
\phi_{F_{\beta_i},b}(F_{\beta_j})$ is Laurent polynomial. Similarly
$\phi_{F_{\beta_j},b}(F_{\beta_i})=b^{(\beta_i|\beta_j)}
F_{\beta_i}$. This shows that we can define
$\phi_{F_\beta,b}(F_{\beta'}\inv) = \phi_{F_\beta,b}(F_{\beta'})\inv$
for $\beta,\beta'\in\Sigma$ extending $\phi_{F_\beta,b}$ to a
homomorphism $U_{q(F_\Sigma)}\to U_{q(F_\Sigma)}$. Also note that the
$\phi$'s commute because
\begin{align*}
  F_{\beta_i}^{-k_1}F_{\beta_j}^{-k_2} u F_{\beta_j}^{k_2}
  F_{\beta_i}^{k_1} =& q^{k_1k_2(\beta_i|\beta_j)}
  F_{\beta_j}^{-k_2}F_{\beta_i}^{-k_1} u
  q^{-k_1k_2(\beta_i|\beta_j)}F_{\beta_i}^{k_1}F_{\beta_j}^{k_2}
  \\
  =& F_{\beta_j}^{-k_2}F_{\beta_i}^{-k_1} u
  F_{\beta_i}^{k_1}F_{\beta_j}^{k_2}
\end{align*}

\begin{defn}
  \label{def:twist_by_weight}
  Let $\Sigma=\{\beta_1,\dots,\beta_r\}$ be a set of commuting roots
  and let $F_{\beta_1},\dots,F_{\beta_r}$ be corresponding root
  vectors such that $[F_{\beta_j},F_{\beta_i}]_q=0$ for $i<j$.  Let
  $U_{q(F_\Sigma)}$ denote the Ore localization of $U_q$ in the Ore
  set $F_\Sigma:=\{q^a F_{\beta_1}^{a_1}\cdots
  F_{\beta_n}^{a_r}|a_i\in \setN,a\in\setZ\}$. Said in words we invert
  $F_\beta$ for all $\beta\in \Sigma$.
  
  Let $M$ be a $U_{q}$-module. We define $M_{F_\Sigma}$ to be the
  $U_{q(F_\Sigma)}$-module $U_{q(F_\Sigma)} \tensor_{U_q} M$.  Let
  $\mathbf{b} = (b_1,\dots,b_r)\in (\setC^*)^r$. Then for a
  $U_{q(F_\Sigma)}$-module $N$ we define
  $\phi_{F_{\Sigma},\mathbf{b}}.N$ to be the twist of the module by
  $\phi_{F_{\beta_1},b_1}\circ \dots \circ \phi_{F_{\beta_r},b_r}$.

  For $\mathbf{i}=(i_1,\dots,i_r)\in \setZ^r$ define $q^{\mathbf{i}} =
  (q^{i_1},\dots,q^{i_r})\in (\setC^*)^r$ and
  $q^{\setZ^r}=\{q^{\mathbf{i}}|\mathbf{i}\in\setZ^r\}\subset
  (\setC^*)^r$.

  For $\mathbf{b}=(b_1,\dots,b_r)\in (\setC^*)^r$ we set
  $\mathbf{b}^\Sigma := b_1^{\beta_1}\cdots b_r^{\beta_r}\in X$. If
  $\Sigma$ is a basis of $Q$ then the map $\mathbf{b} \mapsto
  \mathbf{b}^\Sigma$ is surjective by Lemma~\ref{lemma:16} but not
  neccesarily injective.
\end{defn}

\begin{cor}[to Lemma~\ref{lemma:29}]
  \label{cor:6}
  Let $\Sigma$ be a set of commuting roots that is a $\setZ$ basis of
  $Q$, let $F_\Sigma$ be an Ore subset corresponding to $\Sigma$, let
  $M$ be a $U_{q(F_\Sigma)}$-module and let
  $\mathbf{i}=(i_1,\dots,i_n)\in \setZ^n$. Then
  \begin{equation*}
    \phi_{F_{\Sigma} ,q^{\mathbf{i}}}.M \iso M
  \end{equation*}
  as $U_{q(F_\Sigma)}$-modules.  Furthermore for $\lambda\in \wt M$ we
  have an isomorphism of $(U_{q(F_\Sigma)})_0$-modules:
  \begin{equation*}
    \phi_{F_\Sigma,q^{\mathbf{i}}}.M_\lambda \iso M_{\left(q^{-\mathbf{i}}\right)^\Sigma \lambda} = M_{q^{-\mu}\lambda}
  \end{equation*}
  where $\mu = \sum_{j=1}^n i_j \beta_j$.
\end{cor}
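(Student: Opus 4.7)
The plan is to write down the isomorphism explicitly and verify it directly, generalizing the proof of Lemma~\ref{lemma:29} from one root to a commuting family. Define
\[
\Phi\colon\phi_{F_\Sigma, q^{\mathbf{i}}}.M \to M, \qquad \Phi(\phi_{F_\Sigma, q^{\mathbf{i}}}.m) := F_{\beta_1}^{i_1} F_{\beta_2}^{i_2}\cdots F_{\beta_n}^{i_n} m.
\]
Its set-theoretic inverse is $m \mapsto \phi_{F_\Sigma,q^{\mathbf{i}}}.\bigl(F_{\beta_n}^{-i_n}\cdots F_{\beta_1}^{-i_1} m\bigr)$, so $\Phi$ is a bijection since each $F_{\beta_j}$ is invertible in $U_{q(F_\Sigma)}$.

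To check that $\Phi$ is a $U_{q(F_\Sigma)}$-module homomorphism I would first establish the identity
\[
\phi_{F_\Sigma, q^{\mathbf{i}}}(u) = (F_{\beta_1}^{i_1}\cdots F_{\beta_n}^{i_n})^{-1}\, u\, (F_{\beta_1}^{i_1}\cdots F_{\beta_n}^{i_n}), \qquad u \in U_{q(F_\Sigma)}.
\]
Iterating the one-root identity $\phi_{F_{\beta_j}, q^{i_j}}(u) = F_{\beta_j}^{-i_j} u F_{\beta_j}^{i_j}$ from Lemma~\ref{lemma:9} yields a version of the right-hand side with the $F_{\beta_j}^{\pm i_j}$ appearing in reverse order. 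However, the $q$-commutation $F_{\beta_j}F_{\beta_i} = q^{(\beta_i|\beta_j)}F_{\beta_i}F_{\beta_j}$ for $i<j$ implies that any two orderings of the product $F_{\beta_1}^{i_1}\cdots F_{\beta_n}^{i_n}$ differ only by a nonzero scalar, and the scalars appearing to the left and to the right of $u$ are mutually inverse and cancel. With this identity in hand, the homomorphism property $\Phi(u\cdot \phi_{F_\Sigma, q^{\mathbf{i}}}.m) = u\cdot \Phi(\phi_{F_\Sigma, q^{\mathbf{i}}}.m)$ is a short computation, exactly as in the proof of Lemma~\ref{lemma:29}.

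For the weight space statement, the element $F_{\beta_1}^{i_1}\cdots F_{\beta_n}^{i_n}$ has weight $-\sum_j i_j\beta_j = -\mu$, so multiplication by it sends $M_\lambda$ bijectively onto $M_{q^{-\mu}\lambda}$. Iterating the remark following Definition~\ref{def:twist_by_weight} shows that $\phi_{F_\Sigma, q^{\mathbf{i}}}.M_\lambda$ is precisely the weight-$q^{-\mu}\lambda$ subspace of $\phi_{F_\Sigma, q^{\mathbf{i}}}.M$, so $\Phi$ restricts to the claimed $(U_{q(F_\Sigma)})_0$-isomorphism $\phi_{F_\Sigma, q^{\mathbf{i}}}.M_\lambda \iso M_{q^{-\mu}\lambda}$. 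There is no real obstacle here: the entire corollary is a direct extension of Lemma~\ref{lemma:29} from one root to a commuting family, and the $q$-commutation of the $F_{\beta_j}$ for $\beta_j\in\Sigma$ reduces the multi-root ordering issue to harmless scalar manipulations.
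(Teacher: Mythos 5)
Your proof is correct and takes essentially the same approach as the paper, which dispatches the corollary in one sentence as a direct consequence of Lemma~\ref{lemma:29}. You spell out the composite isomorphism explicitly rather than iterating the single-root isomorphism of Lemma~\ref{lemma:29} factor by factor; the only added detail is the observation that the $q$-commutation scalars from reordering the $F_{\beta_j}^{\pm i_j}$ cancel between the two sides of $u$, which is harmless (and in fact avoidable, since writing the conjugating element in the order $F_{\beta_n}^{i_n}\cdots F_{\beta_1}^{i_1}$ makes the identity immediate from iterating Lemma~\ref{lemma:9}).
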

\begin{proof}
  The corollary follows from Lemma~\ref{lemma:29} because $\Sigma$ is
  a $\setZ$ basis of $Q$.
\end{proof}

\begin{defn}
  Let $L$ be an admissible module of degree $d$. The essential support
  of $L$ is defined as
  \begin{equation*}
    \Suppess(L) := \{ \lambda \in \wt L | \dim L_\lambda = d\}
  \end{equation*}
\end{defn}

\begin{lemma}
  \label{lemma:13}
  Let $M$ be an admissible module. Let $\Sigma\subset \Phi^+$ be a set
  of commuting roots and $F_\Sigma$ a corresponding Ore subset. Assume
  $-\Sigma\subset T_M$. Then for $\lambda \in X$:
  \begin{equation*}
    \dim (M_{F_\Sigma})_\lambda = \max_{\mu\in \setZ \Sigma}\{\dim M_{q^\mu\lambda}\}
  \end{equation*}
  and if $\dim M_\lambda = \max_{\mu\in \setZ \Sigma}\{\dim
  M_{q^\mu\lambda}\}$ then $(M_{F_\Sigma})_\lambda \iso M_\lambda$ as
  $(U_q)_0$-modules.

  In particular if $\Sigma\subset T_M$ as well then $M_{F_\Sigma}\iso
  M$ as $U_q$-modules.
\end{lemma}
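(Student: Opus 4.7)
The plan is to exploit the hypothesis $-\Sigma\subset T_M$ to see that each $F_\beta$ with $\beta\in\Sigma$ acts injectively on $M$, and then describe $(M_{F_\Sigma})_\lambda$ as an ascending union obtained by inverting the $F_{\beta_i}$, to which admissibility applies.

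First I would note that $-\beta\in T_M$ means $M^{[-\beta]}=0$, so no weight vector is annihilated by any power of $F_\beta$; in particular $F_\beta$ acts injectively on $M$. Since the $F_{\beta_i}$ pairwise $q$-commute, the whole Ore set $F_\Sigma$ acts injectively, so the canonical map $M\to M_{F_\Sigma}$ is an embedding. Writing $F^{-\mu}:=F_{\beta_1}^{-a_1}\cdots F_{\beta_r}^{-a_r}$ for $\mu=\sum a_i\beta_i\in\setN\Sigma$, any weight-$\lambda$ element of $M_{F_\Sigma}$ can be written as $F^{-\mu}m$ with $m\in M_{q^{-\mu}\lambda}$, and multiplication by the invertible element $F^{-\mu}$ is a linear bijection $M_{q^{-\mu}\lambda}\to F^{-\mu}M_{q^{-\mu}\lambda}$. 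If $\mu\leq\mu'$ in $\setN\Sigma$, then $F^{-\mu}m=F^{-\mu'}(F^{\mu'-\mu}m)$ with $F^{\mu'-\mu}m\in M_{q^{-\mu'}\lambda}$, giving
\begin{equation*}
(M_{F_\Sigma})_\lambda \;=\; \bigcup_{\mu\in\setN\Sigma} F^{-\mu}M_{q^{-\mu}\lambda}.
\end{equation*}
The dimension $\dim M_{q^{-\mu}\lambda}$ is non-decreasing in $\mu\in\setN\Sigma$ (injectivity of $F^{\mu'-\mu}$) and bounded above by admissibility, so the chain stabilizes. The stable value equals $\max_{\nu\in\setZ\Sigma}\dim M_{q^\nu\lambda}$: given $\nu_0$ achieving the maximum, take any $\mu\in\setN\Sigma$ large enough that $\mu+\nu_0\in\setN\Sigma$; then injectivity of $F^{\mu+\nu_0}$ on $M_{q^{\nu_0}\lambda}$ shows $\dim M_{q^{\nu_0}\lambda}\leq \dim M_{q^{-\mu}\lambda}$, proving the first identity. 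When $\dim M_\lambda$ itself equals this maximum, every inclusion $M_\lambda\subset F^{-\mu}M_{q^{-\mu}\lambda}$ in the chain is an equality of finite-dimensional spaces, so the canonical $(U_q)_0$-equivariant inclusion $M_\lambda\hookrightarrow (M_{F_\Sigma})_\lambda$ is an isomorphism, giving the second claim.

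For the final assertion, suppose also $\Sigma\subset T_M$. Then $E_\beta$ acts injectively on $M$ for each $\beta\in\Sigma$, so $\dim M_\lambda\leq \dim M_{q^\beta\lambda}$, and combining with the opposite inequality coming from $F_\beta$-injectivity forces $\dim M_{q^\nu\lambda}$ to be constant on $\setZ\Sigma$-cosets. Thus $\dim M_\lambda$ achieves the maximum for every $\lambda$, and the previous paragraph produces isomorphisms $M_\lambda\cong (M_{F_\Sigma})_\lambda$ for all $\lambda$, realised by the canonical $U_q$-equivariant map $M\to M_{F_\Sigma}$; hence $M\cong M_{F_\Sigma}$ as $U_q$-modules. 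The only delicate step is the direct-limit description of $(M_{F_\Sigma})_\lambda$ together with the weight bookkeeping for inverted $F_{\beta_i}$; everything else is a short combination of injectivity of the $F_\beta$ with the uniform bound on weight-space dimensions.
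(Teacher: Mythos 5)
Your proof is correct and follows essentially the same route as the paper's: both use injectivity of the $F_\beta$ (from $-\Sigma\subset T_M$) to embed $M$ in $M_{F_\Sigma}$, describe elements of $(M_{F_\Sigma})_\lambda$ by clearing denominators, and use admissibility for the upper bound and invertibility of $F_\beta$ in $M_{F_\Sigma}$ for the lower bound. Your phrasing via the stabilizing ascending union $\bigcup_{\mu\in\setN\Sigma} F^{-\mu}M_{q^{-\mu}\lambda}$ and the paper's ``take a finite-dimensional subspace $V$ and find a homogeneous $s\in F_\Sigma$ with $sV\subset M$'' are two presentations of the same mechanism.
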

Compare to Lemma~4.4(ii) in~\cite{Mathieu}.
\begin{proof}
  We have $\Sigma=\{\beta_1,\dots,\beta_r\}$ for some
  $\beta_1,\dots,\beta_r\in \Phi^+$ and corresponding root vectors
  $F_{\beta_1},\dots,F_{\beta_r}$. Let $\lambda\in X$ and set
  $d=\max_{\mu\in \setZ \Sigma}\{\dim M_{q^\mu\lambda}\}$. Let $V$ be
  a finite dimensional subspace of $(M_{F_\Sigma})_\lambda$. Then
  there exists a homogenous element $s\in F_\Sigma$ such that
  $sV\subset M$. Let $\nu\in \setZ\Sigma$ be the degree of $s$. So $sV
  \subset M_{q^\nu \lambda}$ hence $\dim sV \leq d$. Since $s$ acts
  injectively on $M_{F_\Sigma}$ we have $\dim V \leq d$. Now the first
  claim follows because $F_\beta^{\pm 1}$ acts injectively on
  $M_{F_\Sigma}$ for all $\beta\in \Sigma$.
  
  We have an injective $U_q$-homomorphism from $M$ to $M_{F_\Sigma}$
  sending $m\in M$ to $1\tensor m\in M_{F_\Sigma}$ that restricts to a
  $(U_q)_0$-homomorphism from $M_\lambda$ to
  $(M_{F_\Sigma})_\lambda$. If $\dim M_\lambda = d$ then this is
  surjective as well. So it is an isomorphism. The last claim follow
  because $\pm \Sigma \subset T_M$ implies $\dim M_\lambda = \dim
  M_{q^\mu\lambda}$ for any $\mu\in \setZ \Sigma$; so $M_\lambda \iso
  (M_{F_\Sigma})_\lambda$ for any $\lambda\in X$. Since $M$ is a
  weight module this implies that $M \iso M_{F_\Sigma}$ as
  $U_q$-modules.
\end{proof}

\begin{lemma}
  \label{lemma:24}
  Let $L$ be a simple infinite dimensional admissible module. Let
  $\beta\in (T_L^s)^+$. Then there exists a $b\in \setC^*$ such that
  $\phi_{F_\beta,b}.L_{F_\beta}$ contains a simple admissible
  $U_q$-submodule $L'$ with $T_{L'}\subset T_L$ and $\beta\not \in
  T_L$.
\end{lemma}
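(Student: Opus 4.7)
The plan is to choose $b\in\setC^*$ and a weight vector $v\in L_\lambda$ so that in the twist $M:=\phi_{F_\beta,b}.L_{F_\beta}$ the element $\phi.v$ is killed by the twisted action of $E_\beta$, then extract a simple admissible submodule from the resulting non-simple $M$ and verify that its torsion set is no larger than $T_L$. Since $\beta\in T_L^s$, both $E_\beta$ and $F_\beta$ act injectively on every weight vector of $L$, so Lemma~\ref{lemma:13} applied with $\Sigma=\{\beta\}$ gives $L\iso L_{F_\beta}$ as $U_q$-modules. Hence $M$ has weight spaces of the same (bounded) dimensions as those of $L$, merely relabelled by the character $b^{-\beta}$, and is therefore admissible for every $b\in\setC^*$.

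By Lemma~\ref{lemma:9}, for $v\in L_\lambda\subset L_{F_\beta}$ the twisted action of $E_\beta$ is
\[
E_\beta\cdot\phi.v \;=\; \phi.\bigl(E_\beta v + d(b,\lambda)\,F_\beta\inv v\bigr),
\]
where $d(b,\lambda)=\frac{(b_\beta-b_\beta\inv)(q_\beta b_\beta\inv\lambda(K_\beta)-q_\beta\inv b_\beta\lambda(K_\beta)\inv)}{(q_\beta-q_\beta\inv)^{2}}\in\setC$. This vanishes iff $F_\beta E_\beta v = -d(b,\lambda)v$ in $L$. Since $F_\beta E_\beta$ preserves the finite-dimensional weight space $L_\lambda$, I pick any eigenvector $v$ with eigenvalue $e\in\setC$. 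As the map $b\mapsto d(b,\lambda)$ is a non-constant Laurent polynomial in $b_\beta^{2}$, there exists $b\in\setC^*$ with $d(b,\lambda)=-e$, and for this choice $E_\beta\cdot\phi.v=0$ in $M$.

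With these $b$ and $v$ fixed, let $N:=U_q\cdot\phi.v\subset M$. Being finitely generated and admissible, $N$ lies in $\mathcal{F}$ and has finite Jordan--Hölder length by Lemma~\ref{lemma:10}. By Proposition~\ref{prop:2} the subspace $N^{[\beta]}$ is a $U_q$-submodule of $N$, and it is non-zero as it contains $\phi.v$. Choose any simple submodule $L'\subset N^{[\beta]}$; it is admissible as a submodule of $M$, and the inclusion $L'\subset N^{[\beta]}$ forces ${L'}^{[\beta]}=L'$, so $\beta\in F_{L'}$ and in particular $\beta\notin T_{L'}$.

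The main obstacle is verifying $T_{L'}\subset T_L$, i.e.\ that $\gamma\in F_L$ implies $\gamma\in F_{L'}$. I would combine Proposition~\ref{prop:25}, which expresses $\phi_{F_\beta,b}(E_\gamma)=\sum_{i=0}^s c_i(b)\,F_\beta^{-i}u_i$ as a finite sum with $u_i\in U_q$, with the weight-support inclusion $\wt L'\subset b^{-\beta}\wt L$ from Proposition~\ref{prop:3}. For $\gamma\in F_L$ one argues, using the $q$-commutation of $F_\beta$ past the weight-homogeneous $u_i$ (Theorem~\ref{thm:DP}) together with the local $E_\gamma$-finiteness of $L$, that iterated applications of $\phi_{F_\beta,b}(E_\gamma)$ to any weight vector of $M$ remain in a finite-dimensional subspace; admissibility of $M$ keeps this control uniform. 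This yields $\gamma\in F_{L'}$ and hence $T_{L'}\subset T_L$, which is the delicate transfer of local finiteness through the twist.
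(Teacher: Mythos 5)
The first half of your argument is essentially the paper's: pick an eigenvector $v\in L_\lambda$ of $F_\beta E_\beta$, use that $b\mapsto\phi_{F_\beta,b}(E_\beta)$ is Laurent polynomial (Lemma~\ref{lemma:9}) to find $b$ with $E_\beta\cdot\phi.v=0$, observe $\phi.v\in(\phi_{F_\beta,b}.L)^{[\beta]}$, and extract a simple submodule $L'$ of this $\beta$-finite part via Lemma~\ref{lemma:10}. (Your detour through the cyclic submodule $N=U_q\cdot\phi.v$ before taking $N^{[\beta]}$ is harmless; the paper just takes $(\phi_{F_\beta,c}.L)^{[\beta]}$ directly.) So far this matches.

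The gap is in the last step. You correctly flag $T_{L'}\subset T_L$ as the crux, but you attack the contrapositive $\gamma\in F_L\Rightarrow\gamma\in F_{L'}$ head on, which is the hard direction: $\phi_{F_\beta,b}(E_\gamma)$ is a sum of terms with negative powers of $F_\beta$, and local $E_\gamma$-finiteness does not visibly survive the twist. The sentence ``iterated applications of $\phi_{F_\beta,b}(E_\gamma)$ to any weight vector of $M$ remain in a finite-dimensional subspace; admissibility of $M$ keeps this control uniform'' is the desired \emph{conclusion}, not an argument, and without a concrete nilpotency/degree estimate there is no reason it should hold. The paper avoids all of this by arguing the forward direction: if $\gamma\in T_{L'}$, then by Proposition~\ref{prop:3} one has $q^{\setN\gamma}\wt L'\subset\wt L'$; combining with $\wt L'\subset c^{-\beta}\wt L$ and translating by $c^\beta$ gives $q^{\setN\gamma}\nu\subset\wt L$ for some $\nu\in\wt L$, which rules out $\gamma\in F_L$; since $L$ is simple, $\Phi=F_L\sqcup T_L$, so $\gamma\in T_L$. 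No transfer of local finiteness through the twist is needed at all --- only the weight-support characterization of $\gamma$-freeness. Note also a small mis-attribution: the inclusion $\wt L'\subset b^{-\beta}\wt L$ does not come from Proposition~\ref{prop:3}; it is the elementary fact $\wt\bigl(\phi_{F_\beta,b}.M\bigr)=b^{-\beta}\wt M$ recorded after the definition of the twist, together with $L\iso L_{F_\beta}$.
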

\begin{proof}
  Since $\beta\in T_L^s$ we have $L\iso L_{F_\beta}$ as $U_q$-modules
  by Lemma~\ref{lemma:13}. So we will consider $L$ as a
  $U_{q(F_\beta)}$-module via this isomorphism when taking twist etc.

  Let $E_\beta$ and $F_\beta$ be root vectors corresponding to
  $\beta$.  Let $\lambda\in \wt L$. Consider $F_\beta E_\beta$ as a
  linear operator on $L_\lambda$. Since $\setC$ is algebraically
  closed $F_\beta E_\beta$ must have an eigenvalue $c_\beta$ and an
  eigenvector $v\in L_\lambda$. By (the proof of) Lemma~\ref{lemma:9}
  \begin{equation*}
    F_\beta E_\beta \phi_{F_{\beta} ,b} . v = \phi_{F_{\beta} ,b}.(c_\beta - (q_\beta-q_\beta\inv)^{-2}(b_\beta-b_\beta\inv)(q_\beta b_\beta\inv \lambda(K_\beta) - q_\beta\inv b_\beta \lambda(K_\beta)\inv) v.
  \end{equation*}
  The Laurent polynomial, in $b$, $c_\beta -
  (q-q\inv)^{-2}(b_\beta-b_\beta\inv)(b_\beta \lambda(K_\beta) -
  b_\beta\inv \lambda(K_\beta\inv))$ has a zero point $c\in \setC^*$.

  Thus $\phi_{F_{\beta} ,c}.L$ contains an element $v'$ such that
  $F_\beta E_\beta v'=0$ and since $F_\beta$ acts injectively on
  $\phi_{F_{\beta} ,c}.L$, we have $E_\beta v'=0$. Set $V=\{m\in
  \phi_{F_{\beta} ,c}.L| E_\beta^N m = 0, N>>0\}=(\phi_{F_{\beta}
    ,c}.L)^{[\beta]}$. By Proposition~\ref{prop:2} this is a
  $U_q$-submodule of the $U_q$-module $\phi_{F_{\beta} ,c}.L$. It is
  nonzero since $v'\in V$. By Lemma~\ref{lemma:10} $V$ has a simple
  $U_q$-submodule $L'$.

  We want to show that $T_{L'} \subset T_L$. Assume $\gamma \in
  T_{L'}$. Then $q^{\setN \gamma}\wt L' \subset \wt L'$. But since
  $\wt L' \subset c^{-\beta} \wt L$ we get for some $\nu \in \wt L$,
  $q^{\setN \gamma}c^{-\beta} \nu \subset c^{-\beta} \wt L$ or
  equivalently $q^{\setN \gamma} \nu \subset \wt L$. But this shows
  that $\gamma \not \in F_L$ and since $L$ is a simple $U_q$-module
  this implies that $\gamma \in T_L$. By construction we have
  $\beta\not \in T_{L'}$.
\end{proof}

\section{Coherent families}
\label{sec:coherent-families}
For a $U_q$-module $M\in\mathcal{F}$ define $\Tr^M: X \times (U_q)_0 \to
\setC$ by $\Tr^M(\lambda,u) = \Tr u|_{M_\lambda}$.

\begin{lemma}
  \label{lemma:32}
  Let $M,N\in \mathcal{F}$ be semisimple $U_q$-modules. If $\Tr^M =
  \Tr^N$ then $M\iso N$.
\end{lemma}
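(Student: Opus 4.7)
The plan is to reduce the statement to a weight-space-by-weight-space comparison and then invoke the standard fact that finite dimensional semisimple modules over a $\setC$-algebra are determined by their trace character, together with Theorem~\ref{thm:Lemire}.

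First I would observe that for each $\lambda\in X$ the function $u\mapsto \Tr^M(\lambda,u)$ is by definition the character of the finite dimensional $(U_q)_0$-module $M_\lambda$. Because $M\in\mathcal{F}$ is semisimple as a $U_q$-module, write $M\iso \bigoplus_{L} L^{\oplus m_L}$ where the sum runs over isomorphism classes of simple modules in $\mathcal{F}$. Taking $\lambda$-weight spaces gives $M_\lambda\iso \bigoplus_L (L_\lambda)^{\oplus m_L}$, and by Theorem~\ref{thm:Lemire} each nonzero $L_\lambda$ is a simple $(U_q)_0$-module and the assignment $L\mapsto L_\lambda$ (for $L$ with $\lambda\in\wt L$) is injective on isomorphism classes. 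Since $\dim M_\lambda<\infty$, only finitely many of the $m_L$ with $L_\lambda\neq 0$ contribute, so $M_\lambda$ is a finite dimensional semisimple $(U_q)_0$-module whose isotypic decomposition has multiplicities $m_L$.

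Next I would apply the standard fact from character theory over $\setC$: the characters of pairwise non-isomorphic finite dimensional simple modules over any $\setC$-algebra are linearly independent as functions on that algebra (this is immediate from the Jacobson density theorem). Consequently a finite dimensional semisimple $(U_q)_0$-module is determined up to isomorphism by the function $u\mapsto \Tr u|_{-}$. Applying this to each weight space, the hypothesis $\Tr^M=\Tr^N$ yields $M_\lambda\iso N_\lambda$ as $(U_q)_0$-modules for every $\lambda\in X$.

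Finally, run the argument of the first paragraph for $N$ as well: decompose $N\iso \bigoplus_L L^{\oplus n_L}$. The isotypic multiplicity of the simple $(U_q)_0$-module $L_\lambda$ in $M_\lambda$ is $m_L$, and the corresponding multiplicity in $N_\lambda$ is $n_L$. The isomorphism $M_\lambda\iso N_\lambda$ therefore gives $m_L=n_L$ for every simple $L$ with $\lambda\in\wt L$. Choosing for each simple $L$ with $m_L>0$ or $n_L>0$ some weight $\lambda\in\wt L$, we conclude $m_L=n_L$ for all $L$, hence $M\iso N$. The main point to get right is the bookkeeping linking $U_q$-multiplicities with $(U_q)_0$-multiplicities via Theorem~\ref{thm:Lemire}; the character-theoretic step is routine once semisimplicity of each weight space is established.
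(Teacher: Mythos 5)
Your proof is correct and follows essentially the same strategy as the paper: reduce to the finite dimensional weight spaces $M_\lambda$, show $M_\lambda \iso N_\lambda$ as $(U_q)_0$-modules using a trace/character argument, and lift back to a $U_q$-isomorphism via Theorem~\ref{thm:Lemire}. The only cosmetic difference is in how the weight-space step is justified: the paper passes to the finite dimensional quotient algebra $(U_q)_0/\Ann(M_\lambda)$ in order to cite Lam's Theorem~7.19, whereas you invoke linear independence of characters of pairwise non-isomorphic finite dimensional simple modules over an arbitrary $\setC$-algebra (via Jacobson density), which sidesteps the reduction to a finite dimensional algebra — both routes are standard and equivalent in substance.
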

\begin{proof}
  Theorem~7.19 in~\cite{Lam} states that this is true for modules over
  a \emph{finite dimensional} algebra. So we will reduce to the case
  of modules over a finite dimensional algebra. Let $L$ be a
  composition factor of $M$ and $\lambda$ a weight of $L$. Then the
  multiplicity of the $U_q$-composition factor $L$ in $M$ is the
  multiplicity of the $(U_q)_0$-composition factor $L_\lambda$ in
  $M_\lambda$ by Theorem~\ref{thm:Lemire}. $M_\lambda$ is a finite
  dimensional $(U_q)_0$-module. Let $I$ be the kernel of the
  homomorphism $(U_q)_0 \to \End_{\setC}(M_\lambda )$ given by the
  action of $(U_q)_0$. Then $(U_q)_0 / I$ is a finite dimensional
  $\setC$ algebra and $M_\lambda$ is a module over $(U_q)_0/
  I$. Furthermore since $\Tr^M(\lambda,u)=0$ for all $u\in I$ the
  trace of an element $u\in (U_q)_0$ is the same as the trace of $u+I
  \in (U_q)_0/I$ on $M_\lambda$ as a $(U_q)_0/I$-module. So if $\Tr^M
  = \Tr^N$ the multiplicity of $L_\lambda$ in $M_\lambda$ and
  $N_\lambda$ are the same and hence the multiplicity of $L$ in $M$ is
  the same as in $N$.
\end{proof}

We will use the Zariski topology on $(\setC^*)^n$: $V$ is a closed set
if it is the zero-points of a Laurent polynomial $p\in \setC[X_1^{\pm
  1},\dots,X_n^{\pm 1}]$.

\begin{prop}
  \label{prop:24}
  Let $L$ be an infinite dimensional admissible simple module of
  degree $d$. Let $\Sigma$ be a set of commuting roots that is a basis
  of $Q$ and $w\in W$ such that $-\Sigma\subset w(T_L)$. Let $F_\Sigma$ be
  a corresponding Ore subset. Let $\lambda\in \Suppess(L)$. The set
  \begin{equation*}
    \{ \mathbf{b}\in (\setC^*)^n |\, \, {^{\bar{w}}}\left( \phi_{F_\Sigma,\mathbf{b}}.\left( \left( {^w}L \right)_{F_\Sigma} \right)_{w(\lambda)} \right) \text{ is a simple $(U_q)_0$-module} \}
  \end{equation*}
  is a Zariski open set of $(\setC^*)^n$.
\end{prop}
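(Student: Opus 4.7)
The plan is a Burnside/Jacobson density argument: express simplicity as the non-vanishing of a determinant depending Laurent-polynomially on $\mathbf{b}$, so that non-simplicity becomes a Zariski closed condition.

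First I fix the underlying space. Since $w(\lambda)\in\Suppess({^w}L)$ (the $w$-twist preserves dimensions of weight spaces), Lemma~\ref{lemma:13} forces $V:=\bigl(({^w}L)_{F_\Sigma}\bigr)_{w(\lambda)}$ to be exactly $d$-dimensional. The automorphism $\phi_{F_\Sigma,\mathbf{b}}$ scales each $K_\alpha$ by a constant, so it preserves the weight grading and restricts to an automorphism of $(U_{q(F_\Sigma)})_0$; hence $\phi_{F_\Sigma,\mathbf{b}}.V$ makes sense as the vector space $V$ equipped with the twisted action $u\cdot v=\phi_{F_\Sigma,\mathbf{b}}(u)v$, and in particular as a $(U_q)_0$-module. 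The outer ${^{\bar w}}$ amounts to further composing this action with the algebra automorphism $T_{w\inv}\inv$ of $(U_q)_0$, which is a bijection and hence leaves the image of $(U_q)_0$ in $\End_{\setC}(V)$ unchanged; thus the module in the statement is simple as a $(U_q)_0$-module iff $\phi_{F_\Sigma,\mathbf{b}}.V$ is simple as a $(U_q)_0$-module.

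Next, polynomial dependence and Burnside. By Lemma~\ref{lemma:9} and the commutativity of the individual automorphisms $\phi_{F_{\beta_j},b_j}$ noted before Definition~\ref{def:twist_by_weight}, for every $u\in U_q$ the assignment $\mathbf{b}\mapsto\phi_{F_\Sigma,\mathbf{b}}(u)\in U_{q(F_\Sigma)}$ is a Laurent polynomial in $\mathbf{b}$, so the induced operator $\phi_{F_\Sigma,\mathbf{b}}(u)|_V\in\End_{\setC}(V)\cong M_d(\setC)$ has matrix entries that are Laurent polynomials in $\mathbf{b}$. Since $V$ is finite dimensional and $\setC$ is algebraically closed, Burnside's theorem (the finite-dimensional Jacobson density theorem) says that $\phi_{F_\Sigma,\mathbf{b}}.V$ is simple iff the image of $(U_q)_0$ in $\End_{\setC}(V)$ under the twisted action is all of the $d^2$-dimensional space $\End_{\setC}(V)$, equivalently iff there exist $u_1,\dots,u_{d^2}\in(U_q)_0$ with $\phi_{F_\Sigma,\mathbf{b}}(u_1)|_V,\dots,\phi_{F_\Sigma,\mathbf{b}}(u_{d^2})|_V$ linearly independent.

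Finally, openness. Linear independence of these operators is the non-vanishing of the $d^2\times d^2$ determinant obtained by stacking their matrix entries as columns, which by the previous paragraph is a Laurent polynomial in $\mathbf{b}$. For fixed $(u_1,\dots,u_{d^2})$ its non-vanishing locus is a Zariski open subset of $(\setC^*)^n$; the set in the proposition is the union over all tuples $(u_1,\dots,u_{d^2})\in(U_q)_0^{d^2}$ of these open loci, hence is itself Zariski open. Equivalently, the non-simple locus is the intersection of the zero-loci of all such determinants and is Zariski closed. The step deserving the most care is the first paragraph: recognizing that the twists produce a family of $(U_q)_0$-module structures on one and the same $d$-dimensional vector space $V$ that varies Laurent-polynomially in $\mathbf{b}$; once this is in place the openness is a formality.
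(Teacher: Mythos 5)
Your proof is correct and follows essentially the same strategy as the paper: invoke Burnside to reduce simplicity to surjectivity of the representation $(U_q)_0\to\End_{\setC}(V)$, observe that the operator $\phi_{F_\Sigma,\mathbf{b}}(u)|_V$ depends Laurent-polynomially on $\mathbf{b}$, express surjectivity as non-vanishing of a $d^2\times d^2$ determinant, and take a union of open loci. The only cosmetic difference is that the paper phrases the rank condition via the trace form $B_{\mathbf{b}}(u,v)=\Tr(uv|_V)$ and its minors, whereas you stack the operator coefficients directly; both yield the same Zariski-open set.
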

\begin{proof}
  The $(U_q)_0$-module $V:={^{\bar{w}}}\left(
    \phi_{F_\Sigma,\mathbf{b}}.\left( \left( {^w}L \right)_{F_\Sigma}
    \right)_{w(\lambda)} \right)$ is simple if and only if the
  bilinear map $B_{\mathbf{b}}(u,v)\in (U_q)_0 \times (U_q)_0 \mapsto
  \Tr\left( uv|_{{^{\bar{w}}}\left( \phi_{F_\Sigma,\mathbf{b}}.\left(
          \left( {^w}L \right)_{F_\Sigma} \right)_{w(\lambda)}
      \right)}\right)$ has maximal rank $d^2$: The map factors through
  $\End_{\setC}(V)\times \End_{\setC}(V)$ given by the representation
  $(U_q)_0\to \End_{\setC}(V)$ on $V$. $B_{\mathbf{b}}$ has maximal
  rank $d^2$ if and only if the representation is surjective onto
  $\End_{\setC}(V)$ which is equivalent to $V$ being simple.

  For any finite dimensional subspace $E\subset (U_q)_0$, the set
  $\Omega_E$ of all $\mathbf{b}$ such that $B_{\mathbf{b}}|_E$ has
  rank $d^2$ is either empty or the non-zero points of the Laurent
  polynomial $\det M$ for some $d^2\times d^2$ minor $M$ of the matrix
  $\left( B_{\mathbf{b}}(e_i,e_j) \right)_{i,j}$ where $\{e_i\}$ is a
  basis of $E$. Therefore $\Omega = \cup_E \Omega_E$ is open.
\end{proof}

For a module $M$ that is a direct sum of modules of finite length we
define $M^{ss}$ to be the unique (up to isomorphism) semisimple module
with the same composition factors as $M$.

\begin{lemma}
  \label{lemma:1}
  Let $L$ be an infinite dimensional simple admissible $U_q$-module of
  degree $d$, $w\in W$ and $\Sigma=\{\beta_1,\dots,\beta_n\}\subset
  \Phi^+$ a set of commuting roots that is a basis of $Q$ such that
  $-\Sigma\subset w(T_L)$. Let $F_{\Sigma}$ be a corresponding Ore
  subset to $\Sigma$. Let $\mathbf{c}\in (\setC^*)^n$ and let $L'$ be
  another infinite dimensional $U_q$-module such that $L'$ is
  contained in
  ${^{\bar{w}}}\left(\phi_{F_{\Sigma},\mathbf{c}}.({^w}L)_{F_{\Sigma}}\right)^{ss}$
  (i.e. $L'$ is a composition factor of
  ${^{\bar{w}}}\left(\phi_{F_{\Sigma},\mathbf{c}}.({^w}L)_{F_\Sigma}\right)$). Assume
  that $\Sigma'=\{\beta_1',\dots,\beta_n'\} \subset \Phi^+$ is another
  set of commuting roots that is a basis of $Q$ and $w' \in W$ is such
  that $-\Sigma' \subset w'(T_{L'})$. Let $F_{\Sigma'}$ be a
  corresponding Ore subset.
  
  Define $a_{i,j}\in \setZ$ by $w(w')\inv (\beta_i') = \sum_{j=1}^n
  a_{i,j} \beta_j$ and define $f:(\setC^*)^n \to (\setC^*)^n$ by
  \begin{equation*}
    f(b_1,\dots,b_n)= \left( \prod_{i=1}^n b_i^{a_{i,1}},\dots,\prod_{i=1}^{n}b_i^{a_{i,n}}\right).
  \end{equation*}
  Then $L'$ is admissible of degree $d$ and
  \begin{equation*}
    {^{\bar{w'}}}\left( \phi_{F_{\Sigma'},\mathbf{b}}.({^{w'}}L')_{F_{\Sigma'}}\right)^{ss} \iso {^{\bar{w}}}\left( \phi_{F_{\Sigma},f(\mathbf{b})\mathbf{c}}.({^w}L)_{F_{\Sigma}}\right)^{ss}
  \end{equation*}
\end{lemma}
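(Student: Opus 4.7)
The strategy is to compare the two semisimple modules through their trace functions via Lemma~\ref{lemma:32}, view these traces as Laurent polynomials in $\mathbf{b} \in (\setC^*)^n$, and verify their equality on the Zariski-dense subset $q^{\setZ^n}$ using Lemma~\ref{lemma:37}.

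\textbf{Step 1 (admissibility of $L'$).} Since ${^w}L$ is admissible of degree $d$, Lemma~\ref{lemma:13} gives $\dim(({^w}L)_{F_\Sigma})_\mu \leq d$, and the operations $\phi_{F_\Sigma,\mathbf{c}}.$ and ${^{\bar{w}}}$ preserve weight-space dimensions while only shifting the $q^Q$-coset by a character. Hence $L'$, being a composition factor, is admissible of degree at most $d$; that $\deg L' = d$ will be read off from the final isomorphism. Being infinite dimensional admissible, $L'$ satisfies the hypotheses of Lemma~\ref{lemma:26}, which supplies the $w'$ and $\Sigma'$.

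\textbf{Step 2 (trace as a Laurent polynomial in $\mathbf{b}$).} Fix $\lambda\in X$ and $u\in (U_q)_0$. Unwinding the definitions of ${^{\bar{w'}}}$ and of the twist $\phi_{F_{\Sigma'},\mathbf{b}}.$, one identifies the weight space of the LHS at $\lambda$ with the fixed vector space $(({^{w'}}L')_{F_{\Sigma'}})_{\mathbf{b}^{\Sigma'}w'(\lambda)}$, on which $u$ acts as $\phi_{F_{\Sigma'},\mathbf{b}}(T_{(w')\inv}\inv(u))$. An analogous identification holds for the RHS, with weight $(f(\mathbf{b})\mathbf{c})^{\Sigma}w(\lambda)$ and operator $\phi_{F_\Sigma,f(\mathbf{b})\mathbf{c}}(T_{w\inv}\inv(u))$. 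The defining identity $\sum_j a_{i,j}\beta_j = w(w')\inv(\beta_i')$ for $f$ rearranges to $w\inv(f(\mathbf{b})^{-\Sigma}) = (w')\inv(\mathbf{b}^{-\Sigma'})$ as characters, so the supports of the two sides coincide for every $\mathbf{b}$. By Lemma~\ref{lemma:9}, both $\phi_{F_{\Sigma'},\mathbf{b}}(x)$ and $\phi_{F_\Sigma,f(\mathbf{b})\mathbf{c}}(x)$ are Laurent polynomials in $\mathbf{b}$; restricting to weight spaces of dimension $\leq d$ therefore makes both trace expressions Laurent polynomials in $\mathbf{b}$.

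\textbf{Step 3 (evaluation at $q^{\setZ^n}$ and conclusion).} For $\mathbf{b}=q^{\mathbf{i}}$ with $\mathbf{i}\in\setZ^n$, Corollary~\ref{cor:6} gives $\phi_{F_{\Sigma'},q^{\mathbf{i}}}.N \iso N$, so the LHS collapses to $\bigl({^{\bar{w'}}}({^{w'}}L')_{F_{\Sigma'}}\bigr)^{ss}$, independent of $\mathbf{i}$. Since $f(q^{\mathbf{i}})\in q^{\setZ^n}$ and $\phi_{F_\Sigma,f(q^{\mathbf{i}})\mathbf{c}} = \phi_{F_\Sigma,f(q^{\mathbf{i}})}\circ \phi_{F_\Sigma,\mathbf{c}}$ by Lemma~\ref{lemma:9}, the same corollary makes the RHS collapse to ${^{\bar{w}}}\bigl(\phi_{F_\Sigma,\mathbf{c}}.({^w}L)_{F_\Sigma}\bigr)^{ss}$. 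It therefore suffices to establish the single identity
\[
\bigl({^{\bar{w'}}}({^{w'}}L')_{F_{\Sigma'}}\bigr)^{ss} \iso {^{\bar{w}}}\bigl(\phi_{F_\Sigma,\mathbf{c}}.({^w}L)_{F_\Sigma}\bigr)^{ss}.
\]
From the hypothesis that $L'$ embeds in the right-hand side, one obtains an embedding of ${^{w'}}L'$ into a suitable braid-twisted version of $\phi_{F_\Sigma,\mathbf{c}}.({^w}L)_{F_\Sigma}$; localizing at $F_{\Sigma'}$, which acts injectively on ${^{w'}}L'$ since $-\Sigma' \subset w'(T_{L'})$, and combining Lemma~\ref{lemma:13} with the finite-length statement of Lemma~\ref{lemma:10}, forces the semisimplifications to coincide. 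Once this identity is known on $q^{\setZ^n}$, Step~2 together with Lemma~\ref{lemma:37} extends it to every $\mathbf{b}$, and Lemma~\ref{lemma:32} produces the claimed isomorphism. Comparing weight-space dimensions afterwards yields $\deg L' = d$.

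\textbf{Main obstacle.} The crux is the displayed identity in Step~3: one must show that the coherent extension built directly from $L'$ coincides with the one built from $L$ in which $L'$ appears as only one of possibly several composition factors; this is in essence the well-definedness of the coherent family $\mathcal{EXT}$. Proving it will likely require combining Proposition~\ref{prop:24} (for generic simplicity of the weight spaces), Lemma~\ref{lemma:13} (to pin down weight-space dimensions in the localization), Lemma~\ref{lemma:10} (for finite Jordan--Hölder length), and possibly an iteration of Lemma~\ref{lemma:24} to walk between different composition factors inside the coherent family.
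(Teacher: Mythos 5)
You correctly identify the overall strategy --- compare trace functions, use Laurent polynomiality in $\mathbf{b}$, and apply Lemma~\ref{lemma:32} --- and your reduction in Step~3 to the single identity at $\mathbf{b}\in q^{\setZ^n}$ is a valid and in fact cleaner reformulation than what the paper writes down. But you have not actually proved that identity, and you say so yourself in the final paragraph. That is not a minor polishing step; it is the entire substance of the lemma. The sentence ``localizing at $F_{\Sigma'}$\,\ldots\,and combining Lemma~\ref{lemma:13} with the finite-length statement of Lemma~\ref{lemma:10}, forces the semisimplifications to coincide'' is an assertion, not an argument: $L'$ is only \emph{one} composition factor of the semisimplification on the right, and nothing in Lemmas~\ref{lemma:10} or~\ref{lemma:13} by itself rules out that localizing the single factor $L'$ recovers a strictly smaller coherent family than the one generated by $L$.

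The missing ingredient is exactly the one you flag as merely ``likely required,'' namely Proposition~\ref{prop:24}. The paper uses it as follows. Pick $\lambda\in\Suppess(L)$ and $\lambda'\in\Suppess(L')$; after replacing $\mathbf{c}$ by a suitable $q^{\mathbf{j}}\mathbf{c}$ one may assume $L'_{\lambda'}$ is a $(U_q)_0$-submodule of ${^{\bar{w}}}\bigl(\phi_{F_\Sigma,\mathbf{c}}.(\,({^w}L)_{F_\Sigma}\,)_{w(\lambda)}\bigr)$. For each $\mu\in\Suppess(L')$ there is a unique $\mathbf{j}_\mu\in\setZ^n$ so that $L'_\mu$ sits inside the analogous weight space twisted by $q^{\mathbf{j}_\mu}\mathbf{c}$. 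The set $D=\{q^{\mathbf{j}_\mu}\mathbf{c}\}$ is Zariski dense (via $f$ and Lemma~\ref{lemma:37}), and by Proposition~\ref{prop:24} the weight space ${^{\bar{w}}}\bigl(\phi_{F_\Sigma,\mathbf{b}}.(\,({^w}L)_{F_\Sigma}\,)_{w(\lambda)}\bigr)$ is a \emph{simple} $(U_q)_0$-module for $\mathbf{b}$ in a nonempty Zariski-open set $\Omega$. Intersecting $D\cap\Omega\neq\emptyset$ gives a $\mu_0\in\Suppess(L')$ at which the ambient weight space is simple \emph{and} contains the nonzero simple submodule $L'_{\mu_0}$, hence equals it. Lemma~\ref{lemma:13} then propagates this to every $\mu\in\Suppess(L')$ and simultaneously establishes that $L'$ is admissible of degree exactly $d$, without the deferred reading-off you use in Step~1. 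Only after this identification is in hand can one write each trace as $\Tr u|_{L'_\mu}$ on the dense set and invoke Lemmas~\ref{lemma:37} and~\ref{lemma:32} to finish. Your proposal correctly names all the tools but never carries out the density/simplicity argument that makes the rest go through.
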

\begin{proof}
  We will show that $\Tr^{{^{\bar{w'}}}\left(
      \phi_{F_{\Sigma'},\mathbf{b}}.({^{w'}}L')_{F_{\Sigma'}}\right)^{ss}}
  = \Tr^{{^{\bar{w}}}\left(
      \phi_{F_{\Sigma},f(\mathbf{b})\mathbf{c}}.({^w}L)_{F_{\Sigma}}\right)^{ss}}$.

  Let $\lambda\in \Suppess(L)$. Then $w(\lambda)\in
  \Suppess({^w}L)$. As a $(U_q)_0$-module we have $\left(
    {^{\bar{w}}}\left( \phi_{F_\Sigma,\mathbf{c}}.({^w}L)_{F_\Sigma}
    \right) \right)^{ss}\iso \bigoplus_{\mathbf{i}\in \setZ^n}
  {^{\bar{w}}}\left( \phi_{F_\Sigma,q^{\mathbf{i}}\mathbf{c}}.\left(
      \left( {^w}L \right)_{F_\Sigma} \right)_{w(\lambda)}
  \right)^{ss}$ (Corollary~\ref{cor:6}). Let
  $\lambda'\in\Suppess(L')$.  Then $L'_{\lambda'}$ is a
  $(U_q)_0$-submodule of ${^{\bar{w}}}\left(
    \phi_{F_\Sigma,q^{\mathbf{j}}\mathbf{c}}.\left( \left( {^w}L
      \right)_{F_\Sigma} \right)_{w(\lambda)} \right)^{ss}$ for some
  $\mathbf{j}\in\setZ^n$. We can assume $\mathbf{j}=0$ by replacing
  $\mathbf{c}$ with $q^{\mathbf{j}}\mathbf{c}$ (note that we have then
  $(\mathbf{c}\inv)^\Sigma = w\left( \lambda' \lambda\inv \right)$).
  So $L'_{\lambda'}$ is a $(U_q)_0$-submodule of ${^{\bar{w}}}\left(
    \phi_{F_\Sigma,\mathbf{c}}.\left( \left( {^w}L \right)_{F_\Sigma}
    \right)_{w(\lambda)} \right)^{ss}$. For any other $\mu\in
  \Suppess(L')$ there is a unique $\mathbf{j}_\mu'\in \setZ^n$ such
  that $\mu = (w')\inv \left(\left(
      q^{-\mathbf{j}_\mu'}\right)^{\Sigma'}\right) \lambda'$ and a
  unique $\mathbf{j}_\mu\in \setZ^n$ such that $w\inv \left(\left(
      q^{-\mathbf{j}_\mu}\mathbf{c}\inv \right)^\Sigma\right)
  \lambda=\mu$. For such $\mathbf{j}_\mu$, $L_\mu'$ is a submodule of
  ${^{\bar{w}}}\left(
    \phi_{F_\Sigma,q^{\mathbf{j}_\mu}\mathbf{c}}.\left( \left( {^w}L
      \right)_{F_\Sigma} \right)_{w(\lambda)}
  \right)^{ss}$. 

  $f$ is bijective, $f(q^{\setZ^n})=q^{\setZ^n}$,
  $f(\mathbf{b})^\Sigma = w(w')\inv \left(\mathbf{b}^{\Sigma'}\right)$
  for all $\mathbf{b}\in (\setC^*)^n$ and for any $\mu\in
  \Suppess(L')$, $f(q^{\mathbf{j}'_\mu})=q^{\mathbf{j}_\mu}$. For a
  Laurent polynomial $p$, $p\circ f$ is Laurent polynomial as
  well. Since $q^{\setN^n}$ is Zariski dense in $(\setC^*)^n$
  (Lemma~\ref{lemma:37}) and $f$ is a Laurent polynomial the set
  $D=\{q^{\mathbf{j}_\mu}\mathbf{c}\in (\setC^*)^n|\mu\in
  \Suppess(L')\}$ is Zariski dense. By Proposition~\ref{prop:24} the
  $(U_q)_0$-module ${^{\bar{w}}}\left(
    \phi_{F_\Sigma,\mathbf{b}}.\left( \left( {^w}L \right)_{F_\Sigma}
    \right)_{w(\lambda)} \right)$ is simple for all $\mathbf{b}\in
  \Omega$ for some Zariski open set $\Omega$ of $(\setC^*)^n$. Since
  $D$ is dense and $\Omega$ is open $D\cap \Omega$ is nonempty. So
  there exists a $\mu_0 \in \Suppess(L')$ such that
  ${^{\bar{w}}}\left( \phi_{F_\Sigma,q^{\mathbf{j}_{\mu_0}}
      \mathbf{c}}.\left( \left( {^w}L \right)_{F_\Sigma}
    \right)_{w(\lambda)} \right)$ is simple and contains the nonzero
  simple $(U_q)_0$-module $L'_{\mu_0}$ as a submodule. Thus
  $L'_{\mu_0} \iso {^{\bar{w}}}\left(
    \phi_{F_\Sigma,q^{\mathbf{j}_{\mu_0}} \mathbf{c}}.\left( \left(
        {^w}L \right)_{F_\Sigma} \right)_{w(\lambda)} \right)$. We get
  now from Lemma~\ref{lemma:13} that $L'$ is admissible of degree $d$
  and that for every $\mu \in \Suppess(L')$,
  \begin{align*}
    L'_\mu \iso& {^{\bar{w}}}\left( \phi_{F_\Sigma,q^{\mathbf{j}_\mu}
        \mathbf{c}}.\left( \left( {^w}L \right)_{F_\Sigma}
      \right)_{w(\lambda)} \right)
    \\
    \iso& {^{\bar{w}}}\left( \phi_{F_\Sigma,f(q^{\mathbf{j}'_\mu})
        \mathbf{c}}.\left( \left( {^w}L \right)_{F_\Sigma}
      \right)_{w(\lambda)} \right).
  \end{align*}
  
  By Lemma~\ref{lemma:13}, Corollary~\ref{cor:6} and the definition of
  $\mathbf{j}'_\mu$ we have for any $\mu\in \Suppess(L')$
  \begin{align*}
    {^{\bar{w'}}}\left(
      \phi_{F_{\Sigma'},q^{\mathbf{j}'_\mu}}.\left(({^{w'}}L')_{F_{\Sigma'}}\right)_{w'(\lambda')}
    \right) \iso L'_{\mu}.
  \end{align*}
  Let $u\in (U_q)_0$. We see that for $\mathbf{b} =
  q^{\mathbf{j}'_\mu}$
  \begin{align*}
    \Tr u|_{{^{\bar{w}}}\left( \phi_{F_\Sigma,f(\mathbf{b})
          \mathbf{c}}.\left( \left( {^w}L \right)_{F_\Sigma}
        \right)_{w(\lambda)} \right)} = \Tr u|_{L'_\mu} = \Tr
    u|_{{^{\bar{w'}}}\left(
        \phi_{F_{\Sigma'},\mathbf{b}}.\left(({^{w'}}L')_{F_{\Sigma'}}\right)_{w'(\lambda')}
      \right)}.
  \end{align*}
  Since $\mathbf{b}\mapsto \Tr u|_{{^{\bar{w}}}\left(
      \phi_{F_\Sigma,f(\mathbf{b}) \mathbf{c}}.\left( \left( {^w}L
        \right)_{F_\Sigma} \right)_{w(\lambda)} \right)^{ss}}$ and
  $\mathbf{b}\mapsto \Tr u|_{{^{\bar{w'}}}\left(
      \phi_{F_{\Sigma'},\mathbf{b}}.\left(({^{w'}}L')_{F_{\Sigma'}}\right)_{w'(\lambda')}
    \right)^{ss}}$ are both Laurent polynomials and equal on the
  Zariski dense subset $\{q^{\mathbf{j}'_\mu}|\mu\in \Suppess(L')\}$
  they are equal for all $\mathbf{b}\in (\setC^*)^n$. Thus by
  Lemma~\ref{lemma:32}
  \begin{align*}
    {^{\bar{w}}}\left( \phi_{F_\Sigma,f(\mathbf{b}) \mathbf{c}}.\left(
        \left( {^w}L \right)_{F_\Sigma} \right)_{w(\lambda)}
    \right)^{ss} \iso {^{\bar{w'}}}\left(
      \phi_{F_{\Sigma'},\mathbf{b}}.\left(({^{w'}}L')_{F_{\Sigma'}}\right)_{w'(\lambda')}
    \right)^{ss}
  \end{align*}
  as $(U_q)_0$-modules. Since (by Corollary~\ref{cor:6})
  \begin{align*}
    {^{\bar{w'}}}\left(
      \phi_{F_{\Sigma'},\mathbf{b}}.({^{w'}}L')_{F_{\Sigma'}}\right)^{ss}
    \iso \bigoplus_{\mathbf{i}\in \setZ^n} {^{\bar{w'}}}\left(
      \phi_{F_{\Sigma'},q^{\mathbf{i}}\mathbf{b}}.\left(({^{w'}}L')_{F_{\Sigma'}}\right)_{w'(\lambda')}\right)^{ss}
  \end{align*}
  and
  \begin{align*}
    {^{\bar{w}}}\left(
      \phi_{F_{\Sigma},f(\mathbf{b})\mathbf{c}}.({^w}L)_{F_{\Sigma}}\right)^{ss}
    \iso \bigoplus_{\mathbf{i}\in\setZ^n}{^{\bar{w}}}\left(
      \phi_{F_{\Sigma},q^{\mathbf{i}}
        f(\mathbf{b})\mathbf{c}}.\left(({^w}L)_{F_{\Sigma}}\right)_{w(\lambda)}\right)^{ss}
  \end{align*}
  we get
  \begin{align*}
    {^{\bar{w'}}}\left(
      \phi_{F_{\Sigma'},\mathbf{b}}.({^{w'}}L')_{F_{\Sigma'}}\right)^{ss}
    \iso& \bigoplus_{\mathbf{i}\in \setZ^n} {^{\bar{w'}}}\left(
      \phi_{F_{\Sigma'},q^{\mathbf{i}}\mathbf{b}}.\left(({^{w'}}L')_{F_{\Sigma'}}\right)_{w'(\lambda')}\right)^{ss}
    \\
    \iso& \bigoplus_{\mathbf{i}\in\setZ^n}{^{\bar{w}}}\left(
      \phi_{F_{\Sigma}, f(q^{\mathbf{i}}
        \mathbf{b})\mathbf{c}}.\left(({^w}L)_{F_{\Sigma}}\right)_{w(\lambda)}\right)^{ss}
    \\
    \iso& \bigoplus_{\mathbf{i}\in\setZ^n}{^{\bar{w}}}\left(
      \phi_{F_{\Sigma}, q^{\mathbf{i}} f(
        \mathbf{b})\mathbf{c}}.\left(({^w}L)_{F_{\Sigma}}\right)_{w(\lambda)}\right)^{ss}
    \\
    \iso& {^{\bar{w}}}\left(
      \phi_{F_{\Sigma},f(\mathbf{b})\mathbf{c}}.({^w}L)_{F_{\Sigma}}\right)^{ss}
  \end{align*}
  as $(U_q)_0$-modules. By Theorem~\ref{thm:Lemire} this implies they
  are isomorphic as $U_q$-modules as well.
\end{proof}

Corollary~\ref{cor:6} tells us that twisting with an element of the
form $q^{\mathbf{i}}$ gives us a module isomorphic to the original
module. Thus it makes sense to write $\phi_{F_\Sigma,t}.M$ for a $t
\in (\setC^*)^n / q^{\setZ^n}$ and a $U_{q(F_\Sigma)}$-module
$M$. Just choose a representative for $t$. Any representative gives
the same $U_{q(F_\Sigma)}$-module up to isomorphism.

Let $L$ be an admissible simple module. Assume for a $w\in W$ that
$\Sigma\subset -w(T_L)$ is a set of commuting roots that is a basis of
$Q$ (it is always possible to find such $w$ and $\Sigma$ by
Lemma~\ref{lemma:6} and Lemma~\ref{lemma:26}) and let $F_\Sigma$ be a
corresponding Ore subset. Let $\nu \in X$. The $U_q$-module
\begin{equation*}
  {^{\bar{w}}}\left( \bigoplus_{\mathbf{b}\in (\setC^*)^n: \, \mathbf{b}^{\Sigma}=\nu} \phi_{F_\Sigma,\mathbf{b}}. \left( {^w}L \right)_{F_\Sigma} \right)
\end{equation*}
has finite length by Lemma~\ref{lemma:16}, Lemma~\ref{lemma:13} and Lemma~\ref{lemma:10}.

We define
\begin{equation*}
  \mathcal{EXT}(L) = \left( \bigoplus_{t\in (\setC^*)^n/q^{\setZ^n}} {^{\bar{w}}} \left( \phi_{F_\Sigma,t}.\left( {^w}L \right)_{F_\Sigma} \right) \right)^{ss}.
\end{equation*}
The definition is independent (up to isomorphism) of the chosen $w$,
$\Sigma$ and $F_\Sigma$ as suggested by the notation:

\begin{lemma}
  \label{lemma:20}
  Let $L$ be a simple admissible module. Let $w,w'\in W$ and assume
  $\Sigma\subset -w(T_L),\Sigma'\subset -w'(T_{L'})$ are sets of commuting
  roots that are both a basis of $Q$. Let $F_\Sigma,F'_{\Sigma'}$
  be corresponding Ore subsets. Then
  \begin{equation*}
    \left( \bigoplus_{     t\in (\setC^*)^n/q^{\setZ^n}     } {^{\bar{w}}} \left( \phi_{F_\Sigma,t}.\left( {^w}L \right)_{F_\Sigma} \right) \right)^{ss} \iso \left( \bigoplus_{    t\in (\setC^*)^n/q^{\setZ^n}    } {^{\bar{w'}}} \left( \phi_{F'_{\Sigma'},t}.\left( {^{w'}}L \right)_{F'_{\Sigma'}} \right) \right)^{ss} 
  \end{equation*}
  as $U_q$-modules.
\end{lemma}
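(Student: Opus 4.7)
The plan is to invoke Lemma~\ref{lemma:1} with $L' = L$ itself and parameter $\mathbf{c} = (1,\dots,1)$, thereby identifying, for each fixed $\mathbf{b}$, the $(w',\Sigma')$-twist of ${^{w'}}L$ with a suitable $(w,\Sigma)$-twist of ${^w}L$, and then to re-index the big direct sum defining $\mathcal{EXT}(L)$ via the bijection of parameter space coming out of that identification. First note that $L$ must be infinite dimensional: otherwise $T_L = \emptyset$, contradicting the hypothesis that $-\Sigma \subset w(T_L)$ is a $\setZ$-basis of $Q$.

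Next I verify that $L$ occurs as a $U_q$-composition factor of ${^{\bar w}}\bigl(\phi_{F_\Sigma,\mathbf{1}}.({^w}L)_{F_\Sigma}\bigr) = {^{\bar w}}\bigl(({^w}L)_{F_\Sigma}\bigr)$. Indeed, for each $\beta \in \Sigma$ we have $-\beta \in w(T_L) = T_{{^w}L}$, so $F_\beta$ acts injectively on the simple module ${^w}L$; the canonical localization map ${^w}L \to ({^w}L)_{F_\Sigma}$ is therefore an embedding of $U_q$-modules, and twisting by ${^{\bar w}}(\cdot)$ gives an embedding $L \hookrightarrow {^{\bar w}}\bigl(({^w}L)_{F_\Sigma}\bigr)$, so $L$ is in particular a composition factor of the semisimplification. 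Lemma~\ref{lemma:1}, applied with $L' = L$, $\mathbf{c} = \mathbf{1}$, and with $(w',\Sigma',F'_{\Sigma'})$ playing the role of the primed data, now produces a map $f\colon (\setC^*)^n \to (\setC^*)^n$ (built from the integers $a_{i,j}$ expressing $w(w')\inv(\beta_i') = \sum_j a_{i,j}\beta_j$) such that for every $\mathbf{b} \in (\setC^*)^n$
\begin{equation*}
{^{\bar{w'}}}\bigl(\phi_{F'_{\Sigma'},\mathbf{b}}.({^{w'}}L)_{F'_{\Sigma'}}\bigr)^{ss} \iso {^{\bar w}}\bigl(\phi_{F_\Sigma,f(\mathbf{b})}.({^w}L)_{F_\Sigma}\bigr)^{ss}.
\end{equation*}

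Because $w(w')\inv$ is a $\setZ$-linear automorphism of $Q$ sending the basis $\Sigma'$ to another $\setZ$-basis of $Q$, the matrix $(a_{i,j})$ lies in $GL_n(\setZ)$; hence $f$ is a bijection of $(\setC^*)^n$ satisfying $f(q^{\setZ^n}) = q^{\setZ^n}$, and therefore descends to a bijection $\bar f$ of the coset space $(\setC^*)^n / q^{\setZ^n}$. Re-indexing the right-hand side of the displayed isomorphism along $\bar f$, and using that $(\cdot)^{ss}$ commutes with direct sums, the pointwise isomorphisms assemble to the desired isomorphism of $U_q$-modules. The main obstacle is the verification that Lemma~\ref{lemma:1} genuinely applies at the specific parameter $\mathbf{c} = \mathbf{1}$, which is exactly the injectivity of the localization map established above.
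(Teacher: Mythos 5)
Your proof is correct and follows essentially the same route as the paper's: both invoke Lemma~\ref{lemma:1} with $L' = L$ and $\mathbf{c} = \mathbf{1}$, using the fact that $L$ is a composition factor of ${^{\bar w}}\bigl(\phi_{F_\Sigma,\mathbf{1}}.({^w}L)_{F_\Sigma}\bigr)^{ss}$, and then re-index the direct sum along the bijection $f$ of $(\setC^*)^n/q^{\setZ^n}$. You supply more detail (the injectivity of the localization map, and the $GL_n(\setZ)$ argument for bijectivity of $f$), but the argument is the same.
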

\begin{proof}
  Obviously $L$ is a submodule of $\left( {^{\bar{w}}} \left(
      \phi_{F_\Sigma,\mathbf{1}}.\left( {^w}L \right)_{F_\Sigma}
    \right) \right)^{ss}$ where $\mathbf{1}=(1,\dots,1)$. By
  Lemma~\ref{lemma:1} this implies that for $\mathbf{b}\in
  (\setC^*)^n$
  \begin{align*}
    \left( {^{\bar{w'}}} \left( \phi_{F_{\Sigma'},\mathbf{b}}.\left(
          {^{w'}}L \right)_{F_{\Sigma'}} \right) \right)^{ss} \iso
    \left( {^{\bar{w}}} \left( \phi_{F_\Sigma,f(\mathbf{b})}.\left(
          {^w}L \right)_{F_\Sigma} \right) \right)^{ss}
  \end{align*}
  for some $f$ with the property that $f(q^{\setZ^n})=q^{\setZ^n}$. So
  it makes sense to write $f(t)$ for $t\in (\setC^*)^n/q^{\setZ^n}$.
  Thus
  \begin{align*}
    \left( \bigoplus_{ t\in (\setC^*)^n/q^{\setZ^n} } {^{\bar{w'}}}
      \left( \phi_{F'_{\Sigma'},t}.\left( {^{w'}}L
        \right)_{F'_{\Sigma'}} \right) \right)^{ss} \iso& \left(
      \bigoplus_{ t\in (\setC^*)^n/q^{\setZ^n} } {^{\bar{w}}} \left(
        \phi_{F_\Sigma,f(t)}.\left( {^w}L \right)_{F_\Sigma} \right)
    \right)^{ss}
    \\
    \iso& \left( \bigoplus_{ t\in (\setC^*)^n/q^{\setZ^n} }
      {^{\bar{w}}} \left( \phi_{F_\Sigma,t}.\left( {^w}L
        \right)_{F_\Sigma} \right) \right)^{ss}
  \end{align*}
  since $f$ is bijective.
\end{proof}

\begin{prop}
  \label{prop:19}
  Let $L$ be a simple infinite dimensional admissible module. For
  $x\in W$:
  \begin{equation*}
    \mathcal{EXT}({^{x}}L) \iso {^{x}}\left(\mathcal{EXT}(L)\right)
  \end{equation*}
  and
  \begin{equation*}
    \mathcal{EXT}({^{\bar{x}}}L) \iso {^{\bar{x}}}\left(\mathcal{EXT}(L)\right).
  \end{equation*}
\end{prop}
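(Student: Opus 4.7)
The plan is to reduce by induction on $l(x)$ to the case where $x=s_i$ is a simple reflection, handle the $s_i$-case by a careful choice of the pair $(w,\Sigma)$ defining $\mathcal{EXT}(L)$, and then deduce the second isomorphism from the first using that ${^{\bar{x}}}$ is a two-sided inverse of ${^{x}}$ on $U_q$-modules (the composition of these twists on any $M$ equals the identity automorphism of $U_q$, requiring no length condition). For the reduction, if $l(x)=0$ both isomorphisms are tautologies; otherwise I write a reduced decomposition $x=s_iy$ with $l(x)=1+l(y)$, so that ${^{x}}M\iso{^{s_i}}({^{y}}M)$ for every $U_q$-module $M$ by the length-additive composition of twists stated in the excerpt. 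Applying this to $L$ and to $\mathcal{EXT}(L)$ and invoking the inductive hypothesis reduces the first isomorphism to the simple reflection case.

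For $x=s_i$ I would choose a valid pair $(w,\Sigma)$ defining $\mathcal{EXT}(L)$ with the additional property $l(ws_i)=l(w)-1$, so that $s_i$ is a right descent of $w$. The pair $(ws_i,\Sigma)$ is then automatically valid for $\mathcal{EXT}({^{s_i}}L)$, since $-(ws_i)(T_{{^{s_i}}L})=-(ws_i)s_i(T_L)=-w(T_L)\supset\Sigma$. Under the length condition, two termwise $U_q$-module isomorphisms hold: first ${^{ws_i}}({^{s_i}}L)\iso{^{w}}L$, directly from the length-additive composition $l((ws_i)s_i)=l(ws_i)+l(s_i)=l(w)$; and second ${^{s_i}}({^{\bar{w}}}N)\iso{^{\overline{ws_i}}}N$ for every $U_{q(F_\Sigma)}$-module $N$, which follows from the braid-operator identity $T_{w^{-1}}=T_{s_i}T_{s_iw^{-1}}$, valid because under the length condition the expression $w^{-1}=s_i\cdot s_iw^{-1}$ is reduced. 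Since Ore localization and the automorphisms $\phi_{F_\Sigma,t}$ respect $U_q$-module isomorphisms, applying these identifications to each summand in the definitions of the two sides gives
\[
\mathcal{EXT}({^{s_i}}L)\iso\bigoplus_t{^{\overline{ws_i}}}\bigl(\phi_{F_\Sigma,t}.({^{w}}L)_{F_\Sigma}\bigr)^{ss}\iso{^{s_i}}\mathcal{EXT}(L).
\]

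The main obstacle is guaranteeing the existence of a valid pair $(w,\Sigma)$ for $L$ with $s_i$ as a right descent of $w$. If the pair $(w_0,\Sigma_0)$ supplied by Lemma~\ref{lemma:26} does not satisfy this, I would apply Lemma~\ref{lemma:26} instead to ${^{s_i}}L$, obtaining $(w_1,\Sigma_1)$ with $\Sigma_1\subset -w_1s_i(T_L)$. Then $(w_1s_i,\Sigma_1)$ is automatically a valid pair for $L$, and a short case analysis on the sign of $l(w_1s_i)-l(w_1)$, using that the sets of valid $w$ for $L$ and for ${^{s_i}}L$ are in bijection via right multiplication by $s_i$ (since $F_{{^{s_i}}L}=s_i(F_L)$), combined with the independence of $\mathcal{EXT}$ from the chosen pair (Lemma~\ref{lemma:20}), yields a valid pair for $L$ with $s_i$ as a right descent.

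Finally, the second isomorphism is deduced from the first without further work: applying the first isomorphism to the admissible simple infinite dimensional module ${^{\bar{x}}}L$ gives $\mathcal{EXT}({^{x}}({^{\bar{x}}}L))\iso{^{x}}\mathcal{EXT}({^{\bar{x}}}L)$. Since ${^{x}}({^{\bar{x}}}L)=L$ as $U_q$-modules (the twists cancel with no length condition), this reads $\mathcal{EXT}(L)\iso{^{x}}\mathcal{EXT}({^{\bar{x}}}L)$; applying ${^{\bar{x}}}$ to both sides and using ${^{\bar{x}}}({^{x}}M)=M$ yields $\mathcal{EXT}({^{\bar{x}}}L)\iso{^{\bar{x}}}\mathcal{EXT}(L)$, as desired.
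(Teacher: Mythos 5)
Your overall strategy mirrors the paper's: reduce by induction to simple reflections, and then compare the two expressions for $\mathcal{EXT}$ defined by the pairs $(w,\Sigma)$ and $(ws_i,\Sigma)$, the key point being that both are valid (since $-\Sigma\subset w(T_L)=(ws_i)(T_{{^{s_i}}L})$). When $s_i$ is a right descent of $w$, your termwise identification ${^{s_i}}({^{\bar{w}}}N)\iso{^{\overline{ws_i}}}N$ via $T_{w^{-1}}=T_{s_i}T_{s_iw^{-1}}$ is correct, and your deduction of the ${^{\bar{x}}}$ statement from the ${^{x}}$ statement is clean and valid.

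The gap is in the paragraph beginning ``The main obstacle.'' You assert that some valid pair $(w,\Sigma)$ for $L$ always has $s_i$ as a right descent of $w$, and that the case analysis involving $(w_1s_i,\Sigma_1)$ produces one. This is false. Take $\mathfrak{g}=\mathfrak{sl}_2$ and $L=L(\lambda)$ with $\lambda(K_\alpha)\not\in\pm q^{\setN}$. Then $T_L=\{-\alpha\}$, and since $\Sigma$ must be a basis of $Q$ contained in $\Phi^+=\{\alpha\}$, the only candidate is $\Sigma=\{\alpha\}$, and the requirement $\Sigma\subset -w(T_L)$ forces $w=e$. Hence $e$ is the \emph{only} valid $w$, and $s_\alpha$ is not a right descent of it. Your fallback doesn't help either: applying Lemma~\ref{lemma:26} to ${^{s_\alpha}}L$ yields $w_1=s_\alpha$, so $w_1s_\alpha=e$, landing you back at $e$; here $l(w_1s_\alpha)-l(w_1)<0$ and $s_\alpha$ is not a right descent of $w_1s_\alpha$. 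Applying your core argument to ${^{s_i}}L$ with $(w_1,\Sigma_1)$ instead produces a statement about $\mathcal{EXT}({^{s_i}}({^{s_i}}L))$, and ${^{s_i}}({^{s_i}}L)\neq L$ (the twists ${^{s_i}}$ and ${^{\bar{s_i}}}$ are distinct), so this does not close the loop.

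The paper's actual proof supplies exactly the missing ingredient: it first establishes an alternative description
\[
\mathcal{EXT}'(L)=\Bigl(\bigoplus_{t}{^{w\inv}}\bigl(\phi_{F_\Sigma,t}.({^{\bar{w\inv}}}L)_{F_\Sigma}\bigr)\Bigr)^{ss}\iso\mathcal{EXT}(L),
\]
obtained by rerunning Lemmas~\ref{lemma:1} and~\ref{lemma:20} with $T_{w\inv},T_{w\inv}\inv$ replaced by $T_w\inv,T_w$. Your argument handles the case $l(ws_i)<l(w)$; the $\mathcal{EXT}'$ form is tailored to the opposite case $l(ws_i)>l(w)$, where one instead uses $l(s_i w\inv)=l(s_i)+l(w\inv)$ to peel off ${^{s_i}}$ from ${^{(ws_i)\inv}}$ and to absorb ${^{s_i}}L$ into ${^{\bar{(ws_i)\inv}}}$. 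Without $\mathcal{EXT}'$ (or some other device to handle the ascent case), the induction cannot be completed.
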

\begin{proof}
  Let $w\in W$ be such that $w(F_L \backslash F_L^s) \subset \Phi^+$
  (exists by Lemma~\ref{lemma:6}). Let $\Sigma$ be a set of commuting
  roots that is a basis of $Q$ such that $-\Sigma \subset w(T_L)$
  (exists by Lemma~\ref{lemma:26}) and let $F_\Sigma$ be a
  corresponding Ore subset. First we will define $\mathcal{EXT}'(L) =
  \left( \bigoplus_{t\in (\setC^*)^n/ q^{\setZ^n}} {^{w\inv }}\left(
      \phi_{F_{\Sigma} ,t}.({^{\bar{w\inv }}} L)_{F_\Sigma}\right)
  \right)^{ss}$ and show that $\mathcal{EXT}'(L) \iso
  \mathcal{EXT}(L)$ as $U_q$-modules: Going through the proof of
  Lemma~\ref{lemma:1} and Lemma~\ref{lemma:20} and and replacing $T_{w\inv}$
  and $T_{w\inv}\inv$ with $T_{w}\inv$ and $T_{w}$ respectively we
  get that $\Tr^{\mathcal{EXT}'(L)}=\Tr^{\mathcal{EXT}(L)}$ so they
  are isomorphic by Lemma~\ref{lemma:32}.

  We will show for any $\alpha\in \Pi$ that
  \begin{equation*}
    \mathcal{EXT}({^{s_\alpha}}L) \iso {^{s_\alpha}}\left(\mathcal{EXT}(L)\right)
  \end{equation*}
  which implies the claim by induction over the length $l(x)$ of $x$
  (where $l(x)$ is the smallest number of simple reflections need to
  write $x$, i.e. there is a reduced expression $x=s_{i_1}\cdots
  s_{i_{l(x)}}$).

  So let $\alpha\in \Pi$ and let $w$ and $\Sigma$ be defined as
  above. Let $w'=ws_\alpha$. Note that $w'
  (F_{{^{s_\alpha}}L}\backslash F_{{^{s_\alpha}}L}^s) \subset \Phi^+$
  and $-\Sigma \subset T_{{^{s_\alpha}}L}$. We split into two cases:
  If $l( w' )<l(w)$ then
  \begin{align*}
    {^{s_\alpha}}\left( \mathcal{EXT}(L) \right)=& {^{s_\alpha}}\left(
      \left( \bigoplus_{t\in (\setC^*)^n/q^{\setZ^n}} {^{\bar{w' s_\alpha}}}
        \left( \phi_{F_\Sigma ,t}. ({^{w' s_\alpha}} L
          )_{F_\Sigma} \right) \right)^{ss} \right)
    \\
    \iso& {^{s_\alpha}} \left( \left( \bigoplus_{t\in (\setC^*)^n/q^{\setZ^n}}
        {^{ \bar{s_\alpha} }}\left({^{ \bar{w'} }} \left(
            \phi_{F_\Sigma,t}.(^{w'}(^{s_\alpha}L))_{F_\Sigma}
          \right) \right) \right)^{ss} \right)
    \\
    \iso& \left( \bigoplus_{t\in (\setC^*)^n/q^{\setZ^n}} {^{ \bar{w'} }} \left(
        \phi_{F_\Sigma,t}.(^{w'}(^{s_\alpha}L))_{F_\Sigma}
      \right) \right)^{ss}
    \\
    =& \mathcal{EXT}({^{s_\alpha}}L).
  \end{align*}

  If $l(w' ) > l(w)$ we get
  \begin{align*}
    {^{s_\alpha}}\left( \mathcal{EXT}(L) \right) \iso&
    {^{s_\alpha}}\left( \mathcal{EXT}'(L) \right)
    \\
    =& {^{s_\alpha}}\left( \left( \bigoplus_{t\in (\setC^*)^n/q^{\setZ^n}}
        {^{w\inv }} \left( \phi_{F_\Sigma
            ,t}. ({^{\bar{w\inv}}} L )_{F_\Sigma} \right)
      \right)^{ss} \right)
    \\
    \iso& \left( \bigoplus_{t\in (\setC^*)^n/q^{\setZ^n}} {^{ (w' )\inv }} \left(
        \phi_{F_\Sigma ,t}. ({^{\bar{ (w') \inv }}}
        (^{s_\alpha}L) )_{F_\Sigma} \right) \right)^{ss}
    \\
    =& \mathcal{EXT}({^{s_\alpha}}L).
  \end{align*}
  The second claim is shown similarly.
\end{proof}

\begin{prop}
  \label{prop:15}
  Let $L$ be an infinite dimensional admissible simple module of
  degree $d$. If $L'$ is an infinite dimensional simple submodule of
  $\mathcal{EXT}(L)$ then $L'$ is admissible of degree $d$ and
  $\mathcal{EXT}(L)\iso \mathcal{EXT}(L')$.
\end{prop}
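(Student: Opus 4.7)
The plan is to reduce everything to Lemma~\ref{lemma:1} and then reindex the direct sum defining $\mathcal{EXT}$. By Lemma~\ref{lemma:6} and Lemma~\ref{lemma:26} we may choose $w\in W$ and a commuting set of roots $\Sigma=\{\beta_1,\dots,\beta_n\}\subset\Phi^+$ which is a $\setZ$-basis of $Q$ with $-\Sigma\subset w(T_L)$, and form $\mathcal{EXT}(L)$ using the corresponding Ore subset $F_\Sigma$. Since $\mathcal{EXT}(L)$ is semisimple by construction, any simple submodule $L'$ is a direct summand, and hence a composition factor of ${^{\bar{w}}}\!\left(\phi_{F_\Sigma,\mathbf{c}}.({^w}L)_{F_\Sigma}\right)^{ss}$ for some representative $\mathbf{c}\in(\setC^*)^n$ of some class in $(\setC^*)^n/q^{\setZ^n}$.

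Next I apply Lemma~\ref{lemma:6} and Lemma~\ref{lemma:26} to $L'$ to obtain $w'\in W$ and a commuting set of roots $\Sigma'\subset\Phi^+$ which is a $\setZ$-basis of $Q$ with $-\Sigma'\subset w'(T_{L'})$, and let $F_{\Sigma'}$ be the associated Ore subset. Lemma~\ref{lemma:1} then applies directly and yields (i) that $L'$ is admissible of degree $d$, and (ii) that for every $\mathbf{b}\in(\setC^*)^n$
\begin{equation*}
  {^{\bar{w'}}}\!\left(\phi_{F_{\Sigma'},\mathbf{b}}.({^{w'}}L')_{F_{\Sigma'}}\right)^{ss} \iso {^{\bar{w}}}\!\left(\phi_{F_\Sigma,f(\mathbf{b})\mathbf{c}}.({^w}L)_{F_\Sigma}\right)^{ss},
\end{equation*}
where $f$ is the monomial map from Lemma~\ref{lemma:1} attached to the matrix $(a_{i,j})$ defined by $w(w')\inv(\beta_i')=\sum_j a_{i,j}\beta_j$.

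The crucial observation is now that $\Sigma$ and $\Sigma'$ are both $\setZ$-bases of $Q$, so $(a_{i,j})\in GL_n(\setZ)$. Consequently $f:(\setC^*)^n\to(\setC^*)^n$ is bijective with inverse again a monomial map having integer exponents, so in particular $f(q^{\setZ^n})=q^{\setZ^n}$. Therefore $t\mapsto f(t)\mathbf{c}$ descends to a bijection of the quotient $(\setC^*)^n/q^{\setZ^n}$, and we may reindex the direct sum defining $\mathcal{EXT}(L)$ accordingly:
\begin{align*}
  \mathcal{EXT}(L') =& \left(\bigoplus_{t\in(\setC^*)^n/q^{\setZ^n}} {^{\bar{w'}}}\!\left(\phi_{F_{\Sigma'},t}.({^{w'}}L')_{F_{\Sigma'}}\right)\right)^{ss}
  \\
  \iso& \left(\bigoplus_{t\in(\setC^*)^n/q^{\setZ^n}} {^{\bar{w}}}\!\left(\phi_{F_\Sigma,f(t)\mathbf{c}}.({^w}L)_{F_\Sigma}\right)\right)^{ss}
  \\
  \iso& \left(\bigoplus_{t\in(\setC^*)^n/q^{\setZ^n}} {^{\bar{w}}}\!\left(\phi_{F_\Sigma,t}.({^w}L)_{F_\Sigma}\right)\right)^{ss} \iso \mathcal{EXT}(L).
\end{align*}

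The argument is almost entirely a bookkeeping exercise once Lemma~\ref{lemma:1} and Lemma~\ref{lemma:20} are in hand; the only step that requires any thought is verifying that the reparametrization $t\mapsto f(t)\mathbf{c}$ is well-defined on $(\setC^*)^n/q^{\setZ^n}$, which is handled by the $GL_n(\setZ)$ observation above. In particular no new trace computation is needed: all the delicate density/simplicity arguments were already absorbed into the proof of Lemma~\ref{lemma:1}.
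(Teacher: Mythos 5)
Your proof is correct and follows essentially the same route as the paper: both first identify $L'$ as a composition factor of a single twisted piece of $\mathcal{EXT}(L)$, invoke Lemma~\ref{lemma:1} to get admissibility of degree $d$ and the isomorphism of semisimplified twists, and then reindex the direct sum using the bijection $t\mapsto f(t)t_{\mathbf c}$ on $(\setC^*)^n/q^{\setZ^n}$. The only cosmetic difference is that you re-derive the bijectivity of $f$ and $f(q^{\setZ^n})=q^{\setZ^n}$ from the $\GL{n,\setZ}$ change-of-basis matrix between $\Sigma$ and $\Sigma'$, whereas the paper simply cites these properties of $f$ as already stated in Lemma~\ref{lemma:1}.
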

\begin{proof}
  Let $w\in W$ and let $\Sigma$ be a set of commuting roots that is a
  basis of $Q$ such that $\Sigma\subset -w(T_L)$ (possible by
  Lemma~\ref{lemma:6} and Lemma~\ref{lemma:26}). Then by definition
  \begin{align*}
    \mathcal{EXT}(L)=\left( \bigoplus_{t\in (\setC^*)^n/q^{\setZ^n}}
      {^{\bar{w}}} \left( \phi_{F_\Sigma,t}.\left( {^w}L
        \right)_{F_\Sigma} \right) \right)^{ss}.
  \end{align*}
  $L'$ being a submodule of $\mathcal{EXT}(L)$ implies that $L'$ must
  be a submodule of
  \begin{align*}
    \left( {^{\bar{w}}} \left( \phi_{F_\Sigma,\mathbf{c}}.\left( {^w}L
        \right)_{F_\Sigma} \right) \right)^{ss}
  \end{align*}
  for some $\mathbf{c}\in (\setC^*)^n$. Let $w'\in W$ and let
  $\Sigma'$ be a set of commuting roots that is a basis of $Q$ such
  that $\Sigma' \subset -w'(T_{L'})$. By Lemma~\ref{lemma:1} $L'$ is
  admissible of degree $d$ and there exists a bijective map
  $f:(\setC^*)^n \to (\setC^*)^n$ such that
  $f(q^{\setZ^n})=q^{\setZ^n}$ and
  \begin{align*}
    \left( {^{\bar{w'}}} \left( \phi_{F_{\Sigma'},\mathbf{b}}.\left(
          {^{w'}}L' \right)_{F_{\Sigma'}} \right) \right)^{ss} \iso
    \left( {^{\bar{w}}} \left(
        \phi_{F_\Sigma,f(\mathbf{b})\mathbf{c}}.\left( {^w}L
        \right)_{F_\Sigma} \right) \right)^{ss}.
  \end{align*}
  Since $f(q^{\setZ^n})=q^{\setZ^n}$ it makes sense to write $f(t)$
  for $t\in (\setC^*)^n/q^{\setZ^n}$. So writing
  $t_\mathbf{c}=q^{\setZ^n} \mathbf{c}\in (\setC^*)^n/q^{\setZ^n}$ we get
  \begin{align*}
    \mathcal{EXT}(L')=&\left( \bigoplus_{t\in (\setC^*)^n/q^{\setZ^n}}
      {^{\bar{w'}}} \left( \phi_{F_\Sigma,t}.\left( {^{w'}}(L')
        \right)_{F_{\Sigma'}} \right) \right)^{ss}
    \\
    \iso& \left( \bigoplus_{t\in (\setC^*)^n/q^{\setZ^n}} {^{\bar{w}}}
      \left( \phi_{F_\Sigma,f(t)t_{\mathbf{c}}}.\left( {^w}L
        \right)_{F_\Sigma} \right) \right)^{ss}
    \\
    \iso& \left( \bigoplus_{t\in (\setC^*)^n/q^{\setZ^n}} {^{\bar{w}}}
      \left( \phi_{F_\Sigma,t}.\left( {^w}L \right)_{F_\Sigma} \right)
    \right)^{ss}
    \\
    =& \mathcal{EXT}(L)
  \end{align*}
  since the assignment $t\mapsto f(t)t_{\mathbf{c}}$ is bijective.
\end{proof}

\begin{lemma}
  \label{lemma:15}
  Let $f\in \setC[X_1^{\pm 1},\dots,X_n^{\pm 1}]$ be a nonzero Laurent
  polynomial. There exists $b_1,\dots,b_n\in \setC^*$ such that for
  all $i_1,\dots,i_n\in \setZ$
  \begin{equation*}
    f(q^{i_1}b_1,\dots,q^{i_n}b_n) \neq 0.
  \end{equation*}
\end{lemma}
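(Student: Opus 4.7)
The plan is to prove the lemma by induction on $n$, exploiting the fact that $\setC^*$ is uncountable while the bad set of $b_i$ we must avoid is only countable.

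For the base case $n=1$, the Laurent polynomial $f$ has only finitely many zeros $c_1,\dots,c_k$ in $\setC^*$. The set of ``bad'' values $\{q^{-i}c_j\mid i\in\setZ,\ 1\le j\le k\}$ is countable, so its complement in $\setC^*$ is nonempty, and any $b_1$ in the complement works.

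For the inductive step, I would expand $f(X_1,\dots,X_n)=\sum_{k}g_k(X_1,\dots,X_{n-1})X_n^k$ where only finitely many $g_k$ are nonzero. Pick one nonzero $g_{k_0}$; by the induction hypothesis there exist $b_1,\dots,b_{n-1}\in\setC^*$ such that $g_{k_0}(q^{i_1}b_1,\dots,q^{i_{n-1}}b_{n-1})\neq 0$ for every $(i_1,\dots,i_{n-1})\in\setZ^{n-1}$. With these $b_1,\dots,b_{n-1}$ fixed, for each $(i_1,\dots,i_{n-1})\in\setZ^{n-1}$ the specialization
\begin{equation*}
h_{(i_1,\dots,i_{n-1})}(X_n):=f(q^{i_1}b_1,\dots,q^{i_{n-1}}b_{n-1},X_n)
\end{equation*}
is a nonzero Laurent polynomial in the single variable $X_n$, since its coefficient on $X_n^{k_0}$ is nonzero by construction. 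Hence it has a finite zero set $Z_{(i_1,\dots,i_{n-1})}\subset\setC^*$.

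Now the set of ``bad'' choices for $b_n$ is
\begin{equation*}
B:=\bigl\{q^{-i_n}z\bigm| i_n\in\setZ,\ (i_1,\dots,i_{n-1})\in\setZ^{n-1},\ z\in Z_{(i_1,\dots,i_{n-1})}\bigr\},
\end{equation*}
which is a countable union of finite sets, hence countable. Since $\setC^*$ is uncountable we may pick $b_n\in\setC^*\setminus B$, and then $f(q^{i_1}b_1,\dots,q^{i_n}b_n)\neq 0$ for every $(i_1,\dots,i_n)\in\setZ^n$, completing the induction. The argument contains no real obstacle; the only point worth noting is that one must single out a specific nonzero coefficient $g_{k_0}$ before applying induction, so that the one-variable specialization in $X_n$ is guaranteed to be nonzero.
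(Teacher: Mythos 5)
Your proof is correct, but it takes a genuinely different route from the paper's. The paper's argument is algebraic: after clearing denominators to get an ordinary polynomial $g$, it passes to a finitely generated subfield $k\subset\setC$ containing the coefficients (and implicitly $q$), and chooses each $b_i$ to be a generator of a field extension of $k$ of degree larger than $\deg g$, with the $n$ extensions linearly disjoint. Then the monomials $b_1^{m_1}\cdots b_n^{m_n}$ with $0\leq m_i\leq\deg g$ are linearly independent over $k$, and multiplying by the $k$-scalars $q^{\sum i_j m_j}$ preserves that independence, so no nonzero $k$-linear combination — in particular $g$ evaluated at $(q^{i_1}b_1,\dots,q^{i_n}b_n)$ — can vanish. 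Your argument instead uses the uncountability of $\setC^*$: induct on $n$, observe that for a single variable the ``bad'' set of $b$'s you must avoid is a countable union of the finitely many zeros shifted by powers of $q$, and in the inductive step isolate one nonzero coefficient $g_{k_0}$ of $f$ viewed as a Laurent polynomial in $X_n$ so that every one-variable specialization is nonzero, again producing a countable bad set. Both approaches are sound; the paper's is more constructive in that it describes the $b_i$ algebraically and generalizes to any field of sufficient algebraic richness, while yours is more elementary (cardinality only, no field theory) and is structured more transparently as an induction — though it inherently relies on working over an uncountable field like $\setC$.
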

\begin{proof}
  Assume $f = X_1^{-N_1}\cdots X_n^{-N_n}g$ with $g\in
  \setC[X_1,\dots,X_n]$. $g$ has coefficients in some finitely
  generated (over $\setQ$) subfield $k$ of $\setC$. Let $b_1,\dots
  b_n$ be generators of $n$ disjoint extensions of $k$ of degree
  $>\deg g$. The monomials $b_1^{m_1}\cdots b_n^{m_n}$, $0\leq m_i
  \leq \deg g$ are all linearly independent over $k$. Since $q^i \neq
  0$ for $i\in\setZ$ the same is true for the monomials
  $(q^{i_1}b_1)^{m_1}\cdots (q^{i_n}b_n)^{m_n}$. So
  $g(q^{i_1}b_1,\dots,q^{i_n}b_n)\neq 0$, hence
  $f(q^{i_1}b_1,\dots,q^{i_n}b_n)\neq 0$.
\end{proof}

\begin{thm}
  \label{thm:existence_of_torsion_free_modules}
  Let $L$ be an infinite dimensional admissible simple modules of
  degree $d$. Then $\mathcal{EXT}(L)$ contains at least one simple
  torsion free module.
\end{thm}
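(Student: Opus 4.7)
The strategy is to locate a torsion-free simple submodule inside $\mathcal{EXT}(L)$ by examining its summands $M_\mathbf{b} := {^{\bar{w}}}\bigl(\phi_{F_\Sigma,\mathbf{b}}.({^w}L)_{F_\Sigma}\bigr)$ at a sufficiently generic twist parameter $\mathbf{b}\in(\setC^*)^n$. To set things up, I would use Lemma~\ref{lemma:6} and Lemma~\ref{lemma:26} to fix $w\in W$ and a commuting set $\Sigma=\{\beta_1,\ldots,\beta_n\}\subset\Phi^+$ which is a $\setZ$-basis of $Q$ with $-\Sigma\subset w(T_L)$, and let $F_\Sigma$ be the corresponding Ore subset. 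Because $\Sigma$ is a $\setZ$-basis of $Q$, the localization $({^w}L)_{F_\Sigma}$ has weight support equal to a full $q^Q$-coset, and Lemma~\ref{lemma:13} guarantees that every weight space there has the full dimension $d$. Transporting through the twist and the inverse $w$-twist, the same is true of $M_\mathbf{b}$, so $M_\mathbf{b}$ is admissible of degree $d$ with weight support a single $q^Q$-coset.

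Next I would apply Proposition~\ref{prop:24} to produce a Zariski-open set $\Omega\subset(\setC^*)^n$ on which the weight space $(M_\mathbf{b})_{\mu_0}$ at the chosen base weight $\mu_0$ is simple as a $(U_q)_0$-module. Since $\Sigma$ is a $\setZ$-basis of $Q$, Corollary~\ref{cor:6} propagates this across the whole coset: \emph{every} weight space of $M_\mathbf{b}$ is a simple $(U_q)_0$-module for $\mathbf{b}\in\Omega$. By Theorem~\ref{thm:Lemire}, $(M_\mathbf{b})_{\mu_0}$ then determines a unique simple $U_q$-module $L'$ with $(L')_{\mu_0}\cong(M_\mathbf{b})_{\mu_0}$, and $L'$ sits as a direct summand of $(M_\mathbf{b})^{ss}$, so as a simple submodule of $\mathcal{EXT}(L)$. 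By Lemma~\ref{lemma:1}, $L'$ is admissible of degree $d$, and $(L')_\mu\subset(M_\mathbf{b})_\mu$ combined with the $(U_q)_0$-simplicity of the latter forces each weight space of $L'$ to be either zero or all of $(M_\mathbf{b})_\mu$.

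By Proposition~\ref{prop:3}, $L'$ is torsion-free if and only if $\wt L'$ is the entire coset $q^Q\mu_0$, i.e.~$(L')_{q^\gamma\mu_0}\neq 0$ for every $\gamma\in Q$. For each $\gamma$, I would pick elements of $U_q$ of weight $\gamma$ and consider the induced linear maps $(M_\mathbf{b})_{\mu_0}\to(M_\mathbf{b})_{q^\gamma\mu_0}$; Lemma~\ref{lemma:9} ensures that these depend on $\mathbf{b}$ via a matrix of Laurent polynomials, so the condition $(L')_{q^\gamma\mu_0}=0$ (i.e.~all such maps vanish) cuts out a Zariski-closed subset of $(\setC^*)^n$. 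Granted that each of these closed conditions is \emph{proper}, Lemma~\ref{lemma:15} then produces a single $\mathbf{b}\in\Omega$ avoiding all of them at once, and the corresponding $L'$ has full weight support $q^Q\mu_0$, hence is torsion-free.

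The main obstacle is verifying that the Laurent polynomial conditions cutting out "$(L')_{q^\gamma\mu_0}=0$" are indeed nontrivial for every $\gamma\in Q$ — in other words ruling out the degenerate scenario in which every simple component of $\mathcal{EXT}(L)$ systematically misses an entire $q^Q$-direction. This is the quantum counterpart of Mathieu's nontriviality step; I would approach it by combining the explicit formulae for $\phi_{F_\Sigma,\mathbf{b}}$ on root vectors from Proposition~\ref{prop:25} with the density estimates of Lemma~\ref{lemma:11} and Lemma~\ref{lemma:12}, which force the simple $L'$ to achieve the maximal support compatible with admissibility. Once nontriviality is secured, the combination of Lemma~\ref{lemma:15} and Proposition~\ref{prop:3} concludes the proof.
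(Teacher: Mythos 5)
Your overall strategy---finding a torsion-free submodule by choosing a Zariski-generic twist parameter $\mathbf{b}$ and using Lemma~\ref{lemma:15} to make the generic choice stable under the $q^{\setZ^n}$-action---is the same strategy as the paper's. However, there is a real gap at the step you flag as ``the main obstacle,'' and the gap is larger than you suggest.

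You index your closed conditions by $\gamma\in Q$, an \emph{infinite} set, and then hope to apply Lemma~\ref{lemma:15}. But Lemma~\ref{lemma:15} handles one nonzero Laurent polynomial; to apply it you would need to combine your conditions into a single polynomial, and you cannot take an infinite product. You do not reduce to a finite subfamily, and you also concede that you have not shown nontriviality of the individual conditions. The observation you are missing, which resolves both issues at once, is that it suffices to guarantee that $E_\beta$ acts injectively for the \emph{finitely many} $\beta\in\Sigma$: since each $F_\beta$, $\beta\in\Sigma$, is already invertible after localization, injectivity of the $E_\beta$'s gives $\pm\Sigma\subset T_{L'}$, and because $\Sigma$ is a $\setZ$-basis of $Q$ this already forces full weight support and torsion-freeness via Proposition~\ref{prop:3}. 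This collapses your infinite family into one Laurent polynomial
\[
  p(\mathbf{b}) \;=\; \prod_{\beta\in\Sigma}\;\det\Bigl(E_\beta F_\beta\big|_{{^{\bar{w}}}\bigl(\phi_{F_\Sigma,\mathbf{b}}.(({^w}L)_{F_\Sigma})_{\lambda}\bigr)}\Bigr),
\]
to which Lemma~\ref{lemma:15} applies directly, producing a $\mathbf{c}$ for which $p$ is nonzero on the entire translate $q^{\setZ^n}\mathbf{c}$. That translate-wide non-vanishing (rather than non-vanishing at a single $\mathbf{b}$) is precisely what you need to conclude that $E_\beta$ is injective on \emph{every} weight space of the twisted module and hence on the whole module. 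Once that holds, any simple submodule $V$ has, by Lemma~\ref{lemma:1} and the injectivity of the $E_\beta$ and $F_\beta$, full-dimensional weight spaces everywhere, so $V$ is the whole twisted module; it is simple and torsion-free. Note this route also dispenses with your invocation of Proposition~\ref{prop:24}, which is not needed here.
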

\begin{proof}
  Let $\lambda \in w(\wt L)$. Then as a $(U_q)_0$-module
  \begin{equation*}
    \mathcal{EXT}(L) = \left( {^{\bar{w}}}\left( \bigoplus_{\mathbf{b} \in (\setC^*)^n} \phi_{F_\Sigma,\mathbf{b}}.\left(\left({^w}L\right)_{F_\Sigma}\right)_{\lambda} \right) \right)^{ss}
  \end{equation*}
  for some $w\in W$ and some Ore subset $F_{\Sigma}$ corresponding to
  a set of commuting roots $\Sigma$ that is a basis of $Q$. Let $u\in
  (U_q)_0$. Then the map $\mathbf{b} \mapsto \det
  u|_{{^{\bar{w}}}\left(\phi_{F_\Sigma,\mathbf{b}}.\left(\left({^w}L\right)_{F_\Sigma}\right)_\lambda\right)}=
  \det
  \phi_{F_\Sigma,\mathbf{b}}(T_w\inv(u))|_{\left(\left({^w}L\right)_{F_\Sigma}\right)_{\lambda}}$
  is Laurent polynomial. Let $p(\mathbf{b}) = \prod_{\beta\in \Sigma}
  \det
  E_{\beta}F_{\beta}|_{{^{\bar{w}}}\left(\phi_{F_\Sigma,\mathbf{b}}.\left(\left({^w}L\right)_{F_\Sigma}\right)_\lambda\right)}$. $p$
  is a Laurent polynomial by the above.  By Lemma~\ref{lemma:15} there
  exists a $\mathbf{c}\in (\setC^*)^n$ such that $p(\mathbf{b})\neq 0$
  for all $\mathbf{b} \in q^{\setZ^n}\mathbf{c}$ which implies that
  $E_{\beta}F_{\beta}$ acts injectively on the module
  $L':={^{\bar{w}}}\left(
    \phi_{F_\beta,\mathbf{c}}.({^w}L)_{F_\Sigma}\right)$ for all
  $\beta\in \Sigma$. Since $F_{\beta}$ acts injectively on the module
  by construction this implies that $E_\beta$ acts injectively as
  well. So we have $\pm \Sigma \subset T_{L'}$. Any simple submodule
  $V$ of $L'$ is admissible of degree $d$ by Lemma~\ref{lemma:1}
  and since $F_\beta$ and $E_\beta$ act injectively we get $\dim
  V_\lambda = d = \dim L'_\lambda$ for any $\lambda\in \wt L'$ thus $V
  = L'$. So $L'$ is a simple module. Using Proposition~\ref{prop:3} it
  is easy to see that $L'$ is torsion free since $\pm \Sigma \subset
  T_{L'}$ and $\Sigma$ is a basis of $Q$.
\end{proof}

\begin{prop}
  \label{prop:23}
  Let $L$ be an infinite dimensional admissible simple module. Let
  $\beta\in \Phi^+$. If $-\beta \in T_L$ then $\mathcal{EXT}(L)$
  contains $\left( \bigoplus_{t \in \setC^*/q^{\setZ}}
    \phi_{F_\beta,t}.L_{F_\beta}\right)^{ss}$ as a $U_q$-submodule.
\end{prop}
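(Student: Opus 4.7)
The plan is to exhibit $\phi_{F_\beta,b}.L_{F_\beta}$, for each $b \in \setC^*/q^\setZ$, as a $U_q$-submodule of one specific summand in the direct sum defining $\mathcal{EXT}(L)$, and then sum over $b$ inside the semisimplification.

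First I reduce to the case $\beta \in \Pi$. Since every positive root is $W$-conjugate to a simple root, pick $x \in W$ with $x(\beta) \in \Pi$. Then $-x(\beta) \in T_{{}^{x}L}$, and Proposition~\ref{prop:19} gives $\mathcal{EXT}(L) \iso {}^{\bar{x}}\mathcal{EXT}({}^{x}L)$. Combining the braid identity $T_{x^{-1}}(F_{x(\beta)}) = \pm F_\beta$ with Lemma~\ref{lemma:37} (to verify the equality of two Laurent-polynomial automorphisms on $b \in q^\setZ$) one gets ${}^{\bar{x}}\bigl(\phi_{F_{x(\beta)},b}.({}^{x}L)_{F_{x(\beta)}}\bigr) \iso \phi_{F_\beta,b}.L_{F_\beta}$, which reduces the statement to $\beta \in \Pi$.

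Now assume $\beta \in \Pi$ with $-\beta \in T_L$. I pick $w \in W$ by Lemma~\ref{lemma:6} with $w(F_L \setminus F_L^s) \subset \Phi^+$ and, using the remaining coset freedom (right multiplication by the Weyl group of the Levi of the parabolic $P_L = T_L^s \cup F_L$, which preserves $F_L \setminus F_L^s$), arrange $w(\beta) \in \Pi$; this is allowed because $\beta \notin F_L^s$. Lemma~\ref{lemma:26} applied with the simple root $\alpha := w(\beta)$ then produces a commuting set $\Sigma \subset \Phi^+$, a basis of $Q$, with $w(\beta) \in \Sigma$ and $-\Sigma \subset w(T_L)$. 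Let $F_\Sigma$ be the corresponding Ore subset and let $i_0$ be the index of $w(\beta)$ in $\Sigma$. By Lemma~\ref{lemma:20},
\[
\mathcal{EXT}(L) \iso \Bigl( \bigoplus_{t \in (\setC^*)^n/q^{\setZ^n}} {}^{\bar w}\bigl(\phi_{F_\Sigma,t}.({}^{w}L)_{F_\Sigma}\bigr) \Bigr)^{ss},
\]
and restricting $t$ to the one-dimensional subtorus $\{(1,\ldots,1,b,1,\ldots,1) : b \in \setC^*/q^\setZ\}$ (with $b$ in slot $i_0$) yields the direct summand $\bigl( \bigoplus_{b} {}^{\bar w}\bigl(\phi_{F_{w(\beta)},b}.({}^{w}L)_{F_\Sigma}\bigr) \bigr)^{ss}$ of $\mathcal{EXT}(L)$.

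Finally, because $-w(\beta) \in T_{{}^{w}L}$, the root vector $F_{w(\beta)}$ acts injectively on ${}^{w}L$, so $({}^{w}L)_{F_{w(\beta)}}$ embeds as a $U_q$-submodule of $({}^{w}L)_{F_\Sigma}$, and this inclusion is preserved by $\phi_{F_{w(\beta)},b}$ and by ${}^{\bar w}$. The same braid identity as before gives ${}^{\bar w}\bigl(\phi_{F_{w(\beta)},b}.({}^{w}L)_{F_{w(\beta)}}\bigr) \iso \phi_{F_\beta,b}.L_{F_\beta}$, so every composition factor of $\phi_{F_\beta,b}.L_{F_\beta}$ appears in the summand above; summing over $b$ yields $(\bigoplus_b \phi_{F_\beta,b}.L_{F_\beta})^{ss} \subset \mathcal{EXT}(L)$. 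The main obstacle is the simultaneous adjustment of $w$: arranging both $w(F_L\setminus F_L^s) \subset \Phi^+$ and $w(\beta) \in \Pi$ at once. If this coset bookkeeping is delicate one can instead prove a strengthened version of Lemma~\ref{lemma:26} permitting a non-simple positive root of $-w(T_L)$ as seed, by a refinement of the combinatorial argument in Lemma~\ref{lemma:7}.
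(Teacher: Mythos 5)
Your approach and the paper's diverge precisely at the point you flag as the ``main obstacle,'' and that obstacle is a genuine gap, not bookkeeping. The claim that you can simultaneously arrange $w(F_L\setminus F_L^s)\subset\Phi^+$ and $w(\beta)\in\Pi$ fails already in rank $2$. Take $\mathfrak{g}=\mathfrak{sl}_3$ and $\lambda$ with $A(\lambda)=\Pi$ (so by Lemma~\ref{lemma:18} $T_{L(\lambda)}=\Phi^-$, $F_{L(\lambda)}=\Phi^+$, both antisymmetric), and set $L={^{s_2}}L(\lambda)$. Then $T_L=s_2(\Phi^-)=\{-\alpha_1,\,-\alpha_1-\alpha_2,\,\alpha_2\}$, $F_L=\{\alpha_1,\,\alpha_1+\alpha_2,\,-\alpha_2\}$, and $T_L^s=F_L^s=\emptyset$. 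Take $\beta=\alpha_1$, so $-\beta\in T_L$, $\beta\in\Pi$, and $\beta\in F_L\setminus F_L^s$. The Levi is trivial, so there is \emph{no} coset freedom: the unique $w$ with $w(F_L\setminus F_L^s)\subset\Phi^+$ is $w=s_2$, and $s_2(\alpha_1)=\alpha_1+\alpha_2\notin\Pi$. Nor does the first reduction step save you: for $L'={^x}L$ the admissible $w'$ must equal $wx^{-1}$, so $w'(x(\beta))=w(\beta)=\alpha_1+\alpha_2$ regardless of $x$. Hence Lemma~\ref{lemma:26} is never applicable with seed $\alpha=w(\beta)$, and your proof as written does not go through. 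The fallback you mention --- a version of Lemma~\ref{lemma:26} with non-simple seed --- is an entirely new combinatorial statement; Lemma~\ref{lemma:7} and its proof are silent about it. (In this example a suitable $\Sigma=\{\alpha_1,\alpha_1+\alpha_2\}$ with $w=e$ happens to exist, but note $w=e$ violates $w(F_L\setminus F_L^s)\subset\Phi^+$, so your ``remaining coset freedom inside Lemma~\ref{lemma:6}'s choice'' cannot produce it.)

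The paper's proof sidesteps the issue entirely and this is the key structural difference. It takes \emph{any} $w$ and $\Sigma$ coming from Lemma~\ref{lemma:6} and Lemma~\ref{lemma:26}, with no constraint linking $\beta$ to $\Sigma$, writes $w(\beta)=\sum_i a_i\beta_i$ with $a_i\in\setZ$ --- crucially, with possibly negative coefficients --- and forms $x=F_{\beta_1}^{a_1}\cdots F_{\beta_n}^{a_n}$ inside the Ore localization $U_{q(F_\Sigma)}$. This $x$ plays the role of a ``root vector for $w(\beta)$'' but in general does not lie in $U_q$. The argument then works in the subalgebra $U_{q(x)}$ generated by $U_q$ and $x^{\pm 1}$, builds the $U_{q(x)}$-submodule $V\subset({^w}L)_{F_\Sigma}$ generated by $1\tensor{^w}L$, checks that the one-parameter twists $\phi_{F_\Sigma,(t^{a_1},\dots,t^{a_n})}$ preserve $U_{q(x)}$, and finally identifies $\bigl(\bigoplus_t {^{\bar w}}\phi_{F_\Sigma,(t^{a_1},\dots,t^{a_n})}.V\bigr)^{ss}$ with $\bigl(\bigoplus_t\phi_{F_\beta,t}.L_{F_\beta}\bigr)^{ss}$ by comparing traces via Lemma~\ref{lemma:32}, not by a direct term-by-term module isomorphism. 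Incidentally, that trace argument also dispatches the secondary issue in your write-up: $T_{x\inv}(F_{x(\beta)})$ is \emph{a} root vector for $\beta$, but it is not literally $\pm F_\beta$ for an arbitrary fixed choice of reduced expression, so the isomorphism ${}^{\bar w}\bigl(\phi_{F_{w(\beta)},b}.(\cdot)\bigr)\iso\phi_{F_\beta,b}.L_{F_\beta}$ you invoke also needs an argument rather than a bare braid-compatibility claim.
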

\begin{proof}
  Let $w\in W$ and $\Sigma=\{\beta_1,\dots,\beta_n\}$ be such that
  $\Sigma$ is a set of commuting roots that is a basis of $Q$ and
  $-\Sigma \subset w(T_L)$ and $F_\Sigma$ a corresponding Ore subset
  (always possible by Lemma~\ref{lemma:6} and Lemma~\ref{lemma:26}).

  We have $w(\beta) = \sum_{i=1}^n a_i \beta_i$ for some $a_i\in
  \setZ$. 
  Set $x =
  F_{\beta_1}^{a_1}\cdots F_{\beta_n}^{a_n}\in U_{q(F_\Sigma)}$. Let
  $U_{q(x)}$ be the $U_q$-subalgebra generated by $x$ in
  $U_{q(F_\Sigma)}$. $x$ is playing the role of $F_\beta$ and that is
  why the notation resembles the notation for Ohre localization. The
  Ohre localization of $U_q$ in $x$ does not neccesarily make sense
  though because $x$ is not neccesarily an element of $U_q$.

  Let $V$ be the $U_{q(x)}$-submodule of $({^w}L)_{F_\Sigma}$
  generated by $1\tensor {^w}L$. For any $t\in \setC^*/q^{\setZ}$
  \begin{equation*}
    {^{\bar{w}}}\left(\phi_{F_\Sigma,(t^{a_1},\dots,t^{a_n})}.V\right):=\left\{ \phi_{F_\Sigma,(t^{a_1},\dots,t^{a_n})}.v \in {^{\bar{w}}}\left(\phi_{F_\Sigma,(t^{a_1},\dots,t^{a_n})}.({^w}L)_{F_{\Sigma}}\right) | v \in V\right\}
  \end{equation*}
  is a $U_{q(x)}$-submodule of
  ${^{\bar{w}}}\left(\phi_{F_\Sigma,(t^{a_1},\dots,t^{a_n})}.({^w}L)_{F_{\Sigma}}\right)$:
  To show this we show that for $u\in U_{q(x)}$ and $c\in \setC^*$,
  $\phi_{F_\Sigma,(c^{a_1},\dots,c^{a_n})}(u)\in U_{q(x)}$. We know
  that $\phi_{F_\Sigma,(c^{a_1},\dots,c^{a_n})}(u) \in
  U_{q(F_\Sigma)}[c^{\pm 1}]$ and we also see by construction that for
  $c=q^i$, $i\in\setZ$, we have
  $\phi_{F_\Sigma,(c^{a_1},\dots,c^{a_n})}(u)=x^{-i}ux^i \in
  U_{q(x)}$. Choose a vector space basis of $U_{q(x)}$, $\{u_i\}_{i\in
    I}$ and extend to a basis $\{u_i,u_j'\}_{i\in I, j\in J}$ of
  $U_{q(F_\Sigma)}$ where $I$ and $J$ are some index sets. Then for
  $u\in U_{q(x)}$ we have $\phi_{F_\Sigma,(c^{a_1},\dots,c^{a_n})}(u)
  = \sum_{i\in I'} u_i p_i(c) + \sum_{j\in J'} u_j'p_j'(c)$ for some
  finite $I'\subset I$ and $J'\subset J$ and some $p_i,p_j'\in
  \setC[X^{\pm 1}]$. We see that for $j\in J'$, $p_j'(q^i)=0$ for all
  $i\in\setZ$ so $p_j'=0$. Hence
  $\phi_{F_\Sigma,(c^{a_1},\dots,c^{a_n})}(u) = \sum_{i\in I'} u_i
  p_i(c)\in U_{q(x)}$.  This shows that ${^{\bar{w}}}\left(
    \phi_{F_\Sigma,(t^{a_1},\dots,t^{a_n})}.V \right)$ is a submodule
  of
  ${^{\bar{w}}}\left(\phi_{F_\Sigma,(t^{a_1},\dots,t^{a_n})}.({^w}L)_{F_{\Sigma}}\right)$. Set
  \begin{equation*}
    \mathcal{V} =\left(
      \bigoplus_{t\in \setC^*/q^{\setZ}} {^{\bar{w}}}\left(
        \phi_{F_\Sigma,(t^{a_1},\dots,t^{a_n})}.V\right) \right)^{ss}.
  \end{equation*}
  Clearly $\mathcal{V}$ is a $U_q$-submodule of $\mathcal{EXT}(L)$. We
  claim that $\mathcal{V}\iso \left( \bigoplus_{t \in
      \setC^*/q^{\setZ}} \phi_{F_\beta,t}.L_{F_\beta}\right)^{ss}$ as
  $U_q$-modules. We will show this using Lemma~\ref{lemma:32}.

  Note that for $\lambda\in \wt V$ and $i\in\setZ$ we have
  \begin{equation*}
    {^{\bar{w}}}\left(\phi_{F_\Sigma,((q^i)^{a_1},\dots,(q^i)^{a_n})}.V_\lambda\right) \iso {^{\bar{w}}}\left( V_{q^{-i\sum_{k=1}^n a_k \beta_k}\lambda}\right)
  \end{equation*}
  as a $(U_q)_0$-module by Corollary~\ref{cor:6}.
 
  We have $\wt \mathcal{V} = (\setC^*)^\beta \wt L = \wt \left(
    \bigoplus_{t \in \setC^*/q^{\setZ}}
    \phi_{F_\beta,t}.L_{F_\beta}\right)^{ss}$. Let $\lambda \in \wt L$
  be such that $\dim L_\lambda = \max_{i\in \setZ}\{ \dim
  L_{q^{i\beta}\lambda}\}$ then $V_{w(\lambda)} \iso
  ({^w}L)_{w(\lambda)}\iso {^w}(L_\lambda)$ as a $(U_q)_0$-module by
  Lemma~\ref{lemma:13} and we have for $\nu \in (\setC^*)^\beta
  \lambda$:
  \begin{equation*}
    \mathcal{V}_\nu = \left( \bigoplus_{c\in \setC^*: c^{w(\beta)}=w(\nu\inv \lambda)} {^{\bar{w}}}\left( \phi_{F_\Sigma,(c^{a_1},\dots,c^{a_n})}.V_{w(\lambda)}\right)\right)^{ss}
  \end{equation*}
  so for $u\in (U_q)_0$:
  \begin{align*}
    \Tr u|_{\mathcal{V}_\nu} =& \sum_{c\in \setC^*: c^{\beta}=\nu\inv
      \lambda} \Tr
    \left(\phi_{F_{\Sigma},(c^{a_1},\dots,c^{a_n})}(T_w\inv(u))\right)|_{V_{w(\lambda)}}
  \end{align*}
  (note that $c^{w(\beta)}=w(\nu\inv \lambda)$ if and only if $c^\beta
  = \nu\inv \lambda$ since $c^{w(\beta)}=w(c^\beta)$).

  Set $p(c) = \Tr
  \left(\phi_{F_{\Sigma},(c^{a_1},\dots,c^{a_n})}(T_w\inv(u))\right)|_{V_{w(\lambda)}}$.
  $p$ is Laurent polynomial in $c$ and $p(q^{i}) = \Tr
  u|_{L_{q^{-i\beta}\lambda}}$ for $i\in \setN$.
  
  On the other hand we can show similarly that
  \begin{align*}
    \Tr &u|_{\left(\left( \bigoplus_{t \in \setC^*/q^{\setZ}}
          \phi_{F_\beta,t}.L_{F_\beta}\right)^{ss}\right)_\nu}
    \\
    =& \sum_{c\in \setC^*: c^\beta=\nu\inv \lambda} \Tr
    \left(\phi_{F_{\beta},c}(u)\right)|_{(L_{F_\beta})_{\lambda}}.
  \end{align*}
  Similarly $\Tr
  \left(\phi_{F_{\beta},c}(u)\right)|_{(L_{F_\beta})_{\lambda}}$ is
  Laurent polynomial in $c$ and equal to $\Tr
  u|_{L_{q^{-i\beta}\lambda}}$ for $c=q^{i}$, $i\in\setN$. So $\Tr
  \left(\phi_{F_{\beta},c}(u)\right)|_{(L_{F_\beta})_{\lambda}}=
  p(c)$. We conclude that $\Tr^{\mathcal{V}} = \Tr^{(\left(
      \bigoplus_{t \in \setC^*/q^{\setZ}}
      \phi_{F_\beta,t}.L_{F_\beta}\right)^{ss}}$ so $\mathcal{V}\iso
  \left( \bigoplus_{t \in \setC^*/q^{\setZ}}
    \phi_{F_\beta,t}.L_{F_\beta}\right)^{ss}$ as $U_q$-modules by
  Lemma~\ref{lemma:32}. 
\end{proof}

For any $\lambda\in X$ there is a unique simple highest weight module
which we call $L(\lambda)$. It is the unique simple quotient of the
Verma module $M(\lambda) := U_q \tensor_{U_q^{\geq 0}}
\setC_{\lambda}$ where $\setC_{\lambda}$ is the $1$-dimensional
$U_q^{\geq 0}$-module with $U_q^{+}$ acting trivially and $U_q^0$
acting like $\lambda$. Let $\rho = \frac{1}{2}\sum_{\beta\in \Phi^+}
\beta$. In the following we use the dot action on $X$. For $w\in W$, $w.\lambda := q^{- \rho}
w(q^\rho\lambda)$.

\begin{prop}
  \label{prop:9}
  Let $\lambda\in X$ be such that $L(\lambda)$ is admissible. Let
  $\alpha\in \Pi$. Assume $\lambda(K_{\alpha})\not \in \pm
  q^{\setN}$. Let $a=\frac{2}{(\alpha|\alpha)}$. If $a=\frac{1}{2}$
  choose a squareroot $\lambda(K_\alpha)^{\frac{1}{2}}$ of
  $\lambda(K_\alpha)$. Then
  \begin{itemize}
  \item $-\alpha\in T_{L(\lambda)}$.
  \item $L(s_\alpha.\lambda)$ is admissible.
  \item ${^{s_\alpha}}L(s_\alpha.\lambda)$ is a subquotient of the
    $U_q$-module $L(\lambda)_{F_\alpha}$.
  \item $L(s_\alpha.\lambda)$ and ${^{s_\alpha}}L(\lambda)$ are
    subquotients of the $U_q$-module.
    $\phi_{F_\alpha,\lambda(K_{\alpha})^a}.L(\lambda)_{F_\alpha}$.
  \end{itemize}
\end{prop}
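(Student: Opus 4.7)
First I would establish $-\alpha\in T_{L(\lambda)}$ via an $\mathfrak{sl}_2$-computation. The standard identity
\[
  [E_\alpha,F_\alpha^n] \;=\; [n]_\alpha F_\alpha^{n-1}\,\frac{q_\alpha^{1-n}K_\alpha - q_\alpha^{n-1}K_\alpha^{-1}}{q_\alpha-q_\alpha^{-1}}
\]
applied to $v_\lambda$ (using $E_\alpha v_\lambda=0$) yields inductively that $F_\alpha^n v_\lambda=0$ for some $n\geq 1$ would force $\lambda(K_\alpha)=\pm q_\alpha^{n-1}$. Since $q_\alpha^{\setN}\subseteq q^{\setN}$, the hypothesis $\lambda(K_\alpha)\notin\pm q^{\setN}$ rules this out, so $F_\alpha^n v_\lambda\neq 0$ for every $n$. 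Hence $v_\lambda\notin L(\lambda)^{[-\alpha]}$; since the latter is a submodule by Proposition~\ref{prop:2}, simplicity of $L(\lambda)$ forces $L(\lambda)^{[-\alpha]}=0$.

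For the first half of the fourth claim I would exhibit an explicit highest weight vector inside $\phi.L(\lambda)_{F_\alpha}$, where $\phi:=\phi_{F_\alpha,\lambda(K_\alpha)^a}$, so that $b_\alpha=\lambda(K_\alpha)$. Consider $w:=\phi.(F_\alpha v_\lambda)$. Using $2(\rho|\alpha)=(\alpha|\alpha)$, the weight $b^{-\alpha}q^{-\alpha}\lambda$ of $w$ is directly identified with $s_\alpha.\lambda$. For $\beta\in\Pi\setminus\{\alpha\}$ Lemma~\ref{lemma:9} gives $\phi(E_\beta)=E_\beta$, which commutes with $F_\alpha$ and kills $v_\lambda$; for $\beta=\alpha$ the same lemma writes $\phi(E_\alpha)=E_\alpha+F_\alpha^{-1}Z$ with $Z\in U_q^0$, and combining the relation $E_\alpha F_\alpha v_\lambda=\tfrac{\lambda(K_\alpha)-\lambda(K_\alpha)^{-1}}{q_\alpha-q_\alpha^{-1}}v_\lambda$ with the substitution $b_\alpha=\lambda(K_\alpha)$ produces the required cancellation. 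Consequently $U_q\cdot w$ is a highest weight module of weight $s_\alpha.\lambda$, so $L(s_\alpha.\lambda)$ occurs as its simple quotient, hence as a subquotient of $\phi.L(\lambda)_{F_\alpha}$. Admissibility (the second claim) is then immediate from Lemma~\ref{lemma:13}, which bounds the weight multiplicities of $L(\lambda)_{F_\alpha}$ by $\deg L(\lambda)$; the twist preserves these dimensions, and subquotients inherit the bound.

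For the third claim I would use the untwisted vector $F_\alpha^{-1}v_\lambda\in L(\lambda)_{F_\alpha}$: it has weight $q^\alpha\lambda=s_\alpha(s_\alpha.\lambda)$, and since $[E_\beta,F_\alpha]=0$ for simple $\beta\neq\alpha$, one has $E_\beta F_\alpha^{-1}v_\lambda=0$. This matches the behaviour of the generator $v_{s_\alpha.\lambda}\in{}^{s_\alpha}L(s_\alpha.\lambda)$, where in the twisted action $E_\beta$ operates as $T_{s_\alpha^{-1}}(E_\beta)\in U_q^+$, which kills a highest weight vector of $L(s_\alpha.\lambda)$. In the quotient $L(\lambda)_{F_\alpha}/L(\lambda)$ the image of $F_\alpha^{-1}v_\lambda$ generates a cyclic $U_q$-module whose unique simple constituent at weight $q^\alpha\lambda$ is identified with ${}^{s_\alpha}L(s_\alpha.\lambda)$ via Theorem~\ref{thm:Lemire}. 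The second half of the fourth claim runs in parallel: in $\phi.L(\lambda)_{F_\alpha}/U_q\cdot w$ the image of $\phi.v_\lambda$ has weight $b^{-\alpha}\lambda=s_\alpha\lambda$, matching the twisted generator of ${}^{s_\alpha}L(\lambda)$, and ${}^{s_\alpha}L(\lambda)$ is identified as the remaining composition factor by a character match through Lemma~\ref{lemma:32}.

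The main technical obstacle lies in this last paragraph. Neither of the candidate assignments $v_{s_\alpha.\lambda}\mapsto F_\alpha^{-1}v_\lambda$ nor $v_\lambda\mapsto\phi.v_\lambda$ extends to a $U_q$-homomorphism from the corresponding simple twisted module into the ambient module, so one cannot exhibit ${}^{s_\alpha}L(s_\alpha.\lambda)$ or ${}^{s_\alpha}L(\lambda)$ as actual submodules. The identification has to proceed instead by passing to quotients of the length-two modules $L(\lambda)_{F_\alpha}$ and $\phi.L(\lambda)_{F_\alpha}$ and using a character/trace bookkeeping argument via Lemma~\ref{lemma:32} to pin down the remaining composition factor.
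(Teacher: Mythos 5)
Your argument for the first two bullets and for the first half of the fourth (that $L(s_\alpha.\lambda)$ occurs in $\phi.L(\lambda)_{F_\alpha}$) is correct and essentially identical to the paper's: an $\mathfrak{sl}_2$-computation showing $F_\alpha$ acts injectively on $v_\lambda$, the explicit highest weight vector $\phi.(F_\alpha v_\lambda)$ of weight $s_\alpha.\lambda$, and admissibility via Lemma~\ref{lemma:13}.

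The gap is in the third bullet and the second half of the fourth. You correctly identify the obstruction---$E_\alpha F_\alpha^{-1}v_\lambda\neq 0$ in $L(\lambda)_{F_\alpha}$, so there is no direct $U_q$-map out of ${}^{s_\alpha}L(s_\alpha.\lambda)$---but your fallback (a trace-bookkeeping argument via Lemma~\ref{lemma:32} on ``length-two'' modules) does not close it. You compute no traces, the localization is not known to have length two, and a one-dimensional weight space only pins down a simple constituent through Theorem~\ref{thm:Lemire} once the full $(U_q)_0$-action on it, not merely the $U_q^0$-character, has been matched, which you do not do. What is missing is the twist-then-quotient step the paper uses: in ${}^{\bar{s_\alpha}}\bigl(L(\lambda)_{F_\alpha}/U_q v_\lambda\bigr)$, the generator $E_\alpha$ acts by $T_{s_\alpha}^{-1}(E_\alpha)$, which is a unit multiple of $K_\alpha^{\pm 1}F_\alpha$; since $F_\alpha\cdot\bigl(F_\alpha^{-1}v_\lambda\bigr)=v_\lambda$ dies in the quotient, the image $v''$ of $F_\alpha^{-1}v_\lambda$ satisfies $E_\alpha\cdot v''=0$ for free, while $E_\beta\cdot v''=0$ for $\beta\neq\alpha$ because $T_{s_\alpha}^{-1}(E_\beta)\in U_q^+$ (as you note). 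Thus $v''$ is a highest weight vector of weight $s_\alpha.\lambda$, so $L(s_\alpha.\lambda)$ is a subquotient of ${}^{\bar{s_\alpha}}L(\lambda)_{F_\alpha}$, equivalently ${}^{s_\alpha}L(s_\alpha.\lambda)$ is a subquotient of $L(\lambda)_{F_\alpha}$. The identical computation applied to $F_\alpha^{-1}v_{s_\alpha.\lambda}$ in ${}^{\bar{s_\alpha}}\bigl(\phi.L(\lambda)_{F_\alpha}/U_q v_{s_\alpha.\lambda}\bigr)$ yields the remaining half of the fourth bullet; no trace argument is required.
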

\begin{proof}
  $\lambda(K_\alpha)\not \in \pm q_\alpha^{\setN}$ implies that
  $-\alpha\subset T_{L(\lambda)}$ since for $i\in \setN$:
  \begin{equation*}
    E_\alpha^{(i)} F_\alpha^{(i)}v_\lambda = \prod_{j=1}^i \frac{q_\alpha^{j-1}\lambda(K_\alpha)-q_\alpha^{1-j}\lambda(K_\alpha)\inv}{q_\alpha^j - q_\alpha^{-j}} v_\lambda.
  \end{equation*}
  This is only zero for an $i\in\setN$ when $\lambda(K_\alpha)\in \pm
  q_\alpha^{\setN}$.

  Let $v_\lambda\in L(\lambda)$ be a highest weight vector. Denote the
  vector $\phi_{F_\alpha,\lambda(K_\alpha)^a}.F_\alpha v_\lambda \in
  \phi_{F_\alpha,\lambda(K_\alpha)}.L(\lambda)_{F_\Sigma}$ as
  $v_{s_\alpha.\lambda}$. This is a highest weight vector of weight
  $s_\alpha.\lambda$: For $\mu\in Q$:
  \begin{align*}
    K_\mu v_{s_\alpha.\lambda} =& K_\mu \phi_{F_\alpha,
      \lambda(K_\alpha)^a}.F_\alpha v_\lambda
    \\
    =& \phi_{F_\alpha,q \lambda(K_\alpha)^a}.\left( \left(q
        \lambda(K_\alpha)^a \right)^{-\left(\mu|\alpha \right)}
      \lambda(K_\mu) F_\alpha v_\lambda \right)
    \\
    =& q^{-\left(\mu|\alpha \right)} \lambda\left(
      K_\alpha^{-\left<\mu,\alpha^\vee\right>} K_\mu \right)
    \phi_{F_\alpha,q_\alpha \lambda(K_\alpha)}.F_\alpha v_\lambda
    \\
    =& q^{-(\mu|\alpha)}(s_\alpha \lambda)(K_\mu) v_{s_\alpha.\lambda}
    \\
    =& s_\alpha.\lambda(K_\mu) v_{s_\alpha.\lambda}.
  \end{align*}
  For $\alpha'\in \Pi\backslash\{\alpha\}$
  \begin{equation*}
    E_{\alpha'} \phi_{F_\alpha, \lambda(K_\alpha)^a}.v_\lambda = \phi_{F_\alpha, \lambda(K_\alpha)^a}. E_{\alpha'} v_\lambda
  \end{equation*}
  and for $\alpha'=\alpha$ we have by the formula in the proof of
  Lemma~\ref{lemma:9}
  \begin{align*}
    E_\alpha \phi_{F_\alpha, \lambda(K_\alpha)^a}.&F_\alpha v_\lambda
    \\
    =& \phi_{F_\alpha, \lambda(K_\alpha)^a}.F_\alpha \phi_{F_\alpha,q \lambda(K_\alpha)^a}(E_\alpha) v_\lambda
    \\
    =& \phi_{F_\alpha, \lambda(K_\alpha)^a}. F_\alpha \left( E_\alpha +
      F_\alpha\inv \frac{q_\alpha (q_\alpha\lambda(K_\alpha))\inv
        K_\alpha - q_\alpha\inv
        q_\alpha\lambda(K_\alpha)K_\alpha\inv}{(q_\alpha-q_\alpha\inv)^2}
    \right) v_\lambda
    \\
    =& 0.
  \end{align*}
  So $v_{s_\alpha.\lambda}$ is a highest weight vector of weight
  $s_\alpha.\lambda$ hence $L(s_\alpha.\lambda)$ is a subquotient of
  $\phi_{F_\alpha,\lambda(K_\alpha)^a}.L(\lambda)_{F_\alpha}$. Since
  $L(s_\alpha.\lambda)$ is a subquotient of
  $\phi_{F_\alpha,\lambda(K_\alpha)^a}.L(\lambda)_{F_\alpha}$ it is
  admissible by Lemma~\ref{lemma:13}.

  Consider
  ${^{\bar{s_\alpha}}}\left(\phi_{F_\alpha,\lambda(K_\alpha)^a}.L(\lambda)_{F_\alpha}/(U_qv_{s_\alpha.\lambda})\right)$
  and the vector
  \begin{equation*}
    v' = F_{\alpha}\inv v_{s_\alpha.\lambda} + U_q v_{s_\alpha.\lambda} \in {^{\bar{s_\alpha}}}\left( \phi_{F_\alpha,\lambda(K_\alpha)^a}.L(\lambda)_{F_\alpha} /(U_q v_{s_\alpha.\lambda})\right).
  \end{equation*}
  Then $E_\beta v'=0$ for all $\beta\in \Pi$: First of all
  \begin{align*}
    E_\alpha \cdot v' =& T_{s_\alpha}\inv(E_\alpha) v'
    \\
    =& -K_\alpha F_\alpha v'
    \\
    =& - K_\alpha v_{s_\alpha.\lambda} + U_q v_{s_\alpha.\lambda}
    \\
    =& 0.
  \end{align*}
  For $\beta \in \Pi\backslash\{ \alpha \}$
  \begin{align*}
    E_\beta \cdot v' =& T_{s_\alpha}\inv(E_\beta) v'.
    \\
    =& \sum_{i=0}^{-\left<\beta,\alpha^\vee\right>} (-1)^i
    q_\alpha^{-i} E_\alpha^{(i)}E_\beta
    E_\alpha^{(-\left<\beta,\alpha^\vee\right>-i)}v'
    \\
    =& (-1)^{\left<\beta,\alpha^\vee\right>}
    q_\alpha^{\left<\beta,\alpha^\vee\right>}E_{\alpha}^{\left(-\left<\beta,\alpha^\vee\right>\right)}
    E_{\beta} v'
    \\
    =& (-1)^{\left<\beta,\alpha^\vee\right>}
    q_\alpha^{\left<\beta,\alpha^\vee\right>}E_{\alpha}^{\left(-\left<\beta,\alpha^\vee\right>\right)}
    F_{\alpha}\inv E_{\beta} v_{s_\alpha.\lambda} + U_q
    v_{s_\alpha.\lambda}
    \\
    =& 0
  \end{align*}
  since $E_{\alpha}v'=0$ and $E_{\beta}v_{s_\alpha.\lambda}=0$ by the
  above.

  So $v'$ is a highest weight vector and $v'$ has weight $\lambda$:
  For $\mu\in Q$:
  \begin{align*}
    K_\mu \cdot v' =& K_{s_\alpha \mu } v'
    \\
    =& K_{s_\alpha \mu } F_{\alpha}\inv v_{s_\alpha.\lambda} + U_q
    v_{s_\alpha.\lambda}
    \\
    =& q^{(s_\alpha(\mu)|\alpha)} s_{\alpha}.\lambda(K_{s_\alpha \mu})
    F_{\alpha}\inv v_{s_\alpha.\lambda} + U_q v_{s_\alpha.\lambda}
    \\
    =& \lambda(K_{\mu}) F_{\alpha}\inv v_{s_\alpha.\lambda} + U_q
    v_{s_\alpha.\lambda}.
  \end{align*}

  So $L(\lambda)$ is a subquotient of ${^{\bar{s_\alpha}}}(\phi_{F_\alpha,\lambda(K_\alpha)^a}.L(\lambda)_{F_\alpha})$
  hence ${^{s_\alpha}}L(\lambda)$ is a subquotient of $\phi_{F_\alpha,\lambda(K_\alpha)^a}.L(\lambda)_{F_\alpha}$.
  Consider the vector
  \begin{equation*}
    v''=F_\alpha^{-1} v_\lambda + U_q v_{\lambda}\in
    {^{\bar{s_\alpha}}}\left(L(\lambda)_{F_\alpha}/(U_q v_{\lambda} )\right).
  \end{equation*}
  By an argument analog to above we get $E_\beta \cdot v'' = 0$ for
  all $\beta\in \Pi \backslash\{ \alpha\}$ since $E_\beta$ and
  $F_\alpha\inv$ commutes and $v_\lambda$ is a highest weight
  vector. We get $E_\alpha\cdot v'' = 0$ by the following:
  \begin{align*}
    E_\alpha \cdot v'' =& T_{s_\alpha}\inv(E_\alpha) v''
    \\
    =& -K_\alpha F_\alpha v''
    \\
    =& - q^{-2} F_\alpha K_\alpha F_\alpha\inv v_\lambda + U_q
    v_\lambda
    \\
    =& 0.
  \end{align*}
  
  So $v''$ is a highest weight vector in
  ${^{\bar{s_\alpha}}}\left( L(\lambda)_{F_\alpha}/(U_q v_{\lambda}
    )\right)$. $v''$ has weight $s_\alpha. \lambda$: For $\mu\in Q$:
  \begin{align*}
    K_\mu \cdot v'' =& K_{s_\alpha \mu} v''
    \\
    =& K_{s_\alpha \mu} F_\alpha\inv v_\lambda + U_q v_\lambda
    \\
    =& q^{(s_\alpha(\mu)|\alpha)} \lambda(K_{s_\alpha \mu}) v''
    \\
    =& (q^{-\alpha}s_\alpha \lambda ) (K_{\mu}) v''.
  \end{align*}

  Hence $L(s_\alpha. \lambda)$ is a subquotient of
  ${^{\bar{s_\alpha}}}L(\lambda)_{F_\Sigma}$ and therefore ${^{s_\alpha}}L(
  s_\alpha. \lambda)$ is a subquotient of $L(\lambda)_{F_\Sigma}$.
\end{proof}

\begin{lemma}
  \label{lemma:30}
  Let $\lambda\in X$ be such that $L(\lambda)$ is an infinite
  dimensional admissible module of degree $d$. Let $\alpha\in
  \Pi$. Then
  \begin{equation*}
    \mathcal{EXT}(L(\lambda))\iso \mathcal{EXT}({^{s_\alpha}}L(\lambda))
  \end{equation*}
  and if $\lambda(K_\alpha) \not \in \pm q_\alpha^{\setN}$ then
  $\mathcal{EXT}(L(\lambda))$ contains $L(s_\alpha.\lambda)$ and
  ${^{s_\alpha}}L(s_\alpha.\lambda)$ as $U_q$-submodules, where
  $s_\alpha.\lambda := q^{- \rho}
  s_\alpha(q^\rho\lambda)=q^{-\alpha}s_\alpha \lambda$.
\end{lemma}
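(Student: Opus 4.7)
I will prove the submodule claim first, since it yields the isomorphism in the generic case; the remaining integrable case needs a separate argument. \emph{Submodule claim.} Assume $\lambda(K_\alpha)\notin \pm q_\alpha^{\setN}$. Proposition~\ref{prop:9} gives $-\alpha \in T_{L(\lambda)}$, so Proposition~\ref{prop:23} embeds $\bigl(\bigoplus_{t\in \setC^*/q^{\setZ}} \phi_{F_\alpha,t}.L(\lambda)_{F_\alpha}\bigr)^{ss}$ as a $U_q$-submodule of $\mathcal{EXT}(L(\lambda))$. Proposition~\ref{prop:9} then identifies ${^{s_\alpha}}L(s_\alpha.\lambda)$ as a subquotient of $L(\lambda)_{F_\alpha} = \phi_{F_\alpha,1}.L(\lambda)_{F_\alpha}$, and both $L(s_\alpha.\lambda)$ and ${^{s_\alpha}}L(\lambda)$ as subquotients of $\phi_{F_\alpha,\lambda(K_\alpha)^a}.L(\lambda)_{F_\alpha}$, where $a = 2/(\alpha|\alpha)$. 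Since $\mathcal{EXT}(L(\lambda))$ is semisimple by construction, every composition factor appears as a direct summand, so $L(s_\alpha.\lambda)$ and ${^{s_\alpha}}L(s_\alpha.\lambda)$ occur as $U_q$-submodules, establishing the second assertion.

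\emph{Isomorphism.} In the generic case $\lambda(K_\alpha)\notin\pm q_\alpha^{\setN}$, the argument above also makes ${^{s_\alpha}}L(\lambda)$ an infinite-dimensional simple $U_q$-submodule of $\mathcal{EXT}(L(\lambda))$, so Proposition~\ref{prop:15} yields $\mathcal{EXT}({^{s_\alpha}}L(\lambda)) \iso \mathcal{EXT}(L(\lambda))$. When instead $\lambda(K_\alpha) \in \pm q_\alpha^{\setN}$, the simple highest weight module $L(\lambda)$ is $\alpha$-integrable (of type $\pm 1$), and Lusztig's braid operator on $\alpha$-integrable modules provides a $U_q$-isomorphism $L(\lambda) \iso {^{s_\alpha}}L(\lambda)$, from which $\mathcal{EXT}(L(\lambda)) \iso \mathcal{EXT}({^{s_\alpha}}L(\lambda))$ is immediate.

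The main technical point is tracking the twist parameter $\lambda(K_\alpha)^a$ through Proposition~\ref{prop:9} so that the two distinguished composition factor identifications (at $t=1$ and at $t=\lambda(K_\alpha)^a$) genuinely land inside the submodule of $\mathcal{EXT}(L(\lambda))$ produced by Proposition~\ref{prop:23}. Apart from this, the proof is a direct assembly of Propositions~\ref{prop:9}, \ref{prop:23} and \ref{prop:15}, together with the standard braid-operator isomorphism on integrable modules to cover the degenerate case.
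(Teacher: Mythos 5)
Your handling of the generic case $\lambda(K_\alpha)\notin\pm q_\alpha^{\setN}$ matches the paper: Propositions~\ref{prop:9}, \ref{prop:23}, and \ref{prop:15} do yield both the submodule claim and the isomorphism $\mathcal{EXT}(L(\lambda))\iso\mathcal{EXT}({^{s_\alpha}}L(\lambda))$, and the semisimplicity of $\mathcal{EXT}$ converts subquotient to submodule exactly as you say.

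The gap is in the $\alpha$-integrable case. You assert that whenever $\lambda(K_\alpha)\in\pm q_\alpha^{\setN}$, Lusztig's braid operator gives a $U_q$-isomorphism $L(\lambda)\iso {^{s_\alpha}}L(\lambda)$. This is true only for the sign $+$ case $\lambda(K_\alpha)=q_\alpha^k$, and it fails for $\lambda(K_\alpha)=-q_\alpha^k$ once $\mathfrak{g}$ has rank $\geq 2$. Indeed, ${^{s_\alpha}}L(\lambda)$ is again a simple highest weight module (the vector $F_\alpha^k v_\lambda$ is a highest weight vector in ${^{s_\alpha}}L(\lambda)$, since $T_{s_\alpha}(E_\beta)F_\alpha^k v_\lambda=0$ for all $\beta\in\Pi$), and its highest weight is $q^{k\alpha}s_\alpha\lambda$. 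A direct computation gives
\begin{equation*}
  (q^{k\alpha}s_\alpha\lambda)(K_\beta) = (\pm 1)^{\left<\beta,\alpha^\vee\right>}\,\lambda(K_\beta)
  \quad\text{for }\lambda(K_\alpha)=\pm q_\alpha^k.
\end{equation*}
For the sign $+$ case this is $\lambda$, consistent with your claim, but for the sign $-$ case the highest weight is $\lambda'$ with $\lambda'(K_\beta)=(-1)^{\left<\beta,\alpha^\vee\right>}\lambda(K_\beta)$, which differs from $\lambda$ whenever some $\beta$ adjacent to $\alpha$ has $\left<\beta,\alpha^\vee\right>$ odd. Since a simple highest weight module is determined by its highest weight, $L(\lambda)\not\iso {^{s_\alpha}}L(\lambda)$ in that situation, and the braid-operator shortcut breaks down. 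This is exactly why the paper splits the integrable case: it disposes of $\lambda(K_\alpha)=q_\alpha^k$ with the one-line observation you had in mind, but for $\lambda(K_\alpha)=-q_\alpha^k$ it gives a separate argument, exhibiting an explicit highest weight vector of weight $\lambda$ in ${^{\bar{s_\alpha}}}\mathcal{EXT}(L(\lambda))$ (constructed by twisting by parameters $(-1)^{a_i'}$ and using Proposition~\ref{prop:25} to control $\phi_{F_\Sigma,(-1)^{a_\bullet'}}$ on simple root vectors), and then invoking Proposition~\ref{prop:15}. You need an argument of that kind here; the claimed $U_q$-isomorphism does not exist in the sign $-$ case.
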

\begin{proof}
  Assume first that $\lambda(K_\alpha) \not \in \pm
  q_\alpha^{\setN}$. By Proposition~\ref{prop:9} the $U_q$-module
  $\bigoplus_{t\in \setC^*/q^{\setZ}} \phi_{F_{\alpha} ,t}.
  L(\lambda)_{F_\alpha}$ contains $L(s_\alpha.\lambda)$,
  ${^{s_\alpha}}L(\lambda)$ and ${^{s_\alpha}}L(s_\alpha.\lambda)$ as
  subquotients.  By Proposition~\ref{prop:23} and
  Proposition~\ref{prop:15} this finishes the proof of the claim when
  $\lambda(K_\alpha) \not \in \pm q_\alpha^{\setN}$.

  Assume now that $\lambda(K_\alpha) = \pm q_\alpha^{k}$ for some
  $k\in \setN$: If $\lambda(K_\alpha)= q_\alpha^{k}$ it is easy to
  prove that $L(\lambda) \iso {^{s_\alpha}}L(\lambda)$. Assume from
  now on that $\lambda(K_\alpha)=-q_\alpha^k$. We have
  \begin{equation*}
    \mathcal{EXT}(L(\lambda)) = \left( \bigoplus_{t\in (\setC^*)/q^{\setZ^n}} \phi_{F_\Sigma,t}.L(\lambda)_{F_\Sigma} \right)^{ss}
  \end{equation*}
  for some set of commuting roots $\Sigma=\{\beta_1,\dots,\beta_n\}$
  that is a basis of $Q$ with $-\Sigma\subset T_{L(\lambda)}$. Since
  $\Sigma$ is a basis of $Q$ there exists $a_1,\dots,a_n\in \setZ$
  such that $\alpha = \sum_{i=1}^n a_i \beta_i$. Let $v_\lambda$ be a
  highest weight vector in $L(\lambda)$. We will show that
  $v_0:=\phi_{F_{\Sigma},((-1)^{a_1'},\dots,(-1)^{a_n'})}.F_\alpha^iv_\lambda\in
  {^{\bar{s_\alpha}}}\mathcal{EXT}(L(\lambda))$ is a highest weight
  vector of weight $\lambda$ where
  $a_i'=\frac{2a_i}{(\alpha|\alpha)}$. This will imply
  $\mathcal{EXT}({^{s_\alpha}}L(\lambda)) \iso
  \mathcal{EXT}(L(\lambda))$ by Proposition~\ref{prop:15}. The weight
  of $v_0$: Let $\mu\in Q$:
  \begin{align*}
    K_\mu \cdot v_0 =&
    K_{s_\alpha(\mu)}\phi_{F_\Sigma,((-1)^{a_1'},\dots,(-1)^{a_n'})}.F_\alpha^i
    v_\lambda
    \\
    =& (-1)^{\left(\sum_{i=1}^n a_i'
        \beta_i|\mu\right)}q^{i(\alpha|\mu)} \lambda(K_\mu
    K_{\alpha}^{-\left< \mu,\alpha^\vee\right>})
    \phi_{F_\Sigma,((-1)^{a_1'},\dots,(-1)^{a_n'})}.F_{\alpha}^i
    v_\lambda
    \\
    =& (-1)^{\left<\mu,\alpha^\vee\right>}q_\alpha^{i\left<
        \mu,\alpha^\vee\right>} (- q_\alpha^i)^{-
      \left<\mu,\alpha^\vee\right>} \lambda(K_\mu) v_0
    \\
    =& \lambda(K_\mu) v_0.
  \end{align*}
  By Proposition~\ref{prop:25}
  $\phi_{F_\beta,(-1)^{\frac{2}{(\beta|\beta)}}}(E_{\alpha'})
  =E_{\alpha'}$ and
  $\phi_{F_\beta,(-1)^{\frac{2}{(\beta|\beta)}}}(F_{\alpha'}) = \pm
  F_{\alpha'}$ for any $\alpha'\in \Pi$ and any $\beta\in \Phi^+$. So
  $\phi_{F_\Sigma,((-1)^{a_1'},\dots,(-1)^{a_n'})}(E_\beta)$,
  $\beta\in \Pi\backslash\{\alpha\}$ and
  $\phi_{F_\Sigma,((-1)^{a_1'},\dots,(-1)^{a_n'})}(F_\alpha)$ kills
  $F_{\alpha}^iv_\lambda\in L(\lambda)$ because $E_\beta$ and
  $F_\alpha$ does. Hence $E_\beta$, $\beta\in \Pi$ kills $v_0$ by the
  same argument as in the proof of Proposition~\ref{prop:9} when
  proving that $v'$ is a highest weight vector.
\end{proof}

\begin{thm}
  \label{thm:EXT_contains_highest_weight}
  Let $L$ be an infinite dimensional admissible simple module of
  degree $d$. Then the $U_q$-module $\mathcal{EXT}(L)$ contains an
  infinite dimensional admissible simple highest weight module
  $L(\lambda)$ of degree $d$ for some weight $\lambda\in
  X$. Furthermore for any $x\in W$:
  \begin{equation*}
    {^x}\mathcal{EXT}(L) \iso \mathcal{EXT}(L).
  \end{equation*}
\end{thm}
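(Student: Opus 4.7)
The plan is to handle both assertions in three stages, starting with the easy case of highest weight modules. For an admissible simple highest weight $L(\mu)$ and any simple $\alpha\in\Pi$, Lemma~\ref{lemma:30} combined with Proposition~\ref{prop:19} gives
\[
{^{s_{\alpha}}}\mathcal{EXT}(L(\mu))\iso\mathcal{EXT}({^{s_{\alpha}}}L(\mu))\iso\mathcal{EXT}(L(\mu)),
\]
and an induction on $l(x)$, using ${^{s_{\alpha}x}}M\iso{^{s_{\alpha}}}({^{x}}M)$ when $l(s_{\alpha}x)=l(x)+1$, extends this to ${^{x}}\mathcal{EXT}(L(\mu))\iso\mathcal{EXT}(L(\mu))$ for all $x\in W$; applying ${^{\bar{x}}}$ to both sides and using ${^{\bar{x}}}({^{x}}M)\iso M$ gives the analogous statement with $\bar{x}$ in place of $x$.

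To produce a highest weight submodule of $\mathcal{EXT}(L)$, I iterate Lemma~\ref{lemma:24}: setting $L_{0}=L$, whenever $(T_{L_{k}}^{s})^{+}\neq\emptyset$ I pick $\beta$ in this set and take $L_{k+1}$ to be an infinite dimensional simple admissible submodule of $\phi_{F_{\beta},b}.(L_{k})_{F_{\beta}}$ provided by that lemma, which forces $T_{L_{k+1}}^{s}\subsetneq T_{L_{k}}^{s}$. Because $-\beta\in T_{L_{k}}$, Proposition~\ref{prop:23} embeds the localization into $\mathcal{EXT}(L_{k})$, and Proposition~\ref{prop:15} identifies $\mathcal{EXT}(L_{k})$ with $\mathcal{EXT}(L)$, so every $L_{k+1}$ lives inside $\mathcal{EXT}(L)$. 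The iteration terminates at some $L_{n}$ with $T_{L_{n}}^{s}=\emptyset$, and then Lemma~\ref{lemma:6} furnishes $w\in W$ with $w(T_{L_{n}})\subset\Phi^{-}$; consequently every $E_{\alpha}$, $\alpha\in\Pi$, acts locally nilpotently on ${^{w}}L_{n}$, which being simple admissible with finite dimensional weight spaces must admit a highest weight vector, so ${^{w}}L_{n}\iso L(\mu)$ for some $\mu\in X$.

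Untwisting, $L_{n}\iso{^{\bar{w}}}L(\mu)$ embeds into $\mathcal{EXT}(L)$, so Propositions~\ref{prop:15} and~\ref{prop:19} together with the first paragraph give
\[
\mathcal{EXT}(L)\iso\mathcal{EXT}(L_{n})\iso\mathcal{EXT}({^{\bar{w}}}L(\mu))\iso{^{\bar{w}}}\mathcal{EXT}(L(\mu))\iso\mathcal{EXT}(L(\mu)),
\]
which manifestly contains $L(\mu)$ as a submodule and has degree $d$ by Proposition~\ref{prop:15}, proving the first assertion; the Weyl invariance ${^{x}}\mathcal{EXT}(L)\iso\mathcal{EXT}(L)$ for arbitrary $L$ is then immediate from the first paragraph via $\mathcal{EXT}(L)\iso\mathcal{EXT}(L(\mu))$. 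The main obstacle is the two implicit choices in the middle paragraph: that the simple submodule output of Lemma~\ref{lemma:24} can be taken infinite dimensional (the lemma as stated only asserts admissibility) and that a simple admissible module with all $E_{\alpha}$ acting locally nilpotently is indeed a highest weight module. Both should follow from a Zariski-genericity argument for the parameter $b$ together with finite Jordan-H\"older length (Lemma~\ref{lemma:10}), but neither is entirely automatic.
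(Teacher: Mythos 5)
Your proof is correct and takes the same overall route as the paper's: iterate Lemma~\ref{lemma:24} to strip $T^{s}$ down to nothing, twist by a Weyl element into highest-weight position, and carry the $\mathcal{EXT}$-isomorphisms along via Propositions~\ref{prop:23}, \ref{prop:15}, \ref{prop:19} and Lemma~\ref{lemma:30}. The only reorganization is minor: the paper twists $L$ up front (so that $T\backslash T^{s}\subset\Phi^{-}$) and inducts on $|T^{+}|$, whereas you iterate on $L$ directly, use $|T^{s}|$ as the terminating quantity, and twist only at the end; your bookkeeping is arguably tidier since $|T^{s}|$ shrinks monotonically with no re-twisting, while the paper's $|T^{+}|$ is not obviously preserved as an induction measure once one passes from $L'$ to $L''$.

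Both gaps you flag are also left implicit in the paper's proof, and both close --- though not via Zariski genericity in $b$. For the first: since $\beta\in T_{L_{k}}^{s}$, Lemma~\ref{lemma:13} gives $L_{k}\iso(L_{k})_{F_{\beta}}$, so $F_{\beta}$ acts injectively on $\phi_{F_{\beta},b}.(L_{k})_{F_{\beta}}$ and hence on every nonzero submodule; a nonzero weight module on which some $F_{\beta}$ acts injectively is automatically infinite dimensional (the vectors $F_{\beta}^{m}v$ lie in distinct weight spaces), so $L_{k+1}$ is infinite dimensional for free. For the second --- that a simple module in $\mathcal{F}$ on which all $E_{\alpha}$ act locally nilpotently is a highest weight module --- this is precisely the unstated base case ``$|T_{L'}^{+}|=0$ then $L'$ is itself a highest weight module'' of the paper's own induction, so the two proofs rely on the same prior input; it is a consequence of the reduction theory inherited from~\cite{DHP1} that underlies Theorem~\ref{thm:classification}.
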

\begin{proof}
  Let $w\in W$ be such that $w(F_L\backslash F_L^s) \subset \Phi^+$
  and $w(T_L\backslash T_L^s)\subset \Phi^-$. Set
  $L'={^{\bar{w\inv}}}L$ (then ${^{w\inv}}L'= L$). We will show the
  result first for $L'$ by induction on $|T_{L'}^+|$. If
  $|T_{L'}^+|=0$ then $L'$ is itself a highest weight module. Assume
  $|T_{L'}^+|>0$. Let $\beta\in T_{L'}^+$. Then $\beta \in T_{L'}^s$
  since $T_{L'}\backslash T_{L'}^s \subset \Phi^-$. So $-\beta \in
  T_{L'}$. Then by Lemma~\ref{lemma:24} there exists a $b\in \setC^*$
  such that $\phi_{F_\beta,b}.L'_{F_\beta}$ contains a $U_q$-submodule
  $L''$ with $T_{L''}\subset T_{L'}$ and $\beta\not \in T_{L''}$. By
  Proposition~\ref{prop:23} and Proposition~\ref{prop:15}
  $\mathcal{EXT}(L')\iso \mathcal{EXT}(L'')$ as $U_q$-modules. By
  induction $\mathcal{EXT}(L'')$ contains an infinite dimensional
  admissible simple highest weight module $L(\lambda)$ for some
  $\lambda$. So $\mathcal{EXT}(L')\iso \mathcal{EXT}(L(\lambda))$ by
  Proposition~\ref{prop:15}. Choose a reduced expression
  $s_{i_r}\cdots s_{i_1}$ for $w\inv$. By Proposition~\ref{prop:19}
  and Lemma~\ref{lemma:30}
  \begin{align*}
    \mathcal{EXT}(L) \iso& \mathcal{EXT}({^{w\inv}}L')
    \\
    \iso& {^{w\inv}}\mathcal{EXT}(L')
    \\
    \iso& {^{ w\inv }}\mathcal{EXT}(L(\lambda))
    \\
    \iso& {^{s_{i_r}\cdots
        s_{i_{2}}}}\mathcal{EXT}({^{s_{i_1}}}L(\lambda))
    \\
    \iso& {^{s_{i_r}\cdots s_{i_{2}}}}\mathcal{EXT}(L(\lambda))
    \\
    \vdots&
    \\
    \iso& \mathcal{EXT}(L(\lambda)).
  \end{align*}
  So $\mathcal{EXT}(L)$ contains a simple highest weight module
  $L(\lambda)$. For any $x\in W$ we can do as above to show
  ${^x}\mathcal{EXT}(L)\iso \mathcal{EXT}({^x}L(\lambda))\iso
  \mathcal{EXT}(L(\lambda))\iso \mathcal{EXT}(L)$.
\end{proof}

\begin{cor}
  \label{cor:1}
  Let $L$ be a simple torsion free module. Then there exists a set of
  commuting roots $\Sigma$ that is a basis of $Q$ with corresponding
  Ore subset $F_\Sigma$, a $\lambda\in X$ and $\mathbf{b}\in
  (\setC^*)^n$ such that $-\Sigma\subset T_{L(\lambda)}$ and $L\iso
  \phi_{F_\Sigma,\mathbf{b}}.L(\lambda)_{F_{\Sigma}}$
\end{cor}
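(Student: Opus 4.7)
The plan is to use Theorem~\ref{thm:EXT_contains_highest_weight} to replace $L$ by a highest weight module sharing its coherent family, spell out that family concretely, locate $L$ among its summands, and then use torsion freeness to identify $L$ with the whole summand.

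First, $L$ is admissible of some degree $d$. By Theorem~\ref{thm:EXT_contains_highest_weight}, $\mathcal{EXT}(L)$ contains an infinite dimensional admissible simple highest weight module $L(\lambda)$ of the same degree $d$, and Proposition~\ref{prop:15} yields $\mathcal{EXT}(L) \iso \mathcal{EXT}(L(\lambda))$. To describe the latter concretely, I would observe that for every $\beta \in \Phi^+$ the highest weight vector $v_\lambda$ is killed by $E_\beta$, since no weight of $L(\lambda)$ lies strictly above $\lambda$. Because $v_\lambda$ generates $L(\lambda)$ and $L(\lambda)^{[\beta]}$ is a submodule by Proposition~\ref{prop:2}, this forces $\Phi^+ \subset F_{L(\lambda)}$, hence $T_{L(\lambda)} \subset \Phi^-$, $T_{L(\lambda)}^s = \emptyset$, and $F_{L(\lambda)} \setminus F_{L(\lambda)}^s \subset \Phi^+$. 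Therefore $w = e$ is a valid choice in Lemma~\ref{lemma:6}, and applying Lemma~\ref{lemma:26} produces a set of commuting roots $\Sigma \subset \Phi^+$ that is a basis of $Q$ with $-\Sigma \subset T_{L(\lambda)}$. With this $w$ and $\Sigma$, the definition of $\mathcal{EXT}$ specializes to
\[
\mathcal{EXT}(L(\lambda)) \iso \left( \bigoplus_{t \in (\setC^*)^n/q^{\setZ^n}} \phi_{F_\Sigma, t}.L(\lambda)_{F_\Sigma} \right)^{ss}.
\]

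Since $L$ is a simple submodule of the left hand side, it is isomorphic to a composition factor of $M := \phi_{F_\Sigma, \mathbf{b}}.L(\lambda)_{F_\Sigma}$ for some $\mathbf{b}\in (\setC^*)^n$; my remaining task is to upgrade this to an isomorphism $L \iso M$. Here torsion freeness enters: by Proposition~\ref{prop:3}, $\wt L$ is stable under multiplication by $q^\beta$ for every $\beta \in \Phi$, so $\wt L$ is a full $q^Q$-coset on which $\dim L_\nu = d$ throughout. Both $L$ and $M$ are admissible of degree $d$ (the latter by Lemma~\ref{lemma:13}), and $\wt M$ lies in a single $q^Q$-coset; since $L$ is a composition factor of $M$ it follows that $\wt L = \wt M$. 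For any $\nu \in \wt M$, the chain $d = \dim L_\nu \leq \dim M_\nu \leq d$ then forces $\dim M_\nu = d$, so $L$ appears in the Jordan--Hölder series of $M$ with multiplicity exactly one and every other composition factor has trivial weight space at $\nu$; such a composition factor would then vanish on all of $\wt M$, hence be zero. Thus $M$ is simple and isomorphic to $L$.

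The main obstacle is precisely this last step: promoting ``$L$ is a composition factor of $M$'' to ``$L \iso M$''. It depends crucially on torsion freeness, which via Proposition~\ref{prop:3} forces $\dim L_\nu$ to saturate the degree bound $d$ at every weight of $M$ and leaves no room for further composition factors.
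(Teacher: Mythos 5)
Your proof is correct and follows essentially the same route as the paper: invoke Theorem~\ref{thm:EXT_contains_highest_weight} and Proposition~\ref{prop:15} to identify $\mathcal{EXT}(L)$ with $\mathcal{EXT}(L(\lambda))$, check that $w=e$ is an admissible choice in Lemma~\ref{lemma:26} so that $\mathcal{EXT}(L(\lambda))$ can be written as $\bigl(\bigoplus_t \phi_{F_\Sigma,t}.L(\lambda)_{F_\Sigma}\bigr)^{ss}$, locate $L$ as a composition factor of a single summand, and then use the degree bound from Lemma~\ref{lemma:13} together with torsion-freeness (via Proposition~\ref{prop:3}) to saturate weight-space dimensions and force the isomorphism. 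The only differences are that you spell out explicitly why $\Phi^+\subset F_{L(\lambda)}$ and why the dimension count rules out extra composition factors, both of which the paper leaves implicit.
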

\begin{proof}
  By Theorem~\ref{thm:EXT_contains_highest_weight}
  $\mathcal{EXT}(L)\iso \mathcal{EXT}(L(\lambda))$ for some
  $\lambda\in X$. So $L$ is a $U_q$-submodule of
  $\mathcal{EXT}(L(\lambda))$. Let $\Sigma$ be a set of commuting
  roots such that $-\Sigma \subset L(\lambda)$ (exists by
  Lemma~\ref{lemma:26} by setting $w=e$, the neutral element in $W$)
  then
  \begin{equation*}
    \mathcal{EXT}(L(\lambda)) = \left( \bigoplus_{t\in (\setC^*)^n/q^{\setZ^n}} \phi_{F_\Sigma,t}.L(\lambda)_{F_\Sigma} \right)^{ss}.
  \end{equation*}
  Since $L$ is simple we must have that $L$ is a submodule of
  $\phi_{F_\Sigma,\mathbf{b}}.L(\lambda)_{F_\Sigma}$ for some
  $\mathbf{b}\in(\setC^*)^n$. By Proposition~\ref{prop:15} and
  Lemma~\ref{lemma:13} $\dim
  \left(\phi_{F_\Sigma,\mathbf{b}}.L(\lambda)_{F_\Sigma}\right)_\lambda
  = \dim L_\lambda$ for all $\lambda \in \wt L$ so we have $L\iso
  \phi_{F_\Sigma,\mathbf{b}}.L(\lambda)_{F_\Sigma}$.
\end{proof}

So to classify torsion free simple modules we need to classify the
admissible infinite dimensional simple highest weight modules
$L(\lambda)$ and then we need to determine the $t\in
(\setC^*)^n/q^{\setZ^n}$ such that
$\phi_{F_\Sigma,t}.L(\lambda)_{F_\Sigma}$ is simple. Furthermore we
have that if there exists an admissible infinite dimensional simple
module then there exists a torsion free simple module. In the classical
case torsion free modules only exists if $\mathfrak{g}$ is of type $A$
or $C$ so we expect the same to be true in the quantum group case. We
show this in Section~\ref{sec:class-admiss-modul-1}.

\section{Classification of simple torsion free
  $U_q(\mathfrak{sl}_2)$-modules}
\label{sec:A_classification_sl2}
In this section let $\mathfrak{g}=\mathfrak{sl}_2$. In this case there
is a single simple root $\alpha$. It is natural to identify $X$ with
$\setC^*$ via $\lambda \mapsto \lambda(K_\alpha)$. We define $F =
F_\alpha$, $E=E_\alpha$ and $K^{\pm 1}= K_\alpha^{\pm 1}$. Let
$\lambda\in \setC^*\backslash\{\pm q^{\setN}\}$ and consider the
simple highest weight module $L(\lambda)$. Let $0\neq v_0\in
L(\lambda)_{\lambda}$. $\wt L = q^{- 2\setN}\lambda$ so $L(\lambda)$
is an admissible infinite dimensional highest weight module. Thus
$\mathcal{EXT}(L(\lambda))$ contains a torsion free module by
Theorem~\ref{thm:existence_of_torsion_free_modules}. Let
$b\in\setC^*$. We will describe the action on the module
$\phi_{F,b}.L(\lambda)_{(F)}$ and determine exactly for which $b$'s
$\phi_{F,b}.L(\lambda)_{(F)}$ is torsion free.

Let $v_i = F^i \phi_{F,b}.v_0$ for all $i\in\setZ$. Then we have for
$i\in \setZ$
\begin{align*}
  F v_i =& v_{i+1}
  \\
  K^{\pm 1} v_i =& q^{-2i} b^{\mp 2} \lambda v_i
  \\
  E v_i =& \frac{(q^{i}b-q^{-i}b\inv)(q^{1-i} b\inv \lambda - q^{i-1}b
    \lambda\inv)}{(q-q\inv)^2} v_{i-1}.
\end{align*}
We see that unless $b=\pm q^{i}$ or $b=\pm q^{i}\lambda$ for some
$i\in\setZ$ then $\phi_{F,b}.L(\lambda)_{(F)}$ is torsion free. In
this case we see that $\phi_{F,-b}=\phi_{F,b}$ since for all $u\in
U_q(\mathfrak{sl}_2)$, $\phi_{F,b}(u)$ is Laurent polynomial in $b^2$.

So in this case $\mathcal{EXT}(L(\lambda))$ contains a maximum of four
different simple submodules which are \emph{not} torsion free: We have
$(\phi_{F,\pm q^{i}}.L(\lambda)_{(F)})^{ss} \iso
(L(\lambda)_{(F)})^{ss}\iso L(\lambda)\oplus
{^{s_\alpha}}L(s_\alpha.\lambda)$ (which can be seen directly from the
calculations but also follows from Corollary~\ref{cor:6} and the fact
that $\phi_{F,-b}=\phi_{F,b}$) and $(\phi_{F,\pm
  q^{i}\lambda}.L(\lambda)_{(F)})^{ss} \iso
(L(s_\alpha.\lambda)_{(F)})^{ss}\iso L(s_\alpha.\lambda)\oplus
{^{s_\alpha}}L(\lambda)$ if $\lambda \not \in \pm q^{\setZ}$.

The weights of $\phi_{F,b}.L(\lambda)_{(F)}$ are $b^{-\alpha} \wt
L(\lambda)_{(F)} = q^{2\setZ} b^{-2} \lambda$. Suppose we want to find
a torsion free $U_q(\mathfrak{sl}_2)$-modules with integral
weights. Then we just need to find $\lambda,b\in \setC^*$ such that
$\lambda\not \in \pm q^{\setZ_{\geq 0}}$, $b\not \in \pm q^{\setZ}$
and $b \not \in \pm q^{\setZ} \lambda$ such that $b^{-2} \lambda \in
q^{\setZ}$. For example choose a square root $q^{1/2}$ of $q$ and set
$\lambda = q^{-1}$ and $b=q^{1/2}$. Then we have a torsion free module
$L=\spa{\setC}{v_i|i\in \setZ}$ with action given by:
\begin{align*}
  F v_i =& v_{i+1}
  \\
  K v_i =& q^{- 2i-2} v_i
  \\
  E v_i =& \frac{(q^{1/2+i}-q^{-1/2-i})(q^{-1/2-i} -
    q^{i+1/2})}{(q-q\inv)^2} v_{i-1}
  \\
  =& \frac{q(q^{-i-1}-q^i)^2}{(q-q\inv)^2} v_{i-1}.
\end{align*}
In this paper we only focus on quantized enveloping algebras over
$\setC$ but note that we can define, for a general field $\mathbb{F}$
with $q\in \mathbb{F}\backslash\{0\}$ a non-root of unity, a simple
torsion free $U_{\mathbb{F}}(\mathfrak{sl}_2)$-module with integral
weights by the above formulas (here
$U_{\mathbb{F}}(\mathfrak{sl}_2)=U_A\tensor_A \mathbb{F}$ where
$\mathbb{F}$ is considered an $A$-algebra by sending $v$ to $q$).

\section{An example for $U_q(\mathfrak{sl}_3)$}
\label{sec:an-example-u_qm}
In this section we will show how we can construct a specific torsion
free simple module for $U_q(\mathfrak{sl}_3)$. In
Section~\ref{sec:type-a_n-calc} we classify all torsion free
$U_q(\mathfrak{sl}_n)$-modules with $n\geq 3$ so this example is of
course included there. If you are only interested in the general
classification you can skip this section but the calculations in this
section gives a taste of the calculations needed in the general case
in Section~\ref{sec:type-a_n-calc} and they show a phenomona that does
not happen in the classical case.

Let $\alpha_1$ and $\alpha_2$ be the two simple roots of the root
system. We will consider the set of commuting roots $\Sigma = \{
\beta_1,\beta_2\}$ where $\beta_1 = \alpha_1$ and $\beta_2 =
\alpha_1+\alpha_2$. Set $F_{\beta_1}:=F_{\alpha_1}$ and $F_{\beta_2}:=
T_{s_1}(F_{\alpha_2})=F_{\alpha_2}F_{\alpha_1}-q
F_{\alpha_1}F_{\alpha_2}=[F_{\alpha_2},F_{\alpha_1}]_q$. We have
$(\beta_1|\beta_2)=1$ and $0=[F_{\beta_2},F_{\beta_1}]_q =
F_{\beta_2}F_{\beta_1}-q^{-1} F_{\beta_1}F_{\beta_2}$ or equivalently
$F_{\beta_1}F_{\beta_2} = q F_{\beta_2}F_{\beta_1}$. Let $\lambda\in
X$ be determined by $\lambda(K_{\alpha_1})=q^{-1}$ and
$\lambda(K_{\alpha_2})=1$. Then $M(s_{\alpha_1}.\lambda)$ is a
submodule of $M(\lambda)$ and
$L(\lambda)=M(\lambda)/M(s_{\alpha_2}.\lambda)=M(\lambda)/M(q^{-\alpha_2}\lambda)$
is admissible of degree $1$. Let $\xi=e^{2\pi i/3}$. We will show that
$\phi_{F_\Sigma,(\xi,\xi)}.L(\lambda)_{F_\Sigma}$ is a torsion free
module. We have here a phenomona that does not happen in the classical
case: $\wt L(\lambda)_{F_\Sigma}=\wt
\phi_{F_\Sigma,(\xi,\xi)}.L(\lambda)_{F_\Sigma}$ but
$L(\lambda)_{F_\Sigma} \not \iso
\phi_{F_\Sigma,(\xi,\xi)}.L(\lambda)_{F_\Sigma}$ as $U_q$-modules
since one is simple and torsion free and the other isn't (compare
to~\cite[Section~10]{Mathieu} where Mathieu classifies the torsion
free simple modules by determining for a coherent family $\mathcal{M}$
for which cosets $t\in \mathfrak{h}^*/Q$, $\mathcal{M}[t]$ is torsion
free).

We will show that $E_{\alpha_1}$ and $E_{\alpha_2}$ act injectively on
the module $\phi_{F_\Sigma,(\xi,\xi)}.L(\lambda)_{F_\Sigma}$. So we
need to calculate $\phi_{F_\Sigma,(\xi,\xi)}(E_{\alpha_1})$ and
$\phi_{F_\Sigma,(\xi,\xi)}(E_{\alpha_2})$. $\phi_{F_\Sigma,(\xi,\xi)}=\phi_{F_{\beta_1},\xi}\circ
\phi_{F_{\beta_2},\xi}$. We have
\begin{align*}
  [E_{\alpha_1},F_{\beta_2}] =&
  F_{\alpha_2}[E_{\alpha_1},F_{\alpha_1}] - q
  [E_{\alpha_1},F_{\alpha_1}] F_{\alpha_2}
  \\
  =& F_{\alpha_2} \frac{K_{\alpha_1} - K_{\alpha_1}\inv}{q-q\inv} - q
  F_{\alpha_2} \frac{q K_{\alpha_1} - q\inv K_{\alpha_1}\inv}{q-q\inv}
  \\
  =& F_{\alpha_2} \frac{K_{\alpha_1} - q^2 K_{\alpha_1}}{q-q\inv}
  \\
  =& -F_{\alpha_2} q \frac{q-q\inv}{q-q\inv}K_{\alpha_1}
  \\
  =& -q F_{\alpha_2} K_{\alpha_1}.
\end{align*}
We can show by induction that
\begin{align*}
  [E_{\alpha_1}, F_{\beta_2}^j] =& - q^{2-j} [j] F_{\beta_2}^{j-1}
  F_{\alpha_2}K_{\alpha_1}
\end{align*}
for any $j\in \setN$. Using that $\phi_{F_{\beta_2},b}(E_{\alpha_1})$
is Laurent polynomial and equal to $F_{\beta_2}^{-j} E_{\alpha_1}
F_{\beta_2}^j$ for $b=q^j$ we get
\begin{align*}
  \phi_{F_{\beta_2},b}(E_{\alpha_1}) =& E_{\alpha_1} - q^2 b\inv
  \frac{b - b\inv }{q-q\inv} F_{\beta_2}\inv F_{\alpha_2} K_{\alpha_1}.
\end{align*}
We have $F_{\beta_2}F_{\beta_1}=q\inv F_{\beta_1}F_{\beta_2}$ so
$F_{\beta_1}^{-i}F_{\beta_2}F_{\beta_1}^i=q^{-i}F_{\beta_2}$ thus
$\phi_{F_{\beta_1},b}(F_{\beta_2}^{-1}) = b F_{\beta_2}^{- 1}$. We
have
\begin{align*}
  \phi_{F_{\alpha_1},b}(F_{\alpha_2}) =& b F_{\alpha_2} - \frac{b -
    b\inv}{q - q\inv} F_{\alpha_1}\inv ( q F_{\alpha_1} F_{\alpha_2} -
  F_{\alpha_2} F_{\alpha_1}) \\=& b F_{\alpha_2} +
  \frac{b-b\inv}{q-q\inv} F_{\alpha_1}\inv F_{\beta_2}
\end{align*}
and
\begin{align*}
  \phi_{F_{\beta_1},b_1}&( \phi_{F_{\beta_2},b_2}(E_{\alpha_1}))
  \\
  =& \phi_{F_{\alpha_1},b_1}\left( E_{\alpha_1} - q^2b_2\inv
    \frac{b_2- b_2\inv }{q-q\inv} F_{\beta_2}\inv F_{\alpha_2}
    K_{\alpha_1} \right)
  \\
  =& E_{\alpha_1} + F_{\alpha_1}\inv \frac{(b_1-b_1\inv)(qb_1\inv
    K_{\alpha_1} - q\inv b_1 K_{\alpha_1}\inv)}{(q-q\inv)^2}
  \\
  &- q^2 b_2\inv \frac{b_2 - b_2\inv}{q-q\inv} b_1 F_{\beta_2}\inv
  \left( b_1 F_{\alpha_2} + \frac{b_1-b_1\inv}{q-q\inv}
    F_{\alpha_1}\inv F_{\beta_2} \right) b_1^{-2} K_{\alpha_1}
  \\
  =& E_{\alpha_1} + F_{\alpha_1}\inv \frac{(b_1-b_1\inv)(qb_1\inv
    K_{\alpha_1} - q\inv b_1 K_{\alpha_1}\inv)}{(q-q\inv)^2}
  \\
  &- q^2 b_2\inv \frac{b_2 - b_2\inv }{q-q\inv} F_{\beta_2}\inv
  F_{\alpha_2} K_{\alpha_1}
  \\
  & - q b_1\inv b_2\inv \frac{(b_2 - b_2\inv
    )(b_1-b_1\inv)}{(q-q\inv)^2} F_{\alpha_1}\inv K_{\alpha_1}
  \\
  =& E_{\alpha_1} + b_2\inv F_{\alpha_1}\inv \frac{(b_1-b_1\inv)(q
    b_1\inv b_2^{-1}K_{\alpha_1} - q\inv b_1b_2
    K_{\alpha_1}\inv)}{(q-q\inv)^2}
  \\
  &- q^2 b_2\inv \frac{b_2 - b_2\inv }{q-q\inv} F_{\beta_2}\inv
  F_{\alpha_2} K_{\alpha_1}.
\end{align*}

Let $v_\lambda'$ be a highest weight vector in $L(\lambda)$ and set
$v_\lambda = 1\tensor v_\lambda'\in L(\lambda)_{F_\Sigma}$. We have
$F_{\alpha_2}v_\lambda = 0$ by construction so we have
\begin{align*}
  \phi_{F_\Sigma,(b_1,b_2)}(E_{\alpha_1})v_\lambda =& b_2\inv
  \frac{(b_1-b_1\inv)(b_1\inv b_2\inv - b_1 b_2)}{(q-q\inv)^2}
  F_{\alpha_1}\inv v_\lambda.
\end{align*}

$\phi_{F_\Sigma,(c_1,c_2)}.L(\lambda)_{F_\Sigma}$ is spanned by the
elements $F_{\beta_1}^i F_{\beta_2}^j
\phi_{F_\Sigma,(c_1,c_2)}.v_\lambda$, $i,j\in \setZ$ because every
weight space is one-dimensional and $F_{\beta_1}^iF_{\beta_2}^j$ acts
injectively.  Since
\begin{align*}
  F_{\beta_2}^{-j} F_{\beta_1}^{-i} E_{\alpha_1} F_{\beta_1}^i
  F_{\beta_2}^j =& F_{\beta_1}^{-i} F_{\beta_2}^{-j} E_{\alpha_1}
  F_{\beta_2}^j F_{\beta_1}^{i}
  \\
  =&\phi_{F_{\beta_1},q^i}(\phi_{F_{\beta_2},q^j}(E_{\alpha_1}))
  \\
  =& \phi_{F_\Sigma,(q^i,q^j)}(E_{\alpha_1})
\end{align*}
we have
\begin{align*}
  E_{\alpha_1} F_{\beta_1}^i
  F_{\beta_2}^j&\phi_{F_\Sigma,(c_1,c_2)}.v_\lambda
  \\
  =& F_{\beta_1}^i F_{\beta_2}^j
  \phi_{F_\Sigma,(q^i,q^j)}(E_{\alpha_1})
  \phi_{F_\Sigma,(c_1,c_2)}.v_\lambda
  \\
  =& F_{\beta_1}^i F_{\beta_2}^j
  \phi_{F_\Sigma,(c_1,c_2)}. \phi_{F_\Sigma,(q^i c_1,q^j
    c_2)}(E_{\alpha_1}) v_\lambda
  \\
  =& q^{-j}c_2\inv \frac{(q^ic_1-q^{-i}c_1\inv)(q^{-i-j}c_1\inv
    c_2\inv - q^{i+j}c_1 c_2)}{(q-q\inv)^2}F_{\beta_1}^i F_{\beta_2}^j
  \phi_{F_\Sigma,(c_1,c_2)}. F_{\alpha_1}\inv v_\lambda
  \\
  =& \frac{(q^ic_1-q^{-i}c_1\inv)(q^{-i-j}c_1\inv c_2\inv - q^{i+j}c_1
    c_2)}{(q-q\inv)^2}F_{\beta_1}^{i-1} F_{\beta_2}^j
  \phi_{F_\Sigma,(c_1,c_2)}. v_\lambda.
\end{align*}
This is only zero when $c_1 = \pm q^{-i}$ or $c_1c_2 = \pm
q^{-i-j}$. Set $c_1=c_2=e^{2\pi i/3}=:\xi$. Then we have shown that
$E_{\alpha_1}$ acts injectively on
$\phi_{F_\Sigma,(\xi,\xi)}.L(\lambda)_{F_\Sigma}$.

Now we will show that $E_{\alpha_2}$ acts injectively on
$F_{\Sigma,(\xi,\xi)}.L(\lambda)_{F_\Sigma}$.  We can show by
induction that
\begin{align*}
  [E_{\alpha_2},F_{\beta_2}^j] =& [j]
  F_{\alpha_1}F_{\beta_2}^{j-1}K_{\alpha_2}\inv
\end{align*}
so $\phi_{F_{\beta_2},b}(E_{\alpha_2}) = E_{\alpha_2} + b
\frac{b-b\inv}{q-q\inv} F_{\alpha_1} F_{\beta_2}\inv K_{\alpha_2}\inv$
and
\begin{align*}
  \phi_{F_\Sigma,(b_1,b_2)}(E_{\alpha_2}) =&
  \phi_{F_{\beta_1},b_1}(\phi_{F_{\beta_2},b_2}(E_{\alpha_2}))
  \\
  =& E_{\alpha_2} + b_2 \frac{b_2-b_2\inv}{q-q\inv} F_{\alpha_1}
  F_{\beta_2}\inv K_{\alpha_2}\inv.
\end{align*}
Thus
\begin{align*}
  E_{\alpha_2} F_{\beta_1}^i F_{\beta_2}^j
  \phi_{F_\Sigma,(c_1,c_2)}.v_\lambda =& F_{\beta_1}^i F_{\beta_2}^j
  \phi_{F_\Sigma,(q^i,q^j)}(E_{\alpha_2})
  \phi_{F_\Sigma,(c_1,c_2)}.v_\lambda
  \\
  =& F_{\beta_1}^i F_{\beta_2}^j
  \phi_{F_\Sigma,(c_1,c_2)}. \phi_{F_\Sigma,(q^i c_1,q^j
    c_2)}(E_{\alpha_2})v_\lambda
  \\
  =&F_{\beta_1}^i F_{\beta_2}^j \phi_{F_\Sigma,(c_1,c_2)}.  c_2
  \frac{q^jc_2-q^{-j}c_2\inv}{q-q\inv} F_{\alpha_1}F_{\beta_2}\inv
  K_{\alpha_2}\inv v_\lambda
  \\
  =& q^{-j-1} c_2 \frac{q^jc_2-q^{-j}c_2\inv}{q-q\inv}
  F_{\beta_1}^{i+1} F_{\beta_2}^{j-1} \phi_{F_\Sigma,(c_1,c_2)}.
  v_\lambda.
\end{align*}
We see that this is nonzero only if $c_2 = \pm q^{-j}$ so again
setting $c_1=c_2=\xi$ ensures that this is nonzero.

We have shown that the $U_q$-module
$\phi_{F_\Sigma,(\xi,\xi)}.L(\lambda)_{F_\Sigma}$ has a basis
$F_{\beta_1}^i F_{\beta_2}^j \phi_{F_\Sigma,(\xi,\xi)}.v_\lambda$,
$i,j\in \setZ$ and we have
\begin{align*}
  F_{\beta_1} F_{\beta_1}^i F_{\beta_2}^j
  \phi_{F_\Sigma,(\xi,\xi)}.v_\lambda =& F_{\beta_1}^{i+1}
  F_{\beta_2}^{j-1} \phi_{F_\Sigma,(\xi,\xi)}.v_\lambda
  \\
  F_{\beta_2} F_{\beta_1}^i F_{\beta_2}^j
  \phi_{F_\Sigma,(\xi,\xi)}.v_\lambda =& q^{-j}F_{\beta_1}^{i}
  F_{\beta_2}^{j+1} \phi_{F_\Sigma,(\xi,\xi)}.v_\lambda
  \\
  E_{\alpha_1} F_{\beta_1}^i F_{\beta_2}^j
  \phi_{F_\Sigma,(\xi,\xi)}.v_\lambda =& C_1 F_{\beta_1}^{i-1}
  F_{\beta_2}^{j} \phi_{F_\Sigma,(\xi,\xi)}.v_\lambda
  \\
  E_{\alpha_1} E_{\alpha_2} F_{\beta_1}^i F_{\beta_2}^j
  \phi_{F_\Sigma,(\xi,\xi)}.v_\lambda =& C_2 F_{\beta_1}^{i}
  F_{\beta_2}^{j-1} \phi_{F_\Sigma,(\xi,\xi)}.v_\lambda
\end{align*}
for some nonzero constants $C_1,C_2\in \setC^*$. We see that any of
the basis vectors $F_{\beta_1}^i F_{\beta_2}^j
\phi_{F_\Sigma,(\xi,\xi)}.v_\lambda$ can be mapped injectively to any
other basis vector $F_{\beta_1}^{i'} F_{\beta_2}^{j'}
\phi_{F_\Sigma,(\xi,\xi)}.v_\lambda$ by elements of $U_q$ so
$\phi_{F_\Sigma,(\xi,\xi)}.L(\lambda)_{F_\Sigma}$ is a simple
module. The module is torsion free by Proposition~\ref{prop:3}.

\section{Classification of admissible simple highest weight modules}

\subsection{Preliminaries}
\label{sec:class-admiss-simple}
In this section we prove some preliminary results with the goal to
classify all admissible simple highest weight modules.  We will only
focus on non-integral weights since we have the following theorem
from~\cite{CatO}:
\begin{thm}
  \label{thm:integral}
  Assume $q\in \setC\backslash\{0\}$ is transcendental. Let
  $\lambda:U_q^0 \to \setC$ be a weight such that $\lambda(K_\alpha)=
  q_\beta^i$ for some $i\in\setZ$ for every $\alpha\in \Pi$ -
  i.e. $\lambda \in q^Q$. Say $\lambda= q^\mu$, $\mu\in Q$. Let
  $L_{\setC}(\mu)$ denote the simple highest weight
  $\mathfrak{g}$-module of highest weight $\mu$. Then the character of
  $L(\lambda)$ and $L_\setC(\mu)$ are equal - i.e. for any $\nu\in Q$,
  $\dim L(\lambda)_{q^\nu\lambda} = \dim L_{\setC}(\mu)_{\nu+\mu}$.
\end{thm}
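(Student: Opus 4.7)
The plan is to reduce the claim to a composition-multiplicity identity in the quantum BGG category $\mathcal{O}_q$ and then to transfer that identity from the classical setting by a deformation argument. First I would observe that by the PBW theorem for $U_q$ the Verma module $M(\lambda) = U_q \tensor_{U_q^{\geq 0}} \setC_\lambda$ has the same formal character as its classical counterpart $M_\setC(\mu)$: its $q^\nu\lambda$-weight space is spanned by PBW monomials in the $F_\beta$'s of total weight $-\nu$, matching $\dim M_\setC(\mu)_{\mu+\nu}$. Since $L(\lambda)$ and $L_\setC(\mu)$ are the unique simple quotients of $M(\lambda)$ and $M_\setC(\mu)$ respectively, it suffices to prove
\begin{equation*}
[M(q^\mu):L(q^\nu)] \;=\; [M_\setC(\mu):L_\setC(\nu)] \qquad \text{for all } \nu\in Q,
\end{equation*}
and invert the resulting unitriangular matrix to conclude $\dim L(q^\mu)_{q^\nu\lambda} = \dim L_\setC(\mu)_{\nu+\mu}$.

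The next step is to invoke the quantum linkage principle so that only $\nu\in W\cdot \mu$ can contribute, and then to set up the Jantzen filtration on the $A$-form Verma module $M_A(\mu)$ with $A=\setZ[v,v\inv]$. The contravariant Shapovalov form on $M_A(\mu)$ has determinant on each weight space equal to an explicit product of quantum Shapovalov factors, each a nonzero element of $A$ whose specialization at $v=1$ recovers the classical Shapovalov factor. Via the Jantzen sum formula the orders of vanishing of these factors at $v\mapsto q$ control the layers of the filtration, which in turn compute the multiplicities $[M(q^\mu):L(q^\nu)]$. The classical picture is the $v\mapsto 1$ specialization, and the goal is to show that specialization at $v=q$ gives the same answer.

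The main obstacle is exactly this last specialization: one must rule out that some Shapovalov factor acquires an unexpected zero at $v=q$, which would collapse layers and produce spurious composition factors not present classically. This is where the hypothesis that $q$ is transcendental enters: each Shapovalov factor is a nonzero Laurent polynomial in $v$ with integer coefficients and hence has only algebraic roots, so its value at a transcendental $q$ is nonzero. Consequently the Jantzen filtration of $M(q^\mu)$ specializes flatly from $M_A(\mu)$, it matches the classical Jantzen filtration of $M_\setC(\mu)$ layer by layer, and one reads off the equality of composition multiplicities, hence of characters.

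In practice I would not redo the Shapovalov-Jantzen analysis from scratch but would cite the treatment of \cite{CatO}, where a principal-block equivalence (or at least the character identification) between $\mathcal{O}_q$ and classical $\mathcal{O}$ is worked out for transcendental $q$, and deduce the statement of Theorem~\ref{thm:integral} as a direct corollary. The assumption of transcendence is essential in the plan only at the flat-specialization step, which is also why the author flags in the introduction that removing it would require a non-root-of-unity analogue of \cite{CatO} not presently in the literature.
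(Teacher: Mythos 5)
Your final move---citing \cite{CatO}---is exactly what the paper does: its entire proof is the one-line reference to \cite[Corollary~6.3]{CatO}, so your conclusion is in the right place. However, the explanatory sketch you give in the middle mischaracterizes both the gap and the role of transcendence. The claim that ``each Shapovalov factor is a nonzero Laurent polynomial in $v$ with integer coefficients and hence has only algebraic roots, so its value at a transcendental $q$ is nonzero'' is misleading at an integral weight $q^\mu$: there the quantum Shapovalov factors are quantum integers $[\,\langle\mu+\rho,\beta^\vee\rangle-m\,]_\beta$ for fixed $m\in\setZ_{>0}$, which vanish \emph{iff} $\langle\mu+\rho,\beta^\vee\rangle=m$, exactly as classically, and which are nonzero for \emph{every} non-root-of-unity $q$ otherwise (their zeros as Laurent polynomials in $v$ are roots of unity). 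So the ``flat specialization'' of the Jantzen filtration needs nothing beyond non-root-of-unity; transcendence does no work there.

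The genuine gap in your sketch is one step earlier: even granting that the quantum and classical Jantzen \emph{sum} formulas coincide, the sum formula constrains but does not determine composition multiplicities $[M(q^\mu):L(q^\nu)]$, so the reduction you set up is not closed by the Shapovalov analysis alone. Determining the multiplicities requires the full Kazhdan--Lusztig input, and that is precisely the content of \cite{CatO} (a character/block identification between quantum and classical category $\mathcal{O}$), where the transcendence hypothesis actually enters. In short: the citation is right, the sketch mislocates both the missing ingredient (multiplicities, not the sum formula) and the point at which $q$ transcendental is used.
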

\begin{proof}
  \cite[Corollary~6.3]{CatO}.
\end{proof}
Extending to modules which are not of type~1 is done in the usual way
(cf. e.g.~\cite[Section~5.1--5.2]{Jantzen}).  The above theorem
implies that the integral admissible simple highest weight modules can
be classified from the classification of the classical admissible simple
highest weight modules when $q$ is transcendental. Hence we need only
to consider weights $\lambda\in X$ such that $\lambda(K_\alpha) \not
\in \pm q^{\setZ}$ for at least one $\alpha\in \Pi$ in this
case. \emph{So in the rest of the paper we will restrict our attention
  to the case when $q$ is transcendental}. If a similar theorem is
true for any non-root-of-unity $q$ then the results in this paper
extend to all non-root-of-unities but the author is not aware of any
such result. 

\begin{thm}
  \label{thm:Jantzen_filtration}
  Let $\lambda\in X$. Then there exists a filtration of $M(\lambda)$,
  $M(\lambda) \supset M_1 \supset \dots \supset M_r$ such that $M_1$
  is the unique maximal submodule of $M(\lambda)$ and
  \begin{equation*}
    \sum_{i=1}^r \ch M_i = \sum_{\substack{\beta\in \Phi^+ \\q^\rho\lambda(K_\beta)\in \pm q_\beta^{\setZ_{>0}}}} \ch M(s_{\beta}.\lambda)
  \end{equation*}
\end{thm}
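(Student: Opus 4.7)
The plan is to carry over Jantzen's classical deformation argument to the quantum setting. First, I would set up a deformation of $\lambda$ over a DVR. Concretely, choose $\mu \in \Lambda$ with $\langle \mu, \alpha^\vee\rangle \neq 0$ for every $\alpha \in \Pi$ and work in the local ring $A=\setC[t]_{(t-1)}$ with residue field $\setC$. Define $\lambda_t : U_q^0 \to A$ by $\lambda_t(K_\alpha) = t^{(\mu|\alpha)}\lambda(K_\alpha)$, so that $\lambda_t$ specializes to $\lambda$ at $t = 1$ and to a generic character elsewhere. Construct the deformed Verma module $M_A(\lambda_t) = U_{q,A}\otimes_{U_{q,A}^{\geq 0}} A_{\lambda_t}$ (using Lusztig's $A$-form as a starting point and extending scalars). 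This is a free $A$-module with $M_A(\lambda_t)\otimes_A \setC = M(\lambda)$.

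Next, I would use the quantum Shapovalov contravariant form $\langle -, -\rangle_t : M_A(\lambda_t)\times M_A(\lambda_t) \to A$ to build the filtration. Concretely set
\begin{equation*}
M_A^{(i)} = \{ v\in M_A(\lambda_t) \mid \langle v, M_A(\lambda_t)\rangle_t \subset (t-1)^i A \}
\end{equation*}
and let $M_i$ be the image of $M_A^{(i)}$ in $M(\lambda)$ under specialization at $t=1$. Then $M_0 = M(\lambda)$ and $M_1$ is the radical of the specialized Shapovalov form, which is precisely the unique maximal submodule of $M(\lambda)$. Since $M_A(\lambda_t)$ is free and $A$ is a DVR, the module $M_A(\lambda_t)/M_A^{(i)}$ is $(t-1)$-torsion-free for $i$ large, so the filtration terminates.

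The sum formula is then a bookkeeping exercise. On each weight space $M(\lambda)_{q^{-\nu}\lambda}$ one has the classical identity
\begin{equation*}
\sum_{i\geq 1}\dim (M_i)_{q^{-\nu}\lambda} = v_{t-1}\bigl(\det \langle -, -\rangle_t\big|_{M_A(\lambda_t)_{q^{-\nu}\lambda_t}}\bigr).
\end{equation*}
Plugging in the quantum De Concini–Kac determinant formula, the right-hand side equals
\begin{equation*}
\sum_{\beta\in\Phi^+}\sum_{n\geq 1} P(\nu-n\beta)\, v_{t-1}\!\left(q^{n(\rho|\beta)}\lambda_t(K_\beta) - q^{-n(\rho|\beta)}\lambda_t(K_\beta)^{-1}\right),
\end{equation*}
where $P$ is the Kostant partition function. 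By the genericity of the deformation each $(\beta,n)$-factor has valuation at most $1$, and valuation exactly $1$ precisely when $q^{(\rho|\beta)}\lambda(K_\beta)=\pm q_\beta^n$, i.e.\ when $q^\rho\lambda(K_\beta)\in \pm q_\beta^{\setZ_{>0}}$ with $n$ the resulting exponent. For such $\beta$ one has $s_\beta.\lambda = q^{-n\beta}\lambda$, and $\sum_\nu P(\nu-n\beta)e^{q^{-\nu}\lambda}=\ch M(s_\beta.\lambda)$, yielding the claimed identity after summing over weights.

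The main obstacle is invoking a quantum Shapovalov determinant formula in precisely the form needed for transcendental $q$; one has to verify that the standard De Concini–Kac factors specialize correctly, that each vanishing order at $t=1$ is at most one for generic $\mu$, and that the sign ambiguity forced by the quantum weights (the $\pm$ in $\pm q_\beta^{\setZ_{>0}}$) is captured correctly by the factor $q^{n(\rho|\beta)}\lambda_t(K_\beta) - q^{-n(\rho|\beta)}\lambda_t(K_\beta)^{-1}$. Once this bookkeeping is verified, the rest of the argument is a routine translation of Jantzen's original proof.
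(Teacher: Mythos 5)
The paper itself does not prove this statement; it refers to Joseph's monograph (Shapovalov--form argument) and to a twisting-functor proof in an earlier paper of the author. Your outline follows Joseph's route---deform over a DVR, build the filtration from the contravariant form, read off the sum from the determinant valuation---so the approach is in line with (one of) the paper's cited proofs, and the overall structure is sound. Two technical points need fixing, however.

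First, the De Concini--Kac factor you write, $q^{n(\rho|\beta)}\lambda_t(K_\beta) - q^{-n(\rho|\beta)}\lambda_t(K_\beta)^{-1}$, vanishes when $\lambda_t(K_\beta) = \pm q^{-n(\rho|\beta)}$, which is not the condition $q^\rho\lambda(K_\beta)=\pm q_\beta^n$ needed for the sum formula. The correct $(\beta,n)$-factor (up to an overall unit) is the quantization of the classical factor $\left<\lambda+\rho,\beta^\vee\right>-n$, namely $q^{(\rho|\beta)}q_\beta^{-n}\lambda_t(K_\beta) - q^{-(\rho|\beta)}q_\beta^{n}\lambda_t(K_\beta)^{-1}$, whose vanishing is exactly $q^\rho\lambda(K_\beta)=\pm q_\beta^n$. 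You state the correct vanishing locus in the sentence after the display, but it does not follow from the factor as written, so this is an error rather than a harmless normalization choice.

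Second, requiring $\left<\mu,\alpha^\vee\right>\neq 0$ only for $\alpha\in\Pi$ does not guarantee $\left<\mu,\beta^\vee\right>\neq 0$ for all $\beta\in\Phi^+$: in type $A_2$, $\mu=\omega_1-\omega_2$ has nonzero pairing with both simple coroots but pairs to zero with $(\alpha_1+\alpha_2)^\vee$. For the step asserting each factor has $(t-1)$-valuation at most one you need $\mu$ regular, e.g.\ $\mu=\rho$, so that $t^{(\mu|\beta)}$ is a nontrivial power of $t$ for every positive $\beta$. With the corrected determinant formula and a regular $\mu$ the argument closes as you sketched.
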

The filtration is called the Jantzen filtration and the formula is
called the Jantzen sum formula.
\begin{proof}
  This is proved in~\cite[Section~4.1.2-4.1.3]{Joseph}. A proof using
  twisting functors can also be found
  in~\cite[Theorem~6.3]{DHP-twist}.
\end{proof}

\begin{defn}
  Let $\lambda\in X$.
  \begin{equation*}
    A(\lambda) = \{ \alpha \in \Pi | \lambda(K_\alpha) \not \in \pm q_{\alpha}^{\setN} \}.
  \end{equation*}
  Let $\gamma\in \Pi$.
  \begin{equation*}
    D(\gamma) = \{\beta \in \Phi^+|\beta=\sum_{\alpha\in\Pi} m_\alpha \alpha, \, m_\gamma>0\}.
  \end{equation*}
\end{defn}

\begin{lemma}
  \label{lemma:18}
  Let $\lambda\in X$. Let $\gamma\in \Pi$ be such that $\gamma\in
  A(\lambda)$. Then $-D(\gamma)\subset T_{L(\lambda)}$.
\end{lemma}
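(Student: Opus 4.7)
To show $-\beta\in T_{L(\lambda)}$ for each $\beta\in D(\gamma)$, it suffices, by Proposition~\ref{prop:2} and simplicity of $L(\lambda)$, to verify that the highest weight vector $v_\lambda$ satisfies $F_\beta^k v_\lambda\neq 0$ in $L(\lambda)$ for every $k\geq 0$; indeed $L(\lambda)^{[-\beta]}$ would otherwise be a nonzero submodule of the simple module $L(\lambda)$, hence all of $L(\lambda)$, forcing $v_\lambda$ to be killed by a power of $F_\beta$. Equivalently, by Proposition~\ref{prop:3}, one has to check that $\lambda q^{-k\beta}\in \wt L(\lambda)$ for all $k\geq 0$.

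The natural first attempt is induction on the height of $\beta$. The base case $\beta=\gamma$ is exactly the first bullet of Proposition~\ref{prop:9}. For the inductive step, suppose $\beta$ can be written as $\beta=\gamma'+\beta'$ for some $\gamma'\in A(\lambda)\cap\Pi$ with $\beta'\in D(\gamma)\cap \Phi^+$ of strictly smaller height. Then $-\beta'\in T_{L(\lambda)}$ by induction, $-\gamma'\in T_{L(\lambda)}$ by the base case, and closedness of $T_{L(\lambda)}$ (Proposition~\ref{prop:8}) yields $-\beta=(-\gamma')+(-\beta')\in T_{L(\lambda)}$.

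The hard part is that such a decomposition need not always exist. For example, in type $A_3$ with $\gamma=\alpha_2$ and $A(\lambda)\cap\Pi=\{\alpha_2\}$, the root $\beta=\alpha_1+\alpha_2+\alpha_3$ satisfies $\beta-\alpha_2\notin\Phi^+$, and while $\beta-\alpha_1$ and $\beta-\alpha_3$ are positive roots lying in $D(\gamma)$, neither $\alpha_1$ nor $\alpha_3$ is in $A(\lambda)$. In this residual case I would argue directly by exploiting the Jantzen sum formula (Theorem~\ref{thm:Jantzen_filtration}) to bound the character of the maximal submodule $K(\lambda)\subset M(\lambda)$ via
\[
\ch K(\lambda)\leq \sum_{\beta'}\ch M(s_{\beta'}.\lambda),
\]
where the sum runs over positive roots $\beta'$ with $q^\rho\lambda(K_{\beta'})\in \pm q_{\beta'}^{\setZ_{>0}}$, and then use a Kostant-partition-function estimate to obtain $\dim L(\lambda)_{\lambda q^{-k\beta}}\geq 1$ for every $k\geq 0$, closing the argument via Proposition~\ref{prop:3}. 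The transcendence of $q$ together with $\gamma\in A(\lambda)$ is what constrains the contributing $\beta'$ enough to make the counting go through.
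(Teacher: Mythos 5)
Your base case and the first branch of the inductive step are correct, and you are right to worry that a decomposition $\beta=\gamma'+\beta'$ with $\gamma'\in A(\lambda)\cap\Pi$ and $\beta'\in D(\gamma)$ need not exist — your $A_3$ example ($\gamma=\alpha_2$, $\beta=\alpha_1+\alpha_2+\alpha_3$) is exactly the obstruction. But you then retreat to an unworked Jantzen-filtration / Kostant-partition estimate, and that is where the proposal has a genuine gap: you neither carry out the counting nor explain why the sum-formula bound would force $\dim L(\lambda)_{q^{-k\beta}\lambda}\geq 1$ for all $k$, so the argument is incomplete precisely in the case you yourself flagged as the hard one.

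The idea you are missing is to stop trying to land \emph{both} summands in $-T_{L(\lambda)}$ and instead argue by contradiction via closedness of $F_{L(\lambda)}$. Pick any $\alpha\in\Pi$ with $\beta-\alpha\in\Phi^+$. If $\alpha=\gamma$, set $\beta'=\gamma$ and $\beta''=\beta-\gamma$; otherwise set $\beta'=\beta-\alpha$ (which is still in $D(\gamma)$, since removing $\alpha\neq\gamma$ does not change the $\gamma$-coefficient) and $\beta''=\alpha$. Either way $\beta=\beta'+\beta''$ with $\beta'\in D(\gamma)$ of strictly smaller height and $\beta''\in\Phi^+$, so the decomposition always exists. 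By induction $-\beta'\in T_{L(\lambda)}$. Now suppose $-\beta\in F_{L(\lambda)}$. Since $L(\lambda)$ is a highest weight module, $\Phi^+\subset F_{L(\lambda)}$, hence $\beta''\in F_{L(\lambda)}$; as $F_{L(\lambda)}$ is closed (Proposition~\ref{prop:8}) and $-\beta+\beta''=-\beta'\in\Phi$, we get $-\beta'\in F_{L(\lambda)}$, contradicting the induction hypothesis. Therefore $-\beta\in T_{L(\lambda)}$. The point is that closedness of $F_{L(\lambda)}$ asks only \emph{one} summand to be easy (a positive root, automatically in $F_{L(\lambda)}$), whereas your route via closedness of $T_{L(\lambda)}$ needed two hard ones; that asymmetry is what lets the induction close with no case split and no Jantzen input.
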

\begin{proof}
  Let $\beta=\sum_{\alpha\in \Pi}m_\alpha \alpha\in D(\gamma)$. We
  prove by induction over $\height \beta = \sum_{\alpha\in \Pi}
  m_\alpha$ that $-\beta \in T_{L(\lambda)}$.  If $\height \beta=1$
  then $\beta=\gamma$ and $-\gamma\in T_{L(\lambda)}$ by
  Proposition~\ref{prop:9}.

  Assume $\height \beta >1$. Then $\beta-\alpha \in \Phi^+$ for some
  $\alpha\in \Pi$. We have either $\alpha=\gamma$ or $\beta-\alpha\in
  D(\gamma)$. In either case we get $\beta = \beta'+\beta''$ for some
  $\beta',\beta''\in \Phi^+$ with $\beta'\in D(\gamma)$ and $\height
  \beta' < \height \beta$. By induction $-\beta'\in
  T_{L(\lambda)}$. If $-\beta \in F_{L(\lambda)}$ then $-\beta' =
  -\beta + \beta'' \in F_{L(\lambda)}$ since $\Phi^+\subset
  F_{L(\lambda)}$ and $F_{L(\lambda)}$ is closed
  (Proposition~\ref{prop:8}). A contradiction. So $-\beta\in
  T_{L(\lambda)}$.
\end{proof}

\begin{lemma}
  \label{lemma:38}
  Let $\gamma\in \Pi$. $D(\gamma)$ generates $Q$.
\end{lemma}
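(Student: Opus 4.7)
The plan is to show every simple root $\alpha \in \Pi$ is a $\setZ$-linear combination of elements of $D(\gamma)$. Since $\Pi$ generates $Q$, this will give the lemma. If $\alpha = \gamma$, then $\gamma \in D(\gamma)$ and there is nothing to prove, so assume $\alpha \neq \gamma$.

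Since $\mathfrak{g}$ is simple the Dynkin diagram on $\Pi$ is connected, so I can pick a simple path $\gamma = \alpha_{i_0}, \alpha_{i_1}, \dots, \alpha_{i_k} = \alpha$ in which consecutive nodes $\alpha_{i_{l-1}}$ and $\alpha_{i_l}$ are joined by an edge (and the $\alpha_{i_l}$ are pairwise distinct). Define the partial sums
\begin{equation*}
  \beta_l = \alpha_{i_0} + \alpha_{i_1} + \cdots + \alpha_{i_l}, \qquad 0 \leq l \leq k.
\end{equation*}
Clearly the coefficient of $\gamma$ in $\beta_l$ equals $1$, so as soon as I know $\beta_l \in \Phi^+$ I get $\beta_l \in D(\gamma)$.

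I prove $\beta_l \in \Phi^+$ by induction on $l$. The base case $\beta_0 = \gamma \in \Phi^+$ is trivial. For the step, assume $\beta_{l-1} \in \Phi^+$; it suffices to show $(\beta_{l-1}, \alpha_{i_l}) < 0$, for then the standard root string argument yields $\beta_l = \beta_{l-1} + \alpha_{i_l} \in \Phi^+$. Compute
\begin{equation*}
  (\beta_{l-1}, \alpha_{i_l}) = \sum_{j=0}^{l-1} (\alpha_{i_j}, \alpha_{i_l}).
\end{equation*}
The term with $j = l-1$ is strictly negative because $\alpha_{i_{l-1}}$ and $\alpha_{i_l}$ are adjacent in the Dynkin diagram, while every other term is $\leq 0$ because distinct simple roots have non-positive inner product. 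Hence the whole sum is negative, as required.

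Finally, $\alpha = \alpha_{i_k} = \beta_k - \beta_{k-1}$ writes $\alpha$ as a $\setZ$-linear combination of elements of $D(\gamma)$. There is no real obstacle in this argument; the only ingredient beyond trivialities is the standard fact that if $\beta$ is a positive root, $\alpha$ a simple root, and $(\beta,\alpha) < 0$, then $\beta + \alpha$ is again a root.
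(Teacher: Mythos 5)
Your proof is correct, but it takes a genuinely different route from the paper's. The paper argues by contradiction: assuming some simple root lies outside $\langle D(\gamma)\rangle$, it picks such an $\alpha$ adjacent in the Dynkin diagram to some $\alpha'\in\Pi\cap\langle D(\gamma)\rangle$, asserts $\alpha+\alpha'\in\langle D(\gamma)\rangle$, and deduces $\alpha\in\langle D(\gamma)\rangle$. You instead argue constructively, picking a simple path in the Dynkin diagram from $\gamma$ to an arbitrary $\alpha\in\Pi$, showing each partial sum $\beta_l$ is a positive root (via the root-string fact together with $(\beta_{l-1},\alpha_{i_l})<0$), observing that each $\beta_l$ has $\gamma$-coefficient exactly $1$ and hence lies in $D(\gamma)$, and writing $\alpha=\beta_k-\beta_{k-1}$. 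What your approach buys is explicitness and, in fact, rigor: the paper's assertion $\alpha+\alpha'\in\langle D(\gamma)\rangle$ is immediate only when $\alpha'=\gamma$ (otherwise $\alpha+\alpha'$ has $\gamma$-coefficient $0$, so it is not itself in $D(\gamma)$ and there is no visible reason it lies in the span), whereas your path-building induction is exactly the argument needed to reach simple roots several steps away from $\gamma$. Both proofs ultimately rest on connectedness of the Dynkin diagram; yours supplies the concrete roots of $D(\gamma)$ that close the argument, at the modest cost of invoking the root-string lemma.
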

\begin{proof}
  Let $\left< D(\gamma)\right>$ be the subgroup of $Q$ generated by
  $D(\gamma)$. Assume $\Pi\cap \left<D(\gamma)\right>\neq \Pi$. Let
  $\alpha\not \in \left< D(\gamma) \right>$ be a simple root that is
  connected to an $\alpha'\in \left< D(\gamma) \right>$ (possible
  since the Dynkin diagram of a simple Lie algebra is connected). Then
  $\alpha+\alpha'\in \left<D(\gamma)\right>$. But then $\alpha =
  \alpha+\alpha'-\alpha' \in \left<D(\gamma)\right>$. A
  contradiction. So $\left<D(\gamma)\right>=Q$.
\end{proof}

\begin{lemma}
  \label{lemma:34}
  Let $\lambda\in X$ be a non-integral weight. Assume that
  $L(\lambda)$ is admissible. Then $A(\lambda)$ is connected and
  $|A(\lambda)|\leq 2$.
\end{lemma}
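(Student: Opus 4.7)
I shall argue by contradiction: assume $A(\lambda)$ is either disconnected or has $|A(\lambda)|\geq 3$. The key tool is the Jantzen sum formula (Theorem~\ref{thm:Jantzen_filtration}), which for $B:=\{\beta\in\Phi^+: q^\rho\lambda(K_\beta)\in\pm q_\beta^{\setZ_{>0}}\}$ gives the lower bound
\[
\dim L(\lambda)_\mu \;\geq\; \dim M(\lambda)_\mu \;-\; \sum_{\beta\in B}\dim M(s_\beta.\lambda)_\mu,
\]
which, combined with the admissibility bound $\dim L(\lambda)_\mu \le d$, forces $B$ to be large. Writing $\lambda(K_\alpha)=\epsilon_\alpha q_\alpha^{a_\alpha}$ for $a_\alpha\in\setC$, $\epsilon_\alpha\in\{\pm 1\}$ (so $\alpha\in A(\lambda)$ iff $a_\alpha\notin\setN$), the membership $\beta=\sum c_\alpha \alpha\in B$ amounts, up to the integer shift by $\rho$, to $\sum c_\alpha a_\alpha\in\setZ$. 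In the ``generic'' situation (no accidental $\setZ$-linear relations among the $a_\alpha$ for $\alpha\in A(\lambda)$) this forces $\mathrm{supp}(\beta)\subset\Pi\setminus A(\lambda)$. Lemma~\ref{lemma:18}, which gives $-D(\gamma)\subset T_{L(\lambda)}$ for each $\gamma\in A(\lambda)$, will also be used to constrain the support of roots that can possibly contribute.

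The overall plan is induction on the rank $n$ of $\mathfrak{g}$, with base case rank $\leq 3$. For the inductive step, note that for any proper connected $J\subsetneq\Pi$, the $U_q(\mathfrak{g}_J)$-submodule of $L(\lambda)$ generated by the highest weight vector is the simple highest weight module $L^{\mathfrak{g}_J}(\lambda|_{U_q(\mathfrak{g}_J)^0})$, and its weight multiplicities are bounded by those of $L(\lambda)$, so it is admissible. By induction, $A(\lambda)\cap J$ is connected in the Dynkin subdiagram on $J$ and has size at most $2$. Hence it suffices to exhibit such a $J$ where this fails. When $A(\lambda)$ is disconnected in $\Pi$, I would take $J$ to be a minimal-length geodesic in the Dynkin diagram between two elements $\gamma_1,\gamma_2\in A(\lambda)$ belonging to distinct connected components of the subdiagram induced on $A(\lambda)$; when $|A(\lambda)|\geq 3$, take $J$ to be the smallest connected subdiagram containing three chosen elements. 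This reduces the problem to rank $3$, together with a few exceptional type-$A_n$ configurations in which the minimal such $J$ is forced to be all of $\Pi$ (most notably $A(\lambda)=\{\alpha_1,\alpha_n\}$).

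For the base cases I would carry out an explicit character computation. For instance, in type $A_3$ with $A(\lambda)=\{\alpha_1,\alpha_3\}$ and $\lambda(K_{\alpha_2})=\pm q^{k_2}$ with $k_2\in\setN$, the generic analysis gives $B=\{\alpha_2\}$, and taking $\theta:=\alpha_1+\alpha_2+\alpha_3$ yields
\[
\dim M(\lambda)_{q^{-N\theta}\lambda} - \dim M(s_{\alpha_2}.\lambda)_{q^{-N\theta}\lambda} \;=\; \binom{N+3}{3} - \binom{N-k_2+2}{3} \;=\; \tfrac{k_2+1}{2}N^2 + O(N),
\]
which is unbounded and so contradicts admissibility. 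The cases $|A(\lambda)|=3$ in the rank-$3$ types $A_3,B_3,C_3$ (recall $G_2$ is excluded) are handled analogously, with the appropriate Kostant partition functions. The main obstacle is two-fold: first, the non-generic cases where accidental $\setZ$-linear relations among the $a_\alpha$ enrich $B$ with extra contributing roots, so that one must verify the Jantzen lower bound still diverges; and second, the exceptional type-$A_n$ configurations such as $A(\lambda)=\{\alpha_1,\alpha_n\}$, where the leading-order asymptotics in the Jantzen bound can cancel and one must instead descend to subleading terms, or exploit the parabolic structure of $L(\lambda)$ with respect to the Levi attached to $\Pi\setminus A(\lambda)$ together with the commuting root vectors in the opposite nilpotent radical to directly produce an unbounded family of weight multiplicities.
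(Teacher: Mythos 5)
Your proposal takes a genuinely different route from the paper's argument, trading structural machinery for an elementary but combinatorially heavy Jantzen-sum estimate. The paper's proof never touches characters: assuming $\alpha,\alpha'\in A(\lambda)$ are orthogonal, it observes via Lemma~\ref{lemma:18} that $-D(\alpha')$ lies in both $T_{L(\lambda)}$ and $T_{{^{s_\alpha}}L(s_\alpha.\lambda)}$, so both cones generate $Q$ by Lemma~\ref{lemma:38}; the essential supports must therefore intersect, and a shared essential weight forces $L(\lambda)\iso {^{s_\alpha}}L(s_\alpha.\lambda)$ as $U_q$-modules by Lemma~\ref{lemma:13} and Theorem~\ref{thm:Lemire}, which is absurd since the weight supports differ. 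Pairwise adjacency plus the tree structure of the Dynkin diagram then yields $|A(\lambda)|\le 2$ in one stroke. Your route would, if completed, give an alternative proof via Kostant partition function asymptotics, but it is strictly more work and leaves gaps you yourself flag without closing.

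The first genuine gap is the non-generic case: $\setZ$-linear relations among the exponents $a_\alpha$ enlarge $B$, pumping extra subtrahends into the lower bound $\ch M(\lambda)-\sum_{\beta\in B}\ch M(s_\beta.\lambda)$, and you offer no mechanism to show the bound still diverges. Since the statement must hold for every non-integral $\lambda$, genericity cannot be assumed. The second is the exceptional chain configurations $A(\lambda)=\{\alpha_1,\alpha_n\}$: these arise not only in type $A_n$ but in every chain diagram ($B_n$, $C_n$, $F_4$), and there the restriction-induction never fires because the minimal geodesic between the endpoints is all of $\Pi$; ``descend to subleading terms'' is an intention, not an estimate. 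A smaller correction: the $U_q(\mathfrak{g}_J)$-submodule generated by $v_\lambda$ is a highest-weight module but need not be simple; one must pass to its unique simple quotient, which remains admissible. Your explicit $A_3$ computation $\binom{N+3}{3}-\binom{N-k_2+2}{3}\sim\tfrac{k_2+1}{2}N^2$ is correct, but it handles only the generic sub-case of the simplest base case, and the inductive scaffolding needed to lift it is precisely where the difficulty lives. The paper's argument is uniform in rank and type and requires no genericity because it reuses the localization and twist machinery already built in Sections~\ref{sec:class-tors-free-1}--\ref{sec:coherent-families}, which is exactly the toolkit your approach foregoes.
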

\begin{proof}
  Assume $|A(\lambda)|\geq 2$. Let $\alpha,\alpha'\in A(\lambda)$ be
  two distinct elements. We will show that $\alpha$ and $\alpha'$ are
  connected. So assume $(\alpha|\alpha')=0$ to reach a
  contradiction. $L(\lambda)$ is admissible of some degree $d$. By
  Lemma~\ref{lemma:30} and Proposition~\ref{prop:15}
  ${^{s_\alpha}}L(s_\alpha.\lambda)$ is admissible of the same degree
  $d$ ($L(s_\alpha.\lambda)$ is infinite dimensional since
  $s_\alpha.\lambda(K_{\alpha'})=\lambda(K_{\alpha'})\not \in \pm
  q_{\alpha'}^{\setN}$). Let $\Sigma$ be a set of commuting roots that
  is a basis of $Q$ such that $\alpha\in \Sigma$ and $-\Sigma\subset
  T_{L(\lambda)}$ (Lemma~\ref{lemma:26}). By Proposition~\ref{prop:9}
  ${^{s_\alpha}}L(s_\alpha.\lambda)$ is a subquotient of
  $L(\lambda)_{F_\Sigma}$. We claim that $\Suppess(L(\lambda))\cap
  \Suppess({^{s_\alpha}}L(s_\alpha.\lambda))\neq \emptyset$. If this
  is true then we have for $\nu \in \Suppess(L(\lambda))\cap
  \Suppess({^{s_\alpha}}L(s_\alpha.\lambda))$, $L(\lambda)_\nu \iso
  (L(\lambda)_{F_\Sigma})_\nu \iso
  ({^{s_\alpha}}L(s_\alpha.\lambda))_\nu$ as $(U_q)_0$-modules by
  Lemma~\ref{lemma:13}. But then by Theorem~\ref{thm:Lemire}
  $L(\lambda)\iso {^{s_{\alpha}}}L(s_\alpha.\lambda)$ which is clearly
  a contradiction by looking at the weights of the modules.  So we
  will prove the claim that $\Suppess(L(\lambda))\cap
  \Suppess({^{s_\alpha}}L(s_\alpha.\lambda))\neq \emptyset$:

  We have $-D(\alpha')\subset T_{L(\lambda)}$ and $-D(\alpha') \subset
  T_{{^{s_\alpha}}L(s_\alpha.\lambda)}=s_\alpha(T_{L(s_\alpha.\lambda)})$
  by Lemma~\ref{lemma:18} and the fact that $(\alpha|\alpha')=0$. So
  $-D(\alpha')\subset C(L(\lambda)) \cap
  C({^{s_\alpha}}L(s_\alpha.\lambda))$ thus $C(L(\lambda)) \cap
  C({^{s_\alpha}}L(s_\alpha.\lambda))$ generate $Q$ by
  Lemma~\ref{lemma:38}. This implies that
  $C(L(\lambda))-C({^{s_\alpha}}L(s_\alpha.\lambda))=Q$.  The weights
  of $L(\lambda)$ and ${^{s_\alpha}}L(s_\alpha.\lambda)$ are contained
  in $q^Q \lambda$ so a weight in the essential support of
  $L(\lambda)$ (resp. ${^{s_\alpha}}L(s_\alpha.\lambda)$) is of the
  form $q^{\mu_1}\lambda$ (resp. $q^{\mu_2}\lambda$) for some
  $\mu_1,\mu_2 \in Q$. By the above $q^{C(L(\lambda))+\mu_1}\lambda
  \cap q^{C({^{s_\alpha}}L(s_\alpha.\lambda))+\mu_2}\lambda\neq
  \emptyset$. Since $q^{C(L(\lambda))+\mu_1}\lambda \subset
  \Suppess(L(\lambda))$ and
  $q^{C({^{s_\alpha}}L(s_\alpha.\lambda))+\mu_2}\lambda \subset
  \Suppess({^{s_\alpha}}L(s_\alpha.\lambda))$ we have proved the
  claim.

  So we have proved that any two roots of $A(\lambda)$ are
  connected. Since there are no cycles in the Dynkin diagram of a
  simple Lie algebra we get $A(\lambda)=2$.
\end{proof}

\subsection{Rank $2$ calculations}
\label{sec:rank-2-calculations}
Following the procedure in~\cite[Section~7]{Mathieu} we classify
admissible simple highest weight modules in rank $2$ in order to
classify the modules in higher ranks. We only consider non-integral
weights because of Theorem~\ref{thm:integral}. We assume that $q$ is
transcendental over $\setQ$.

\begin{lemma}
  \label{lemma:8}
  Assume $\mathfrak{g}=\mathfrak{sl}_3$. Let $\lambda\in X$ be a
  non-integral weight. The module $L(\lambda)$ is admissible if and
  only if $q^\rho\lambda(K_{\beta})\in \pm q^{\setZ_{>0}}$ for at
  least one root $\beta\in \Phi^+$.
\end{lemma}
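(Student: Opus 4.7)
The plan is to combine the Jantzen sum formula (Theorem~\ref{thm:Jantzen_filtration}) with the explicit Kostant partition function for $\mathfrak{sl}_3$. For the ``only if'' direction, suppose no $\beta\in\Phi^+$ satisfies $q^\rho\lambda(K_\beta)\in\pm q^{\setZ_{>0}}$. Then the right-hand side of the Jantzen sum formula vanishes, so $\sum_{i\geq 1}\ch M_i=0$, and since $M_1\supseteq M_i$ for all $i$ this forces $M_1=0$ and $M(\lambda)=L(\lambda)$. But the weight multiplicity of the Verma module at $q^{-a\alpha_1-b\alpha_2}\lambda$ is the partition function value $p(a,b)=\min(a,b)+1$, which is unbounded in $a,b\in\setN$, so $L(\lambda)$ cannot be admissible.

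For the ``if'' direction, fix $\beta\in\Phi^+$ with $q^\rho\lambda(K_\beta)=\epsilon q^n$ for some $\epsilon\in\{\pm 1\}$ and $n\in\setZ_{>0}$, so that $s_\beta.\lambda=q^{-n\beta}\lambda$ (all roots being short in $\mathfrak{sl}_3$). A standard Shapovalov/BGG argument---or, equivalently, extracting a singular vector from the summand $\ch M(s_\beta.\lambda)$ on the right-hand side of Theorem~\ref{thm:Jantzen_filtration}---yields a Verma embedding $M(s_\beta.\lambda)\hookrightarrow M(\lambda)$. The image lies in the maximal proper submodule of $M(\lambda)$, so $L(\lambda)$ is a quotient of $M(\lambda)/M(s_\beta.\lambda)$; writing $\beta=c_1\alpha_1+c_2\alpha_2$, we obtain
\[
\dim L(\lambda)_{q^{-a\alpha_1-b\alpha_2}\lambda}\leq p(a,b)-p(a-nc_1,b-nc_2),
\]
with $p$ extended by $0$ on tuples with a negative coordinate. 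A direct check of the three cases $\beta\in\{\alpha_1,\alpha_2,\alpha_1+\alpha_2\}$, using $p(a,b)=\min(a,b)+1$, shows this difference is always at most $n$, uniformly in $a,b$; hence $L(\lambda)$ has uniformly bounded weight multiplicities and is admissible.

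The one point requiring care is the existence of the Verma embedding $M(s_\beta.\lambda)\hookrightarrow M(\lambda)$ when $\beta=\alpha_1+\alpha_2$, since in that case the singular vector is not a mere power of $F_\beta$ (for simple $\beta$ it is just $F_\beta^n v_\lambda$, handled by a one-line check using $E_\beta F_\beta^n v_\lambda$ and the commutation with $E_{\alpha'}$, $\alpha'\neq\beta$). For the non-simple case one should either invoke the quantum Shapovalov determinant or argue directly from Theorem~\ref{thm:Jantzen_filtration} that $M_1$ acquires a singular vector at weight $s_\beta.\lambda$. Once this is in place the remainder is a routine character computation.
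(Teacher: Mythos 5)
Your proof is correct and follows essentially the same route as the paper: establish a Verma embedding $M(s_\beta.\lambda)\hookrightarrow M(\lambda)$ from the Jantzen sum formula, then observe that $M(\lambda)/M(s_\beta.\lambda)$ is admissible. The paper handles the one subtle point you flag with an explicit case split (if a \emph{simple} root satisfies the condition, the singular vector is $F_\alpha^{n}v_\lambda$; otherwise $\alpha_1+\alpha_2$ is the \emph{only} contributing root, so the sum formula gives $\sum_i\ch M_i=\ch M(s_\beta.\lambda)$ and hence $M_1\cong M(s_\beta.\lambda)$ directly), and it leaves the boundedness of $\ch M(\lambda)/M(s_\beta.\lambda)$ as an obvious computation, whereas you spell it out via $p(a,b)=\min(a,b)+1$; otherwise the two arguments coincide.
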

\begin{proof}
  It is easy to show that the Verma module $M(\lambda)$ is not
  admissible. So $q^\rho\lambda(K_{\beta})\in \pm q^{\setZ_{>0}}$ for
  at least one root $\beta\in \Phi^+$ by
  Theorem~\ref{thm:Jantzen_filtration}.  On the other hand suppose
  $q^\rho\lambda(K_{\beta})\in \pm q^{\setZ_{>0}}$ for at least one
  root $\beta\in \Phi^+$. If $q^\rho\lambda(K_{\alpha})\in \pm
  q^{\setZ_{>0}}$ for a simple root $\alpha\in \Pi$ then by easy
  calculations we see that $M(s_\alpha.\lambda)$ is a submodule of
  $M(\lambda)$. If $q^\rho\lambda(K_{\alpha})\not\in \pm
  q^{\setZ_{>0}}$ for both simple roots $\alpha\in \Pi$ then we get
  that $M(s_\beta.\lambda)$ is a submodule by
  Theorem~\ref{thm:Jantzen_filtration}. So in both cases we have a
  submodule $M(s_\beta.\lambda)$ of $M(\lambda)$. Since $L(\lambda)$
  is the unique simple quotient of $M(\lambda)$, $L(\lambda)$ is a
  subquotient of $M(\lambda)/M(s_\beta.\lambda)$. Since
  $M(\lambda)/M(s_\beta.\lambda)$ is admissible 
  we see that $L(\lambda)$ is admissible as well.
\end{proof}

\begin{lemma}
  \label{lemma:14}
  Assume $\mathfrak{g}$ is of type $C_2$
  (i.e. $\mathfrak{g}=\mathfrak{sp}(4)$). Let
  $\Pi=\{\alpha_1,\alpha_2\}$ where $\alpha_1$ is short and $\alpha_2$
  is long. Let $\lambda\in X$ be a non-integral weight. The module
  $L(\lambda)$ is infinite dimensional and admissible if and only if
  $q^\rho\lambda(K_{\alpha_1}),q^\rho\lambda(K_{\alpha_1+\alpha_2})\in
  \pm q^{\setZ_{>0}}$ and
  $\lambda(K_{\alpha_2}),\lambda(K_{2\alpha_1+\alpha_2})\in \pm
  q^{1+2\setZ} (= \pm q_{\alpha_2}^{1/2+\setZ}=\pm
  q_{2\alpha_1+\alpha_2}^{1/2+\setZ})$.
\end{lemma}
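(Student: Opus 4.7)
\emph{Sufficiency.} Assume the four conditions. The plan is to apply the Jantzen sum formula (Theorem~\ref{thm:Jantzen_filtration}) and bound $\ch L(\lambda)$ from above. Conditions $(C_3)$ and $(C_4)$ force $q^\rho\lambda(K_\beta)$ to be an odd power of $q$ (up to sign) for each long positive root $\beta\in\{\alpha_2,\,2\alpha_1+\alpha_2\}$, so these cannot lie in $\pm q_\beta^{\setZ_{>0}}=\pm q^{2\setZ_{>0}}$ and drop out of the sum; conditions $(C_1)$ and $(C_2)$ make the two short positive roots $\alpha_1$ and $\alpha_1+\alpha_2$ contribute, so
\begin{equation*}
  \sum_{i\geq 1} \ch M_i = \ch M(s_{\alpha_1}.\lambda) + \ch M(s_{\alpha_1+\alpha_2}.\lambda).
\end{equation*}
Standard quantum BGG theory gives embeddings $M(s_{\alpha_1}.\lambda),\,M(s_{\alpha_1+\alpha_2}.\lambda)\hookrightarrow M(\lambda)$ (the second via the factorisation $s_{\alpha_1+\alpha_2}=s_{\alpha_2}s_{\alpha_1}s_{\alpha_2}$ and a chain of strong linkages), so both submodules lie inside the unique maximal submodule $M_1$. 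Inclusion--exclusion then gives
\begin{equation*}
  \ch L(\lambda) \leq \ch M(\lambda) - \ch M(s_{\alpha_1}.\lambda) - \ch M(s_{\alpha_1+\alpha_2}.\lambda) + \ch\bigl(M(s_{\alpha_1}.\lambda)\cap M(s_{\alpha_1+\alpha_2}.\lambda)\bigr),
\end{equation*}
and identifying or suitably bounding the intersection by a deeper Verma module yields a character estimate whose weight multiplicities are visibly uniformly bounded. Infinite-dimensionality follows from $\lambda(K_{\alpha_2})\notin\pm q^{2\setN}$.

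\emph{Necessity.} Conversely, assume $L(\lambda)$ is infinite-dimensional and admissible with $\lambda$ non-integral. By Lemma~\ref{lemma:34}, $A(\lambda)$ is nonempty, connected, and has cardinality at most two. The plan is to rule out $A(\lambda)=\{\alpha_1\}$ and $A(\lambda)=\{\alpha_1,\alpha_2\}$ by Jantzen-sum analysis combined with Lemma~\ref{lemma:18} (forcing $-D(\gamma)\subset T_{L(\lambda)}$ for the non-integral $\gamma$) and Lemma~\ref{lemma:13}: in either of these cases non-integrality at $\alpha_1$ kills the simple-root contribution, and a direct study of the remaining possible contributions together with the growth of Kostant partition multiplicities along $-D(\alpha_1)$ shows that no admissible configuration exists. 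What remains is $A(\lambda)=\{\alpha_2\}$, which automatically gives $\lambda(K_{\alpha_1})\in\pm q^\setN$ and hence $(C_1)$. The long roots are again excluded from the Jantzen sum by parity, and $L(\lambda)\subsetneq M(\lambda)$ is forced by admissibility since $M(\lambda)$ is not admissible; a multiplicity argument then shows that the $\alpha_1$-contribution alone cannot produce bounded weight spaces, forcing $(C_2)$ as well. Writing $\lambda(K_{\alpha_1})=\pm q^m$ with $m\geq 0$, condition $(C_2)$ combined with $\lambda(K_{\alpha_2})\notin\pm q^{2\setZ}$ gives $\lambda(K_{\alpha_2})\in\pm q^{1+2\setZ}$, which is $(C_3)$; then $(C_4)$ is automatic since $\lambda(K_{2\alpha_1+\alpha_2})=\lambda(K_{\alpha_1})^2\lambda(K_{\alpha_2})\in\pm q^{2m+1+2\setZ}=\pm q^{1+2\setZ}$.

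\emph{Main obstacle.} The central difficulties are, for sufficiency, identifying or sharply bounding the intersection $M(s_{\alpha_1}.\lambda)\cap M(s_{\alpha_1+\alpha_2}.\lambda)$ finely enough for inclusion--exclusion to yield a useful character estimate; and, for necessity, two things: first, eliminating the case $A(\lambda)=\{\alpha_1\}$, which requires combining Lemma~\ref{lemma:18} (whose cone $-D(\alpha_1)$ already generates $Q$) with the finite-degree constraint of admissibility to rule out every residual Jantzen configuration; and second, showing that the Jantzen contribution from $\alpha_1$ alone is insufficient to produce uniformly bounded multiplicities, so that $(C_2)$ is genuinely forced on top of $(C_1)$.
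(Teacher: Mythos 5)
Your proposal correctly identifies the two main tools — the Jantzen sum formula and the arithmetic of the four positive roots — but it leaves exactly the steps that carry the content of the lemma as unresolved "obstacles," and in both directions the paper's actual argument is cleaner and finishes the job.

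\emph{Sufficiency.} You set up the inclusion--exclusion bound but do not close it. The paper avoids the intersection $M(s_{\alpha_1}.\lambda)\cap M(s_{\alpha_1+\alpha_2}.\lambda)$ entirely and instead applies the Jantzen sum formula \emph{a second time}, to $M(s_{\alpha_1}.\lambda)$ and to $M(s_{\alpha_1+\alpha_2}.\lambda)$. One checks, exactly as for $M(\lambda)$, that for each of those Verma modules only the single short root $\alpha_1+\alpha_2$ (respectively $\alpha_1$) contributes, that $s_{\alpha_1+\alpha_2}s_{\alpha_1}=s_2s_1s_2s_1=w_0$, and that $M(w_0.\lambda)$ is irreducible by the same parity computation. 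This pins down the set of possible composition factors of the maximal submodule to $\{L(s_{\alpha_1}.\lambda),L(s_{\alpha_1+\alpha_2}.\lambda),L(w_0.\lambda)\}$, and the worst case (each multiplicity one) gives $\ch L(\lambda)=\ch M(\lambda)-\ch M(s_{\alpha_1}.\lambda)-\ch M(s_{\alpha_1+\alpha_2}.\lambda)+\ch M(w_0.\lambda)$, whose admissibility is an explicit Kostant-partition-function calculation. Your appeal to a BGG-style "chain of strong linkages" through $s_{\alpha_1+\alpha_2}=s_2s_1s_2$ is also misleading: the intermediate $\alpha_2$-steps are not strong linkages here (the pairing at $\alpha_2$ is deliberately not a positive integer), so that particular mechanism does not yield the embedding; in any case the paper only needs characters, not embeddings.

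\emph{Necessity.} You invoke Lemma~\ref{lemma:34}, but in rank~$2$ "connected of cardinality at most two" constrains nothing. More importantly, the elimination of $A(\lambda)=\{\alpha_1\}$ and $A(\lambda)=\{\alpha_1,\alpha_2\}$ is exactly the substance of the proof, and you present it only as a plan. The paper's argument is a direct arithmetic case analysis with no reference to $A(\lambda)$: first, since $M(\lambda)/M(s_\beta.\lambda)$ is never admissible in rank $2$, at least two positive roots must contribute to the Jantzen sum; then one assumes $\lambda(K_{\alpha_1})\notin\pm q^\setZ$ and derives a contradiction in each of the two subcases by combining the constraints on the contributing roots with identities like $K_{\alpha_1}=K_{2\alpha_1+\alpha_2}K_{\alpha_1+\alpha_2}^{-1}$ and $K_{\alpha_1}^2=K_{2\alpha_1+\alpha_2}K_{\alpha_2}^{-1}$. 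This yields $\lambda(K_{\alpha_1})\in\pm q^\setZ$, hence by non-integrality $\lambda(K_{\alpha_2})\notin\pm q^{2\setZ}$, hence neither long root can contribute, forcing both short roots to contribute, and the parity constraints on $\alpha_2$ and $2\alpha_1+\alpha_2$ fall out. Your last two sentences agree with the end of this computation, but the case elimination that makes them available is exactly the part you defer. Until those gaps are filled, the proposal is an outline of a strategy rather than a proof.
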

\begin{proof}
  Theorem~\ref{thm:Jantzen_filtration} implies that $q^\rho
  \lambda(K_{\beta}) \in q_{\beta}^{\setZ_{>0}}$ for at least two
  $\beta\in \Phi^+$ because otherwise $L(\lambda) =
  M(\lambda)/M(s_\beta.\lambda)$ for some $\beta\in \Phi^+$. But
  $M(\lambda)/M(s_\beta.\lambda)$ is not admissible. Since $\lambda$
  is not integral we know $q^\rho \lambda(K_\alpha)\not \in
  q_{\alpha}^{\setZ}$ for some $\alpha\in \Pi$. Suppose
  $\lambda(K_{\alpha_1})\not \in \pm q_{\alpha_1}^{\setZ}$. We split
  into cases and arrive at a contradiction in both cases: If
  $\lambda(K_{\alpha_2})\not \in \pm q_{\alpha_2}^{\setZ_{>0}}$ then
  by the above $q^\rho \lambda(K_{\alpha_1+\alpha_2})\in \pm
  q_{\alpha_1+\alpha_2}^{\setZ_{>0}}=\pm q^{\setZ_{>0}}$ and $q^\rho
  \lambda(K_{2\alpha_1+\alpha_2}) \in \pm
  q_{2\alpha_1+\alpha_2}^{\setZ_{>0}}=\pm q^{2\setZ_{>0}}$ which
  implies that $q^\rho \lambda(K_{\alpha_1}) = q^\rho \lambda(
  K_{2\alpha_1+\alpha_2} K_{\alpha_1+\alpha_2}\inv) \in \pm
  q^{\setZ}=\pm q_{\alpha_1}^{\setZ}$. A contradiction.

  The other case is $q^\rho \lambda(K_{\alpha_2}) \in \pm
  q_{\alpha_2}^{\setZ_{>0}}=\pm q^{2\setZ_{>0}}$: In this case we get
  $\lambda(K_{\alpha_1+\alpha_2}) \not \in \pm q^{\setZ} = \pm
  q_{\alpha_1+\alpha_2}^{\setZ}$ so the last root,
  $2\alpha_1+\alpha_2$, must satisfy that
  $q^\rho\lambda(K_{2\alpha_1+\alpha_2}) \in \pm
  q_{2\alpha_1+\alpha_2}^{\setZ_{>0}}=\pm q^{2\setZ_{>0}}$. But this
  implies that $\lambda(K_{\alpha_1})^2 =
  \lambda(K_{2\alpha_1+\alpha_2}K_{\alpha_2}\inv)\in \pm q^{2\setZ}$
  which implies that $\lambda(K_{\alpha_1}) \in \pm q^{\setZ}$. A
  contradiction.

  So $\lambda(K_{\alpha_1})\in \pm q^{\setZ}$. Since $\lambda$ is not
  integral we get $\lambda(K_{\alpha_2})\not \in \pm
  q_{\alpha_2}^{\setZ}=\pm q^{2\setZ}$. This implies that
  $\lambda(K_{2\alpha_1+\alpha_2})\not \in \pm q^{2\setZ} = \pm
  q_{2\alpha_1+\alpha_2}^{\setZ}$. Since $q^\rho \lambda(K_{\beta})
  \in \pm q_{\beta}^{\setZ_{>0}}$ for at least two $\beta\in \Phi^+$
  we get $q^{\rho}\lambda(K_{\alpha_1}) \in \pm q^{\setZ_{>0}}$ and
  $q^{\rho}\lambda(K_{\alpha_1+\alpha_2}) \in \pm
  q^{\setZ_{>0}}$. This in turn implies that $\lambda(K_{\alpha_2}) =
  \lambda(K_{\alpha_1+\alpha_2}K_{\alpha_1}\inv) \in \pm
  q^{\setZ}$. Since $\lambda(K_{\alpha_2})\not \in \pm q^{2\setZ}$ we
  get $\lambda(K_{\alpha_2}) \in \pm q^{1+2\setZ}$. Similarly
  $\lambda(K_{2\alpha_1+\alpha_2})=\lambda(K_{\alpha_1+\alpha_2}K_{\alpha_1})\in
  \pm q^{1+2\setZ}$.  So we have shown the only if part.

  Assume $\lambda$ is as required in the lemma. We will show that
  $L(\lambda)$ is admissible. By Theorem~\ref{thm:Jantzen_filtration}
  we see that the composition factors of $M(s_{\alpha_1}.\lambda)$ are
  $L(s_{\alpha_1}.\lambda)$ and
  $L(s_{\alpha_1+\alpha_2}s_{\alpha_1}.\lambda) = M(w_0.\lambda)$ and
  the composition factors of $M(s_{\alpha_1+\alpha_2})$ are
  $L(s_{\alpha_1+\alpha_2})$ and
  $L(s_{\alpha_1}s_{\alpha_1+\alpha_2})=M(w_0.\lambda)$. So
  \begin{equation*}
    \sum_{\substack{\beta\in \Phi^+ \\q^\rho\lambda(K_\beta)\in \pm q_\beta^{\setZ_{>0}}}} \ch M(s_{\beta}.\lambda) = \ch L(s_{\alpha_1}.\lambda)+\ch L(s_{\alpha_1+\alpha_2}.\lambda) + 2 \ch L(w_0.\lambda).
  \end{equation*}
  So the composition factors of the maximal submodule of $M(\lambda)$
  are $L(s_{\alpha_1}.\lambda)$, $L(s_{\alpha_1+\alpha_2}.\lambda)$ and
  $L(w_0.\lambda)$. The worst case scenario being multiplicity one. In
  this case the character of $L(\lambda)$ is
  \begin{align*}
    \ch M(\lambda)& - \ch L(s_{\alpha_1}.\lambda) - \ch
    L(s_{\alpha_1+\alpha_2}) - \ch L(w_0.\lambda)=
    \\
    =& \ch M(\lambda) - \ch M(s_{\alpha_1}.\lambda) - \ch
    M(s_{\alpha_1+\alpha_2}.\lambda) + \ch M(w_0.\lambda)
  \end{align*}
  The character of Verma modules are known and by an easy calculation
  it is seen that this would imply $L(\lambda)$ is admissible (cf. the
  proof of Lemma~7.2 in~\cite{Mathieu}).
\end{proof}

\subsection{Type A, D, E}
\label{sec:class-admiss-modul}
In this section we complete the classification of all simple
admissible highest weight modules when the Dynkin diagram of
$\mathfrak{g}$ is simply laced. In particular we show that
$\mathfrak{g}$ does not admit infinite dimensional simple admissible
modules when $\mathfrak{g}$ is of type D and E. In
Section~\ref{sec:class-admiss-modul-1} we show that the same is the
case when $\mathfrak{g}$ is of type $B$ or $F$. Combining this and
Section~\ref{sec:class-admiss-modul-1} we get that $\mathfrak{g}$
admits infinite dimensional simple admissible modules if and only if
$\mathfrak{g}$ is of type $A$ or $C$. Remember that we restrict our
attention to transcendental $q$ and to non-integral weights because of
Theorem~\ref{thm:integral}.

\begin{defn}
  Let $\lambda:U_q^0 \to \setC$ be a weight. In the Dynkin diagram of
  $\mathfrak{g}$ let any node corresponding to $\alpha \in \Pi\cap
  A(\lambda)$ be written as $\circ$ and every other as
  $\bullet$. e.g. if $\mathfrak{g}=\mathfrak{sl}_3$ and
  $|A(\lambda)|=1$ then the graph corresponding to $\lambda$ would
  look like this:
  \begin{equation*}
    \xymatrix{ \bullet \ar@{-}[r] & \circ}
  \end{equation*}
  
  We call this the colored Dynkin diagram corresponding to $\lambda$.
\end{defn}
In this way we get a 'coloring' of the Dynkin diagram for every
$\lambda$.

\begin{lemma}
  \label{lemma:39}
  Let $\lambda\in X$ be a non-integral weight such that $L(\lambda)$
  is admissible. If the colored Dynkin diagram of $\lambda$ contains
  \begin{equation*}
    \xymatrix{ \stackrel{\alpha'}{\circ} \ar@{-}[r] & \stackrel{\alpha}{\circ}}
  \end{equation*}
  as a subdiagram then $q^\rho\lambda(K_{\alpha+\alpha'})\in \pm
  q_{\alpha'+\alpha}^{\setZ_{>0}}$.
\end{lemma}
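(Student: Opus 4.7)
The plan is to reduce the claim to a rank-$2$ calculation inside the Levi subalgebra attached to $I := \{\alpha,\alpha'\}$ and then invoke Lemma~\ref{lemma:8}. Let $U_q(\mathfrak{g}_I) \subseteq U_q$ be the subalgebra generated by $E_\gamma$, $F_\gamma$, $K_\gamma^{\pm 1}$ for $\gamma \in I$; let $v_\lambda$ be a highest weight vector of $L(\lambda)$; and set $N := U_q(\mathfrak{g}_I) v_\lambda$. Then $N$ is a highest weight $U_q(\mathfrak{g}_I)$-module with highest weight $\lambda_I := \lambda|_{U_q(\mathfrak{g}_I)^0}$, and its unique simple quotient is $L_I(\lambda_I)$.

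I would next verify that $L_I(\lambda_I)$ is an infinite-dimensional admissible $U_q(\mathfrak{g}_I)$-module. The weights of $N$ are of the form $q^{-\mu}\lambda$ with $\mu \in \setZ_{\geq 0}\alpha + \setZ_{\geq 0}\alpha'$, so they lie in a single $q^{Q_I}$-coset; moreover, since the Cartan form restricted to $Q_I$ is non-degenerate and $q$ is not a root of unity, distinct $U_q^0$-weights of $N$ restrict to distinct $U_q(\mathfrak{g}_I)^0$-weights. Therefore $\dim N_{\nu_I} = \dim N_\nu \leq \dim L(\lambda)_\nu \leq d$, where $d$ is the degree of $L(\lambda)$. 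Passing to the simple quotient preserves this bound, so $L_I(\lambda_I)$ is admissible. Since $\alpha,\alpha' \in A(\lambda)$ we have $\lambda_I(K_\alpha) \notin \pm q_\alpha^{\setN}$ and $\lambda_I(K_{\alpha'}) \notin \pm q_{\alpha'}^{\setN}$, so $L_I(\lambda_I)$ is infinite-dimensional and non-integral.

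Since $\mathfrak{g}$ is not of type $G_2$, and a single edge in the Dynkin diagram of the remaining types only connects roots of equal length, the rank-$2$ subsystem $\Phi_I$ is of type $A_2$ with all roots of equal length, and $U_q(\mathfrak{g}_I)$ is isomorphic to quantum $\mathfrak{sl}_3$ at parameter $q_\alpha$. Applying Lemma~\ref{lemma:8} to $L_I(\lambda_I)$ (with $q$ rescaled to $q_\alpha$) produces some $\beta \in \Phi_I^+ = \{\alpha, \alpha', \alpha+\alpha'\}$ with $q^\rho\lambda(K_\beta) \in \pm q_\beta^{\setZ_{>0}}$; here I use that the rank-$2$ $\rho$ attached to $\Phi_I$ agrees with the restriction of the global $\rho$ to $Q_I$, both satisfying $(\rho|\gamma) = (\gamma|\gamma)/2$ for every simple $\gamma \in I$ by the Weyl-vector identity $\langle \rho,\gamma^\vee\rangle = 1$. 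The cases $\beta = \alpha$ and $\beta = \alpha'$ are ruled out directly: $q^\rho(K_\alpha) = q_\alpha$, so $q^\rho \lambda(K_\alpha) \in \pm q_\alpha^{\setZ_{>0}}$ would force $\lambda(K_\alpha) \in \pm q_\alpha^{\setN}$, contradicting $\alpha \in A(\lambda)$; the same argument eliminates $\beta = \alpha'$. Hence $\beta = \alpha+\alpha'$, giving the desired conclusion.

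The main obstacle is the bookkeeping around conventions: confirming that admissibility descends to the $U_q(\mathfrak{g}_I)$-submodule $N$ (and hence to $L_I(\lambda_I)$), and that the Weyl vector $\rho$ and the $q_\beta$-powers appearing in Lemma~\ref{lemma:8} (stated for $\mathfrak{sl}_3$ with short roots of length $2$) transfer correctly to the subsystem when the roots of $I$ may be long in $\mathfrak{g}$. Once these identifications are in place, the case analysis against the $A_2$ classification is immediate.
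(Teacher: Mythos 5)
Your proof is correct and takes essentially the same route as the paper: both restrict to the rank-$2$ $A_2$-Levi generated by $E_\gamma,F_\gamma,K_\gamma^{\pm 1}$ for $\gamma\in\{\alpha,\alpha'\}$, observe that admissibility passes to $U_q(\mathfrak{g}_I)v_\lambda$ and then to its simple quotient $L_I(\lambda_I)$, and apply Lemma~\ref{lemma:8} to conclude. You merely spell out two points the paper leaves implicit --- that distinct $U_q^0$-weights of $N$ restrict to distinct $U_q(\mathfrak{g}_I)^0$-weights because the form is nondegenerate on $Q_I$ and $q$ is not a root of unity, and that the options $\beta=\alpha$ and $\beta=\alpha'$ from Lemma~\ref{lemma:8} are excluded by $\alpha,\alpha'\in A(\lambda)$ --- both of which are the same reasoning the paper is compressing.
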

\begin{proof}
  Let $v_\lambda$ be a highest weight vector of $L(\lambda)$. Let
  $\mathfrak{s}$ be the Lie algebra $\mathfrak{sl}_3$ with $\alpha$
  and $\alpha'$ as simple roots. Let $U$ be the subalgebra of $U_q$
  generated by $F_{\alpha},F_{\alpha'},K_{\alpha}^{\pm
    1},K_{\alpha'}^{\pm 1},E_{\alpha},E_{\alpha'}$. Then $U \iso
  U_{q_{\alpha}}(\mathfrak{s})$ as algebras and $U v_\lambda$ contains
  the simple highest weight $U_{q_\alpha}(\mathfrak{s})$-module
  $L(\lambda,\mathfrak{s})$ of highest weight $\lambda$ (restricted to
  $U_{q_\alpha}^0(\mathfrak{s})$) as a subquotient. Since $L(\lambda)$
  is admissible so is $U v_\lambda$ hence $L(\lambda,\mathfrak{s})$ is
  admissible. Then Lemma~\ref{lemma:8} implies that $q^\rho
  \lambda(K_{\alpha+\alpha'})\in \pm q_\alpha^{\setZ_{>0}}$.
\end{proof}

\begin{lemma}
  \label{lemma:33}
  Let $\lambda\in X$ be a non-integral weight such that $L(\lambda)$
  is admissible. If the colored Dynkin diagram of $\lambda$ contains
  \begin{equation*}
    \xymatrix{ \stackrel{\alpha'}{\circ} \ar@{-}[r] & \stackrel{\alpha}{\circ} \ar@{-}[r] & \stackrel{\alpha''}{\bullet}}
  \end{equation*}
  as a subdiagram then $L(s_\alpha.\lambda)$ is admissible and the
  colored Dynkin diagram corresponding to $s_\alpha.\lambda$ contains
  \begin{equation*}
    \xymatrix{ \stackrel{\alpha'}{\bullet} \ar@{-}[r] & \stackrel{\alpha}{\circ} \ar@{-}[r] & \stackrel{\alpha''}{\circ}}
  \end{equation*}
  i.e. we can 'move' $\xymatrix{\circ \ar@{-}[r] &\circ}$ and still
  get an admissible module.
\end{lemma}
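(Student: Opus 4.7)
The plan is to split the argument into two parts: first, admissibility of $L(s_\alpha.\lambda)$; second, verification of the new colouring at the three nodes $\alpha',\alpha,\alpha''$ by direct computation, with Lemma~\ref{lemma:39} as the essential structural input.

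Admissibility is immediate. Since $\alpha\in A(\lambda)$ means $\lambda(K_\alpha)\notin\pm q_\alpha^{\setN}$, Proposition~\ref{prop:9} (equivalently the first half of Lemma~\ref{lemma:30}) gives that $L(s_\alpha.\lambda)$ is admissible.

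For the colouring I would first unwind the dot action. In simply laced type every $q_\beta$ equals $q$ and $\langle\gamma,\alpha^\vee\rangle\in\{-1,0\}$ for simple $\gamma\neq\alpha$, and a short computation from $(w.\lambda)(u)=(q^{-\rho}w(q^\rho\lambda))(u)$ together with $T_{s_\alpha}(K_\beta)=K_{s_\alpha(\beta)}$ yields
\begin{equation*}
  (s_\alpha.\lambda)(K_\beta) = q^{-\langle\beta,\alpha^\vee\rangle}\lambda(K_\beta)\lambda(K_\alpha)^{-\langle\beta,\alpha^\vee\rangle}.
\end{equation*}
Specialising gives $(s_\alpha.\lambda)(K_\alpha)=q^{-2}\lambda(K_\alpha)^{-1}$, $(s_\alpha.\lambda)(K_{\alpha'})=q\lambda(K_{\alpha+\alpha'})$ and $(s_\alpha.\lambda)(K_{\alpha''})=q\lambda(K_\alpha)\lambda(K_{\alpha''})$.

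The hard part is the intermediate claim $\lambda(K_\alpha)\notin\pm q^{\setZ}$, which once proved makes everything else immediate. I would prove it by applying Lemma~\ref{lemma:39} to the connected $\circ$-pair $\alpha,\alpha'\in A(\lambda)$, obtaining $\lambda(K_\alpha)\lambda(K_{\alpha'})=\pm q^m$ for some integer $m\geq -1$. If $\lambda(K_\alpha)=\pm q^k$ for any $k\in\setZ$, then $\lambda(K_{\alpha'})=\pm q^{m-k}$; the hypothesis $\alpha'\in A(\lambda)$ forces $m-k<0$, hence $k\geq 0$, contradicting $\alpha\in A(\lambda)$. Given the claim, the three colour checks follow at once: $q^{-2}\lambda(K_\alpha)^{-1}\notin\pm q^{\setZ}\supset\pm q^{\setN}$, so $\alpha\in A(s_\alpha.\lambda)$; $q\lambda(K_{\alpha+\alpha'})=\pm q^{m+1}\in\pm q^{\setN}$ because $m+1\geq 0$, so $\alpha'\notin A(s_\alpha.\lambda)$; and $q\lambda(K_\alpha)\lambda(K_{\alpha''})$ remains outside $\pm q^{\setZ}$ because $\lambda(K_{\alpha''})\in\pm q^{\setN}\subset\pm q^{\setZ}$, so $\alpha''\in A(s_\alpha.\lambda)$. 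The only genuine obstacle is the intermediate claim, whose proof is a short combinatorial argument that combines admissibility (through Lemma~\ref{lemma:39}) with the two $\circ$-hypotheses to rule out $\lambda(K_\alpha)$ lying on any integer power of $q$.
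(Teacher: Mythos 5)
Your proposal is correct and follows essentially the same route as the paper: Proposition~\ref{prop:9} for admissibility, Lemma~\ref{lemma:39} as the key structural input, and a direct verification of the three colour changes. The paper's proof is terse ("it is easy to see that\dots follows by Lemma~\ref{lemma:39} since $\lambda$ is non-integral"), whereas you have made explicit the crucial intermediate claim $\lambda(K_\alpha)\notin\pm q^{\setZ}$ and the case analysis deriving it from $m\geq-1$ together with $\alpha,\alpha'\in A(\lambda)$, which is exactly what the paper's parenthetical remark is alluding to.
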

\begin{proof}
  $L(s_\alpha.\lambda)$ is admissible by Proposition~\ref{prop:9}. It is
  easy to see that $q^\rho s_\alpha.\lambda(K_\alpha)\not \in \pm
  q^{\setZ}$ (follows by Lemma~\ref{lemma:39} since $\lambda$ is
  non-integral), that $q^\rho s_{\alpha}.\lambda(K_{\alpha''})\not \in
  \pm q^{\setZ}$ and that $q^\rho s_\alpha.\lambda(K_{\alpha'}) \in
  \pm q^{\setZ_{>0}}$ (by Lemma~\ref{lemma:39})
\end{proof}

\begin{lemma}
  \label{lemma:41}
  Assume $\mathfrak{g}\neq \mathfrak{sl}_2$. Let $\lambda\in X$ be a
  non-integral weight such that $L(\lambda)$ is admissible.

  If $A(\lambda)=\{\alpha\}$ then $\alpha$ is only connected to one
  other simple root $\alpha'$, $L(s_\alpha.\lambda)$ is admissible and
  the corresponding colored Dynkin diagram of $s_\alpha.\lambda$
  contains
  \begin{equation*}
    \xymatrix{ \stackrel{\alpha}{\circ} \ar@{-}[r] &\stackrel{\alpha'}{\circ} }
  \end{equation*}
  as a subdiagram.

  On the other hand if the colored Dynkin diagram of $\lambda$
  contains
  \begin{equation*}
    \xymatrix{ \stackrel{\alpha}{\circ} \ar@{-}[r] &\stackrel{\alpha'}{\circ} }
  \end{equation*}
  and $\alpha'$ is the only root connected to $\alpha$ then the
  colored Dynkin diagram of $s_\alpha.\lambda$ contains
  \begin{equation*}
    \xymatrix{ \stackrel{\alpha}{\circ} \ar@{-}[r] &\stackrel{\alpha'}{\bullet} }
  \end{equation*}
  as a subdiagram.
\end{lemma}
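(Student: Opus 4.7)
The plan is to compute, for every simple root $\gamma\in\Pi$, the value
\[
s_\alpha.\lambda(K_\gamma) \;=\; q^{-(\alpha|\gamma)}\,\lambda(K_\gamma)\,\lambda(K_\alpha)^{-\left<\gamma,\alpha^\vee\right>},
\]
which comes directly from unwinding the identity $s_\alpha.\lambda = q^{-\alpha}s_\alpha\lambda$ recalled in Lemma~\ref{lemma:30}. From this formula one reads off whether $\gamma\in A(s_\alpha.\lambda)$. Since the ambient section treats only types $A$, $D$, $E$, I will freely use the simply laced identities $(\alpha|\alpha)=2$ and $q_\alpha=q$ for every $\alpha\in\Pi$.

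For the first assertion, observe that $A(\lambda)=\{\alpha\}$ together with non-integrality of $\lambda$ forces $\lambda(K_\alpha)\notin \pm q^{\setZ}$ (otherwise every $\lambda(K_\beta)$ would lie in $\pm q^{\setZ}$). Plugging into the formula: for $\gamma=\alpha$ one has $s_\alpha.\lambda(K_\alpha) = q^{-2}\lambda(K_\alpha)^{-1}\notin \pm q^{\setZ}$, so $\alpha \in A(s_\alpha.\lambda)$; for $\gamma\neq\alpha$ not adjacent to $\alpha$, the formula collapses to $\lambda(K_\gamma)\in \pm q^{\setN}$, so $\gamma\notin A(s_\alpha.\lambda)$; for $\gamma$ adjacent to $\alpha$, the nonzero exponent $\left<\gamma,\alpha^\vee\right>$ keeps $\lambda(K_\alpha)^{-\left<\gamma,\alpha^\vee\right>}$ outside $\pm q^{\setZ}$, so $\gamma\in A(s_\alpha.\lambda)$. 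Hence $A(s_\alpha.\lambda) = \{\alpha\}\cup\{\gamma\in\Pi : \gamma \text{ adjacent to } \alpha\}$. Proposition~\ref{prop:9} yields admissibility of $L(s_\alpha.\lambda)$, so Lemma~\ref{lemma:34} forces $|A(s_\alpha.\lambda)|\le 2$; thus at most one simple root is adjacent to $\alpha$, and since $\mathfrak{g}\neq\mathfrak{sl}_2$ exactly one, producing the claimed subdiagram.

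For the second assertion, Lemma~\ref{lemma:34} gives $A(\lambda)=\{\alpha,\alpha'\}$. In the simply laced setting $(\alpha|\alpha')=-1=\left<\alpha',\alpha^\vee\right>$, and the formula simplifies to $s_\alpha.\lambda(K_{\alpha'}) = q\,\lambda(K_{\alpha+\alpha'})$. Combining Lemma~\ref{lemma:39} with the simply laced equalities $q^\rho(K_{\alpha+\alpha'})=q^2$ and $q_{\alpha+\alpha'}=q$ yields $\lambda(K_{\alpha+\alpha'})\in \pm q^{\setZ_{\ge -1}}$, so $s_\alpha.\lambda(K_{\alpha'})\in \pm q^{\setN}$ and $\alpha'\notin A(s_\alpha.\lambda)$; this is the new $\bullet$ color at $\alpha'$.

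The only nontrivial step is showing $\alpha\in A(s_\alpha.\lambda)$, i.e.\ that $q^{-2}\lambda(K_\alpha)^{-1}\notin \pm q^{\setN}$, equivalently $\lambda(K_\alpha)\notin \pm q^{\setZ_{\le -2}}$. This is the main obstacle, since the inversion could a priori push a very negative integer power back into the positive range. It is eliminated by a second appeal to Lemma~\ref{lemma:39}: if $\lambda(K_\alpha)\in \pm q^{\setZ_{\le -2}}$, then the bound $\lambda(K_{\alpha+\alpha'})\in \pm q^{\setZ_{\ge -1}}$ would force $\lambda(K_{\alpha'})\in \pm q^{\setZ_{\ge 1}}\subset \pm q^{\setN}$, contradicting $\alpha'\in A(\lambda)$. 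This completes the plan.
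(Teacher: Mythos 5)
Your proof is correct and follows essentially the same route as the paper's: compute $s_\alpha.\lambda(K_\gamma)$ for each simple $\gamma$, invoke Proposition~\ref{prop:9} for admissibility, Lemma~\ref{lemma:34} for $|A(s_\alpha.\lambda)|\le 2$, and Lemma~\ref{lemma:39} for the bound on $\lambda(K_{\alpha+\alpha'})$. The one substantive difference is worth noting: you correctly flag that showing $\alpha\in A(s_\alpha.\lambda)$ in the \emph{second} case is not as immediate as in the first. When $A(\lambda)=\{\alpha\}$, non-integrality forces $\lambda(K_\alpha)\notin\pm q^{\setZ}$, so $s_\alpha.\lambda(K_\alpha)=q^{-2}\lambda(K_\alpha)^{-1}\notin\pm q^{\setZ}$ and the $\circ$ at $\alpha$ is automatic. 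But when $A(\lambda)=\{\alpha,\alpha'\}$, the non-integrality could live entirely at $\alpha'$, leaving $\lambda(K_\alpha)\in\pm q^{\setZ_{<0}}$, and then the inversion $\lambda(K_\alpha)\mapsto q^{-2}\lambda(K_\alpha)^{-1}$ could a priori land in $\pm q^{\setN}$. The paper's proof handles this with the single sentence ``it is easily seen that $q^\rho s_\alpha.\lambda(K_\alpha)\not\in\pm q^{\setZ_{>0}}$ (since $\lambda$ is non integral),'' which is stated under the first case and whose extension to the second case is left implicit; your extra appeal to Lemma~\ref{lemma:39} (showing $\lambda(K_\alpha)\in\pm q^{\setZ_{\le -2}}$ would force $\lambda(K_{\alpha'})\in\pm q^{\setN}$, contradicting $\alpha'\in A(\lambda)$) closes this gap cleanly and is a genuine improvement in rigor over what is written in the paper.
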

\begin{proof}
  Since $\alpha\in A(\lambda)$, $L(s_\alpha.\lambda)$ is admissible by
  Proposition~\ref{prop:9}.  First assume $A(\lambda)=\{\alpha\}$. If
  $\alpha$ is connected to two distinct roots $\alpha'$ and $\alpha''$
  then it is easily seen that $\alpha',\alpha''\in
  A(s_\alpha.\lambda)$ contradicting the fact that
  $A(s_\alpha.\lambda)$ is connected (Lemma~\ref{lemma:34}). It is
  easily seen that $q^\rho s_\alpha.\lambda(K_{\alpha})\not \in \pm
  q^{\setZ_{>0}}$ (since $\lambda$ is non integral) and $q^\rho
  s_\alpha.\lambda(K_{\alpha'}) \not \in q^{\setZ_{>0}}$.
  
  On the other hand if $A(\lambda)=\{\alpha,\alpha'\}$ then $q^\rho
  s_\alpha.\lambda(K_{\alpha'}) = q^\rho \lambda(K_{\alpha+\alpha'})
  \in \pm q^{\setZ_{>0}}$ by Lemma~\ref{lemma:39}.
\end{proof}

Now we can eliminate the types that are not type $A$ by the following
theorem:

\begin{thm}
  \label{thm:simply_laced_exists_only_type_A}
  Assume $\mathfrak{g}$ is a simple Lie algebra of simply laced
  type. If there exists an infinite dimensional admissible simple
  module then $\mathfrak{g}$ is of type $A$.
\end{thm}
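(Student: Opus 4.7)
The proof will proceed by contradiction, exploiting the presence of a unique trivalent vertex in the Dynkin diagrams of types $D$ and $E$. I assume that $\mathfrak{g}$ is simply laced of type $D$ or $E$ and that $L(\lambda)$ is an infinite dimensional admissible simple module. Theorem~\ref{thm:integral} reduces the case of integral $\lambda$ to Mathieu's classical classification (which already rules out types $D$ and $E$), so I may assume $\lambda$ is non-integral.

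The first reduction uses Lemma~\ref{lemma:41}: if $|A(\lambda)|=1$ I replace $\lambda$ by $s_{\alpha}.\lambda$, which remains admissible by Proposition~\ref{prop:9} and now has $|A(s_{\alpha}.\lambda)|=2$. So I may assume $A(\lambda)=\{\alpha,\beta\}$ with $\alpha,\beta$ adjacent in the Dynkin diagram (connectedness by Lemma~\ref{lemma:34}). A quick calculation combining Lemma~\ref{lemma:39} with the non-integrality of $\lambda$ shows that both $\lambda(K_{\alpha}),\lambda(K_{\beta})\notin \pm q^{\mathbb{Z}}$, and that this stronger non-integrality is preserved under each application of Lemma~\ref{lemma:33}.

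Next I transport the colored pair toward the unique trivalent vertex $\alpha^{*}$ of the diagram. Since every non-trivalent node in a type $D$ or $E$ diagram has valence exactly two, Lemma~\ref{lemma:33} applies at each step to slide a colored pair $\{\alpha_{i},\alpha_{i+1}\}$ to $\{\alpha_{i+1},\alpha_{i+2}\}$ along a chain segment. Iterating, I reach a position where $\alpha^{*}\in A(\lambda)$. I then apply $s_{\alpha^{*}}$: by Proposition~\ref{prop:9} the new highest weight module $L(s_{\alpha^{*}}.\lambda)$ is admissible, and for each of the three neighbors $\gamma$ of $\alpha^{*}$ I compute
\begin{equation*}
  s_{\alpha^{*}}.\lambda(K_{\gamma}) \;=\; q\,\lambda(K_{\gamma})\,\lambda(K_{\alpha^{*}}),
\end{equation*}
which is not in $\pm q^{\mathbb{Z}}$ because $\lambda(K_{\alpha^{*}})\notin\pm q^{\mathbb{Z}}$. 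Hence all three pairwise non-adjacent neighbors of $\alpha^{*}$ belong to $A(s_{\alpha^{*}}.\lambda)$, violating either the bound $|A|\leq 2$ or the connectedness clause in Lemma~\ref{lemma:34}. This contradiction rules out types $D$ and $E$, leaving only type $A$.

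The delicate part is verifying that the transport step can always be arranged so that the pair actually lands on $\alpha^{*}$. This reduces to a short case-by-case inspection of the diagrams $D_{n}$ ($n\geq 4$), $E_{6}$, $E_{7}$, $E_{8}$: configurations in which the pair already contains $\alpha^{*}$ need no moves, configurations on a short arm incident to $\alpha^{*}$ (for instance $\{\alpha_{n-2},\alpha_{n-1}\}$ in $D_{n}$) allow an immediate application of $s_{\alpha^{*}}$, and configurations on the long arm admit Lemma~\ref{lemma:33} moves step by step toward $\alpha^{*}$. Once this inspection is in place, the trivalent-vertex contradiction closes the argument.
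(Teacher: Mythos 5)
Your approach is the same as the paper's: transport the colored pair to the trivalent vertex $\alpha^*$ using Lemma~\ref{lemma:33}, then reflect by $s_{\alpha^*}$ to produce too many colored nodes, contradicting Lemma~\ref{lemma:34}.

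However, there is a slip in the final step. The previously colored neighbor $\nu$ of $\alpha^*$ does \emph{not} remain in $A(s_{\alpha^*}.\lambda)$. Your justification that $s_{\alpha^*}.\lambda(K_\gamma)=q\,\lambda(K_\gamma)\lambda(K_{\alpha^*})\notin\pm q^{\setZ}$ ``because $\lambda(K_{\alpha^*})\notin\pm q^{\setZ}$'' is only sound when $\lambda(K_\gamma)\in\pm q^{\setZ}$, which holds for the two uncolored neighbors but not for $\nu$. For $\nu$, Lemma~\ref{lemma:39} gives precisely $q^\rho\lambda(K_{\nu+\alpha^*})\in\pm q^{\setZ_{>0}}$, so $s_{\alpha^*}.\lambda(K_\nu)\in\pm q^{\setN}$ and $\nu$ drops out of $A$. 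The argument is rescued because the trivalent vertex itself stays colored: $s_{\alpha^*}.\lambda(K_{\alpha^*})=q^{-2}\lambda(K_{\alpha^*})^{-1}\notin\pm q^{\setN}$. Hence $A(s_{\alpha^*}.\lambda)\supseteq\{\alpha^*,\gamma_2,\gamma_3\}$ has at least three elements, violating the bound $|A|\leq 2$ of Lemma~\ref{lemma:34}; this is exactly the colored diagram the paper displays. Note that this set is connected through $\alpha^*$, so it is the cardinality bound and not the connectedness clause that supplies the contradiction.
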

\begin{proof}
  Suppose there exists an infinite dimensional admissible simple
  module then by Theorem~\ref{thm:EXT_contains_highest_weight} there
  exists a $\lambda\in X$ such that $L(\lambda)$ is an infinite
  admissible simple highest weight module.  By
  Theorem~\ref{thm:integral} and the classification in~\cite{Mathieu}
  there exists no highest weight simple admissible modules with
  integral weights unless $\mathfrak{g}$ is of type $A$. We need to
  show the same for non-integral weights.

  If the Dynkin diagram is simply laced and not of type $A$ then the
  Dynkin diagram contains
  \begin{equation*}
    \xymatrix{ & \stackrel{\alpha}{\bullet} \ar@{-}[d] & \\ \stackrel{\alpha'}{\bullet} \ar@{-}[r] & \stackrel{\gamma}{\bullet}  \ar@{-}[r] &\stackrel{\alpha''}{\bullet}}
  \end{equation*}
  as a subdiagram.

  By Lemma~\ref{lemma:41} we can assume without loss of generality
  that $|A(\lambda)|=2$ and by Lemma~\ref{lemma:33} we can assume that
  the colored Dynkin diagram corresponding to $\lambda$ contains the
  following:
  \begin{equation*}
    \xymatrix{ & \stackrel{\alpha}{\bullet} \ar@{-}[d] & \\ \stackrel{\alpha'}{\circ} \ar@{-}[r] & \stackrel{\gamma}{\circ}  \ar@{-}[r] &\stackrel{\alpha''}{\bullet}}
  \end{equation*}
  But then $L(s_\gamma.\lambda)$ is admissible as well by
  Proposition~\ref{prop:9} and the colored Dynkin diagram for
  $s_\gamma.\lambda$ contains
  \begin{equation*}
    \xymatrix{ & \stackrel{\alpha}{\circ} \ar@{-}[d] & \\ \stackrel{\alpha'}{\bullet} \ar@{-}[r] & \stackrel{\gamma}{\circ}  \ar@{-}[r] &\stackrel{\alpha''}{\circ}}
  \end{equation*}
  contradicting the fact that $A(\lambda)$ is connected.
\end{proof}

Combining all the above results we get
\begin{thm}
  \label{thm:Classification_of_adm_modules_simply_laced}
  Let $\mathfrak{g}=\mathfrak{sl}_{n+1}$, $n\geq 2$ with simple roots
  $\alpha_1,\dots,\alpha_n$ such that $(\alpha_i|\alpha_{i+1})=-1$,
  $i=1,\dots,n$. Let $\lambda \in X$ be a non-integral weight.

  $L(\lambda)$ is admissible if and only if the colored Dynkin diagram
  of $\lambda$ is of one of the following types:
  \begin{align*}
    \xymatrix{ \stackrel{\alpha_1}{\circ} \ar@{-}[r] &
      \stackrel{\alpha_2}{\bullet} \ar@{-}[r] &
      \stackrel{\alpha_3}{\bullet} \ar@{.}[r] &
      \stackrel{\alpha_n}{\bullet} }
    \\
    \xymatrix{ \stackrel{\alpha_1}{\circ} \ar@{-}[r] &
      \stackrel{\alpha_2}{\circ} \ar@{-}[r] &
      \stackrel{\alpha_3}{\bullet} \ar@{.}[r] &
      \stackrel{\alpha_n}{\bullet} }
    \\
    \xymatrix{ \stackrel{\alpha_1}{\bullet} \ar@{-}[r] &
      \stackrel{\alpha_2}{\circ} \ar@{-}[r] &
      \stackrel{\alpha_3}{\circ} \ar@{.}[r] &
      \stackrel{\alpha_n}{\bullet} }
    \\
    &\vdots
    \\
    \xymatrix{ \stackrel{\alpha_1}{\bullet} \ar@{-}[r] &
      \stackrel{\alpha_2}{\bullet} \ar@{-}[r] &
      \stackrel{\alpha_3}{\bullet} \ar@{.}[r] &
      \stackrel{\alpha_n}{\circ} }
  \end{align*}
\end{thm}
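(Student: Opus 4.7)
The plan is to treat the two implications separately. For the forward direction (admissibility implies diagram shape), the argument is purely combinatorial and essentially already packaged in the preceding lemmas. Apply Lemma~\ref{lemma:34} to conclude that $A(\lambda)$ is a connected subset of $\Pi$ with $1 \le |A(\lambda)| \le 2$, where the lower bound uses non-integrality of $\lambda$. In the chain-shaped Dynkin diagram of type $A_n$, the connected subsets of cardinality at most two are exactly the singletons $\{\alpha_i\}$ and the adjacent pairs $\{\alpha_i,\alpha_{i+1}\}$. For the two-element case this immediately yields the $n-1$ middle diagrams. For the singleton case $A(\lambda)=\{\alpha_i\}$, Lemma~\ref{lemma:41} forces $\alpha_i$ to have exactly one neighbor in $\Pi$, and the only such simple roots in a chain diagram are the endpoints $\alpha_1$ and $\alpha_n$, yielding the first and last diagrams.

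For the reverse direction, I would realize $L(\lambda)$ via parabolic induction. Set $I = \Pi \setminus A(\lambda)$; then $\lambda$ is integrally dominant on $I$, so the simple $U_q(\mathfrak{l}_I)$-module $L^I(\lambda)$ is finite-dimensional (using Theorem~\ref{thm:integral} together with the quantum classification of finite-dimensional simples at transcendental $q$). Form the generalized Verma module $M^I(\lambda)=U_q \tensor_{U_q(\mathfrak{p}_I)} L^I(\lambda)$, where $\mathfrak{p}_I$ is the standard parabolic with Levi $\mathfrak{l}_I$, and let $L(\lambda)$ be its unique simple quotient. In the single-circle cases $I$ is of type $A_{n-1}$, and the integrality condition $\lambda(K_{\alpha_j})\in \pm q^{\setN}$ for $j\in I$ translates, via $\rho = \tfrac12\sum_{\beta\in \Phi^+}\beta$, to $q^\rho\lambda(K_{\alpha_j})\in \pm q^{\setZ_{>0}}$ for each $j\in I$; the Jantzen sum formula (Theorem~\ref{thm:Jantzen_filtration}) then furnishes enough submodules $M(s_{\alpha_j}.\lambda)\subset M(\lambda)$ to cut the weight multiplicities of the simple quotient down to a uniform bound. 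In the two-circle cases, $I$ is a disjoint union of two type-$A$ pieces, and the cuspidal direction along the rank-two subalgebra spanned by $A(\lambda)$ is handled by Lemma~\ref{lemma:8} applied to the internal $\mathfrak{sl}_3$, with the needed condition $q^\rho\lambda(K_{\alpha_i+\alpha_{i+1}})\in \pm q^{\setZ_{>0}}$ supplied by Lemma~\ref{lemma:39}.

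The main obstacle is making the multiplicity bound quantitative: one must control the interplay between the finite-dimensional Levi factor $L^I(\lambda)$ and the infinite-dimensional cuspidal direction along $A(\lambda)$. The cleanest route is induction on $n$, with base case $n=2$ given directly by Lemma~\ref{lemma:8}, peeling off the finite-dimensional piece at each step using the parabolic structure above. An alternative is to display $L(\lambda)$ inside a coherent family $\mathcal{EXT}(L_0)$ built from a model admissible cuspidal module $L_0$ (for instance, the $\mathfrak{sl}_2$-example of Section~\ref{sec:A_classification_sl2} tensored with a finite-dimensional Levi module) and invoke Proposition~\ref{prop:15}, which gives admissibility of every simple submodule of $\mathcal{EXT}(L_0)$ automatically. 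Either way, the combinatorial moves already established in Lemmas~\ref{lemma:33} and~\ref{lemma:41} permit sliding the circle(s) along the chain, so that it suffices to verify admissibility for a single representative in each equivalence class under the $s_\alpha.$-action.
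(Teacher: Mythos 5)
Your forward direction matches the paper exactly: Lemma~\ref{lemma:34} gives $A(\lambda)$ connected with $1\le |A(\lambda)|\le 2$, the adjacent-pair case yields the middle diagrams, and Lemma~\ref{lemma:41} pins the singleton case to an endpoint of the chain. Your reduction of the reverse direction, sliding the circled pair to one end via Lemmas~\ref{lemma:33} and~\ref{lemma:41}, is also what the paper does.

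Where you and the paper diverge is the final admissibility verification once the diagram is the single-circle-at-$\alpha_1$ type, and here your argument is vaguer than it needs to be. You offer three candidate routes (Jantzen sum formula cutting multiplicities, induction on $n$, embedding in a coherent family), none of which you develop, and the first one as you state it is slightly off: the Jantzen sum formula doesn't by itself bound multiplicities of the generalized Verma module $M^I(\lambda)$; one needs an argument about $M^I(\lambda)$ itself. The paper's argument is shorter and cleaner: in the reduced case $T_{L(\lambda)}=-\{\beta_1,\dots,\beta_n\}$ with $\beta_i=\alpha_1+\cdots+\alpha_i$, and these $n$ roots are linearly independent in $Q$. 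The support of $\mathcal{M}(N)=U_q\otimes_{U_q(\mathfrak{p})}N$ (with $N$ finite-dimensional, via \cite[Theorem~2.23]{DHP1}) lies in $q^{-\setN\beta_1}\cdots q^{-\setN\beta_n}\cdot\wt N$, and the linear independence of the $\beta_i$ forces every weight to occur with multiplicity at most $\dim N$. So $\mathcal{M}(N)$, and hence its simple quotient $L(\lambda)$, is admissible with no Jantzen filtration, induction, or coherent-family machinery needed. If you take your parabolic-induction setup and replace the three sketched routes with this single linear-independence observation, your proof coincides with the paper's.
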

\begin{proof}
  By the above results these are the only possibilites. To show that
  $L(\lambda)$ is admissible when the colored Dynkin diagram is of the
  above form use the fact that by Lemma~\ref{lemma:33} and
  Lemma~\ref{lemma:41} we can assume $\lambda$ has colored Dynkin
  diagram as follows:
  \begin{equation*}
    \xymatrix{ \stackrel{\alpha_1}{\circ} \ar@{-}[r] &
      \stackrel{\alpha_2}{\bullet} \ar@{-}[r] &
      \stackrel{\alpha_3}{\bullet} \ar@{.}[r] &
      \stackrel{\alpha_n}{\bullet} }.
  \end{equation*}
  Let $\beta_i=\alpha_1+\alpha_2+\dots + \alpha_i$, $i=1,\dots,n$. We
  see easily that $T_{L(\lambda)}=-\{\beta_1,\beta_2,\dots,\beta_n\}$
  and $F_L = \Phi^+ \cup \Phi_{\{\alpha_2,\dots,\alpha_n\}}$. Let
  $\mathfrak{l}$, $\mathfrak{u}$, $\mathfrak{p}$ etc. be defined as in
  Section~2 of~\cite{DHP1}. By~\cite[Theorem~2.23]{DHP1}
  $N:=L(\lambda)^{\mathfrak{u}}$ is a simple finite dimensional
  $U_q(\mathfrak{l})$-module and $L(\lambda)$ is the unique simple
  quotient of $\mathcal{M}(N) = U_q \tensor_{U_q(\mathfrak{p})}
  N$. Since the vectors $\beta_1,\dots,\beta_n$ are linearly
  independent $\mathcal{M}(N)$ is admissible. This implies that
  $L(\lambda)$ is admissible since it is a quotient of
  $\mathcal{M}(N)$.
\end{proof}

We can now make Corollary~\ref{cor:1} more specific in type A:
\begin{cor}
  \label{cor:2}
  Let $\mathfrak{g}=\mathfrak{sl}_{n+1}$, $n\geq 2$ with simple roots
  $\alpha_1,\dots,\alpha_n$ such that $(\alpha_i|\alpha_{i+1})=-1$,
  $i=1,\dots,n$. Let $\beta_j=\alpha_1+\cdots+\alpha_j$, $j=1,\dots,n$
  and $\Sigma=\{\beta_1,\dots,\beta_n\}$. Let
  $F_{\beta_j}=T_{s_1}\cdots T_{s_{j-1}}(F_{\alpha_{j}})$ and let
  $F_{\Sigma}=\{q^a F_{\beta_1}^{a_1}\cdots F_{\beta_n}^{a_n}|a_i\in
  \setN,a\in \setZ\}$ be the corresponding Ore subset. Then $\Sigma$
  is a set of commuting roots that is a basis of $Q$ with
  corresponding Ore subset $F_\Sigma$.
  
  Let $\beta_j'=\alpha_n+\cdots+\alpha_{n-j}$, $j=1,\dots,n$ and
  $\Sigma=\{\beta_1',\dots,\beta_n'\}$. Let
  $F'_{\beta_j'}=T_{s_n}\cdots T_{s_{n-j+1}}(F_{\alpha_{n-j}})$ and
  let $F_{\Sigma'}=\{q^a (F'_{\beta_1'})^{a_1}\cdots
  (F'_{\beta_n'})^{a_n}|a_i\in \setN,a\in \setZ\}$ be the
  corresponding Ore subset. Then $\Sigma'$ is a set of commuting roots
  that is a basis of $Q$ with corresponding Ore subset $F_{\Sigma'}$.

  Let $L$ be a simple torsion free module then one of the two
  following claims hold
  \begin{itemize}
  \item There exists a $\lambda\in X$ with $\lambda(K_{\alpha_1})\not
    \in \pm q^{\setN}$, $\lambda(K_{\alpha_i})\in \pm q^{\setN}$,
    $i=2,\dots,n$ and $\mathbf{b}\in (\setC^*)^n$ such that
    \begin{align*}
      L \iso \phi_{F_\Sigma,\mathbf{b}}.L(\lambda)_{F_\Sigma}.
    \end{align*}
  \item There exists a $\lambda\in X$ with $\lambda(K_{\alpha_n})\not
    \in \pm q^{\setN}$, $\lambda(K_{\alpha_i})\in \pm q^{\setN}$,
    $i=1,\dots,n-1$ and $\mathbf{b}\in (\setC^*)^n$ such that
    \begin{align*}
      L \iso \phi_{F_{\Sigma'},\mathbf{b}}.L(\lambda)_{F_{\Sigma'}}.
    \end{align*}
  \end{itemize}
\end{cor}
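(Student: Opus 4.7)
The plan is to combine Corollary~\ref{cor:1} with the EXT-equivalence moves provided by Lemma~\ref{lemma:30}, Lemma~\ref{lemma:33}, and Lemma~\ref{lemma:41}, together with the classification in Theorem~\ref{thm:Classification_of_adm_modules_simply_laced}, in order to reduce to a highest weight $\lambda$ with $A(\lambda)=\{\alpha_1\}$ or $A(\lambda)=\{\alpha_n\}$. First I would verify that $\Sigma=\{\beta_1,\dots,\beta_n\}$ is a set of commuting roots forming a $\setZ$-basis of $Q$: linear independence and spanning follow from $\alpha_j=\beta_j-\beta_{j-1}$ (setting $\beta_0:=0$), and the $q$-commutation $[F_{\beta_j},F_{\beta_i}]_q=0$ for $i<j$ follows from Theorem~\ref{thm:DP}, since the $F_{\beta_j}$ arise from the initial segment $s_1 s_2\cdots s_n$ of a reduced expression of $w_0$ and no nontrivial monomial in the intermediate root vectors $F_{\beta_{i+1}},\dots,F_{\beta_{j-1}}$ has weight $\beta_i+\beta_j$ (a quick check on the $\alpha_1$-coefficient rules this out). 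The analysis for $\Sigma'$ is symmetric.

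By Corollary~\ref{cor:1} there exist $\tilde\lambda\in X$ with $L(\tilde\lambda)$ infinite-dimensional admissible, a set of commuting roots $\tilde\Sigma$ that is a basis of $Q$ with $-\tilde\Sigma\subset T_{L(\tilde\lambda)}$, and $\tilde{\mathbf{b}}\in (\setC^*)^n$ such that $L\iso \phi_{F_{\tilde\Sigma},\tilde{\mathbf{b}}}.L(\tilde\lambda)_{F_{\tilde\Sigma}}$; in particular $L$ is a simple submodule of $\mathcal{EXT}(L(\tilde\lambda))$. The heart of the proof is to show $\mathcal{EXT}(L(\tilde\lambda))\iso \mathcal{EXT}(L(\lambda))$ for some $\lambda$ with $A(\lambda)=\{\alpha_1\}$ or $A(\lambda)=\{\alpha_n\}$.

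When $\tilde\lambda$ is non-integral, Theorem~\ref{thm:Classification_of_adm_modules_simply_laced} restricts the colored Dynkin diagram of $\tilde\lambda$ to a finite list, Lemma~\ref{lemma:34} gives $|A(\tilde\lambda)|\in\{1,2\}$, and Lemma~\ref{lemma:41}(1) further restricts the singleton case to $A\in\{\{\alpha_1\},\{\alpha_n\}\}$ (since only these simple roots in type $A_n$ have a unique neighbor). For $|A(\tilde\lambda)|=2$, say $A=\{\alpha_k,\alpha_{k+1}\}$, I would iterate Lemma~\ref{lemma:33} to shift the pair towards one end of the diagram and then apply Lemma~\ref{lemma:41}(2) to collapse it to a singleton. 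Each such step replaces $\tilde\lambda$ by $s_\alpha.\tilde\lambda$ for some $\alpha\in A(\tilde\lambda)$, and Lemma~\ref{lemma:30} together with Proposition~\ref{prop:15} guarantees that these moves preserve the isomorphism class of $\mathcal{EXT}$. When $\tilde\lambda$ is integral (so $A(\tilde\lambda)=\emptyset$) I would use Proposition~\ref{prop:23} to embed $\bigoplus_{t\in\setC^*/q^\setZ}\phi_{F_\beta,t}.L(\tilde\lambda)_{F_\beta}$ in $\mathcal{EXT}(L(\tilde\lambda))$ for a simple root $\beta$ with $-\beta\in T_{L(\tilde\lambda)}$; Proposition~\ref{prop:24} ensures the summand is simple for generic $t$, and for such $t$ the highest weight of the resulting simple summand is shifted out of $\pm q^\setZ$ in the $\beta$-direction, so applying Theorem~\ref{thm:EXT_contains_highest_weight} produces a non-integral highest weight representative in the same EXT-family and reduces to the previous case.

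Finally, with $A(\lambda)=\{\alpha_1\}$, Lemma~\ref{lemma:18} gives $-\Sigma\subset -D(\alpha_1)\subset T_{L(\lambda)}$ (noting $D(\alpha_1)=\Sigma$ in type $A_n$), so $\Sigma$ is a valid choice in the definition of $\mathcal{EXT}(L(\lambda))$. By construction
\[
\mathcal{EXT}(L(\lambda))\iso \left(\bigoplus_{t\in (\setC^*)^n/q^{\setZ^n}} \phi_{F_\Sigma,t}.L(\lambda)_{F_\Sigma}\right)^{ss},
\]
and since $L$ is simple it embeds into a single summand $\phi_{F_\Sigma,\mathbf{b}}.L(\lambda)_{F_\Sigma}$. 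Proposition~\ref{prop:15} together with Lemma~\ref{lemma:13} then forces the weight space dimensions to agree, yielding $L\iso \phi_{F_\Sigma,\mathbf{b}}.L(\lambda)_{F_\Sigma}$. The case $A(\lambda)=\{\alpha_n\}$ is handled symmetrically with $\Sigma'$ and gives the other alternative. The main obstacle will be the integral case in the reduction step: one has to verify that the generic-twist construction actually delivers a new highest weight vector whose weight lies outside $\pm q^\setN$ in at least one simple direction, which requires tracking precisely how the highest weight transforms under $\phi_{F_\beta,t}$ for generic $t$.
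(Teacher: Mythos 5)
Your treatment of the non-integral case tracks the paper closely: the paper likewise invokes Theorem~\ref{thm:Classification_of_adm_modules_simply_laced}, Lemma~\ref{lemma:33}, Lemma~\ref{lemma:30} and Proposition~\ref{prop:15} to push $A(\lambda)$ to one end of the diagram, and then Lemma~\ref{lemma:20} plus the argument from Corollary~\ref{cor:1} to pass from a statement about $\mathcal{EXT}$ to the explicit twist with the specific $\Sigma$. Your verification that $\Sigma$ is a commuting basis is fine in outline (though the $\alpha_1$-coefficient alone does not rule out a monomial in $F_{\beta_{i+1}},\dots,F_{\beta_{j-1}}$ of weight $\beta_i+\beta_j$; you need to look at a second coordinate, say the $\alpha_j$-coefficient).

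The gap is in the integral case, and it is twofold. First, the parenthetical ``(so $A(\tilde\lambda)=\emptyset$)'' is false: ``integral'' means $\tilde\lambda(K_\alpha)\in\pm q^{\setZ}$, whereas $A(\tilde\lambda)$ is defined via $\pm q^{\setN}$, so an integral weight can have $\alpha\in A(\tilde\lambda)$ whenever $\tilde\lambda(K_\alpha)\in\pm q^{\setZ_{<0}}$; in fact, since $L(\tilde\lambda)$ is assumed infinite dimensional, $A(\tilde\lambda)$ is \emph{always} nonempty. What fails for integral $\tilde\lambda$ is not that $A$ is empty, but that the colored-Dynkin machinery (Theorem~\ref{thm:Classification_of_adm_modules_simply_laced}, Lemma~\ref{lemma:33}, Lemma~\ref{lemma:41}) is stated only for non-integral weights, so it cannot be used to move $A(\tilde\lambda)$ to an end of the diagram. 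Second, the proposed workaround via Proposition~\ref{prop:23} and Proposition~\ref{prop:24} does not deliver what you need: for generic $t$ the module $\phi_{F_\beta,t}.L(\tilde\lambda)_{F_\beta}$ has $E_\beta$ acting injectively (that is precisely what makes $t$ generic), so there is no highest weight vector and no ``highest weight of the resulting simple summand'' to track. Reapplying Theorem~\ref{thm:EXT_contains_highest_weight} to the resulting torsion free module only tells you that \emph{some} simple highest weight module lives in the (unchanged) $\mathcal{EXT}$-class; it gives no control over which coset of $X/q^Q$ its highest weight lies in, so you have not escaped the integral situation. The paper closes this gap differently: it appeals to Theorem~\ref{thm:integral} to transfer the problem to the classical side and then cites Mathieu's explicit classification of integral admissible highest weight modules in type $A$ (\cite[Section~8]{Mathieu}, in particular Proposition~8.5) together with Lemma~\ref{lemma:30} to conclude that $\mathcal{EXT}(L(\lambda'))\iso\mathcal{EXT}(L(\lambda))$ with $A(\lambda)=\{\alpha_1\}$ or $\{\alpha_n\}$.
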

\begin{proof}
  By Theorem~\ref{thm:EXT_contains_highest_weight}
  $\mathcal{EXT}(L)\iso \mathcal{EXT}(L(\lambda'))$ for some
  $\lambda'\in X$. If $\lambda'$ is non-integral then by
  Theorem~\ref{thm:Classification_of_adm_modules_simply_laced},
  Lemma~\ref{lemma:33}, Lemma~\ref{lemma:30} and
  Proposition~\ref{prop:15} there exists a $\lambda$ such that
  $\lambda(K_{\alpha_1})\not \in \pm q^{\setN}$,
  $\lambda(K_{\alpha_i})\in\pm q^{\setN}$, $i=2,\dots,n$ and such that
  $\mathcal{EXT}(L(\lambda'))\iso \mathcal{EXT}(L(\lambda))$. By
  Lemma~\ref{lemma:20} we can choose $\Sigma$ as the commuting set of
  roots that is used in the definition of $\mathcal{EXT}(L(\lambda))$.

  If $\lambda'$ is integral we see by Theorem~\ref{thm:integral},
  Lemma~\ref{lemma:30}, Proposition~\ref{prop:15} and the
  classification in~\cite[Section~8]{Mathieu} that
  $\mathcal{EXT}(L(\lambda'))\iso \mathcal{EXT}(L(\lambda))$ for a
  $\lambda$ such that $A(\lambda)=\{\alpha_1\}$ or
  $A(\lambda)=\{\alpha_n\}$
  (cf.~e.g.~\cite[Proposition~8.5]{Mathieu}).

  Now the result follows just like in the proof of
  Corollary~\ref{cor:1}.
\end{proof}

In Section~\ref{sec:type-a_n-calc} we determine all $\mathbf{b}\in
(\setC^*)^n$ such that
$\phi_{F_\Sigma,\mathbf{b}}.L(\lambda)_{F_\Sigma}$ is torsion free
with $\Sigma$ as above in Corollary~\ref{cor:2} and $\lambda$ such
that $\lambda(K_{\alpha_1})\not \in \pm q^{\setN}$,
$\lambda(K_{\alpha_i})\in\pm q^{\setN}$, $i=2,\dots,n$. By symmetry of
the Dynkin diagram and Corollary~\ref{cor:2} this classifies all
simple torsion free modules.

\subsection{Quantum Shale-Weil representation}
\label{sec:quantum-shale-weyl}

In this section we assume $\mathfrak{g}$ is of type $C_n$. Let
$\alpha_1,\dots,\alpha_n$ be the simple roots such that $\alpha_i$ is
connected to $\alpha_{i+1}$ and $\alpha_1$ is long. We will describe a
specific admissible module $V$ and show that $V=L(\omega^+)\oplus
L(\omega^-)$ for some weights $\omega^{\pm}$ with the purpose of
classifying the admissible simple highest weight modules, see
Theorem~\ref{thm:existence_adm_mod_type_C}. Let
$V=\setC[X_1,\dots,X_n]$. We describe an action of the simple root
vectors on $V$: For $i\in \{2,\dots,n\}$
\begin{align*}
  E_{\alpha_1} X_1^{a_1}X_2^{a_2}\cdots X_n^{a_n} =&
  -\frac{[a_1][a_1-1]}{[2]} X_1^{a_1-2}X_2^{a_2}\cdots X_n^{a_n}
  \\
  F_{\alpha_1} X_1^{a_1}X_2^{a_2}\cdots X_n^{a_n} =& \frac{1}{[2]}
  X_1^{a_1+2}X_2^{a_2}\cdots X_n^{a_n}
  \\
  E_{\alpha_i} X_1^{a_1}\cdots X_n^{a_n} =& [a_i] X_1^{a_1}\cdots
  X_{i-1}^{a_{i-1}+1}X_i^{a_i-1}\cdots X_n^{a_n}
  \\
  F_{\alpha_i} X_1^{a_1}\cdots X_n^{a_n} =& [a_{i-1}] X_1^{a_1}\cdots
  X_{i-1}^{a_{i-1}-1}X_i^{a_i+1}\cdots X_n^{a_n}
  \\
  K_{\alpha_1}^{\pm 1} X_1^{a_1}X_2^{a_2}\cdots X_n^{a_n} =& q^{\mp
    (2a_1+1)} X_1^{a_1}X_2^{a_2}\cdots X_n^{a_n}
  \\
  K_{\alpha_i}^{\pm 1} X_1^{a_1}X_2^{a_2}\cdots X_n^{a_n} =& q^{\pm
    (a_{i-1}-a_i)}X_1^{a_1}X_2^{a_2}\cdots X_n^{a_n}.
\end{align*}

We check that this is an action of $U_q$ by checking the generating
relations. These are tedious and kind of long calculations but just
direct calculations. We refer to the generating relations as (R1) to
(R6) like in~\cite[Section~4.3]{Jantzen}.

(R1) is clear. (R2) and (R3): Let $j\in\{1,\dots,n\}$
\begin{align*}
  K_{\alpha_j} E_{\alpha_1} X_1^{a_1}\cdots X_n^{a_n} =&
  \begin{cases}
    -q^{-2a_1+3} \frac{[a_1][a_1-1]}{[2]} X_1^{a_1-2}X_2^{a_2}\cdots
    X_n^{a_n} &\text{ if } j=1
    \\
    -q^{a_{1}-2-a_2} \frac{[a_1][a_1-1]}{[2]}
    X_1^{a_1-2}X_2^{a_2}\cdots X_n^{a_n} &\text{ if } j=2
    \\
    -q^{a_{j-1}-a_j} \frac{[a_1][a_1-1]}{[2]}
    X_1^{a_1-2}X_2^{a_2}\cdots X_n^{a_n} &\text{ if } j>2
  \end{cases}
  \\
  =& q^{(\alpha_1|\alpha_j)}E_{\alpha_1} K_{\alpha_j}
  X_1^{a_1}X_2^{a_2}\cdots X_n^{a_n}.
\end{align*}
Similar for $K_{\alpha_j}F_{\alpha_1}$. For $i\in\{2,\dots,n\}$
\begin{align*}
  K_{\alpha_j} E_{\alpha_i} X_1^{a_1}\cdots X_n^{a_n} =&
  \begin{cases}
    q^{a_{j-1}-a_j}[a_i] X_1^{a_1}\cdots
    X_{i-1}^{a_{i-1}+1}X_i^{a_i-1}\cdots X_n^{a_n} &\text{ if }
    |j-i|>1
    \\
    q^{a_{j-1}-a_j-1}[a_i] X_1^{a_1}\cdots
    X_{i-1}^{a_{i-1}+1}X_i^{a_i-1}\cdots X_n^{a_n} &\text{ if } j=i-1
    \\
    q^{a_{j-1}+1-a_j+1}[a_i] X_1^{a_1}\cdots
    X_{i-1}^{a_{i-1}+1}X_i^{a_i-1}\cdots X_n^{a_n} &\text{ if } j=i
    \\
    q^{a_{j-1}-1-a_j}[a_i] X_1^{a_1}\cdots
    X_{i-1}^{a_{i-1}+1}X_i^{a_i-1}\cdots X_n^{a_n} &\text{ if } j=i+1
  \end{cases}
  \\
  =& q^{(\alpha_i|\alpha_j)}E_{\alpha_1} K_{\alpha_j}
  X_1^{a_1}X_2^{a_2}\cdots X_n^{a_n}.
\end{align*}
Similarly for $K_{\alpha_j}F_{\alpha_i}$.

(R4):
\begin{align*}
  [E_{\alpha_1},F_{\alpha_1}] X_1^{a_1}X_2^{a_2}\cdots X_n^{a_n} =&
  E_{\alpha_1} \frac{1}{[2]} X_1^{a_1+2}X_2^{a_2}\cdots X_n^{a_n} +
  F_{\alpha_1} \frac{[a_1][a_1-1]}{[2]} X_1^{a_1-2}X_2^{a_2}\cdots
  X_n^{a_n}
  \\
  =& \left(-\frac{[a_1+2][a_1+1]}{[2][2]} +
    \frac{[a_1][a_1-1]}{[2][2]}\right) X_1^{a_1}\cdots X_n^{a_n}
  \\
  =& \frac{q^{-2a_1-1}-q^{2a_1+1}}{q^2-q^{-2}} X_1^{a_1}\cdots
  X_n^{a_n}
  \\
  =& \frac{K_{\alpha_1}-K_{\alpha_1}\inv}{q^2-q^{-2}} X_1^{a_1}\cdots
  X_n^{a_n}.
\end{align*}

\begin{align*}
  [E_{\alpha_1},F_{\alpha_2}] X_1^{a_1}\cdots X_n^{a_n} =&
  [a_1]E_{\alpha_1} X_1^{a_1-1}X_2^{a_2+1}\cdots X_n^{a_n} +
  \frac{[a_1][a_1-1]}{[2]} F_{\alpha_2} X_1^{a_1-2}X_2^{a_2}\cdots
  X_n^{a_n}
  \\
  =& -\frac{[a_1][a_1-1][a_1-2]}{[2]} X_1^{a_1-3}X_2^{a_2+1}\cdots
  X_n^{a_n}
  \\
  &+ \frac{[a_1][a_1-1][a_1-2]}{[2]} X_1^{a_1-3}X_2^{a_2+1}\cdots
  X_n^{a_n}
  \\
  =&0.
\end{align*}

For $i>2$ clearly $[E_{\alpha_1},F_{\alpha_i}]X_1^{a_1}\cdots
X_n^{a_n}=0$. For $i,j\in\{2,\dots,n\}$: If $|i-j|>1$ clearly
$[E_{\alpha_i},F_{\alpha_j}]X_1^{a_1}\cdots X_n^{a_n}=0$.
\begin{align*}
  [E_{\alpha_i},F_{\alpha_{i+1}}] X_1^{a_n}\cdots X_n^{a_n} =&
  [a_{i}]E_{\alpha_i} X_1^{a_1}\cdots X_{i}^{a_{i}-1}X_{i+1}^{a_{i+1}
    +1}\cdots X_n^{a_n}
  \\
  &- [a_i]F_{\alpha_{i+1}}X_1^{a_1}\cdots
  X_{i-1}^{a_{i-1}+1}X_i^{a_i-1}\cdots X_n^{a_n}
  \\
  =& [a_{i}][a_i-1] X_1^{a_1}\cdots
  X_{i-1}^{a_{i-1}+1}X_{i}^{a_{i}-2}X_{i+1}^{a_{i+1} +1}\cdots
  X_n^{a_n}
  \\
  &- [a_{i}][a_i-1]X_1^{a_1}\cdots
  X_{i-1}^{a_{i-1}+1}X_i^{a_i-2}X_{i+1}^{a_{i+1}+1}\cdots X_n^{a_n}
  \\
  =& 0.
\end{align*}

\begin{align*}
  [E_{\alpha_2},F_{\alpha_1}] X_1^{a_1}\cdots X_n^{a_n} =&
  E_{\alpha_2} \frac{1}{[2]} X_1^{a_1+2}X_2^{a_2}\cdots X_n^{a_n} -
  [a_2]F_{\alpha_1} X_1^{a_1+1}X_2^{a_2-1}\cdots X_n^{a_n}
  \\
  =& \frac{[a_2]}{[2]} X_1^{a_1+3}X_2^{a_2-1}\cdots X_n^{a_n} -
  \frac{[a_2]}{[2]}X_1^{a_1+3}X_2^{a_2-1}\cdots X_n^{a_n}
  \\
  =&0.
\end{align*}

For $i>2$:
\begin{align*}
  [E_{\alpha_i},F_{\alpha_{i-1}}] X_1^{a_n}\cdots X_n^{a_n} =&
  [a_{i-2}]E_{\alpha_i} X_1^{a_1}\cdots
  X_{i-2}^{a_{i-2}-1}X_{i-1}^{a_{i-1} +1}\cdots X_n^{a_n}
  \\
  &- [a_i]F_{\alpha_{i-1}}X_1^{a_1}\cdots
  X_{i-1}^{a_{i-1}+1}X_i^{a_i-1}\cdots X_n^{a_n}
  \\
  =&[a_{i-2}][a_i] X_1^{a_1}\cdots X_{i-2}^{a_{i-2}-1}X_{i-1}^{a_{i-1}
    +2}X_i^{a_i-1}\cdots X_n^{a_n}
  \\
  &- [a_i][a_{i-2}]F_{\alpha_{i-1}}X_1^{a_1}\cdots
  X_{i-2}^{a_{i-2}-1}X_{i-1}^{a_{i-1}+2}X_i^{a_i-1}\cdots X_n^{a_n}
  \\
  =& 0.
\end{align*}

For $i>1$:
\begin{align*}
  [E_{\alpha_i},F_{\alpha_i}]X_1^{a_1}\cdots X_n^{a_n} =&
  [a_{i-1}]E_{\alpha_i}X_1^{a_1}\cdots
  X_{i-1}^{a_{i-1}+1}X_i^{a_i-1}\cdots X_n^{a_n}
  \\
  &- [a_i]F_{\alpha_i}X_1^{a_1}\cdots
  X_{i-1}^{a_{i-1}-1}X_i^{a_{i}+1}\cdots X_n^{a_n}
  \\
  =& ([a_{i-1}][a_i-1]-[a_i][a_{i-1}-1])X_1^{a_1}\cdots
  X_{i-1}^{a_{i-1}}X_i^{a_i}\cdots X_n^{a_n}
  \\
  =& [a_{i-1}-a_i] X_1^{a_1}\cdots X_n^{a_n}
  \\
  =& \frac{K_{\alpha_i}-K_{\alpha_i}\inv}{q-q\inv}X_1^{a_1}\cdots
  X_n^{a_n}.
\end{align*}

Finally we have the relations (R5) and (R6): Clearly
$[E_{\alpha_i},E_{\alpha_j}]X_1^{a_1}\cdots X_n^{a_n}=0$ and
$[F_{\alpha_i},F_{\alpha_j}]X_1^{a_1}\cdots X_n^{a_n}=0$ when
$|j-i|>1$.

\begin{align*}
  (E_{\alpha_2}^3 E_{\alpha_1}& - [3]
  E_{\alpha_2}^2E_{\alpha_1}E_{\alpha_2} +
  [3]E_{\alpha_2}E_{\alpha_1}E_{\alpha_2}^2 -
  E_{\alpha_1}E_{\alpha_2}^3)X_1^{a_1}\cdots X_n^{a_n}
  \\
  =& \frac{1}{[2]}\Big(-[a_1][a_1-1][a_2][a_2-1][a_2-2]
  \\
  &+[3][a_1+1][a_1][a_2][a_2-1][a_2-2]
  \\
  &-[3][a_1+2][a_1+1][a_2][a_2-1][a_2-2]
  \\
  &+[a_1+3][a_1+2][a_2][a_2-1][a_2-2]\Big)X_1^{a_1+1}X_2^{a_2-3}\cdots
  X_n^{a_n}
  \\
  =& \frac{[a_2][a_2-1][a_2-2]}{[2]}\Big(
  -[a_1][a_1-1]+[3][a_1+1][a_1]
  \\
  &-[3][a_1+2][a_1+1]+[a_1+3][a_1+2]\Big) X_1^{a_1+1}X_2^{a_2-3}\cdots
  X_n^{a_n}
  \\
  =&0.
\end{align*}

\begin{align*}
  (E_{\alpha_1}^2 E_{\alpha_2}-[2]_{\alpha_1}
  &E_{\alpha_1}E_{\alpha_2}E_{\alpha_1} +
  E_{\alpha_2}E_{\alpha_1}^2)X_1^{a_1}\cdots X_n^{a_n}
  \\
  =& \frac{[a_2]}{[2][2]}\big( [a_1+1][a_1][a_1-1][a_1-2]
  \\
  &- [2]_{\alpha_1} [a_1][a_1-1][a_1-1][a_1-2]
  \\
  &+ [a_1][a_1-1][a_1-2][a_1-3]\Big) X_1^{a_1+3}X_2^{a_2-1}\cdots
  X_n^{a_n}
  \\
  =& \frac{[a_2][a_1][a_1-1][a_1-2]}{[2][2]} \Big(
  [a_1+1]-[2]_{\alpha_1}[a_1-1]
  \\
  &+[a_1-3]\Big) X_1^{a_1+3}X_2^{a_2-1}\cdots X_n^{a_n}
  \\
  =& 0.
\end{align*}

For $i>1$:
\begin{align*}
  (E_{\alpha_i}^2
  E_{\alpha_{i+1}}-&[2]E_{\alpha_i}E_{\alpha_{i+1}}E_{\alpha_i}+E_{\alpha_{i+1}}E_{\alpha_i}^2)X_1^{a_1}\cdots
  X_n^{a_n}
  \\
  =& [a_{i+1}][a_i]([a_i+1]-[2][a_i]+[a_i-1])X_1^{a_1}\cdots
  X_{i-1}^{a_{i-1}+2}X_i^{a_i-1}X_{i+1}^{a_{i+1}-1}\cdots X_n^{a_n}
  \\
  =& 0.
\end{align*}

\begin{align*}
  (E_{\alpha_{i+1}}^2
  E_{\alpha_{i}}-&[2]E_{\alpha_{i+1}}E_{\alpha_{i}}E_{\alpha_{i+1}}+E_{\alpha_{i}}E_{\alpha_{i+1}}^2)X_1^{a_1}\cdots
  X_n^{a_n}
  \\
  =& [a_{i+1}][a_{i+1}-1]([a_i]-[2][a_i+1]+[a_i+2])X_1^{a_1}\cdots
  X_{i-1}^{a_{i-1}+1}X_i^{a_i+1}X_{i+1}^{a_{i+1}-2}\cdots X_n^{a_n}
  \\
  =& 0.
\end{align*}

\begin{align*}
  (F_{\alpha_1}^2 F_{\alpha_2}-[2]_{\alpha_1}
  &F_{\alpha_1}F_{\alpha_2}F_{\alpha_1} +
  F_{\alpha_2}F_{\alpha_1}^2)X_1^{a_1}\cdots X_n^{a_n}
  \\
  =&
  \frac{1}{[2][2]}([a_1]-[2]_{\alpha_1}[a_1+2]+[a+4])X_1^{a_1+3}X_2^{a_2+1}\cdots
  X_n^{a_n}
  \\
  =&0.
\end{align*}

\begin{align*}
  (F_{\alpha_2}^3 F_{\alpha_1}& - [3]
  F_{\alpha_2}^2F_{\alpha_1}F_{\alpha_2} +
  [3]F_{\alpha_2}F_{\alpha_1}F_{\alpha_2}^2 -
  F_{\alpha_1}F_{\alpha_2}^3)X_1^{a_1}\cdots X_n^{a_n}
  \\
  =& \frac{1}{[2]}\Big( [a_1+2][a_1+1][a_1] - [3][a_1][a_1+1][a_1]
  \\
  &+ [3][a_1][a_1-1][a_1]
  \\
  &-[a_1][a_1-1][a_1-2] \Big) X_1^{a_1-1}X_2^{a_2+3}\cdots X_n^{a_n}
  \\
  =& \frac{[a_1]}{[2]}\Big( [a_1+2][a_1+1] - [3][a_1+1][a_1]
  \\
  &+ [3][a_1][a_1-1]
  \\
  &-[a_1-1][a_1-2] \Big) X_1^{a_1-1}X_2^{a_2+3}\cdots X_n^{a_n}
  \\
  =& 0.
\end{align*}

For $i>1$:
\begin{align*}
  (F_{\alpha_i}^2
  F_{\alpha_{i+1}}-&[2]F_{\alpha_i}F_{\alpha_{i+1}}F_{\alpha_i}+F_{\alpha_{i+1}}F_{\alpha_i}^2)X_1^{a_1}\cdots
  X_n^{a_n}
  \\
  =& [a_{i-1}][a_{i-1}-1]([a_i]-[2][a_i+1]+[a_i+2])X_1^{a_1}\cdots
  X_{i-1}^{a_{i-1}-2}X_i^{a_i+1}X_{i+1}^{a_{i+1}+1}\cdots X_n^{a_n}
  \\
  =& 0.
\end{align*}

\begin{align*}
  (F_{\alpha_{i+1}}^2
  F_{\alpha_{i}}-&[2]F_{\alpha_{i+1}}F_{\alpha_{i}}F_{\alpha_{i+1}}+F_{\alpha_{i}}F_{\alpha_{i+1}}^2)X_1^{a_1}\cdots
  X_n^{a_n}
  \\
  =& [a_{i-1}][a_{i}]([a_i+1]-[2][a_i]+[a_i-1])X_1^{a_1}\cdots
  X_{i-1}^{a_{i-1}-1}X_i^{a_i-1}X_{i+1}^{a_{i+1}+2}\cdots X_n^{a_n}
  \\
  =& 0.
\end{align*}

So we have shown that $V$ is a $U_q(\mathfrak{g})$-module. Note that
$V$ is admissible of degree $1$ and $V=V^{even}\oplus V^{odd}$ where
$V^{even}$ are even degree polynomials and $V^{odd}$ are odd degree
polynomials. Furthermore we see that $V^{even}=L(\omega^+)$ and
$V^{odd}=L(\omega^-)$ where $\omega^{\pm}$ are the weights defined by
$\omega^+(K_{\alpha_1})=q\inv$, $\omega^+(K_{\alpha_i})=1$, $i>1$ and
$\omega^-(K_{\alpha_1})=q^{-3}$, $\omega^-(K_{\alpha_2})=q\inv$,
$\omega^-(K_{\alpha_i})=1$, $i>2$. $V^{even}$ is generated by $1$ and
$V^{odd}$ is generated by $X_1$. We will use the fact that
$L(\omega^+)$ is admissible in
Theorem~\ref{thm:existence_adm_mod_type_C} in the next section.

\subsection{Type B, C, F}
\label{sec:class-admiss-modul-1}
In this section we classify the simple highest weight admissible
modules when $\mathfrak{g}$ is of type $B$, $C$ or $F$. Remember that
we have assumed that $q$ is transcendental.

\begin{thm}
  \label{thm:clas_of_adm_modules_type_B_C_F}
  Let $\mathfrak{g}$ be a simple Lie algebra not of type
  $G_2$. Suppose there exists an infinite dimensional admissible
  simple $U_q(\mathfrak{g})$-module. Then $\mathfrak{g}$ is of type
  $A$ or $C$.
\end{thm}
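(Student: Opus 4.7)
The plan is to extend the strategy of Theorem~\ref{thm:simply_laced_exists_only_type_A} to the non-simply-laced types $B_n$ ($n\geq 3$) and $F_4$, using Lemma~\ref{lemma:14} to handle the double bond. By Theorem~\ref{thm:EXT_contains_highest_weight} it will suffice to exclude the existence of infinite-dimensional admissible simple highest weight modules $L(\lambda)$. For integral $\lambda$, I would invoke Theorem~\ref{thm:integral} (valid since $q$ is transcendental), which identifies $\ch L(\lambda)$ with the character of the classical highest weight $\mathfrak{g}$-module $L_{\setC}(\mu)$ where $\lambda = q^\mu$; admissibility transfers, and the classical classification in~\cite{Mathieu} rules out infinite-dimensional admissible simple highest weight modules in types $B,D,E,F$ with integral weight. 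It remains to exclude non-integral $\lambda$ in types $B_n$ ($n\geq 3$) and $F_4$.

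The central new input will be the rank-2 restriction across a double bond. Given adjacent simple roots $\alpha,\alpha'$ connected by a double bond, the subalgebra $U'\subset U_q$ generated by the corresponding root vectors is of type $B_2=C_2$, and $U'\cdot v_\lambda$ yields an admissible highest weight $U'$-module whose simple quotient is admissible. Invoking Lemma~\ref{lemma:14} then provides two constraints on $A(\lambda)$: \emph{(i)} both roots of a double bond cannot simultaneously lie in $A(\lambda)$, since in a non-integral admissible $B_2$-module only the long root belongs to $A$; and \emph{(ii)} the short root of a double bond cannot lie in $A(\lambda)$ while the long root lies outside $A(\lambda)$, since Lemma~\ref{lemma:14} forbids the short root being in $A$ of a non-integral admissible $B_2$-module, while a finite-dimensional or integral rank-2 restriction is incompatible with any root being in $A$.

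With these constraints in hand I would proceed as in the simply-laced proof. Assuming $\lambda$ non-integral and $L(\lambda)$ admissible, Lemma~\ref{lemma:34} gives $|A(\lambda)|\in\{1,2\}$ connected, and constraint (ii) together with Lemma~\ref{lemma:41} forces $|A(\lambda)|=2$ with the two open nodes adjacent via a single bond (by (i)). Lemma~\ref{lemma:33} will then allow me to slide $\circ-\circ$ along the chain of single bonds toward the double bond: in $B_n$ this lands at $\{\alpha_{n-2},\alpha_{n-1}\}$; in $F_4$ the possibilities are $\{\alpha_1,\alpha_2\}$ or $\{\alpha_3,\alpha_4\}$, and the latter is immediately excluded by (ii) applied at the double-bond pair $\{\alpha_2,\alpha_3\}$.

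Finally, let $\beta$ denote the long root of the double bond that is inside $A(\lambda)$ (so $\beta=\alpha_{n-1}$ in $B_n$ and $\beta=\alpha_2$ in $F_4$), let $\alpha''$ be the other single-bond neighbor of $\beta$ inside $A(\lambda)$, and let $\beta'$ be the short neighbor of $\beta$ across the double bond. Proposition~\ref{prop:9} will give $L(s_\beta.\lambda)$ admissible. A direct computation of $(s_\beta.\lambda)(K_\gamma)$ on each simple root $\gamma$, using $\lambda(K_\beta)\in \pm q^{1+2\setZ}$ from Lemma~\ref{lemma:14} and $q^\rho\lambda(K_{\alpha''+\beta})\in \pm q_{\alpha''+\beta}^{\setZ_{>0}}$ from Lemma~\ref{lemma:39}, should show $A(s_\beta.\lambda)\subseteq\{\beta,\beta'\}$ with $\beta\in A(s_\beta.\lambda)$. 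If $\beta'\in A(s_\beta.\lambda)$ then $A(s_\beta.\lambda)=\{\beta,\beta'\}$ spans the double bond, violating (i); otherwise $A(s_\beta.\lambda)=\{\beta\}$, but $\beta$ has two neighbors in the Dynkin diagram, contradicting Lemma~\ref{lemma:41}. The main obstacle will be the careful $q$-parity bookkeeping in this reflection calculation, where the odd-integer constraint $\lambda(K_\beta)\in \pm q^{1+2\setZ}$ must combine with the $A_2$-type constraint from Lemma~\ref{lemma:39} to force $\alpha''\notin A(s_\beta.\lambda)$ and to pin down the possibilities for $\beta'$.
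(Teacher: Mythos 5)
Your reduction to $A(\lambda) = \{\beta,\alpha''\}$ and the computation of $A(s_\beta.\lambda)$ via Lemmas~\ref{lemma:14} and~\ref{lemma:39} are correct, but the final step has a genuine gap. You derive $A(s_\beta.\lambda) = \{\beta\}$ and declare this a contradiction of Lemma~\ref{lemma:41} since $\beta$ is not a leaf. However, Lemma~\ref{lemma:41} is proven in Section~\ref{sec:class-admiss-modul} via the ``easily seen'' claim that every neighbor $\alpha'$ of $\alpha$ lies in $A(s_\alpha.\lambda)$. That claim is verified only when the neighbor has the same length; when $\alpha = \beta$ is long and the neighbor $\beta'$ is short across a double bond, it is \emph{false}: the Lemma~\ref{lemma:14} constraint $\lambda(K_{\beta'+\beta}) \in \pm q^{\setZ_{\geq -2}}$ forces $s_\beta.\lambda(K_{\beta'}) = q^2\lambda(K_{\beta'+\beta}) \in \pm q^{\setN}$, hence $\beta' \notin A(s_\beta.\lambda)$ --- as you yourself computed. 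So Lemma~\ref{lemma:41}'s argument yields no contradiction from $A(s_\beta.\lambda) = \{\beta\}$ in this configuration; reflecting again just returns $A(\lambda) = \{\beta,\alpha''\}$, consistently. In effect, the validity of Lemma~\ref{lemma:41} for a long root at a double bond is \emph{equivalent} to the nonexistence statement being proved, so invoking it here is circular.

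The paper closes the case with an independent, module-theoretic argument instead. After the same reduction to $A(\lambda) = \{\alpha_2,\alpha_3\}$, it restricts to the rank-3 subalgebra $U_q(\mathfrak{s})$ of type $B_3$ and compares the two admissible $U_q(\mathfrak{s})$-modules $L(\lambda,\mathfrak{s})$ and ${^{s_{\alpha_2}}}L(s_{\alpha_2}.\lambda,\mathfrak{s})$, both of which are subquotients of $L(\lambda,\mathfrak{s})_{F_{\alpha_2}}$. It exhibits $\{-\alpha_1-\alpha_2,\,-\alpha_3,\,-2\alpha_1-\alpha_2\}$ in both torsion sets $T$, so the cones $C(\cdot)$ intersect in a generating set for $Q_\mathfrak{s}$; hence the essential supports meet, and by Lemma~\ref{lemma:13} together with Theorem~\ref{thm:Lemire} the two modules are isomorphic, which is impossible on weight grounds. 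This is precisely the mechanism of Lemma~\ref{lemma:34}'s proof, applied at the level of the $B_3$ restriction, and it is the ingredient your proposal lacks. Your combinatorial bookkeeping of $A(s_\beta.\lambda)$ is sound and agrees with the reduction step of the paper's proof; what is missing is a non-circular device (the essential-support/torsion-set comparison) to turn $A(\lambda) = \{\alpha_2,\alpha_3\}$ into an outright contradiction.
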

\begin{proof}
  If $\mathfrak{g}$ is simply laced then
  Theorem~\ref{thm:Classification_of_adm_modules_simply_laced} gives
  that $\mathfrak{g}$ is of type $A$.  So assume $\mathfrak{g}$ is not
  of simply laced type. Theorem~\ref{thm:integral} and the
  classification in the classical case tells us that no admissible
  infinite dimensional simple highest weight modules exists with
  integral weights when $\mathfrak{g}$ is not simply laced
  (cf.~\cite[Lemma~9.1]{Mathieu}).
  
  We have assumed that $\mathfrak{g}$ is not of type $G_2$ so the
  remaining non-simply laced types are $B$, $C$ or $F$. We will show
  that the Dynkin diagram of $\mathfrak{g}$ can't contain the
  subdiagram
  \begin{equation*}
    \xymatrix{ \stackrel{\alpha_1}{\bullet} \ar@{<=}[r] & \stackrel{\alpha_2}{\bullet} \ar@{-}[r] & \stackrel{\alpha_3}{\bullet}}.
  \end{equation*}

  Assume the Dynkin diagram contains the above as a subdiagram. If
  there exists a simple admissible infinite dimensional module $L$
  then there exists a non-integral $\lambda\in X$ such that
  $L(\lambda)$ is infinite dimensional and admissible
  (Theorem~\ref{thm:EXT_contains_highest_weight}). Let $\lambda\in X$
  be a non-integral weight such that $L(\lambda)$ is admissible. Then
  by Lemma~\ref{lemma:14}, $q^\rho\lambda(K_{\alpha_1}) \in \pm
  q_{\alpha_1}^{\setZ}= \pm q^{\setZ}$. By Lemma~\ref{lemma:33} and
  Lemma~\ref{lemma:41} we can assume without loss of generality that
  the colored Dynkin diagram of $\lambda$ is of the form
  \begin{equation*}
    \xymatrix{ \stackrel{\alpha_1}{\bullet} \ar@{<=}[r] & \stackrel{\alpha_2}{\circ} \ar@{-}[r] & \stackrel{\alpha_3}{\circ}}.
  \end{equation*}
  Let $\mathfrak{s}$ be the simple rank $3$ Lie algebra of type
  $B_3$. Let $U$ be the subalgebra of $U_q$ generated by
  $E_{\alpha_i},F_{\alpha_i},K_{\alpha_i}^{\pm 1}$, $i=1,2,3$. Then
  $U\iso U_q(\mathfrak{s})$. Let $Q_{\mathfrak{s}}:= \setZ
  \{\alpha_1,\alpha_2,\alpha_3\}\subset Q$. Let $v_\lambda$ be a
  highest weight vector of $L(\lambda)$. Then $Uv_\lambda$ contains
  the simple highest weight $U_q(\mathfrak{s})$-module
  $L(\lambda,\mathfrak{s})$ of highest weight $\lambda$ (restricted to
  $U_q^0(\mathfrak{s})$) as a subquotient. Since $L(\lambda)$ is
  admissible so is $L(\lambda,\mathfrak{s})$.
  
  Like in the proof of Lemma~\ref{lemma:34} we get a contradiction if
  we can show that $T_{L(\lambda,\mathfrak{s})}\cap
  T_{{^{s_{\alpha_2}}}L(s_{\alpha_2}.\lambda,\mathfrak{s})}$ generates
  $Q_{\mathfrak{s}}$. It is easily seen that
  $\{-\alpha_1-\alpha_2,-\alpha_3,-2\alpha_1-\alpha_2\} \subset
  T_{L(\lambda,\mathfrak{s})}\cap
  T_{{^{s_{\alpha_2}}}L(s_{\alpha_2}.\lambda,\mathfrak{s})}$, so
  $T_{L(\lambda,\mathfrak{s})}\cap
  T_{{^{s_{\alpha_2}}}L(s_{\alpha_2}.\lambda,\mathfrak{s})}$ generates
  $Q_{\mathfrak{s}}$. So $C(L(\lambda,\mathfrak{s})) \cap
  C({^{s_{\alpha_2}}}L(s_{\alpha_2}.\lambda,\mathfrak{s}))$ generates
  $Q_{\mathfrak{s}}$. Therefore $C(L(\lambda,\mathfrak{s}))-
  C({^{s_{\alpha_2}}}L(s_{\alpha_2}.\lambda,\mathfrak{s}))=Q_{\mathfrak{s}}$.
  The weights of $L(\lambda,\mathfrak{s})$ and
  ${^{s_{\alpha_2}}}L(s_{\alpha_2}.\lambda,\mathfrak{s})$ are
  contained in $q^{Q_{\mathfrak{s}}} \lambda$ so a weight in the
  essential support of $L(\lambda,\mathfrak{s})$
  (resp. ${^{s_{\alpha_2}}}L(s_{\alpha_2}.\lambda,\mathfrak{s})$) is
  of the form $q^{\mu_1}\lambda$ (resp. $q^{\mu_2}\lambda$) for some
  $\mu_1,\mu_2 \in Q_{\mathfrak{s}}$. By the above
  $q^{C(L(\lambda,\mathfrak{s}))+\mu_1}\lambda \cap
  q^{C({^{s_{\alpha_2}}}L(s_{\alpha_2}.\lambda,\mathfrak{s}))+\mu_2}\lambda\neq
  \emptyset$. Since $q^{C(L(\lambda,\mathfrak{s}))+\mu_1}\lambda
  \subset \Suppess(L(\lambda))$ and
  $q^{C({^{s_{\alpha_2}}}L(s_{\alpha_2}.\lambda,\mathfrak{s}))+\mu_2}\lambda
  \subset
  \Suppess({^{s_{\alpha_2}}}L(s_{\alpha_2}.\lambda,\mathfrak{s}))$ we
  have proved that
  $\Suppess({^{s_{\alpha_2}}}L(s_{\alpha_2}.\lambda,\mathfrak{s}))\cap
  \Suppess(L(\lambda,\mathfrak{s}))\neq \emptyset$. By
  Proposition~\ref{prop:9} $L(\lambda,\mathfrak{s})$ and
  ${^{s_{\alpha_2}}}L(s_{\alpha_2}.\lambda,\mathfrak{s})$ are
  subquotients of $L(\lambda,\mathfrak{s})_{F_{\alpha_2}}$. Let
  $\nu\in
  \Suppess({^{s_{\alpha_2}}}L(s_{\alpha_2}.\lambda,\mathfrak{s}))\cap
  \Suppess(L(\lambda,\mathfrak{s}))$. Then by Lemma~\ref{lemma:13}
  $L(\lambda,\mathfrak{s})_{\nu} \iso
  (L(\lambda,\mathfrak{s})_{F_{\alpha_2}})_\nu \iso
  ({^{s_{\alpha_2}}}L(s_{\alpha_2}.\lambda,\mathfrak{s}))_\nu$ so by
  Theorem~\ref{thm:Lemire} $L(\lambda,\mathfrak{s})\iso
  {^{s_{\alpha_2}}}L(s_{\alpha_2}.\lambda,\mathfrak{s})$. This is a
  contradiction by looking at weights of the modules.
\end{proof}

\begin{thm}
  \label{thm:existence_adm_mod_type_C}
  Let $\mathfrak{g}$ be a simple Lie algebra of type $C_n$
  (i.e. $\mathfrak{g}=\mathfrak{sp}(2n)$). Let
  $\alpha_1,\dots,\alpha_n$ be the simple roots such that $\alpha_i$
  is connected to $\alpha_{i+1}$ and $\alpha_1$ is long -- i.e. the
  Dynkin diagram of $C_n$ is
  \begin{equation*}
    \xymatrix{ \stackrel{\alpha_1}{\bullet} \ar@{=>}[r] & \stackrel{\alpha_2}{\bullet} \ar@{.}[r] & \stackrel{\alpha_{n-1}}{\bullet} \ar@{-}[r] & \stackrel{\alpha_n}{\bullet}}.
  \end{equation*}
  Let $\lambda\in X$. $L(\lambda)$ is infinite dimensional and
  admissible if and only if
  \begin{itemize}
  \item $\lambda(K_{\alpha_i})\in \pm q^{\setN}$ for $1< i \leq n$
  \item $\lambda(K_{\alpha_1})\in \pm q_{\alpha_1}^{1/2 + \setZ}=\pm
    q^{1+2\setZ}$
  \item $\lambda(K_{\alpha_{1}+\alpha_2})\in \pm q^{\setZ_{\geq -2}}$
  \end{itemize}
  or equivalently $q^\rho\lambda(K_\beta)\in \pm q^{\setZ_{>0}}$ for
  every short root $\beta\in \Phi^+$ and $\lambda(K_{\beta'})\in \pm
  q^{1+2\setZ}$ for every long root $\beta'\in \Phi^+$.
\end{thm}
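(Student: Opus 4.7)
The plan is to prove both directions separately. The \emph{only if} direction reduces to a rank $2$ analysis on the $U_q(C_2)$-subsystem spanned by $\{\alpha_1,\alpha_2\}$ and invokes Lemma~\ref{lemma:14}; the \emph{if} direction uses the quantum Shale-Weil module $L(\omega^+)\oplus L(\omega^-)$ from Section~\ref{sec:quantum-shale-weyl} as a base case and produces general admissible $L(\lambda)$ by tensoring with finite dimensional modules.

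For the \emph{only if} direction, assume $L(\lambda)$ is infinite dimensional and admissible. By Theorem~\ref{thm:integral} combined with the classical non-existence of infinite dimensional admissible simple highest weight $\mathfrak{sp}(2n)$-modules with integral weights, $\lambda$ must be non-integral, so $A(\lambda)\neq \emptyset$. Lemma~\ref{lemma:34} then yields that $A(\lambda)$ is connected with $|A(\lambda)|\leq 2$. The first step is to show $\alpha_2\notin A(\lambda)$: if $\alpha_2\in A(\lambda)$, the $U_q(C_2)$-subalgebra on $\{\alpha_1,\alpha_2\}$ gives an admissible infinite dimensional simple highest weight submodule as subquotient of $U_q(C_2)\cdot v_\lambda$, and Lemma~\ref{lemma:14} (with the long/short labels suitably swapped) then forces $\lambda(K_{\alpha_2})\in \pm q^{\setN}$, contradicting $\alpha_2\in A(\lambda)$. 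Next, rule out $A(\lambda)\subseteq \{\alpha_i : i>2\}$: Lemma~\ref{lemma:41} requires such a singleton $A(\lambda)=\{\alpha_i\}$ to occur at an end of the Dynkin diagram, so $i=n$; then Lemma~\ref{lemma:41} followed by repeated application of Lemma~\ref{lemma:33} within the simply-laced sub-diagram $\{\alpha_2,\dots,\alpha_n\}$ produces an admissible weight with $\alpha_2\in A$, contradicting the first step. Therefore $A(\lambda)=\{\alpha_1\}$, which is the statement $\lambda(K_{\alpha_i})\in \pm q^{\setN}$ for $i>1$. A final application of Lemma~\ref{lemma:14} to the rank $2$ submodule generated by $v_\lambda$ in the $U_q(C_2)$-subalgebra yields $\lambda(K_{\alpha_1})\in \pm q^{1+2\setZ}$ and $\lambda(K_{\alpha_1+\alpha_2})\in \pm q^{\setZ_{\geq -2}}$, completing this direction.

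For the \emph{if} direction, assume $\lambda$ satisfies the three listed conditions. Direct inspection of the action given in Section~\ref{sec:quantum-shale-weyl} shows that both $\omega^+$ and $\omega^-$ satisfy the conditions: $\omega^+(K_{\alpha_1})=q^{-1}$, $\omega^-(K_{\alpha_1})=q^{-3}$, and the respective values on $\alpha_1+\alpha_2$ have exponent $-1$ and $-2$, both $\geq -2$. So the base case $\lambda=\omega^\pm$ is handled. In general, choose the sign character $\epsilon\in\{+,-\}$ so that the sign pattern of $\lambda$ agrees with that of $\omega^\epsilon$, and write $\lambda=q^\mu\omega^\epsilon$; the three conditions translate exactly to the requirement that $\mu\in\Lambda$ be dominant integral with respect to the simple roots $\{\alpha_2,\dots,\alpha_n\}$ and have non-negative coefficient on $\omega_1$. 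The finite dimensional simple $U_q$-module $L_{\mathrm{fin}}(q^\mu)$ of highest weight $q^\mu$ exists, and a highest weight vector of weight $\lambda$ is obtained from $v_{\omega^\epsilon}\tensor v_{q^\mu}$ inside $L(\omega^\epsilon)\tensor L_{\mathrm{fin}}(q^\mu)$, so $L(\lambda)$ appears as a composition factor of this tensor product. The tensor product is admissible (tensor of an admissible module of degree $1$ with a finite dimensional module has uniformly bounded weight spaces, and its weights lie in a single $q^Q$-coset), and admissibility passes to subquotients; hence $L(\lambda)$ is admissible.

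The main obstacle is the matching of sign characters and the verification, in the \emph{if} direction, that $L(\lambda)$ genuinely appears as a composition factor of $L(\omega^\epsilon)\tensor L_{\mathrm{fin}}(q^\mu)$. An alternative approach, paralleling \cite[Section~9]{Mathieu}, would compute $\ch L(\lambda)$ directly from the Jantzen sum formula (Theorem~\ref{thm:Jantzen_filtration}): under the three conditions the sum on the right runs over precisely the short positive roots, and a combinatorial analysis identifies $\ch L(\lambda)$ as the alternating sum $\sum_{w\in W'}(-1)^{l(w)}\ch M(w.\lambda)$ over the Weyl subgroup $W'$ generated by reflections in the short roots, which is manifestly the character of an admissible module of degree $1$.
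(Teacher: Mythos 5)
Your \emph{only if} direction is essentially the same argument as the paper's: ruling out $\alpha_i \in A(\lambda)$ for $i>1$ by reducing to the $C_2$ subsystem on $\{\alpha_1,\alpha_2\}$ and invoking Lemma~\ref{lemma:14}, after moving any ``circles'' to position $2$ via Lemma~\ref{lemma:33} (and Lemma~\ref{lemma:41}). Your split into three cases is a slightly more explicit version of what the paper does, and it is correct (modulo the usual caveat, handled implicitly in both arguments via Theorem~\ref{thm:integral}, that the rank-$2$ restriction could be integral).

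The \emph{if} direction has a genuine gap. You assert that the three listed conditions ``translate exactly to the requirement that $\mu\in\Lambda$ be dominant integral with respect to $\{\alpha_2,\dots,\alpha_n\}$ and have non-negative coefficient on $\omega_1$.'' This last clause is false: the hypotheses allow $\lambda(K_{\alpha_1})=\pm q^{-1+r}$ with $r$ an arbitrary (possibly very negative) even integer, as long as $\lambda(K_{\alpha_1+\alpha_2})\in\pm q^{\setZ_{\geq -2}}$. For instance $\lambda(K_{\alpha_1})=q^{-51}$, $\lambda(K_{\alpha_2})=q^{100}$ satisfies all three conditions but $\lambda\omega^{-1}$ has $\omega_1$-coefficient $-50$, so it is not dominant and your tensor-product construction does not directly apply. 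The paper repairs exactly this: it first reduces to $r\geq 0$ by replacing $\lambda$ with $s_1.\lambda$, using Lemma~\ref{lemma:30} and Proposition~\ref{prop:15} to see that $L(\lambda)$ is admissible iff $L(s_1.\lambda)$ is, and the condition $\lambda(K_{\alpha_1+\alpha_2})\in\pm q^{\setZ_{\geq -2}}$ is what guarantees that $s_1.\lambda$ still satisfies the integrality hypotheses on $\alpha_2,\dots,\alpha_n$ (since $s_1.\lambda(K_{\alpha_2})=q^2\lambda(K_{\alpha_1+\alpha_2})$). Without this reflection step the if direction simply does not close. A secondary point: your ``choose $\epsilon$ so that the sign pattern of $\lambda$ agrees with $\omega^\epsilon$'' is not a meaningful distinction, since $\omega^+$ and $\omega^-$ both take positive values; the signs are instead absorbed in the integral weight $\lambda_0=\lambda(\omega^+)^{-1}$ by allowing $L(\lambda_0)$ to be a finite-dimensional simple module of arbitrary type $\sigma$, which is the paper's device. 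Your sketch of an alternative argument via the Jantzen sum formula (noting that under the hypotheses the sum runs precisely over the short positive roots) is a reasonable alternative route but is left at the level of a sketch; if you pursue it you would still need a Weyl-group orbit argument to identify $\ch L(\lambda)$ with the stated alternating sum, and you should check that the degree-$1$ bound follows from that character.
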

\begin{proof}
  Assume $\lambda(K_{\alpha_i})\not \in \pm q^{\setN}$ for some
  $i>1$. Then by Lemma~\ref{lemma:33} there exists a $\lambda'$ such
  that $L(\lambda')$ is admissible and such that
  $\lambda'(K_{\alpha_{2}})\not \in q^{\setN}$. Let $\mathfrak{s}$ be
  the Lie algebra $\mathfrak{sp}(4)$ with simple roots $\alpha_{2}$
  and $\alpha_1$. Let $U$ be the subalgebra of $U_q$ generated by
  $F_{\alpha_{1}},F_{\alpha_{2}},K_{\alpha_1},K_{\alpha_{2}},E_{\alpha_1},E_{\alpha_{2}}$. Then
  $U \iso U_{q}(\mathfrak{s})$ as algebras and $U v_{\lambda'}$
  contains the simple highest weight $U_{q}(\mathfrak{s})$-module
  $L(\lambda',\mathfrak{s})$ of highest weight $\lambda'$ (restricted
  to $U_{q_\alpha}^0(\mathfrak{s})$) as a subquotient. Since
  $L(\lambda')$ is admissible so is $U v_{\lambda'}$ hence
  $L(\lambda',\mathfrak{s})$ is admissible. So
  $\lambda'(K_{\alpha_{2}})\in \pm q^{\setN}$ by
  Lemma~\ref{lemma:14}. A contradiction. So we have proven that
  $\lambda(K_{\alpha_i})\in \pm q^{\setN}$ for $1< i \leq n$ is a
  neccesary condition. We get also from Lemma~\ref{lemma:14} that
  $\lambda(K_{\alpha_1})\in q^{1+2\setZ}$ and
  $q^3\lambda(K_{\alpha_{1}+\alpha_2})=
  q^\rho\lambda(K_{\alpha_{1}+\alpha_2}) \in \pm q^{\setZ_{>0}}$ which
  shows that the two other conditions are neccesary.

  Now assume we have a weight $\lambda\in X$ that satisfies the
  above. So $\lambda(K_{\alpha_1})=q^{-1+r}$ for some $r\in
  2\setZ$. We can assume $r\in \setN$ by Lemma~\ref{lemma:30} and
  Proposition~\ref{prop:15} (if $r<0$ replace $\lambda$ with
  $s_1.\lambda$, $L(\lambda)$ is admissible if and only if
  $L(s_1.\lambda)$ is).  We have $\lambda= \omega^+ \lambda_0$ for
  some dominant integral weight $\lambda_0$ and $L(\lambda)$ is a
  subquotient of $L(\omega^+)\tensor L(\lambda_0)$. Since
  $L(\omega^+)$ is admissible and $L(\lambda_0)$ is finite dimensional
  $L(\omega^+)\tensor L(\lambda_0)$ is admissible and since
  $L(\lambda)$ is a subquotient of $L(\omega^+)\tensor L(\lambda_0)$,
  $L(\lambda)$ is admissible as well.
\end{proof}

\begin{cor}
  \label{cor:3}
  Let $\mathfrak{g}$ be a simple Lie algebra of type $C_n$
  (i.e. $\mathfrak{g}=\mathfrak{sp}(2n)$). Let
  $\alpha_1,\dots,\alpha_n$ be the simple roots such that $\alpha_i$
  is connected to $\alpha_{i+1}$ and $\alpha_1$ is long.
  
  Let $\beta_j=\alpha_1+\cdots+\alpha_j$, $j=1,\dots,n$ and
  $\Sigma=\{\beta_1,\dots,\beta_n\}$. Let $F_{\beta_j}=T_{s_1}\cdots
  T_{s_{j-1}}(F_{\alpha_{j}})$ and let $F_{\Sigma}=\{q^a
  F_{\beta_1}^{a_1}\cdots F_{\beta_n}^{a_n}|a_i\in \setN,a\in \setZ\}$
  be the corresponding Ore subset. Then $\Sigma$ is a set of commuting
  roots that is a basis of $Q$ with corresponding Ore subset
  $F_\Sigma$.
  
  Let $L$ be a simple torsion free module. Then there exists a
  $\lambda\in X$ with $\lambda(K_{\beta})\in \pm q^\setN$ for all
  short $\beta\in \Phi^+$ and $\lambda(K_{\gamma}) \in \pm
  q^{1+2\setZ}$ for all long $\gamma \in \Phi^+$ and a $\mathbf{b}\in
  (\setC^*)^n$ such that
  \begin{align*}
    L\iso \phi_{F_\Sigma,\mathbf{b}}.L(\lambda)_{F_\Sigma}
  \end{align*}
\end{cor}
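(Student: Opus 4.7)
The plan is to mirror the proof of Corollary~\ref{cor:2} in the simply-laced case, substituting Theorem~\ref{thm:existence_adm_mod_type_C} for the type-$A$ classification.

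First I would verify that $\Sigma$ is a $\setZ$-basis of $Q$ and a commuting set. The basis property is immediate from $\beta_1=\alpha_1$ and $\beta_j-\beta_{j-1}=\alpha_j$. For commutativity I would extend $s_1 s_2\cdots s_n$ to a reduced expression of $w_0$, so that $\beta_1,\dots,\beta_n$ appear as the first $n$ positive roots in the natural order. Theorem~\ref{thm:DP} then places each $[F_{\beta_j},F_{\beta_i}]_q$ in the $A'$-span of monomials in $F_{\beta_{i+1}},\dots,F_{\beta_{j-1}}$. Writing $\beta_1=2e_1$ and $\beta_k=e_1+e_k$ for $k\geq 2$ in the standard realization of $C_n$, a direct weight count shows no monomial in this span carries weight $-\beta_i-\beta_j$, so the $q$-commutators vanish. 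Proposition~\ref{prop:11} then gives that $F_\Sigma$ is an Ore subset of $U_q$.

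Next, by Theorem~\ref{thm:EXT_contains_highest_weight} we have $\mathcal{EXT}(L)\iso\mathcal{EXT}(L(\lambda'))$ for some $\lambda'\in X$ with $L(\lambda')$ admissible and infinite dimensional. Theorem~\ref{thm:existence_adm_mod_type_C} (for non-integral $\lambda'$) and Theorem~\ref{thm:integral} combined with Mathieu's classical classification in~\cite{Mathieu} (for integral $\lambda'$) pin down the allowable $\lambda'$. The next step is to normalize $\lambda'$ into the region specified in the Corollary while preserving $\mathcal{EXT}$. The key tool is Lemma~\ref{lemma:30}: whenever $\mu(K_\alpha)\notin\pm q_\alpha^\setN$, the module $L(s_\alpha.\mu)$ embeds in $\mathcal{EXT}(L(\mu))$, so Proposition~\ref{prop:15} yields $\mathcal{EXT}(L(\mu))\iso\mathcal{EXT}(L(s_\alpha.\mu))$; in the complementary integral case one instead uses $\mathcal{EXT}(L(\mu))\iso\mathcal{EXT}({^{s_\alpha}}L(\mu))$ from Lemma~\ref{lemma:30} together with the fact that the analogous integral reflections preserve the admissible class. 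Iterating such reflections produces a $\lambda$ with $\lambda(K_\beta)\in\pm q^\setN$ for every short positive root $\beta$ and $\lambda(K_\gamma)\in\pm q^{1+2\setZ}$ for every long positive root $\gamma$. This normalization is the main obstacle: the coexistence of long and short roots in $C_n$ and the weaker condition $\lambda'(K_{\alpha_1+\alpha_2})\in\pm q^{\setZ_{\geq -2}}$ (rather than $\pm q^\setN$) coming from Theorem~\ref{thm:existence_adm_mod_type_C} demand a careful case analysis to show that a sequence of dot-reflections can always reach the stated dominant region without altering $\mathcal{EXT}$.

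Once such $\lambda$ is fixed, Proposition~\ref{prop:9} yields $-\alpha_1\in T_{L(\lambda)}$ (since $\pm q^{1+2\setZ}$ is disjoint from $\pm q_{\alpha_1}^\setN=\pm q^{2\setN}$), and then Lemma~\ref{lemma:18} gives $-D(\alpha_1)\subset T_{L(\lambda)}$; since each $\beta_j=\alpha_1+\cdots+\alpha_j$ lies in $D(\alpha_1)$ we obtain $-\Sigma\subset T_{L(\lambda)}$. Hence Lemma~\ref{lemma:20} (applied with $w=e$) gives
\[
\mathcal{EXT}(L(\lambda))\iso\left(\bigoplus_{t\in(\setC^*)^n/q^{\setZ^n}}\phi_{F_\Sigma,t}.L(\lambda)_{F_\Sigma}\right)^{ss}.
\]
Since $L$ is simple and contained in $\mathcal{EXT}(L(\lambda))$, it must sit inside $\phi_{F_\Sigma,\mathbf{b}}.L(\lambda)_{F_\Sigma}$ for some $\mathbf{b}\in(\setC^*)^n$; Proposition~\ref{prop:15} together with Lemma~\ref{lemma:13} then force the inclusion to be an isomorphism by the same weight-space dimension argument as in the final step of the proof of Corollary~\ref{cor:1}.
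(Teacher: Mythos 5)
Your outline follows the paper's proof: obtain a highest weight $\lambda'$ with $\mathcal{EXT}(L)\iso\mathcal{EXT}(L(\lambda'))$ via Theorem~\ref{thm:EXT_contains_highest_weight}, pin it down via Theorem~\ref{thm:existence_adm_mod_type_C}, and finish as in Corollary~\ref{cor:1}; your verification that $\Sigma$ is a commuting basis of $Q$ is fine. You have also correctly noticed a discrepancy the paper's very short proof glosses over: Theorem~\ref{thm:existence_adm_mod_type_C} only gives $q^\rho\lambda(K_\beta)\in\pm q^{\setZ_{>0}}$ for short positive $\beta$ (equivalently, e.g., $\lambda(K_{\alpha_1+\alpha_2})\in\pm q^{\setZ_{\geq -2}}$), which is strictly weaker than the Corollary's stated $\lambda(K_\beta)\in\pm q^{\setN}$ for all short positive $\beta$.

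However, the normalization argument you propose to bridge this does not work, and this is a genuine gap in your proposal. The integral fallback (replacing $L(\mu)$ by ${^{s_\alpha}}L(\mu)$) is a dead end since ${^{s_\alpha}}L(\mu)$ is not a highest weight module, so it produces no new $\lambda$. More seriously, the target region is simply not reachable in general: take $\lambda'=\omega^+$ from Section~\ref{sec:quantum-shale-weyl}, with $\omega^+(K_{\alpha_1})=q^{-1}$ and $\omega^+(K_{\alpha_i})=1$ for $i>1$, so $\omega^+(K_{\alpha_1+\alpha_2})=q^{-1}\notin\pm q^\setN$. Since $\omega^+(K_{\alpha_i})\in\pm q^{\setN}$ for every $i\geq 2$, Lemma~\ref{lemma:30} only produces a new highest weight via $s_1$, and $s_1.\omega^+(K_{\alpha_1+\alpha_2})=q^{-2}\notin\pm q^\setN$ (iterating $s_1$ returns $\omega^+$). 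So no $\lambda$ in this $\mathcal{EXT}$-class satisfies the stated hypotheses. The resolution is that the hypotheses on $\lambda$ in Corollary~\ref{cor:3} should be read as exactly the conditions in Theorem~\ref{thm:existence_adm_mod_type_C} (i.e., $q^\rho\lambda(K_\beta)\in\pm q^{\setZ_{>0}}$ for short $\beta$), not the stronger condition printed; the downstream Propositions~\ref{prop:7},~\ref{prop:6} and Theorem~\ref{thm:clas_C} in fact only invoke $\lambda(K_{\alpha_i})\in\pm q^{\setN}$ for the short \emph{simple} roots $\alpha_2,\dots,\alpha_n$, which Theorem~\ref{thm:existence_adm_mod_type_C} does supply. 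Once the statement is read that way, the paper's one-line proof is correct and there is no normalization to perform.
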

\begin{proof}
  By Theorem~\ref{thm:EXT_contains_highest_weight} there exists a
  $\lambda \in X$ such that $\mathcal{EXT}(L)\iso
  \mathcal{EXT}(L(\lambda))$. By Proposition~\ref{prop:15}
  $L(\lambda)$ is admissible and by
  Theorem~\ref{thm:existence_adm_mod_type_C} $\lambda$ is as described
  in the statement of the corollary. Now the result follows just like
  in the proof of Corollary~\ref{cor:1}.
\end{proof}

In Section~\ref{sec:type-c_n-calc} we determine all $\mathbf{b}\in
(\setC^*)^n$ such that
$\phi_{F_\Sigma,\mathbf{b}}.L(\lambda)_{F_\Sigma}$ is torsion free
(with $\Sigma$ and $\lambda$ as above in Corollary~\ref{cor:3}). By
the corollary this classifies all simple torsion free modules for type
$C$.

\section{Classification of simple torsion free modules. Type A.}
\label{sec:type-a_n-calc}
In this section we assume $\mathfrak{g}=\mathfrak{sl}_{n+1}$ with
$n\geq 2$. Let $\Pi=\{\alpha_1,\dots,\alpha_n\}$ denote the simple
roots such that $(\alpha_i|\alpha_{i+1})=-1$, $i=1,\dots,n-1$. Set
$\beta_j = s_1\cdots s_{j-1}(\alpha_j)=\alpha_1+\dots+\alpha_j$, then
$\Sigma=\{\beta_1,\dots,\beta_n\}$ is a set of commuting roots with
corresponding root vectors $F_{\beta_j} = T_{s_1}\cdots
T_{s_{j-1}}(F_{\alpha_j})$. We will show some commutation formulas and
use these to calculate $\phi_{F_\Sigma,\mathbf{b}}$ on all simple root
vectors. This will allow us to determine exactly for which
$\mathbf{b}\in (\setC^*)^n$,
$\phi_{F_\Sigma,\mathbf{b}}.L(\lambda)_{F_\Sigma}$ is torsion free,
see Theorem~\ref{thm:clas_of_b_such_that_twist_is_torsion_free}.

Choose a reduced expression of $w_0$ starting with $s_1\cdots s_n$ and
define roots $\gamma_1,\dots,\gamma_N$ and root vectors
$F_{\gamma_1},\dots,F_{\gamma_N}$ from this expression. Note that
$F_{\beta_i}=F_{\gamma_i}$ for $i=1,\dots, n$.

\begin{prop}
  \label{prop:26}
  Let $i\in \{2,\dots,n\}$ and $j\in \{1,\dots,n\}$.
  \begin{equation*}
    [F_{\alpha_i},F_{\beta_j}]_q =
    \begin{cases}
      F_{\beta_i}, &\text{ if } j = i-1
      \\
      0, &\text{ otherwise}
    \end{cases}
  \end{equation*}
  and
  \begin{equation*}
    [E_{\alpha_i},F_{\beta_j}] =
    \begin{cases}
      F_{\beta_{i-1}} K_{\alpha_i}\inv, &\text{ if } j = i
      \\
      0, &\text{ otherwise.}
    \end{cases}
  \end{equation*}
\end{prop}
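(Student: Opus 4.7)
The proof is a case-by-case verification. The workhorse is that braid operators preserve $q$-commutators: for $x\in (U_q)_\mu$ and $y\in (U_q)_\gamma$ one has $T_w([x,y]_q)=[T_w(x),T_w(y)]_q$, since $T_w$ shifts weights by $w$ and the bilinear form on $\mathfrak{h}^*$ is $W$-invariant.

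For the first identity, the anchor is $j=i-1$. The explicit braid formula for adjacent simple roots gives $T_{s_{i-1}}(F_{\alpha_i})=[F_{\alpha_i},F_{\alpha_{i-1}}]_q$ (as used in Section~\ref{sec:an-example-u_qm} for $i=2$). In type $A$ the root $\alpha_i$ is disconnected from $\alpha_1,\dots,\alpha_{i-2}$, so $T_{s_1}\cdots T_{s_{i-2}}$ fixes $F_{\alpha_i}$ and sends $F_{\alpha_{i-1}}$ to $F_{\beta_{i-1}}$; applying it yields $F_{\beta_i}=[F_{\alpha_i},F_{\beta_{i-1}}]_q$. For $j<i-1$ the same trick applies to the trivial relation $[F_{\alpha_i},F_{\alpha_j}]_q=0$ (ordinary commutator, since $\alpha_i,\alpha_j$ are disconnected and hence commute). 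For $j=i$, expanding $[F_{\alpha_i},[F_{\alpha_i},F_{\beta_{i-1}}]_q]_q$ using $(\alpha_i|\beta_{i-1})=-1$ and $(\alpha_i|\beta_i)=1$ produces the Serre-type expression $F_{\alpha_i}^2F_{\beta_{i-1}}-[2]F_{\alpha_i}F_{\beta_{i-1}}F_{\alpha_i}+F_{\beta_{i-1}}F_{\alpha_i}^2$, which is $T_{s_1}\cdots T_{s_{i-2}}$ applied to the quantum Serre relation between $F_{\alpha_i}$ and $F_{\alpha_{i-1}}$, hence zero. The case $j>i$ goes by induction on $j$: for $j>i+1$, expanding $F_{\beta_j}=F_{\alpha_j}F_{\beta_{j-1}}-qF_{\beta_{j-1}}F_{\alpha_j}$ and using $[F_{\alpha_i},F_{\alpha_j}]=0$ reduces to the inductive hypothesis $[F_{\alpha_i},F_{\beta_{j-1}}]=0$; for $j=i+1$, the same expansion combined with the already-established identities $[F_{\alpha_i},F_{\beta_i}]_q=0$ and $[F_{\alpha_{i+1}},F_{\beta_i}]_q=F_{\beta_{i+1}}$ cancels everything.

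For the second identity the central case is $j=i$. From the recursion $F_{\beta_k}=[F_{\alpha_k},F_{\beta_{k-1}}]_q$ one sees inductively that $F_{\beta_{i-1}}$ lies in the subalgebra generated by $F_{\alpha_1},\dots,F_{\alpha_{i-1}}$, so $[E_{\alpha_i},F_{\beta_{i-1}}]=0$. Writing $F_{\beta_i}=F_{\alpha_i}F_{\beta_{i-1}}-qF_{\beta_{i-1}}F_{\alpha_i}$ and combining $[E_{\alpha_i},F_{\alpha_i}]=(K_{\alpha_i}-K_{\alpha_i}^{-1})/(q-q^{-1})$ with $K_{\alpha_i}F_{\beta_{i-1}}=qF_{\beta_{i-1}}K_{\alpha_i}$, the $K_{\alpha_i}$ contributions cancel and the $K_{\alpha_i}^{-1}$ ones combine to give precisely $F_{\beta_{i-1}}K_{\alpha_i}^{-1}$. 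For $j<i$ the vanishing is immediate since $F_{\beta_j}$ lies in the subalgebra generated by $F_{\alpha_1},\dots,F_{\alpha_j}$, all commuting with $E_{\alpha_i}$. For $j>i+1$ the expansion $F_{\beta_j}=F_{\alpha_j}F_{\beta_{j-1}}-qF_{\beta_{j-1}}F_{\alpha_j}$ together with $[E_{\alpha_i},F_{\alpha_j}]=0$ reduces $[E_{\alpha_i},F_{\beta_j}]$ to the inductive case $[E_{\alpha_i},F_{\beta_{j-1}}]=0$; for $j=i+1$ the same expansion instead yields $[F_{\alpha_{i+1}},F_{\beta_{i-1}}]K_{\alpha_i}^{-1}$ after the $K$-commutations, which vanishes by the first identity (applied with $i$ replaced by $i+1$ and $j=i-1$).

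The main technical nuisance throughout is bookkeeping of the various $q$-powers coming from $K$-commutation and from the definition of $[\cdot,\cdot]_q$; no ingredients beyond the quantum Serre relations, the braid action, and the recursion $F_{\beta_j}=[F_{\alpha_j},F_{\beta_{j-1}}]_q$ are needed.
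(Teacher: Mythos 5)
Your proof of the second identity (commutation with $E_{\alpha_i}$) and of the first identity in the cases $j<i-1$, $j=i-1$, and $j=i$ all check out. In particular your use of the Serre relation pulled back through $T_{s_1}\cdots T_{s_{i-2}}$ for $j=i$ is a nice self-contained alternative to the paper's argument. However, there is a genuine gap in the case $j=i+1$ of the first identity, and this gap propagates to all $j>i$ since your induction for $j>i+1$ is anchored at $j=i+1$.

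Concretely: you claim that the expansion $F_{\beta_{i+1}}=F_{\alpha_{i+1}}F_{\beta_i}-qF_{\beta_i}F_{\alpha_{i+1}}$ together with $[F_{\alpha_i},F_{\beta_i}]_q=0$ and $[F_{\alpha_{i+1}},F_{\beta_i}]_q=F_{\beta_{i+1}}$ ``cancels everything'' in $[F_{\alpha_i},F_{\beta_{i+1}}]_q$. But carrying out this expansion, writing $X:=F_{\alpha_i}F_{\alpha_{i+1}}-qF_{\alpha_{i+1}}F_{\alpha_i}$ and using $F_{\alpha_i}F_{\beta_i}=q^{-1}F_{\beta_i}F_{\alpha_i}$ repeatedly, one obtains
\begin{equation*}
[F_{\alpha_i},F_{\beta_{i+1}}]=[X,F_{\beta_i}],
\end{equation*}
and re-expanding the right-hand side by the same two known relations merely returns $[F_{\alpha_i},F_{\beta_{i+1}}]$ again. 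The manipulation is a tautology, not a proof of vanishing; nothing cancels because the term $[X,F_{\beta_i}]$, where $X$ is a root vector for $\alpha_i+\alpha_{i+1}$ (not one of the $\beta_k$), is precisely the quantity you need to control and is not handled by any ingredient you have listed. To close this case you need an external input. The paper invokes the Levendorski--Soibelman straightening theorem (Theorem~\ref{thm:DP}): for $j\ge i$ the $q$-commutator $[F_{\alpha_i},F_{\beta_j}]_q$ is a linear combination of monomials $F_{\gamma_{j+1}}^{a_{j+1}}\cdots F_{\gamma_{k-1}}^{a_{k-1}}$ (with $\alpha_i=\gamma_k$), and a weight count shows no such monomial can carry the weight $\alpha_i+\beta_j$ because the roots $\gamma_{j+1},\dots,\gamma_{k-1}$ never include $\alpha_1+\cdots+\alpha_m$ for $m\le j$; this argument handles all $j\ge i$, including $j=i+1$, uniformly. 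Without LS you would instead have to push the $T_{s_1}\cdots T_{s_{i-2}}$ reduction through $T_{s_{i-1}}T_{s_i}$ and carry out an explicit $U_q^-(\mathfrak{sl}_4)$ computation using both quantum Serre relations, which is more than ``the same expansion cancels everything.''
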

\begin{proof}
  We will show the proposition for the $F$'s first and then for the
  $E$'s.
  
  Assume first that $j<i-1$. Then clearly
  $[F_{\alpha_i},F_{\beta_j}]_q = [F_{\alpha_i},F_{\beta_j}]=0$ since
  $\alpha_i$ is not connected to any of the simple roots
  $\alpha_1,\dots,\alpha_j$ appearing in $\beta_j$.
  
  Then assume $j\geq i$. We must have $\alpha_i = \gamma_k$ for some
  $k>n$ since $\{\gamma_1,\dots,\gamma_N\}=\Phi^+$. By
  Theorem~\ref{thm:DP} $[F_{\alpha_i},F_{\beta_j}]_q$ is a linear
  combination of monomials of the form
  $F_{\gamma_{j+1}}^{a_{j+1}}\cdots F_{\gamma_{k-1}}^{a_{k-1}}$. For a
  monomial of this form to appear with nonzero coefficient we must
  have
  \begin{equation*}
    \sum_{h=j+1}^{k-1} a_h \gamma_h = \alpha_i + \beta_j = \alpha_1+\dots + \alpha_{i-1}+2\alpha_i +\alpha_{i+1}+\dots \alpha_j.
  \end{equation*}
  For this to be possible one of the positive roots $\gamma_s$,
  $j<s<k$ must be equal to $\alpha_1+\alpha_2+\dots+\alpha_m$ for some
  $m\leq j$ but $\alpha_1+\alpha_2+\dots+\alpha_m=\gamma_m$ by
  construction and $m\leq j<s$ so $m\neq s$. We conclude that this is
  not possible.

  Finally we investigate the case when $j=i-1$. We have
  \begin{align*}
    [F_{\alpha_i},F_{\beta_{i-1}}]_q =& [T_{s_1}\cdots
    T_{s_{i-2}}(F_{\alpha_i}),T_{s_1}\cdots
    T_{s_{i-2}}(F_{\alpha_{i-1}})]_q
    \\
    =& T_{s_1}\cdots T_{s_{i-2}}
    \left([F_{\alpha_i},F_{\alpha_{i-1}}]_q\right)
    \\
    =& T_{s_1}\cdots T_{s_{i-2}} T_{s_{i-1}}(F_{\alpha_i})
    \\
    =& F_{\beta_i}.
  \end{align*}

  For the $E$'s: Assume first $j<i$: Since $F_{\beta_j}$ is a
  polynomial in $F_{\alpha_1},\dots,F_{\alpha_j}$, $E_{\alpha_i}$
  commutes with $F_{\beta_j}$ when $j<i$.

  Assume then $j=i$: We have by the above
  \begin{equation*}
    F_{\beta_i} = [F_{\alpha_i},F_{\beta_{i-1}}]_q
  \end{equation*}
  so
  \begin{align*}
    [E_{\alpha_i},F_{\beta_i}] =&
    [E_{\alpha_i},(F_{\alpha_i}F_{\beta_{i-1}}-q^{-(\beta_{i-1}|\alpha_i)}F_{\beta_{i-1}}F_{\alpha_i})]
    \\
    =& [E_{\alpha_i},F_{\alpha_i}] F_{\beta_{i-1}} - q F_{\beta_{i-1}}
    [E_{\alpha_i},F_{\alpha_i}]
    \\
    =& \frac{ K_{\alpha_i}-K_{\alpha_i}\inv}{q-q\inv} F_{\beta_{i-1}}
    - q F_{\beta_{i-1}} \frac{K_{\alpha_i}-K_{\alpha_i}\inv}{q-q\inv}
    \\
    =& F_{\beta_{i-1}} \frac{ q K_{\alpha_i} - q\inv K_{\alpha_i}\inv
      - q K_{\alpha_i} + q K_{\alpha_i}\inv}{q-q\inv}
    \\
    =& F_{\beta_{i-1}}K_{\alpha_{i}}\inv.
  \end{align*}
  
  Finally assume $j>i$: Observe first that we have
  \begin{equation*}
    T_{s_{i+1}}\cdots T_{s_{j-1}}F_{\alpha_j} = \sum_{s=1}^m u_s F_{\alpha_{i+1}} u_s'
  \end{equation*}
  for some $m\in \setN$ and some $u_s,u_s'$ that are polynomials in
  $F_{\alpha_{i+2}},\dots F_{\alpha_{j}}$. Note that
  $T_{s_i}(u_s)=u_s$ and $T_{s_i}(u_s')=u_s'$ for all $s$ since
  $\alpha_i$ is not connected to any of the simple roots
  $\alpha_{i+2},\dots \alpha_j$. So
  \begin{align*}
    T_{s_i}T_{s_{i+1}}\cdots T_{s_{j-1}}F_{\alpha_j} =& T_{s_i}\left(
      \sum_{s=1}^m u_s F_{\alpha_{i+1}} u_s'\right)
    \\
    =& \sum_{s=1}^m u_s T_{s_i}(F_{\alpha_{i+1}}) u_s'
    \\
    =& \sum_{s=1}^m u_s
    (F_{\alpha_{i+1}}F_{\alpha_i}-qF_{\alpha_i}F_{\alpha_{i+1}}) u_s'
    \\
    =& \sum_{s=1}^m u_s F_{\alpha_{i+1}} u_s' F_{\alpha_i} - q
    F_{\alpha_i}\sum_{s=1}^m u_s F_{\alpha_{i+1}} u_s'
    \\
    =& T_{s_{i+1}}\cdots T_{s_{j-1}}(F_{\alpha_j}) F_{\alpha_i} - q
    F_{\alpha_i} T_{s_{i+1}}\cdots T_{s_{j-1}}(F_{\alpha_j}).
  \end{align*}
  Thus we see that
  \begin{align*}
    F_{\beta_j} =& T_{s_1}\dots T_{s_i}\cdots
    T_{s_{j-1}}(F_{\alpha_j})
    \\
    =&T_{s_{i+1}}\cdots T_{s_{j-1}}(F_{\alpha_j}) T_{s_{1}}\cdots
    T_{s_{i-1}}(F_{\alpha_i}) - qT_{s_{1}}\cdots T_{s_{i-1}}(
    F_{\alpha_i}) T_{s_{i+1}}\cdots T_{s_{j-1}}(F_{\alpha_j})
    \\
    =& T_{s_{i+1}}\cdots T_{s_{j-1}}(F_{\alpha_j}) F_{\beta_i} - q
    F_{\beta_i} T_{s_{i+1}}\cdots T_{s_{j-1}}(F_{\alpha_j})
  \end{align*}
  and therefore
  \begin{align*}
    [E_{\alpha_i},F_{\beta_j}] =& T_{s_{i+1}}\cdots
    T_{s_{j-1}}(F_{\alpha_j}) [E_{\alpha_i},F_{\beta_i}] - q
    [E_{\alpha_i},F_{\beta_i}]T_{s_{i+1}}\cdots
    T_{s_{j-1}}(F_{\alpha_j})
    \\
    =& T_{s_{i+1}}\cdots T_{s_{j-1}}(F_{\alpha_j})
    F_{\beta_{i-1}}K_{\alpha_i}\inv- q F_{\beta_{i-1}}K_{\alpha_i}\inv
    T_{s_{i+1}}\cdots T_{s_{j-1}}(F_{\alpha_j}) 
    \\
    =& F_{\beta_{i-1}} T_{s_{i+1}}\cdots T_{s_{j-1}}(F_{\alpha_j})
    K_{\alpha_i}\inv - F_{\beta_{i-1}} T_{s_{i+1}}\cdots
    T_{s_{j-1}}(F_{\alpha_j}) K_{\alpha_i}\inv
    \\
    =& 0.
  \end{align*}
\end{proof}

\begin{prop}
  \label{prop:27}
  Let $i\in \{2,\dots,n\}$. Let $a\in \setZ_{>0}$. Then
  \begin{equation*}
    [F_{\alpha_i},F_{\beta_{i-1}}^a]_q = [a] F_{\beta_{i-1}}^{a-1}F_{\beta_i}
  \end{equation*}
  and for $b\in \setC^*$
  \begin{equation*}
    \phi_{F_{\beta_{i-1}},b}(F_{\alpha_i}) = b F_{\alpha_i}+ \frac{b-b\inv}{q-q\inv} F_{\beta_{i-1}}\inv F_{\beta_i}.
  \end{equation*}
\end{prop}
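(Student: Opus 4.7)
The plan is to first establish the commutator identity $[F_{\alpha_i},F_{\beta_{i-1}}^a]_q = [a] F_{\beta_{i-1}}^{a-1}F_{\beta_i}$ by induction on $a$, and then to deduce the formula for $\phi_{F_{\beta_{i-1}},b}(F_{\alpha_i})$ by interpolating with Lemma~\ref{lemma:37}. Unwinding the definition, the claim amounts to $F_{\alpha_i} F_{\beta_{i-1}}^a = q^a F_{\beta_{i-1}}^a F_{\alpha_i} + [a] F_{\beta_{i-1}}^{a-1} F_{\beta_i}$ since $(\alpha_i|\beta_{i-1}) = -1$. The base case $a=1$ is just the relation $[F_{\alpha_i},F_{\beta_{i-1}}]_q = F_{\beta_i}$ from Proposition~\ref{prop:26}.

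For the inductive step, I would multiply the induction hypothesis by $F_{\beta_{i-1}}$ on the right and substitute $F_{\alpha_i}F_{\beta_{i-1}} = qF_{\beta_{i-1}}F_{\alpha_i}+F_{\beta_i}$ from the base case, together with the $q$-commutation $F_{\beta_i}F_{\beta_{i-1}} = q^{-1} F_{\beta_{i-1}}F_{\beta_i}$. The latter holds because $\{\beta_{i-1},\beta_i\}\subset \Sigma$ and $(\beta_{i-1}|\beta_i)=1$. Collecting terms produces a coefficient $q^a + q^{-1}[a]$ in front of $F_{\beta_{i-1}}^a F_{\beta_i}$, which simplifies to $[a+1]$ by a routine manipulation of quantum integers, completing the induction.

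For the second identity, I would observe that by Lemma~\ref{lemma:9} the assignment $b \mapsto \phi_{F_{\beta_{i-1}},b}(F_{\alpha_i})$ is a Laurent polynomial in $b$ with values in $U_{q(F_{\beta_{i-1}})}$, and that at $b = q^a$ ($a\in\setN$) it equals $F_{\beta_{i-1}}^{-a} F_{\alpha_i} F_{\beta_{i-1}}^a$. Dividing the identity of the first part by $F_{\beta_{i-1}}^a$ on the left shows
\begin{equation*}
\phi_{F_{\beta_{i-1}},q^a}(F_{\alpha_i}) = q^a F_{\alpha_i} + [a]F_{\beta_{i-1}}^{-1}F_{\beta_i}.
\end{equation*}
The right-hand side of the proposition's formula is also Laurent polynomial in $b$, and specialising at $b = q^a$ reproduces the same expression. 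Hence two Laurent polynomials agree on $\{q^a \mid a\in\setN\}$, and by Lemma~\ref{lemma:37} they coincide for all $b\in\setC^*$.

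The main obstacle is bookkeeping with the $q$-commutation conventions: the weights of the $F$'s carry negative signs, so one must carefully track the exponent $q^{a}$ (rather than $q^{-a}$) in $[F_{\alpha_i},F_{\beta_{i-1}}^a]_q$ and the direction of the $q$-commutation between $F_{\beta_{i-1}}$ and $F_{\beta_i}$. Once those are fixed, the computation is a short induction followed by a Laurent-polynomial interpolation, with no genuinely difficult step.
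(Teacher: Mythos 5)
Your proposal is correct and follows essentially the same route as the paper's own proof: induction on $a$ with base case taken from Proposition~\ref{prop:26}, followed by Laurent-polynomial interpolation via Lemma~\ref{lemma:9} and Lemma~\ref{lemma:37}. The sign and exponent bookkeeping you flag (that $(\alpha_i|\beta_{i-1}) = -1$ gives $[F_{\alpha_i},F_{\beta_{i-1}}^a]_q = F_{\alpha_i}F_{\beta_{i-1}}^a - q^a F_{\beta_{i-1}}^a F_{\alpha_i}$, and that $F_{\beta_i}F_{\beta_{i-1}} = q^{-1}F_{\beta_{i-1}}F_{\beta_i}$) is handled correctly, and the collapse $q^a + q^{-1}[a] = [a+1]$ matches the paper's displayed computation.
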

\begin{proof}
  The first claim is proved by induction over $a$. $a=1$ is shown
  in Proposition~\ref{prop:26}. The induction step:
  \begin{align*}
    F_{\alpha_i}F_{\beta_{i-1}}^{a+1} =& \left( q^a F_{\beta_{i-1}}^a
      F_{\alpha_i} + [a] F_{\beta_{i-1}}^{a-1}F_{\beta_i}\right)
    F_{\beta_{i-1}}
    \\
    =& q^{a+1}F_{\beta_{i-1}}^{a+1}F_{\alpha_i} +q^a F_{\beta_{i-1}}^a
    F_{\beta_i} + q\inv [a] F_{\beta_{i-1}}^a F_{\beta_i}
    \\
    =& q^{a+1}F_{\beta_{i-1}}^{a+1}F_{\alpha_i} +
    [a+1]F_{\beta_{i-1}}^a F_{\beta_i}.
  \end{align*}
  So we have proved the first claim. We get then for $a\in \setZ_{>0}$
  \begin{equation*}
    \phi_{F_{\beta_{i-1}},q^a}(F_{\alpha_i}) = F_{\beta_{i-1}}^{-a} F_{\alpha_i} F_{\beta_{i-1}}^a = q^a F_{\alpha_i} + \frac{q^a-q^{-a}}{q-q\inv} F_{\beta_{i-1}}\inv F_{\beta_i}.
  \end{equation*}
  Using the fact that $\phi_{F_{\beta_{i-1}},b}(F_{\alpha_i})$ is
  Laurent polynomial in $b$ we get the second claim of the
  proposition.
\end{proof}

\begin{prop}
  \label{prop:28}
  Let $i\in \{2,\dots,n\}$. Let $a\in \setZ_{>0}$. Then
  \begin{equation*}
    [E_{\alpha_i},F_{\beta_i}^a] = q^{a-1}[a] F_{\beta_i}^{a-1}F_{\beta_{i-1}}K_{\alpha_i}\inv
  \end{equation*}
  and for $b\in \setC^*$
  \begin{equation*}
    \phi_{F_{\beta_{i}},b}(E_{\alpha_i}) = E_{\alpha_i}+ q\inv b \frac{b-b\inv}{q-q\inv} F_{\beta_i}\inv F_{\beta_{i-1}} K_{\alpha_i}\inv.
  \end{equation*}
\end{prop}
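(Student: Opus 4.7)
The plan is to mirror the strategy of Proposition~\ref{prop:27}: establish the commutator identity by induction on $a$, then extract the automorphism formula via the Laurent polynomial argument of Lemma~\ref{lemma:37}.

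First I would verify the base case $a=1$, which is exactly the second statement of Proposition~\ref{prop:26} (giving $[E_{\alpha_i},F_{\beta_i}] = F_{\beta_{i-1}}K_{\alpha_i}\inv$). For the inductive step, I would expand
\begin{equation*}
E_{\alpha_i}F_{\beta_i}^{a+1} = (E_{\alpha_i}F_{\beta_i}^{a})F_{\beta_i},
\end{equation*}
substitute the hypothesis, and then commute the trailing $F_{\beta_i}$ past $F_{\beta_{i-1}}$ and $K_{\alpha_i}\inv$. The two ingredients here are $F_{\beta_{i-1}}F_{\beta_i} = q F_{\beta_i}F_{\beta_{i-1}}$ (from $(\beta_{i-1}|\beta_i)=1$ and the $q$-commutation of root vectors in the commuting set $\Sigma$) and $K_{\alpha_i}\inv F_{\beta_i} = qF_{\beta_i}K_{\alpha_i}\inv$ (from $(\alpha_i|\beta_i)=1$). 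Together these contribute a combined factor of $q^2$, so the coefficient of $F_{\beta_i}^{a}F_{\beta_{i-1}}K_{\alpha_i}\inv$ becomes $1 + q^{a+1}[a]$, which a short quantum-integer manipulation shows equals $q^{a}[a+1]$, closing the induction.

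For the second identity, I would evaluate at $b=q^a$ for $a\in\setZ_{>0}$:
\begin{equation*}
\phi_{F_{\beta_i},q^a}(E_{\alpha_i}) = F_{\beta_i}^{-a}E_{\alpha_i}F_{\beta_i}^{a} = E_{\alpha_i} + F_{\beta_i}^{-a}[E_{\alpha_i},F_{\beta_i}^{a}] = E_{\alpha_i} + q^{a-1}[a]F_{\beta_i}\inv F_{\beta_{i-1}}K_{\alpha_i}\inv.
\end{equation*}
Rewriting $q^{a-1}[a] = q\inv q^{a}(q^a-q^{-a})/(q-q\inv)$ puts this in precisely the target form at $b=q^a$. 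Since both sides of the claimed formula are Laurent polynomials in $b$ by Lemma~\ref{lemma:9} and agree on the infinite set $\{q^a : a\in\setZ_{>0}\}$, Lemma~\ref{lemma:37} forces equality for all $b\in\setC^*$.

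I do not anticipate any real obstacle: all the necessary commutation relations are already available from Section~\ref{sec:u_a-calculations} and Proposition~\ref{prop:26}, and the interpolation trick is exactly the one employed in the essentially identical proof of Proposition~\ref{prop:27}. The only mildly delicate step is the bookkeeping of $q$-powers in the induction, but the coincidence $1+q^{a+1}[a] = q^a[a+1]$ is a standard quantum-integer identity.
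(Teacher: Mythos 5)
Your proposal is correct and follows the paper's proof verbatim: the same induction on $a$ with base case from Proposition~\ref{prop:26}, the same $q$-commutations of $F_{\beta_i}$ past $F_{\beta_{i-1}}$ and $K_{\alpha_i}\inv$ yielding the coefficient identity $1+q^{a+1}[a]=q^a[a+1]$, and the same Laurent-polynomial interpolation via Lemmas~\ref{lemma:9} and~\ref{lemma:37}. No gaps.
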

\begin{proof}
  The first claim is proved by induction over $a$. $a=1$ is shown
  in Proposition~\ref{prop:26}. The induction step:
  \begin{align*}
    E_{\alpha_i} F_{\beta_i}^{a+1} =& \left( F_{\beta_i}^a
      E_{\alpha_i}+ q^{a-1} [a] F_{\beta_i}^{a-1}F_{\beta_{i-1}}
      K_{\alpha_i}\inv \right) F_{\beta_i}
    \\
    =& F_{\beta_i}^{a+1} E_{\alpha_i} + F_{\beta_i}^a F_{\beta_{i-1}}
    K_{\alpha_i}\inv + q^{a+1} [a] F_{\beta_i}^a
    F_{\beta_{i-1}}K_{\alpha_i}\inv
    \\
    =& F_{\beta_i}^{a+1} E_{\alpha_i} + q^{a} (q^{-a} +
    q[a])F_{\beta_i}^a F_{\beta_{i-1}}K_{\alpha_i}\inv
    \\
    =& F_{\beta_i}^{a+1} E_{\alpha_i} + q^{a} [a+1]F_{\beta_i}^a
    F_{\beta_{i-1}}K_{\alpha_i}\inv.
  \end{align*}
  This proves the first claim. We get then for $a\in \setZ_{>0}$
  \begin{equation*}
    \phi_{F_{\beta_i},q^a}(E_{\alpha_i}) = F_{\beta_i}^{-a} E_{\alpha_i} F_{\beta_i}^a = E_{\alpha_i} + q\inv q^a  \frac{q^a - q^{-a}}{q-q\inv} F_{\beta_i}\inv F_{\beta_{i-1}}K_{\alpha_i}\inv.
  \end{equation*}
  Using the fact that $\phi_{F_{\beta_{i}},b}(E_{\alpha_i})$ is
  Laurent polynomial in $b$ we get the second claim of the
  proposition.
\end{proof}

In our classification we don't need to calculate
$\phi_{F_\Sigma,\mathbf{b}}(E_{\alpha_1})$ but for completeness we
show what it is in this case in Proposition~\ref{prop:30}. To do this
we need the following proposition:
\begin{prop}
  \label{prop:29}
  Let $j\in \{2,\dots,n\}$. Then
  \begin{equation*}
    [E_{\alpha_1},F_{\beta_j}] = -q T_{s_2}\cdots T_{s_{j-1}}(F_{\alpha_j})K_{\alpha_1},
  \end{equation*}
  for $a\in \setZ_{>0}$:
  \begin{equation*}
    [E_{\alpha_1},F_{\beta_j}^a] = -q^{2-a}[a]F_{\beta_j}^{a-1} T_{s_2}\cdots T_{s_{j-1}}(F_{\alpha_j})K_{\alpha_1}
  \end{equation*}
  and for $b\in \setC^*$:
  \begin{equation*}
    \phi_{F_{\beta_j},b}(E_{\alpha_1}) = E_{\alpha_1} - q^2 b \frac{b-b\inv}{q-q\inv} F_{\beta_j}\inv T_{s_2}\cdots T_{s_{j-1}}(F_{\alpha_j}) K_{\alpha_1}.
  \end{equation*}
\end{prop}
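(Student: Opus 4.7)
Write $u_j := T_{s_2}\cdots T_{s_{j-1}}(F_{\alpha_j})$ for $j\geq 2$ (empty product for $j=2$, so $u_2=F_{\alpha_2}$). The plan is to establish the three identities in order, using induction on $j$ for the first, induction on $a$ for the second, and Laurent polynomial interpolation (Lemma~\ref{lemma:37}) for the third.

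For the first identity I would start with $j=2$: since $F_{\beta_2}=F_{\alpha_2}F_{\alpha_1}-qF_{\alpha_1}F_{\alpha_2}$, using $[E_{\alpha_1},F_{\alpha_2}]=0$, $[E_{\alpha_1},F_{\alpha_1}]=(K_{\alpha_1}-K_{\alpha_1}\inv)/(q-q\inv)$, and the commutation $K_{\alpha_1}F_{\alpha_2}=qF_{\alpha_2}K_{\alpha_1}$ collapses the expression to $-qF_{\alpha_2}K_{\alpha_1}=-qu_2K_{\alpha_1}$. For the inductive step at $j\geq 3$, Proposition~\ref{prop:26} (with $(\alpha_j|\beta_{j-1})=-1$) yields $F_{\beta_j}=F_{\alpha_j}F_{\beta_{j-1}}-qF_{\beta_{j-1}}F_{\alpha_j}$. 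Applying $T_{s_2}\cdots T_{s_{j-2}}$ to the braid identity $T_{s_{j-1}}(F_{\alpha_j})=F_{\alpha_j}F_{\alpha_{j-1}}-qF_{\alpha_{j-1}}F_{\alpha_j}$, and using that these operators fix $F_{\alpha_j}$ (since $\alpha_j$ is disconnected from $\alpha_2,\dots,\alpha_{j-2}$), yields the parallel relation $u_j=F_{\alpha_j}u_{j-1}-qu_{j-1}F_{\alpha_j}$. Since $[E_{\alpha_1},F_{\alpha_j}]=0$ and $K_{\alpha_1}F_{\alpha_j}=F_{\alpha_j}K_{\alpha_1}$ for $j\geq 3$, the inductive hypothesis then gives
\[
[E_{\alpha_1},F_{\beta_j}]=F_{\alpha_j}(-qu_{j-1}K_{\alpha_1})-q(-qu_{j-1}K_{\alpha_1})F_{\alpha_j}=-q(F_{\alpha_j}u_{j-1}-qu_{j-1}F_{\alpha_j})K_{\alpha_1}=-qu_jK_{\alpha_1}.
\]

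For the second identity I would induct on $a$, with $a=1$ being the first identity. In the inductive step, I write $E_{\alpha_1}F_{\beta_j}^{a+1}=(E_{\alpha_1}F_{\beta_j}^a)F_{\beta_j}$, apply the hypothesis on the left factor and the $a=1$ case on the remaining $E_{\alpha_1}F_{\beta_j}$, and then reduce using $K_{\alpha_1}F_{\beta_j}=q^{-(\beta_j|\alpha_1)}F_{\beta_j}K_{\alpha_1}=q\inv F_{\beta_j}K_{\alpha_1}$ and, crucially, $u_jF_{\beta_j}=q\inv F_{\beta_j}u_j$. The latter is the subtle point: it follows from the fact that $u_j$ has weight $-(\alpha_2+\cdots+\alpha_j)$, so every monomial in its expansion in simple root vectors contains exactly one $F_{\alpha_i}$ for each $i\in\{2,\dots,j\}$; by Proposition~\ref{prop:26} these satisfy $F_{\alpha_i}F_{\beta_j}=q^{-(\alpha_i|\beta_j)}F_{\beta_j}F_{\alpha_i}$, and since $(\alpha_i|\beta_j)=0$ for $1<i<j$ while $(\alpha_j|\beta_j)=1$, each monomial picks up exactly one scalar factor $q\inv$. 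Collecting yields a new coefficient $-(q+q^{-a}[a])F_{\beta_j}^au_jK_{\alpha_1}$, and the quantum integer identity $q+q^{-a}[a]=q^{1-a}[a+1]$ (both equal $(q^2-q^{-2a})/(q-q\inv)$) closes the induction.

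The third identity is then obtained by the Laurent polynomial interpolation argument already used in the proofs of Propositions~\ref{prop:27} and \ref{prop:28}: by Lemma~\ref{lemma:9}, $b\mapsto\phi_{F_{\beta_j},b}(E_{\alpha_1})$ is a Laurent polynomial in $b$, and at $b=q^a$ for $a\in\setN$ it equals $F_{\beta_j}^{-a}E_{\alpha_1}F_{\beta_j}^a=E_{\alpha_1}-q^{2-a}[a]F_{\beta_j}\inv u_jK_{\alpha_1}$ by the second identity; the unique Laurent polynomial interpolating this scalar coefficient then produces the stated formula via Lemma~\ref{lemma:37}. I expect the main obstacle to be the weight-theoretic commutation $u_jF_{\beta_j}=q\inv F_{\beta_j}u_j$: one must verify that the homogeneity of $u_j$ forces the same scalar on every monomial rather than a nonscalar correction. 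Once this uniformity is established, the rest is routine quantum integer bookkeeping and the standard interpolation step.
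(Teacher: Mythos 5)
Your proof is correct in method and follows essentially the same three-part strategy as the paper, with one genuine variation in the first identity: you induct on $j$ via the relation $F_{\beta_j}=F_{\alpha_j}F_{\beta_{j-1}}-qF_{\beta_{j-1}}F_{\alpha_j}$ and the parallel $u_j=F_{\alpha_j}u_{j-1}-qu_{j-1}F_{\alpha_j}$, whereas the paper pushes $T_{s_1}$ through an expansion $u_j=\sum_s u_sF_{\alpha_2}u_s'$ to obtain $F_{\beta_j}=u_jF_{\alpha_1}-qF_{\alpha_1}u_j$ directly and applies $[E_{\alpha_1},\cdot]$ once. Both variants rest on the same braid-operator observations and on $[E_{\alpha_1},F_{\alpha_i}]=0$ for $i\neq 1$; yours is a clean alternative that trades one global braid computation for a recursion. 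For the second and third identities (induction on $a$, the key commutation $u_jF_{\beta_j}=q^{-1}F_{\beta_j}u_j$ justified by the weight homogeneity of $u_j$, and Laurent interpolation via Lemma~\ref{lemma:37}) your argument matches the paper's.

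One point you should flag rather than pass over: your interpolation does not literally reproduce the printed third formula. You correctly get $\phi_{F_{\beta_j},q^a}(E_{\alpha_1})=E_{\alpha_1}-q^{2-a}[a]F_{\beta_j}^{-1}u_jK_{\alpha_1}$, and since $q^{2-a}[a]=q^2\,q^{-a}\,\frac{q^a-q^{-a}}{q-q^{-1}}$, interpolating in $b=q^a$ yields the coefficient $q^2\,b^{-1}\,\frac{b-b^{-1}}{q-q^{-1}}$, whereas the proposition prints $q^2\,b\,\frac{b-b^{-1}}{q-q^{-1}}$. These are distinct Laurent polynomials (at $b=q$ they give $q$ and $q^3$ respectively), and one can also see the mismatch by noting that the printed third formula is inconsistent with your (correct) second formula. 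This is a $b$ versus $b^{-1}$ typo in the statement, which the paper's own proof also carries (it too derives the $q^{-a}$ factor before claiming the stated formula). So the step where you write that the interpolation ``produces the stated formula'' is not quite right; it produces the corrected formula with $b^{-1}$ in place of $b$.
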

\begin{proof}
  Like in the proof of Proposition~\ref{prop:26} we see that
  \begin{equation*}
    T_{s_{2}}\cdots T_{s_{j-1}}F_{\alpha_j} = \sum_{s=1}^m u_s F_{\alpha_{2}} u_s'
  \end{equation*}
  for some $m\in \setN$ and some $u_s,u_s'$ that are polynomials in
  $F_{\alpha_{3}},\dots F_{\alpha_{j}}$. Note that $T_{s_1}(u_s)=u_s$
  and $T_{s_1}(u_s')=u_s'$ for all $s$ since $\alpha_1$ is not
  connected to any of the simple roots $\alpha_{3},\dots \alpha_j$. So
  \begin{align*}
    T_{s_1}T_{s_{2}}\cdots T_{s_{j-1}}F_{\alpha_j} =& T_{s_1}\left(
      \sum_{s=1}^m u_s F_{\alpha_{2}} u_s'\right)
    \\
    =& \sum_{s=1}^m u_s T_{s_1}(F_{\alpha_{2}}) u_s'
    \\
    =& \sum_{s=1}^m u_s (F_{\alpha_{2}}F_{\alpha_1}-q
    F_{\alpha_1}F_{\alpha_{2}}) u_s'
    \\
    =& \sum_{s=1}^m u_s F_{\alpha_{2}} u_s' F_{\alpha_1} - q
    F_{\alpha_1}\sum_{s=1}^m u_s F_{\alpha_{2}} u_s'
    \\
    =& T_{s_{2}}\cdots T_{s_{j-1}}(F_{\alpha_j}) F_{\alpha_1} - q
    F_{\alpha_1} T_{s_{2}}\cdots T_{s_{j-1}}(F_{\alpha_j}).
  \end{align*}
  Thus
  \begin{align*}
    [E_{\alpha_1},F_{\beta_j}] =& T_{s_2}\cdots
    T_{s_{j-1}}(F_{\alpha_j}) [E_{\alpha_1},F_{\alpha_1}] - q
    [E_{\alpha_1},F_{\alpha_1}] T_{s_2}\cdots
    T_{s_{j-1}}(F_{\alpha_j})
    \\
    =& T_{s_2}\cdots T_{s_{j-1}}(F_{\alpha_j})
    \frac{K_{\alpha_1}-K_{\alpha_1}\inv}{q-q\inv} - q
    \frac{K_{\alpha_1}-K_{\alpha_1}\inv}{q-q\inv} T_{s_2}\cdots
    T_{s_{j-1}}(F_{\alpha_j})
    \\
    =& T_{s_2}\cdots T_{s_{j-1}}(F_{\alpha_j}) \frac{
      K_{\alpha_1}-K_{\alpha_1}\inv - q^2 K_{\alpha_1} +
      K_{\alpha_1}\inv}{q-q\inv}
    \\
    =& -q T_{s_2}\cdots T_{s_{j-1}}(F_{\alpha_j}) K_{\alpha_1}.
  \end{align*}
  Note that $T_{s_2}\cdots T_{s_{j-1}}(F_{\alpha_j})$ is a polynomial
  in $F_{\alpha_2},\dots,F_{\alpha_j}$. By Proposition~\ref{prop:26}
  $[F_{\alpha_i},F_{\beta_j}]_q = [F_{\alpha_i},F_{\beta_j}] = 0$ for
  $1<i<j$ and $[F_{\alpha_j},F_{\beta_j}]_q =
  F_{\alpha_j}F_{\beta_j}-q\inv F_{\beta_j}F_{\alpha_j}=0$ so
  \begin{align*}
    T_{s_2}\cdots T_{s_{j-1}}(F_{\alpha_j}) F_{\beta_j} - q\inv
    F_{\beta_j}T_{s_2}\cdots T_{s_{j-1}}(F_{\alpha_j}) =&
    [T_{s_2}\cdots T_{s_{j-1}}(F_{\alpha_j}),F_{\beta_j}]_q
    \\
    =& 0.
  \end{align*}
  
  The second claim is by induction on $a$:
  \begin{align*}
    E_{\alpha_1} F_{\beta_j}^{a+1} =& \left( F_{\beta_j}^a
      E_{\alpha_1} - q^{2-a}[a] F_{\beta_j}^{a-1} T_{s_2}\cdots
      T_{s_{j-1}}(F_{\alpha_j}) K_{\alpha_1} \right) F_{\beta_j}
    \\
    =& F_{\beta_j}^{a+1} E_{\alpha_1} - q F_{\beta_j}^a T_{s_2}\cdots
    T_{s_{j-1}}(F_{\alpha_j}) K_{\alpha_1}
    \\
    &- q^{-a} [a] F_{\beta_j}^a T_{s_2}\cdots
    T_{s_{j-1}}(F_{\alpha_j}) K_{\alpha_1}
    \\
    =& F_{\beta_j}^{a+1} E_{\alpha_1} - q^{1-a}\left( q^{a} + q\inv
      [a]\right)F_{\beta_j}^a T_{s_2}\cdots T_{s_{j-1}}(F_{\alpha_j})
    K_{\alpha_1}
    \\
    =& F_{\beta_j}^{a+1} E_{\alpha_1} - q^{1-a}[a+1] F_{\beta_j}^a
    T_{s_2}\cdots T_{s_{j-1}}(F_{\alpha_j}) K_{\alpha_1}.
  \end{align*}

  So we get for $a\in \setZ_{>0}$:
  \begin{equation*}
    \phi_{F_{\beta_j},q^a}(E_{\alpha_1}) = F_{\beta_j}^{-a} E_{\alpha_1} F_{\beta_j}^a = E_{\alpha_1} - q^2 q^{-a} \frac{q^a - q^{-a}}{q-q\inv} T_{s_2}\cdots
    T_{s_{j-1}}(F_{\alpha_j}) K_{\alpha_1}.
  \end{equation*}
  Using the fact that $\phi_{F_{\beta_{j}},b}(E_{\alpha_1})$ is
  Laurent polynomial in $b$ we get the third claim of the proposition.
\end{proof}

We can combine the above propositions in the following proposition
\begin{prop}
  \label{prop:30}
  Let $i\in \{2,\dots,n\}$. For $\mathbf{b}=(b_1,\dots,b_n)\in
  (\setC^*)^n$
  \begin{align*}
    \phi_{F_\Sigma,\mathbf{b}}(F_{\alpha_{i}}) =& b_i^{-1}b_{i+1}\inv
    \cdots b_n\inv \phi_{F_{\beta_{i-1},b_{i-1}}}(F_{\alpha_i})
    \\
    =& b_i^{-1}b_{i+1}\inv \cdots b_n\inv (b_{i-1}F_{\alpha_i}+
    \frac{b_{i-1}-b_{i-1}\inv}{q-q\inv} F_{\beta_{i-1}}\inv
    F_{\beta_i})
    \\
    \phi_{F_\Sigma,\mathbf{b}}(E_{\alpha_i}) =&
    \phi_{F_{\beta_{i},b_{i}}}(E_{\alpha_i}) = E_{\alpha_i}+ q\inv b_i
    \frac{b_i-b_i\inv}{q-q\inv} F_{\beta_i}\inv F_{\beta_{i-1}}
    K_{\alpha_i}\inv.
  \end{align*}
  Furthermore
  \begin{equation*}
    \phi_{F_\Sigma,\mathbf{b}}(F_{\alpha_1}) = b_2\cdots b_n
    F_{\alpha_1}
  \end{equation*}
  and
  \begin{align*}
    \phi_{F_{\Sigma,\mathbf{b}}}(E_{\alpha_1}) =& E_{\alpha_1} - q^2
    \sum_{j=2}^n b_jb_{j+1}\inv\cdots b_n\inv
    \frac{b_j-b_j\inv}{q-q\inv}F_{\beta_j}\inv T_{s_2}\cdots
    T_{s_{j-1}}(F_{\alpha_j}) K_{\alpha_1}
    \\
    &+ b_2\inv \cdots b_n\inv F_{\beta_1}\inv
    \frac{(b_1-b_1\inv)(qb_1\inv\cdots b_n\inv K_{\alpha_1} - q\inv
      b_1\cdots b_n K_{\alpha_1}\inv)}{(q-q\inv)^2}.
  \end{align*}
\end{prop}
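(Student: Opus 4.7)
The strategy is to compute $\phi_{F_\Sigma,\mathbf{b}}$ from its defining composition $\phi_{F_{\beta_1},b_1}\circ\cdots\circ\phi_{F_{\beta_n},b_n}$, applying the factors from the right and keeping track of how each $\phi_{F_{\beta_j},b_j}$ acts on the intermediate expressions. The main inputs are Propositions~\ref{prop:26}--\ref{prop:29}, which describe the action of a single $\phi_{F_{\beta_j},b_j}$ on the relevant generators, together with Lemma~\ref{lemma:37} to pass from $b_j=q^k$, $k\in\setZ$, to arbitrary $b_j\in\setC^*$ via Laurent polynomial extension.

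For $F_{\alpha_i}$ with $i\geq 2$: by Proposition~\ref{prop:26}, $F_{\alpha_i}$ fails to $q$-commute with $F_{\beta_j}$ only when $j=i-1$. For each $j\neq i-1$, $\phi_{F_{\beta_j},b_j}(F_{\alpha_i})$ is therefore a scalar multiple of $F_{\alpha_i}$, whose coefficient is a Laurent monomial in $b_j$ read off from the $q$-commutation relation. The one nontrivial step $j=i-1$ is handled by Proposition~\ref{prop:27}. The remaining factors $\phi_{F_{\beta_j},b_j}$ then act on the additional term $F_{\beta_{i-1}}\inv F_{\beta_i}$; since $\Sigma$ is a set of commuting roots, these actions are again by scalars computed directly from the commutation relations among elements of $\Sigma$. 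Combining all contributions yields the stated formula.

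For $E_{\alpha_i}$ with $i\geq 2$: the argument is entirely analogous, now using that by Proposition~\ref{prop:26} the element $E_{\alpha_i}$ ordinarily commutes with $F_{\beta_j}$ for $j\neq i$, together with Proposition~\ref{prop:28} for $j=i$. The formula for $F_{\alpha_1}=F_{\beta_1}$ is immediate: $\phi_{F_{\beta_1},b_1}$ fixes $F_{\beta_1}$, while for each $j>1$, the commuting-roots relation $F_{\beta_j}F_{\beta_1}=q\inv F_{\beta_1}F_{\beta_j}$ yields $\phi_{F_{\beta_j},b_j}(F_{\beta_1})=b_jF_{\beta_1}$, so the total product is $b_2\cdots b_nF_{\alpha_1}$.

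The formula for $E_{\alpha_1}$ is the main obstacle and requires careful bookkeeping. We apply $\phi_{F_{\beta_n},b_n}$ first, producing by Proposition~\ref{prop:29} the $j=n$ summand with coefficient $b_n$. Each successive $\phi_{F_{\beta_k},b_k}$ with $2\leq k\leq n-1$ then both creates a new $j=k$ summand (by applying Proposition~\ref{prop:29} to the $E_{\alpha_1}$ piece that has survived so far) and rescales every earlier summand by suitable powers of $b_k$, arising from the action of $\phi_{F_{\beta_k},b_k}$ on the three factors $F_{\beta_j}\inv$, $T_{s_2}\cdots T_{s_{j-1}}(F_{\alpha_j})$ and $K_{\alpha_1}$. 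Finally $\phi_{F_{\beta_1},b_1}$, applied to $E_{\beta_1}=E_{\alpha_1}$, produces the trailing $F_{\beta_1}\inv$ term via the explicit formula in Lemma~\ref{lemma:9}, while simultaneously rescaling the accumulated summands through its action on $K_{\alpha_1}$ and on the remaining root vectors. The technical heart of this computation is checking the $q$-commutation of $F_{\beta_k}$ with the auxiliary root vectors $T_{s_2}\cdots T_{s_{j-1}}(F_{\alpha_j})$ for $k<j$; this can be done either by repeating the argument of Proposition~\ref{prop:26} for a reduced expression beginning with $s_2 s_3\cdots$, or by direct appeal to Theorem~\ref{thm:DP}.
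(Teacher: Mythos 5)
The treatment of $F_{\alpha_i}$, $E_{\alpha_i}$ for $i\geq 2$ and of $F_{\alpha_1}$ coincides with the paper's. The issue is the formula for $E_{\alpha_1}$, where your choice to apply the factors with $\phi_{F_{\beta_n},b_n}$ first and $\phi_{F_{\beta_1},b_1}$ last runs into a genuine obstruction that your account does not address.

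You assert that the ``technical heart'' is to check that $F_{\beta_k}$ $q$-commutes with $T_{s_2}\cdots T_{s_{j-1}}(F_{\alpha_j})$ for $k<j$, and that this follows from Theorem~\ref{thm:DP} or a variant of Proposition~\ref{prop:26}. This $q$-commutation is \emph{false} for every $k<j$. Indeed $T_{s_2}\cdots T_{s_{j-1}}(F_{\alpha_j})$ is a polynomial in $F_{\alpha_2},\dots,F_{\alpha_j}$, and by Proposition~\ref{prop:26} the $q$-bracket $[F_{\alpha_{k+1}},F_{\beta_k}]_q=F_{\beta_{k+1}}$ is nonzero; since $k+1\in\{2,\dots,j\}$ whenever $k<j$, the offending generator $F_{\alpha_{k+1}}$ actually occurs. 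The simplest concrete failure is $k=1$, $j=2$: $[F_{\alpha_2},F_{\beta_1}]_q=[F_{\alpha_2},F_{\alpha_1}]_q=F_{\beta_2}\neq 0$, and correspondingly $\phi_{F_{\beta_1},b_1}(F_{\alpha_2})=b_1F_{\alpha_2}+\frac{b_1-b_1\inv}{q-q\inv}F_{\beta_1}\inv F_{\beta_2}$ (Proposition~\ref{prop:27}) is not a rescaling. So when, in your order, $\phi_{F_{\beta_k},b_k}$ with $k<j$ hits the already-created summand $F_{\beta_j}\inv\,T_{s_2}\cdots T_{s_{j-1}}(F_{\alpha_j})\,K_{\alpha_1}$, it produces extra cross terms proportional to $F_{\beta_1}\inv K_{\alpha_1}$ etc.\ that must then be tracked and recombined with the trailing term coming from $\phi_{F_{\beta_1},b_1}(E_{\alpha_1})$. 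Since the $\phi$'s do commute, the final answer is of course the same, but the computation is substantially harder than your sketch suggests, and the step ``rescales every earlier summand by suitable powers of $b_k$'' is simply not what happens.

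The paper circumvents this precisely by applying $\phi_{F_{\beta_1},b_1}$ first, then $\phi_{F_{\beta_2},b_2}$, and so on. In that order the summand containing $T_{s_2}\cdots T_{s_{j-1}}(F_{\alpha_j})$ is created by $\phi_{F_{\beta_j},b_j}$ and thereafter is only acted on by $\phi_{F_{\beta_k},b_k}$ with $k>j$; for those $k$ one has $k\neq i-1$ for every $i\in\{2,\dots,j\}$, so Proposition~\ref{prop:26} gives $[F_{\alpha_i},F_{\beta_k}]_q=0$ for all generators appearing in $T_{s_2}\cdots T_{s_{j-1}}(F_{\alpha_j})$, and the twist really is a rescaling. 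This asymmetry between $k>j$ and $k<j$ is exactly why the order of composition matters here even though the $\phi$'s commute. To repair your argument you would either need to switch to the paper's order or carry out the nontrivial cancellation of the cross terms explicitly; as written, the proposal has a gap at this point.
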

\begin{proof}
  The first two equations follow from Proposition~\ref{prop:26},
  Proposition~\ref{prop:27} and Proposition~\ref{prop:28}. The third
  follows because $F_{\alpha_1}=F_{\beta_1}$ q-commutes with all the
  other root vectors $F_{\beta_2},\dots,F_{\beta_n}$ (see also the
  discussion before Definition~\ref{def:twist_by_weight}). For the
  last equation we use Proposition~\ref{prop:29}:
  \begin{align*}
    \phi_{F_\Sigma,\mathbf{b}}(E_{\alpha_1})=&
    \phi_{F_{\beta_n},b_n}\circ \cdots \circ
    \phi_{F_{\beta_1},b_1}(E_{\alpha_1})
    \\
    =& \phi_{F_{\beta_n},b_n}\circ \cdots \circ
    \phi_{F_{\beta_2},b_2}\left( E_{\alpha_1} - F_{\beta_1}\inv
      \frac{(b_1-b_1\inv)(qb_1\inv K_{\alpha_1}-q\inv b_1
        K_{\alpha_1}\inv)}{(q-q\inv)^2}\right)
    \\
    =& \phi_{F_{\beta_n},b_n}\circ \cdots \circ
    \phi_{F_{\beta_3},b_3}( E_{\alpha_1} - q^2
    b_2\frac{b_2-b_2\inv}{q-q\inv} F_{\beta_2}\inv F_{\alpha_2}
    K_{\alpha_1}
    \\
    &- b_2\inv F_{\beta_1}\inv \frac{(b_1-b_1\inv)(qb_1\inv b_2\inv
      K_{\alpha_1}-q\inv b_1b_2 K_{\alpha_1}\inv)}{(q-q\inv)^2})
    \\
    &\vdots
    \\
    =& E_{\alpha_1} - q^2 \sum_{j=2}^n b_jb_{j+1}\inv\cdots b_n\inv
    \frac{b_j-b_j\inv}{q-q\inv}F_{\beta_j}\inv T_{s_2}\cdots
    T_{s_{j-1}}(F_{\alpha_j}) K_{\alpha_1}
    \\
    &- b_2\inv \cdots b_n\inv F_{\beta_1}\inv
    \frac{(b_1-b_1\inv)(qb_1\inv\cdots b_n\inv K_{\alpha_1} - q\inv
      b_1\cdots b_n K_{\alpha_1}\inv)}{(q-q\inv)^2}
  \end{align*}
\end{proof}

\begin{prop}
  \label{prop:4}
  Let $\lambda$ be a weight such that $\lambda(K_{\alpha_i})\in \pm
  q^\setN$ for $i=2,\dots,n$ and $\lambda(K_{\alpha_1})\not \in \pm
  q^\setN$. Let $\mathbf{b}=(b_1,\dots,b_n)\in (\setC^*)^n$. Let
  $i\in\{2,\dots,n\}$. Then $E_{\alpha_i}$ acts injectively on the
  $U_q$-module $\phi_{F_\Sigma,\mathbf{b}}.L(\lambda)_{F_\Sigma}$ if and
  only if $b_i \not \in \pm q^{\setZ}$ and $F_{\alpha_i}$ acts
  injectively on $\phi_{F_\Sigma,\mathbf{b}}.L(\lambda)_{F_\Sigma}$ if
  and only if $b_{i-1}\not \in \pm q^{\setZ}$.
\end{prop}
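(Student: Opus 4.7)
Since the twisted action satisfies $u\cdot\phi.m=\phi.(\phi_{F_\Sigma,\mathbf{b}}(u)m)$ and $\phi.$ is a bijection of the underlying vector spaces, $E_{\alpha_i}$ (resp.\ $F_{\alpha_i}$) acts injectively on $\phi_{F_\Sigma,\mathbf{b}}.L(\lambda)_{F_\Sigma}$ iff $\phi_{F_\Sigma,\mathbf{b}}(E_{\alpha_i})$ (resp.\ $\phi_{F_\Sigma,\mathbf{b}}(F_{\alpha_i})$) acts injectively on $L(\lambda)_{F_\Sigma}$; these twisted operators are given explicitly by Proposition~\ref{prop:30}. Under the hypotheses on $\lambda$ the colored Dynkin diagram has $\alpha_1$ colored $\circ$ and $\alpha_2,\dots,\alpha_n$ colored $\bullet$, so by (the proof of) Theorem~\ref{thm:Classification_of_adm_modules_simply_laced} the subspace $N:=L(\lambda)^{\mathfrak{u}}$ is a finite-dimensional simple $U_q(\mathfrak{l})$-module for $\mathfrak{l}$ corresponding to $\{\alpha_2,\dots,\alpha_n\}$, and $L(\lambda)_{F_\Sigma}$ admits the PBW-type basis $f_{\mathbf{a}}(v):=F_{\beta_1}^{a_1}\cdots F_{\beta_n}^{a_n}v$, with $\mathbf{a}\in\setZ^n$ and $v$ running over a weight basis of $N$.

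The first computational step is to expand the two operators on $f_{\mathbf{a}}(v)$. Using Propositions~\ref{prop:26}, \ref{prop:27} and \ref{prop:28} together with the $q$-commutations $F_{\beta_i}F_{\beta_{i-1}}=q^{-1}F_{\beta_{i-1}}F_{\beta_i}$ and $K_{\alpha_i}^{-1}F_{\beta_j}=q^{(\alpha_i|\beta_j)}F_{\beta_j}K_{\alpha_i}^{-1}$, I expect, after cancellation, the clean two-term formulas
\begin{equation*}
\phi_{F_\Sigma,\mathbf{b}}(E_{\alpha_i})\,f_{\mathbf{a}}(v) = f_{\mathbf{a}}(E_{\alpha_i}v) + \mu(K_{\alpha_i})^{-1}\,\frac{b_i^{2}q^{a_i}-q^{-a_i}}{q-q^{-1}}\,f_{\mathbf{a}'}(v)
\end{equation*}
for $v\in N_\mu$, with $\mathbf{a}'=(a_1,\dots,a_{i-1}+1,a_i-1,\dots,a_n)$, and
\begin{equation*}
\phi_{F_\Sigma,\mathbf{b}}(F_{\alpha_i})\,f_{\mathbf{a}}(v) = c_1\,f_{\mathbf{a}}(F_{\alpha_i}v) + c_2\,\frac{b_{i-1}q^{a_{i-1}}-b_{i-1}^{-1}q^{-a_{i-1}}}{q-q^{-1}}\,f_{\mathbf{a}''}(v)
\end{equation*}
with $\mathbf{a}''=(a_1,\dots,a_{i-1}-1,a_i+1,\dots,a_n)$ and nonzero scalars $c_1,c_2\in\setC^*$ depending at most on $\mathbf{a}$ and $\mathbf{b}$. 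The crucial observation is that $b_i^{2}q^{a_i}-q^{-a_i}$ vanishes for some $a_i\in\setZ$ iff $b_i\in\pm q^{\setZ}$, and similarly the $F$-side scalar vanishes for some $a_{i-1}\in\setZ$ iff $b_{i-1}\in\pm q^{\setZ}$.

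Necessity is then immediate: with $v=v_\lambda$ (so $E_{\alpha_i}v_\lambda=0$) and $\mathbf{a}$ chosen so that $b_i=\pm q^{-a_i}$, one has $\phi_{F_\Sigma,\mathbf{b}}(E_{\alpha_i})f_{\mathbf{a}}(v_\lambda)=0$; similarly, taking $v=F_{\alpha_i}^{m_i}v_\lambda$, where $m_i\in\setN$ satisfies $\lambda(K_{\alpha_i})=\pm q^{m_i}$ (so $F_{\alpha_i}v=0$), kills the first term of the $F$-formula, and the second coefficient can be made to vanish for a suitable $a_{i-1}$ when $b_{i-1}\in\pm q^{\setZ}$. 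For sufficiency, decompose $N$ as a $U_{q_{\alpha_i}}(\mathfrak{sl}_2^{(i)})$-module (semisimple since $q$ is not a root of unity) into strings $v_0,v_1=F_{\alpha_i}v_0,\dots,v_m=F_{\alpha_i}^{m}v_0$ with $E_{\alpha_i}v_0=0$ and $F_{\alpha_i}v_m=0$; this induces a decomposition of $L(\lambda)_{F_\Sigma}$ into subspaces stable under both twisted operators, one per string. Filter such a subspace by $S_k:=\mathrm{span}\{f_{\mathbf{a}}(v_j):j\leq k,\,\mathbf{a}\in\setZ^n\}$: since $E_{\alpha_i}v_k$ lies in the span of $v_{k-1}$, the first term of the $E$-formula lands in $S_{k-1}$, so the induced map on $S_k/S_{k-1}$ is a nonzero multiple of $(b_i^{2}q^{a_i}-q^{-a_i})/(q-q^{-1})$ times the shift $f_{\mathbf{a}}(v_k)\mapsto f_{\mathbf{a}'}(v_k)$, which is injective precisely when $b_i\notin\pm q^{\setZ}$; a standard kernel-chase through the filtration then promotes injectivity on each graded piece to injectivity on the whole string-subspace, and hence on $L(\lambda)_{F_\Sigma}$. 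The $F_{\alpha_i}$ case is treated dually by the filtration $S^{(\geq k)}:=\mathrm{span}\{f_{\mathbf{a}}(v_j):j\geq k\}$. The main bookkeeping obstacle is the cancellation producing the clean factor $b_i^{2}q^{a_i}-q^{-a_i}$: the $q^{a_i-1}[a_i]$ term coming from $[E_{\alpha_i},F_{\beta_i}^{a_i}]$ in Proposition~\ref{prop:28} must combine correctly with the $(b_i^{2}-1)q^{a_i}/(q-q^{-1})$ contribution from the $F_{\beta_i}^{-1}F_{\beta_{i-1}}K_{\alpha_i}^{-1}$-piece, which depends delicately on the signs $(\alpha_i|\beta_i)=+1$ and $(\alpha_i|\beta_{i-1})=-1$ in the $K$-commutations.
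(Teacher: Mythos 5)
Your proposal and the paper both reduce to the same explicit twist formula (Proposition~\ref{prop:30}), and your two-term expansion
\begin{equation*}
  \phi_{F_\Sigma,\mathbf{b}}(E_{\alpha_i})f_{\mathbf{a}}(v) = f_{\mathbf{a}}(E_{\alpha_i}v) + \mu(K_{\alpha_i})^{-1}\frac{b_i^2 q^{a_i}-q^{-a_i}}{q-q^{-1}}\,f_{\mathbf{a}'}(v)
\end{equation*}
is in fact correct: the $[a_i]$ contribution from Proposition~\ref{prop:28} and the $q^{a_i}(b_i^2-1)/(q-q^{-1})$ contribution from the $F_{\beta_i}^{-1}F_{\beta_{i-1}}K_{\alpha_i}^{-1}$-term do combine to $(b_i^2q^{a_i}-q^{-a_i})/(q-q^{-1})$ exactly as you guess, once the $K$-commutations $(\alpha_i|\beta_{i-1})=-1$, $(\alpha_i|\beta_i)=1$, $(\alpha_i|\beta_j)=0$ ($j\ne i-1,i$) and the $q$-commutations $F_{\beta_i}F_{\beta_{i-1}}=q^{-1}F_{\beta_{i-1}}F_{\beta_i}$ are tracked. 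Your necessity argument (evaluate on $f_{\mathbf{a}}(v_\lambda)$, resp.\ $f_{\mathbf{a}}(F_{\alpha_i}^{m_i}v_\lambda)$, and tune $a_i$, resp.\ $a_{i-1}$) is also sound and needs only that $F_{\beta_j}$ acts bijectively, which is automatic in the localization.

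However, your sufficiency argument rests on an unjustified structural claim. You assert that $L(\lambda)_{F_\Sigma}$ has a PBW-type basis $\{F_{\beta_1}^{a_1}\cdots F_{\beta_n}^{a_n}v : \mathbf{a}\in\setZ^n,\ v\text{ in a weight basis of }N\}$, attributing this to the proof of Theorem~\ref{thm:Classification_of_adm_modules_simply_laced}. That proof only shows $L(\lambda)$ is a \emph{quotient} of $\mathcal{M}(N)=U_q\otimes_{U_q(\mathfrak{p})}N$; it does not show that the induced surjection $\mathcal{M}(N)_{F_\Sigma}\twoheadrightarrow L(\lambda)_{F_\Sigma}$ is an isomorphism, which is equivalent to the degree of $L(\lambda)$ being $\dim N$ and is precisely what the PBW basis and the ensuing direct-sum decomposition into string-subspaces require. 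Without this, the finite filtration $S_0\subset S_1\subset\cdots$ indexed by string position may fail to be a filtration by linearly independent layers, and the kernel-chase does not go through. The paper sidesteps this issue entirely: every element of $L(\lambda)_{F_\Sigma}$ is of the form $F_{\beta_1}^{a_1}\cdots F_{\beta_n}^{a_n}\otimes v'$ with $v'\in L(\lambda)$ (no claim that $v'\in N$), and one then applies $E_{\alpha_i}^{(r)}$ where $r$ is the length of the $E_{\alpha_i}$-string through $v'$ (finite because $L(\lambda)$ is a highest weight module, resp.\ because $-\alpha_i\in F_{L(\lambda)}$ for the $F_{\alpha_i}$-case) to read off $c_i=\pm q^{-r}$ with $c_i=q^{a_i}b_i$. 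This is the same core identity you exploit, but applied to a single arbitrary kernel vector rather than threaded through a string filtration, so it needs no PBW-basis claim. To repair your argument, either prove that $\mathcal{M}(N)_{F_\Sigma}\to L(\lambda)_{F_\Sigma}$ is an isomorphism, or replace the string filtration by the paper's single-vector $E^{(r)}$-computation.
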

\begin{proof}
  By Proposition~\ref{prop:25} and Corollary~\ref{cor:6} a root vector
  acts injectively on the $U_q$-module
  \begin{equation*}
    \phi_{F_\Sigma,(b_1,\dots,b_n)}.L(\lambda)_{F_\Sigma}
  \end{equation*}
  if and only if it acts injectively on
  \begin{equation*}
    \phi_{F_\Sigma,(\epsilon_1
      q^{i_1}b_1,\dots,\epsilon_n q^{i_n}b_n)}.L(\lambda)_{F_\Sigma}
  \end{equation*}
  for any $i_1,\dots,i_n\in\setZ$ and $\epsilon_1,\dots,\epsilon_n\in
  \{\pm 1\}$.
  
  Assume there exists a $0\neq v \in
  \phi_{F_\Sigma,\mathbf{b}}.L(\lambda)_{F_\Sigma}$ such that
  $E_{\alpha_i}v=0$. We have $v=F_{\beta_1}^{a_1}\cdots
  F_{\beta_n}^{a_n} \tensor v'$ for some $a_1,\dots,a_n\in \setZ_{\leq 0}$ and
  some $v'\in L(\lambda)$. So $E_{\alpha_i}v=0$ implies
  \begin{equation*}
    0=\phi_{F_\Sigma,\mathbf{b}}(E_{\alpha_i}) F_{\beta_1}^{a_1}\cdots F_{\beta_n}^{a_n}\tensor v' =  F_{\beta_1}^{a_1}\cdots F_{\beta_n}^{a_n}\tensor \phi_{F_\Sigma,\mathbf{c}}(E_{\alpha_i})v'
  \end{equation*}
  where $\mathbf{c}=(q^{a_1}b_1,\dots,q^{a_n}b_n)$. So there exists a
  $v'\in L(\lambda)$ such that
  $\phi_{F_\Sigma,\mathbf{c}}(E_{\alpha_i})v'=0$. That is
  \begin{equation*}
    \left(E_{\alpha_i} + q\inv c_i \frac{c_i-c_i\inv}{q-q\inv} F_{\beta_i}\inv F_{\beta_{i-1}} K_{\alpha_i}\inv\right) v' = 0
  \end{equation*}
  or equivalently
  \begin{equation*}
    F_{\beta_i}E_{\alpha_i} v' = q\inv c_i \frac{c_i\inv - c_i}{q-q\inv} F_{\beta_{i-1}}K_{\alpha_i}\inv v'.
  \end{equation*}
  Since $L(\lambda)$ is a highest weight module we have some $r\in
  \setN$ such that $E_{\alpha_i}^{r}v' \neq 0$ and $E_{\alpha_i}^{r+1}
  v' =0$. Fix this $r$. We get
  \begin{align*}
    E_{\alpha_i}^{(r)}F_{\beta_i}E_{\alpha_i} v' =
    E_{\alpha_i}^{(r)}q\inv c_i \frac{c_i\inv - c_i}{q-q\inv}
    F_{\beta_{i-1}}K_{\alpha_i}\inv v'
  \end{align*}
  and calculating the right hand side and left hand side we get
  \begin{align*}
    q^{r-1}[r]F_{\beta_{i-1}}K_{\alpha_i}\inv E_{\alpha_i}^{(r)}v' =
    q^{-1+2r} c_i \frac{c_i\inv - c_i}{q-q\inv}
    F_{\beta_{i-1}}K_{\alpha_i}\inv E_{\alpha_i}^{(r)} v'.
  \end{align*}
  So we must have
  \begin{align*}
    q^{r-1}[r] = q^{-1+2r} c_i \frac{c_i\inv - c_i}{q-q\inv}
  \end{align*}
  or equivalently $c_i = \pm q^{-r}$. Since $c_i\in q^{\setZ} b_i$ we
  have proved the first claim.

  The other claim is shown similarly (see~e.g. the calculations done
  in the proof of Proposition~\ref{prop:7}. The calculations will be
  the same in this case).
\end{proof}

\begin{prop}
  \label{prop:5}
  Let $M$ be a weight $U_q$-module of finite Jordan-Hölder length with
  finite dimensional weight spaces. Let $\alpha\in \Pi$. If $E_\alpha$
  and $F_\alpha$ both act injectively on $M$ then $E_\alpha$ and
  $F_\alpha$ act injectively on every composition factor of $M$.
\end{prop}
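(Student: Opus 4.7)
The plan is to reduce the question to a statement about the finite-dimensional weight spaces, exploiting the fact that on a finite-dimensional vector space an injective endomorphism is automatically bijective.

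First I would observe that for any weight $\mu$ of $M$, the maps $E_\alpha\colon M_\mu\to M_{q^\alpha\mu}$ and $F_\alpha\colon M_\mu\to M_{q^{-\alpha}\mu}$ are injective (by hypothesis), so the composition $E_\alpha F_\alpha\colon M_\mu\to M_\mu$ is an injective endomorphism of the finite-dimensional space $M_\mu$, hence bijective. The same reasoning gives that $F_\alpha E_\alpha$ is bijective on $M_\mu$ as well.

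Next, let $L$ be a composition factor of $M$ and write $L\iso N/N'$ for some submodules $N'\subset N\subseteq M$. Since $N$ and $N'$ are $U_q$-submodules, $E_\alpha F_\alpha$ preserves $N_\mu$ and $N'_\mu$. The restriction of $E_\alpha F_\alpha$ to either of these finite-dimensional spaces is injective (being a restriction of an injection), hence bijective. Passing to the quotient $L_\mu=N_\mu/N'_\mu$, the induced endomorphism $E_\alpha F_\alpha$ is again bijective. The analogous statement holds for $F_\alpha E_\alpha$.

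Finally, if $v\in L_\mu$ satisfies $F_\alpha v=0$, then $E_\alpha F_\alpha v=0$, contradicting bijectivity; so $F_\alpha$ acts injectively on each weight space of $L$, and hence on $L$. The symmetric argument with $F_\alpha E_\alpha$ gives injectivity of $E_\alpha$. There is no real obstacle here: the only thing to keep in mind is to work weight-space by weight-space and to verify carefully that the preserved-submodule argument goes through for both $N$ and $N'$, so that the induced endomorphism on the subquotient $L_\mu$ is truly bijective.
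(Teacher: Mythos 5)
Your proof is correct and is essentially the same as the paper's: both hinge on the observation that $E_\alpha F_\alpha$ and $F_\alpha E_\alpha$ are injective, hence bijective, on each finite-dimensional weight space $M_\mu$, and that this bijectivity descends to the subquotient weight spaces $(N/N')_\mu = N_\mu/N'_\mu$. You argue directly on an arbitrary composition factor $N/N'$ whereas the paper peels off a simple submodule and inducts on Jordan--H\"older length (and makes an inessential appeal to Theorem~\ref{thm:Lemire}), but this is a cosmetic reorganization of the same argument.
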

\begin{proof}
  Let $V$ be a simple $U_q$-submodule of $M$. Let $\mu$ be a weight of
  $V$. Then $V_\mu$ is a simple $(U_q)_0$-module by
  Theorem~\ref{thm:Lemire} and $E_{\alpha}F_{\alpha}$ and
  $F_{\alpha}E_\alpha$ act injectively on $V_\mu$ by assumption. Since
  $\dim M_\mu<\infty$ this implies that $F_\alpha E_\alpha$ and
  $E_\alpha F_\alpha$ act injectively on the $(U_q)_0$ module
  $(M/V)_\mu \iso M_\mu /V_\mu$. Since $M/V$ is the sum of its weight
  spaces this implies that $E_\alpha F_\alpha$ and $F_\alpha E_\alpha$
  act injectively on $M/V$. This in turn implies that $E_\alpha$ and
  $F_\alpha$ act injectively on $M/V$. Doing induction on the
  Jordan-Hölder length of $M$ finishes the proof.
\end{proof}
The above proposition is true for a general simple Lie algebra
$\mathfrak{g}$ and we will use it in the next section as well.

\begin{thm}
  \label{thm:clas_of_b_such_that_twist_is_torsion_free}
  Let $\lambda$ be a weight such that $\lambda(K_{\alpha_i})\in \pm
  q^\setN$ for $i=2,\dots,n$ and $\lambda(K_{\alpha_1})\not \in \pm
  q^\setN$. Let $\mathbf{b}=(b_1,\dots,b_n)\in (\setC^*)^n$.  Then
  $\phi_{F_\Sigma,\mathbf{b}}.L(\lambda)_{F_\Sigma}$ is simple and
  torsion free if and only if $b_i\not \in \pm q^\setZ$, $i=1,\dots, n$
  and $\lambda(K_{\alpha_1})\inv b_1\cdots b_n \not \in \pm q^\setZ$.
\end{thm}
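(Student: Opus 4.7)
The overall plan is to characterize simplicity and torsion-freeness of the module $M := \phi_{F_\Sigma,\mathbf{b}}.L(\lambda)_{F_\Sigma}$ through the injectivity of the actions of the simple root vectors $E_{\alpha_i}$ and $F_{\alpha_i}$, and then translate these injectivity conditions into the stated conditions on $\mathbf{b}$ using the explicit formulas of Proposition~\ref{prop:30}.

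For the first reduction, I would argue as follows. If $M$ is simple and torsion free then $T_M = \Phi \supset \pm\Pi$, so each simple root vector acts injectively on $M$. For the converse, suppose all $E_{\alpha_i},F_{\alpha_i}$ act injectively on $M$. Since $L(\lambda)$ is admissible of degree $d$ and $-\Sigma \subset T_{L(\lambda)}$ (by Lemma~\ref{lemma:18}), Lemma~\ref{lemma:13} yields $\dim M_\nu = d$ for every $\nu \in \wt M$. The module $M$ has finite Jordan–Hölder length by Lemma~\ref{lemma:10}. By Proposition~\ref{prop:5} each composition factor $L'$ of $M$ inherits injectivity of the simple root vectors, and then the closure property (Proposition~\ref{prop:8}) forces $T_{L'} = \Phi$, so $L'$ is infinite-dimensional torsion free simple. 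By Proposition~\ref{prop:15}, each such $L'$ is admissible of degree $d$; since $L'$ is torsion free one shows $\dim L'_\nu = d$ for every $\nu \in \wt L' = \wt M$. Summing dimensions across composition factors (counted with multiplicity) and comparing with $\dim M_\nu = d$ forces there to be exactly one composition factor, so $M$ is simple and torsion free.

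For the injectivity conditions themselves, Proposition~\ref{prop:4} applied to $i = 2,\ldots,n$ gives directly that injectivity of $E_{\alpha_i}$ forces $b_i \notin \pm q^{\setZ}$ and injectivity of $F_{\alpha_i}$ forces $b_{i-1} \notin \pm q^{\setZ}$; together these yield $b_1,\ldots,b_n \notin \pm q^{\setZ}$. For $i = 1$, Proposition~\ref{prop:30} gives $\phi_{F_\Sigma,\mathbf{b}}(F_{\alpha_1}) = b_2\cdots b_n F_{\alpha_1}$, and since $F_{\alpha_1} = F_{\beta_1}$ is invertible in $U_{q(F_\Sigma)}$ the action of $F_{\alpha_1}$ on $M$ is automatically injective. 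The remaining condition comes from $E_{\alpha_1}$: imitating the proof of Proposition~\ref{prop:4}, if $E_{\alpha_1}$ fails to act injectively on $M$ then some nonzero $v' \in L(\lambda)$ satisfies $\phi_{F_\Sigma,\mathbf{c}}(E_{\alpha_1})v' = 0$ for some $\mathbf{c} \in \mathbf{b}q^{\setZ^n}$. Taking $v' = v_\lambda$ (the highest weight vector), the first summand in Proposition~\ref{prop:30} kills $v_\lambda$, and the remaining equation, after substituting $K_{\alpha_1}v_\lambda = \lambda(K_{\alpha_1})v_\lambda$, becomes an identity in the weight space $(L(\lambda)_{F_\Sigma})_{\lambda+\alpha_1}$ among the vectors $F_{\beta_j}^{-1}T_{s_2}\cdots T_{s_{j-1}}(F_{\alpha_j})v_\lambda$ for $j=2,\ldots,n$ and $F_{\beta_1}^{-1}v_\lambda$. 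The coefficients for $j \geq 2$ contain the nonzero factor $b_j - b_j^{-1}$ (using $b_j \notin \pm q^{\setZ}$), and examining the coefficient of $F_{\beta_1}^{-1}v_\lambda$ in the last summand of Proposition~\ref{prop:30} isolates the factor $qc_1^{-1}\cdots c_n^{-1}\lambda(K_{\alpha_1}) - q^{-1}c_1\cdots c_n \lambda(K_{\alpha_1})^{-1}$, whose vanishing amounts to $\lambda(K_{\alpha_1}) = \pm q^{-1}c_1\cdots c_n$. Since $\mathbf{c}$ ranges over $\mathbf{b}q^{\setZ^n}$ this is precisely the condition $\lambda(K_{\alpha_1})^{-1}b_1\cdots b_n \in \pm q^{\setZ}$, and one checks conversely that outside this locus $\phi_{F_\Sigma,\mathbf{c}}(E_{\alpha_1})$ acts injectively on $L(\lambda)$.

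The main obstacle is the explicit analysis of $\phi_{F_\Sigma,\mathbf{c}}(E_{\alpha_1})v_\lambda$ used in the last step: the weight space $(L(\lambda)_{F_\Sigma})_{\lambda+\alpha_1}$ is not one-dimensional in general, so one must verify that the $n$ vectors appearing in Proposition~\ref{prop:30} interact in a sufficiently controlled fashion that the stated numerical coincidence is the only obstruction to injectivity. The verification parallels the argument of Proposition~\ref{prop:4} (applying appropriate divided powers of $E_{\alpha_1}$ to isolate the coefficient of interest), but requires keeping track of several independent components arising from the longer root subsums encoded by $T_{s_2}\cdots T_{s_{j-1}}(F_{\alpha_j})$.
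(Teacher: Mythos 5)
Your overall framework is sound: reducing ``simple and torsion free'' to ``all simple root vectors act injectively'' (via Lemma~\ref{lemma:13}, Lemma~\ref{lemma:10}, Proposition~\ref{prop:5}, Proposition~\ref{prop:8}, Proposition~\ref{prop:15}) works, and your use of Proposition~\ref{prop:4} for $E_{\alpha_i}, F_{\alpha_i}$ with $i\geq 2$ and the observation that $F_{\alpha_1}=F_{\beta_1}$ is invertible in $U_{q(F_\Sigma)}$ are both correct. The issue is concentrated in the $E_{\alpha_1}$ analysis, which you yourself flag as the ``main obstacle,'' and there the gap is genuine rather than a routine verification you are postponing.

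Two specific problems. First, you evaluate $\phi_{F_\Sigma,\mathbf{c}}(E_{\alpha_1})$ on $v'=v_\lambda$ only, but to prove injectivity you must rule out vanishing on \emph{every} nonzero $v'\in L(\lambda)$; and to prove non-injectivity when the condition fails, the choice $v'=v_\lambda$ is not enough either, since for general $\lambda$ the terms $F_{\beta_j}^{-1}T_{s_2}\cdots T_{s_{j-1}}(F_{\alpha_j})v_\lambda$ for $j\geq 2$ need not vanish even though their coefficients do not. Second, the ``imitate Proposition~\ref{prop:4}'' step does not carry over cleanly: Proposition~\ref{prop:4} succeeds because $\phi_{F_\Sigma,\mathbf{c}}(E_{\alpha_i})$ for $i\geq 2$ has only two terms ($E_{\alpha_i}$ and one correction), so applying $E_{\alpha_i}^{(r)}$ collapses everything onto a single vector and yields one scalar equation. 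For $E_{\alpha_1}$, Proposition~\ref{prop:30} produces $n$ correction terms landing in a weight space that can be multi-dimensional, and applying $E_{\alpha_1}^{(r)}$ does not obviously decouple them, since $E_{\alpha_1}$ does not commute with $F_{\beta_j}^{-1}$ (Proposition~\ref{prop:29}). Making this rigorous would require substantial new computation that is not sketched.

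The paper avoids this obstruction entirely by a different device: using Proposition~\ref{prop:9}, Lemma~\ref{lemma:30} and Lemma~\ref{lemma:1}, it identifies $\left(\phi_{F_\Sigma,\mathbf{b}}.L(\lambda)_{F_\Sigma}\right)^{ss}$ with ${}^{\overline{s_1 s_2}}\!\left(\phi_{F_\Sigma,(\lambda(K_{\alpha_1})b_1^{-1}\cdots b_n^{-1},\,\epsilon b_1,\,b_3,\dots,b_n)}.L(\lambda)_{F_\Sigma}\right)^{ss}$. Since $T_{s_1}^{-1}T_{s_2}^{-1}(E_{\alpha_1})=E_{\alpha_2}$ and $T_{s_1}^{-1}T_{s_2}^{-1}(F_{\alpha_1})=F_{\alpha_2}$, the injectivity of $E_{\alpha_1}$ and $F_{\alpha_1}$ becomes an injectivity question for $E_{\alpha_2}$ and $F_{\alpha_2}$ on a re-parametrized module, where Proposition~\ref{prop:4} already gives the answer ($\epsilon b_1\notin\pm q^{\setZ}$ and $\lambda(K_{\alpha_1})b_1^{-1}\cdots b_n^{-1}\notin\pm q^{\setZ}$). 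The passage from an isomorphism of semisimplifications to a statement about the actual module is then handled exactly as you do with Proposition~\ref{prop:5} and the weight-space-dimension count. If you want to salvage your direct approach, you would need to prove a multi-term analogue of Proposition~\ref{prop:4} for $E_{\alpha_1}$; the paper's Weyl-twist argument is both shorter and avoids the linear-algebra headache you identified.
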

\begin{proof}
  By Proposition~\ref{prop:9} $L(\lambda)$ is a
  subquotient of
  \begin{equation*}
    {^{\bar{s_1}}}\left( \phi_{F_\Sigma,(\lambda(K_{\alpha_1}),1,\dots,1)}.L(\lambda)_{F_\Sigma}\right).
  \end{equation*}
  So by Lemma~\ref{lemma:1} we get (using that
  $L(\lambda)={^{s_1}}\left({^{\bar{s_1}}}L(\lambda)\right)$) for any
  $\mathbf{c}=(c_1,\dots,c_n) \in (\setC^*)^n$
  \begin{equation*}
    \left( \phi_{F_\Sigma,\mathbf{c}}.L(\lambda)_{F_\Sigma} \right)^{ss} \iso {^{\bar{s_1}}}\left( \phi_{F_\Sigma,(\lambda(K_{\alpha_1})c_1\inv \cdots c_n\inv,c_2,\dots,c_n)}.L(\lambda)_{F_\Sigma}\right)^{ss}.
  \end{equation*}
  We have $\lambda(K_{\alpha_2})=\epsilon q^{r}$ for some $r\in \setN$
  and some $\epsilon \in \{\pm 1\}$. We see in the proof of
  Lemma~\ref{lemma:30} that $L(\lambda)$ is a subqoutient of
  \begin{equation*}
    {^{\bar{s_2}}}\left( \phi_{F_\Sigma,(\epsilon,\epsilon,1,\dots,1)}.L(\lambda)_{F_\Sigma}\right).
  \end{equation*}
  We get by Lemma~\ref{lemma:1} (using that
  $L(\lambda)={^{s_2}}\left({^{\bar{s_2}}}L(\lambda)\right)$) for any
  $\mathbf{c}=(c_1,\dots,c_n) \in (\setC^*)^n$
  \begin{equation*}
    \left( \phi_{F_\Sigma,\mathbf{c}}.L(\lambda)_{F_\Sigma}\right)^{ss} \iso {^{\bar{s_2}}}\left( \phi_{F_\Sigma,(\epsilon c_2,\epsilon c_1,c_3,\dots,c_n)}.L(\lambda)_{F_\Sigma}\right)^{ss}.
  \end{equation*}
  Combining the above we get
  \begin{align*}
    \left(
      \phi_{F_\Sigma,\mathbf{b}}.L(\lambda)_{F_\Sigma}\right)^{ss}
    \iso& {^{\bar{s_2}}}\left( \phi_{F_\Sigma,(\epsilon b_2,\epsilon
        b_1,b_3,\dots,b_n)}.L(\lambda)_{F_\Sigma}\right)^{ss}
    \\
    \iso& {^{\bar{s_2}}}\left({^{\bar{s_1}}}\left(
        \phi_{F_\Sigma,(\lambda(K_{\alpha_1})b_1\inv\cdots b_n\inv,
          \epsilon
          b_1,b_3,\dots,b_n)}.L(\lambda)_{F_\Sigma}\right)\right)^{ss}
    \\
    \iso& {^{\bar{s_1s_2}}}\left(
      \phi_{F_\Sigma,(\lambda(K_{\alpha_1})b_1\inv\cdots b_n\inv,
        \epsilon b_1,b_3,\dots,b_n)}.L(\lambda)_{F_\Sigma}\right)^{ss}.
  \end{align*}
  Since $T_{s_1}\inv T_{s_2}\inv(E_{\alpha_1})=E_{\alpha_2}$ and
  $T_{s_1}\inv T_{s_2}\inv(F_{\alpha_1})=F_{\alpha_2}$ we get by
  Proposition~\ref{prop:4} that $E_{\alpha_1}$ acts injectively on
  ${^{\bar{s_1s_2}}}\left(
    \phi_{F_{\Sigma},(\lambda(K_{\alpha_1})b_1\inv \cdots
      b_n\inv,\epsilon
      b_1,b_3,\dots,b_n)}.L(\lambda)_{F_{\Sigma}}\right)$ if and only
  if $b_1 \not \in \pm q^{\setZ}$ and $F_{\alpha_1}$ acts injectively
  on ${^{\bar{s_1s_2}}}\left(
    \phi_{F_{\Sigma},(\lambda(K_{\alpha_1})b_1\inv \cdots
      b_n\inv,\epsilon
      b_1,b_3,\dots,b_n)}.L(\lambda)_{F_{\Sigma}}\right)$ if and only
  if $\lambda(K_{\alpha_1})\inv b_1\cdots b_n \not \in \pm q^{\setZ}$.

  Assume $\phi_{F_\Sigma,\mathbf{b}}.L(\lambda)_{F_\Sigma}$ is torsion
  free. Then all root vectors act injectively on
  $\phi_{F_\Sigma,\mathbf{b}}.L(\lambda)_{F_\Sigma}$. We claim
  $\phi_{F_\Sigma,\mathbf{b}}.L(\lambda)_{F_\Sigma}$ is simple: Let
  $V\subset \phi_{F_\Sigma,\mathbf{b}}.L(\lambda)_{F_\Sigma}$ be a
  simple module. Then $V$ is admissible of the same degree $d$ as
  $L(\lambda)$ by Proposition~\ref{prop:15} and because all root
  vectors act injectively $\dim V_{q^\mu\lambda}=d$ for all $\mu\in
  Q$. So $V= \phi_{F_\Sigma,\mathbf{b}}.L(\lambda)_{F_\Sigma}$. Thus
  $\left(\phi_{F_\Sigma,\mathbf{b}}.L(\lambda)_{F_\Sigma}\right)^{ss}=\phi_{F_\Sigma,\mathbf{b}}.L(\lambda)_{F_\Sigma}$. Then
  by the above
  \begin{equation*}
    \phi_{F_\Sigma,\mathbf{b}}.L(\lambda)_{F_\Sigma} \iso {^{\bar{s_1s_2}}}\left( \phi_{F_{\Sigma},(\lambda(K_{\alpha_1})b_1\inv \cdots b_n\inv,\epsilon b_1,b_3,\dots,b_n)}.L(\lambda)_{F_{\Sigma}}\right).
  \end{equation*}
  This shows that when
  $\phi_{F_\Sigma,\mathbf{b}}.L(\lambda)_{F_{\Sigma}}$ is torsion free
  we must have $\lambda(K_{\alpha_1})\inv b_1\cdots b_n \not \in \pm
  q^{\setZ}$.  By Proposition~\ref{prop:4} $b_i\not \in \pm q^\setZ$,
  $i=1,\dots, n$.
  
  Assume on the other hand that $b_i \not \in \pm q^{\setZ}$ for $i\in
  \{1,\dots,n\}$ and $\lambda(K_{\alpha_1})\inv b_1\cdots b_n\not \in
  \pm q^{\setZ}$. By Proposition~\ref{prop:4} we get that the simple
  root vectors $E_{\alpha_2},\dots,E_{\alpha_n}$ and
  $F_{\alpha_1},\dots,F_{\alpha_n}$ all act injectively on
  $\phi_{F_\Sigma,\mathbf{b}}.L(\lambda)_{F_\Sigma}$. We need to show
  that $E_{\alpha_1}$ acts injectively on the module.  By the above
  \begin{equation*}
    \left(\phi_{F_\Sigma,\mathbf{b}}.L(\lambda)_{F_\Sigma}\right)^{ss} \iso {^{\bar{s_1s_2}}}\left( \phi_{F_{\Sigma},(\lambda(K_{\alpha_1})b_1\inv \cdots b_n\inv,\epsilon b_1,b_3,\dots,b_n)}.L(\lambda)_{F_{\Sigma}}\right)^{ss}
  \end{equation*}
  and the root vectors $E_{\alpha_1},F_{\alpha_1}$ act injectively on
  \begin{equation*}
    {^{\bar{s_1s_2}}}\left(
      \phi_{F_{\Sigma},(\lambda(K_{\alpha_1})b_1\inv \cdots
        b_n\inv,\epsilon b_1,b_3,\dots,b_n)}.L(\lambda)_{F_{\Sigma}}\right).
  \end{equation*}
  Then by Proposition~\ref{prop:5} $E_{\alpha_1}$ act injectively on
  all composition factors of
  $\phi_{F_\Sigma,\mathbf{b}}.L(\lambda)_{F_\Sigma}$.

  Let $V$ be a simple $U_q$-submodule of
  $\phi_{F_\Sigma,\mathbf{b}}.L(\lambda)_{F_\Sigma}$. By the above all
  simple root vectors act injectively on $V$ and then like above this
  implies $V=\phi_{F_\Sigma,\mathbf{b}}.L(\lambda)_{F_\Sigma}$
  i.e. $\phi_{F_\Sigma,\mathbf{b}}.L(\lambda)_{F_\Sigma}$ is simple
  and torsion free.
\end{proof}

By the comments after Corollary~\ref{cor:2} the above Theorem
completes the classification of simple torsion free modules in type A.

\section{Classification of simple torsion free modules. Type C.}
\label{sec:type-c_n-calc}
In this section we assume $\mathfrak{g}$ is of type $C_n$
(i.e. $\mathfrak{g}=\mathfrak{sp}_{2n}$) with $n\geq 2$. Let
$\Pi=\{\alpha_1,\dots,\alpha_n\}$ denote the simple roots such that
$(\alpha_i|\alpha_{i+1})=-1$, $i=2,\dots,n-1$,
$\left<\alpha_2,\alpha_1^\vee\right>=-1$ and
$\left<\alpha_1,\alpha_2^\vee\right>=-2$ i.e. $\alpha_1$ is long and
$\alpha_2,\dots,\alpha_n$ are short.

Set $\beta_j = s_1\cdots s_{j-1}(\alpha_j)=\alpha_1+\dots+\alpha_j$,
then $\Sigma=\{\beta_1,\dots,\beta_n\}$ is a set of commuting roots
with corresponding root vectors $F_{\beta_j} = T_{s_1}\cdots
T_{s_{j-1}}(F_{\alpha_j})$. We will show some commutation formulas and
use these to calculate $\phi_{F_\Sigma,\mathbf{b}}$ on most of the
simple root vectors.

Choose a reduced expression of $w_0$ starting with $s_1\cdots s_n
s_1\cdots s_{n-1}$ and define root vectors
$F_{\gamma_1},\dots,F_{\gamma_N}$ from this expression. Note that
$F_{\beta_i}=F_{\gamma_i}$ for $i=1,\dots, n$. Note for use in the
proposition below that for $j\in \{1,\dots,n-1\}$,
\begin{align*}
  \gamma_{n+j} = s_1\cdots s_n s_1\cdots
  s_{j-1}(\alpha_j)=\alpha_1+2\alpha_2+\alpha_3+\cdots \alpha_{j+1}
\end{align*}
and
\begin{align*}
  F_{\gamma_{n+j}}=&T_{s_1}\cdots T_{s_n} T_{s_1}\cdots
  T_{s_{j-1}}(F_{\alpha_j})
  \\
  =& T_{s_1}\cdots T_{s_{j+1}} T_{s_1}\cdots
  T_{s_{j-1}}(F_{\alpha_j}).
\end{align*}
In particular $F_{\alpha_1+2\alpha_2}=T_{s_1}T_{s_2}(F_{\alpha_1})$.

\begin{prop}
  \label{prop:32}
  Let $i\in \{2,\dots,n\}$ and $j\in \{1,\dots,n\}$
  \begin{equation*}
    [F_{\alpha_i},F_{\beta_j}]_q =
    \begin{cases}
      [2] F_{\alpha_1+2\alpha_2}, &\text{ if } j=i=2
      \\
      F_{\alpha_1+2\alpha_2+\alpha_3+\cdots+\alpha_{j}}, &\text{ if }
      i=2 \text{ and } j>2
      \\
      F_{\beta_i}, &\text{ if } j = i-1
      \\
      0, &\text{ otherwise}
    \end{cases}
  \end{equation*}
  and
  \begin{equation*}
    [E_{\alpha_i},F_{\beta_j}] =
    \begin{cases}
      [2] F_{\beta_{1}} K_{\alpha_2}\inv, &\text{ if } j = 2 = i
      \\
      F_{\beta_{i-1}} K_{\alpha_i}\inv, &\text{ if } j = i > 2
      \\
      0, &\text{ otherwise.}
    \end{cases}
  \end{equation*}
\end{prop}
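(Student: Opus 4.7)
The proof will follow the same template as Proposition~\ref{prop:26}, with extra case work to handle the new phenomena that arise from the long simple root $\alpha_1$ satisfying $\langle\alpha_1,\alpha_2^\vee\rangle=-2$. I split by the pair $(i,j)$. The cases $j<i-1$ and $j=i-1$ go through by the same arguments as in Proposition~\ref{prop:26}: $F_{\beta_j}$ is a polynomial in $F_{\alpha_1},\ldots,F_{\alpha_j}$, all of which commute with both $F_{\alpha_i}$ and $E_{\alpha_i}$ when $j<i-1$; and the identity $[F_{\alpha_i},F_{\beta_{i-1}}]_q=F_{\beta_i}$ follows from applying $T_{s_1}\cdots T_{s_{i-2}}$ to $[F_{\alpha_i},F_{\alpha_{i-1}}]_q$ and recognizing $T_{s_{i-1}}(F_{\alpha_i})$. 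The only detail to watch for is $i=2$, where $(\alpha_1|\alpha_2)=-2$ forces the $q$-scalar in $[F_{\alpha_2},F_{\alpha_1}]_q$ to be $q^2$ instead of $q$.

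For $j\geq i$ with $i\geq 3$, the arguments from Proposition~\ref{prop:26} carry over essentially unchanged. For $[F_{\alpha_i},F_{\beta_j}]_q$, Theorem~\ref{thm:DP} confines the commutator to a span of intermediate PBW monomials, and a weight/height check rules out every such monomial (no positive root $\alpha_1+\cdots+\alpha_m$ with $m\leq j$ occurs as a later $\gamma_s$ in the reduced word). For $[E_{\alpha_i},F_{\beta_j}]$ with $j>i$ one decomposes $T_{s_{i+1}}\cdots T_{s_{j-1}}(F_{\alpha_j})=\sum_s u_s F_{\alpha_{i+1}}u_s'$ with $u_s,u_s'\in\langle F_{\alpha_{i+2}},\ldots,F_{\alpha_j}\rangle$, so that $T_{s_i}$ of this is a $q$-commutator built from $F_{\beta_i}$; combined with $[E_{\alpha_i},F_{\beta_i}]=F_{\beta_{i-1}}K_{\alpha_i}^{-1}$ (handled first as a direct calculation), this finishes those cases.

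The genuinely new cases are $i=2$ with $j\geq 2$. For $j=i=2$ I plan a direct calculation using $F_{\beta_2}=F_{\alpha_2}F_{\alpha_1}-q^2F_{\alpha_1}F_{\alpha_2}$ and $(\alpha_2|\beta_2)=0$, yielding
\[
[F_{\alpha_2},F_{\beta_2}]_q = F_{\alpha_2}^2F_{\alpha_1} - (1+q^2)F_{\alpha_2}F_{\alpha_1}F_{\alpha_2} + q^2F_{\alpha_1}F_{\alpha_2}^2,
\]
which should be identified with $[2]\,F_{\alpha_1+2\alpha_2}=[2]\,T_{s_1}T_{s_2}(F_{\alpha_1})$ by comparing with the explicit expansion of $T_{s_2}(F_{\alpha_1})$ coming from the braid formula for $T_{s_2}$ on $F_{\alpha_1}$ (which has length three because $-\langle\alpha_1,\alpha_2^\vee\rangle=2$). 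The analogous $E$ computation is more immediate: a direct expansion of $[E_{\alpha_2},F_{\alpha_2}F_{\alpha_1}-q^2F_{\alpha_1}F_{\alpha_2}]$ collapses all $K_{\alpha_2}$-conjugations, and the factor $\tfrac{q^2-q^{-2}}{q-q^{-1}}=[2]$ emerges, giving $[E_{\alpha_2},F_{\beta_2}]=[2]\,F_{\beta_1}K_{\alpha_2}^{-1}$.

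For $i=2$, $j>2$ I would use the decomposition $T_{s_3}\cdots T_{s_{j-1}}(F_{\alpha_j})=\sum_s u_s F_{\alpha_3}u_s'$ with $u_s,u_s'\in\langle F_{\alpha_4},\ldots,F_{\alpha_j}\rangle$, apply $T_{s_1}T_{s_2}$, and use that $T_{s_1}T_{s_2}$ fixes $u_s$, $u_s'$ while sending $F_{\alpha_3}$ to $F_{\alpha_3}F_{\beta_2}-qF_{\beta_2}F_{\alpha_3}$. Then $E_{\alpha_2}$ commutes with the $u_s$, $u_s'$ and with $F_{\alpha_3}$, so the $E$-commutator reduces to a $K_{\alpha_2}$-weight computation that makes it collapse to zero; for the $F$-commutator, the $j=2$ case propagates through the outer factors to produce the single root vector $F_{\alpha_1+2\alpha_2+\alpha_3+\cdots+\alpha_j}$. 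The main obstacle will be this last step: we have to simultaneously keep track of the $[2]$ from the Serre relation, the powers of $q$ accumulated by $T_{s_3},\ldots,T_{s_{j-1}}$, and the identification of the output as a single root vector rather than a PBW polynomial. Once the $j=2$ base case is in hand, induction on $j$ should close out the argument.
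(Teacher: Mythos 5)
Your plan reproduces the paper's treatment for the standard cases exactly: $j<i-1$ is immediate, $j=i-1$ follows from $T_{s_1}\cdots T_{s_{i-2}}$ applied to $[F_{\alpha_i},F_{\alpha_{i-1}}]_q$, and $j\geq i>2$ is handled by the Theorem~\ref{thm:DP}/height argument from Proposition~\ref{prop:26}; likewise your direct computation of $[E_{\alpha_2},F_{\beta_2}] = [2]F_{\beta_1}K_{\alpha_2}^{-1}$ is correct, and your $[F_{\alpha_2},F_{\beta_2}]_q$ expansion matches the paper's $q^2F_{\alpha_1}F_{\alpha_2}^2 - q[2]F_{\alpha_2}F_{\alpha_1}F_{\alpha_2} + F_{\alpha_2}^2F_{\alpha_1}$. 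One small inaccuracy: the paper identifies this with $[2]\,T_{s_2}^{-1}(F_{\alpha_1})$ and then uses $T_{s_2}^{-1}T_{s_2}T_{s_1}T_{s_2}(F_{\alpha_1}) = T_{s_1}T_{s_2}(F_{\alpha_1}) = F_{\alpha_1+2\alpha_2}$ (via $T_{s_2}T_{s_1}T_{s_2}(F_{\alpha_1}) = F_{\alpha_1}$ from Jantzen~8.20); you should be comparing against $T_{s_2}^{-1}(F_{\alpha_1})$ or $T_{s_1}T_{s_2}(F_{\alpha_1})$, not $T_{s_2}(F_{\alpha_1})$. For the $E$-commutator with $i=2$, $j>2$, your decomposition-plus-$T_{s_1}T_{s_2}$ route is in substance the same thing the paper does (the paper writes $F_{\beta_j} = G\cdot F_{\beta_2} - qF_{\beta_2}\cdot G$ with $G=T_{s_3}\cdots T_{s_{j-1}}(F_{\alpha_j})$, which is exactly what your decomposition gives after commuting $F_{\beta_2}$ past the $u_s'$); the key point that $E_{\alpha_2}$ commutes with $G$ then collapses everything correctly.

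Where you diverge, and where your estimate is too optimistic, is the $F$-commutator for $i=2$, $j>2$. The paper does \emph{not} induct: it expands $F_{\alpha_1+2\alpha_2+\cdots+\alpha_j} = F_{\gamma_{n+j-1}} = T_{s_1}\cdots T_{s_{j-2}}T_{s_{j-1}}T_{s_j}T_{s_1}\cdots T_{s_{j-3}}T_{s_{j-2}}(F_{\alpha_{j-1}})$ directly (treating $j=3$ and $j>3$ separately), commutes $T_{s_j}$ and $T_{s_{j-1}}$ inward, and uses $T_{s_{j-1}}T_{s_{j-2}}(F_{\alpha_{j-1}})=F_{\alpha_{j-2}}$ and $T_{s_1}\cdots T_{s_{j-2}}T_{s_1}\cdots T_{s_{j-3}}(F_{\alpha_{j-2}})=F_{\alpha_2}$ to land at $F_{\alpha_2}F_{\beta_j}-qF_{\beta_j}F_{\alpha_2}$ in one stroke. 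Your inductive plan has three holes you should be explicit about: \emph{(a)} the base case must be $j=3$, not $j=2$ — the $j=2$ case has a different $q$-scalar ($(\alpha_2|\beta_2)=0$ while $(\alpha_2|\beta_j)=-1$ for $j\geq 3$) and carries the $[2]$, so the $j=3$ identity requires its own braid computation, essentially the paper's; \emph{(b)} unlike $E_{\alpha_2}$, the vector $F_{\alpha_2}$ does \emph{not} commute with $F_{\alpha_3}$, so the "propagation through the outer factors" is genuinely more delicate here than in the $E$-case; and \emph{(c)} the inductive step needs the auxiliary identity $[F_{\alpha_j},F_{\alpha_1+2\alpha_2+\cdots+\alpha_{j-1}}]_q = F_{\alpha_1+2\alpha_2+\cdots+\alpha_j}$, which is not one of the stated cases and would need its own braid argument. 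None of these are fatal, but they are precisely the content the paper's direct unwinding of $F_{\gamma_{n+j-1}}$ is designed to sidestep, and your note about "tracking the $[2]$" suggests you have not yet pinned down where the difficulty actually sits.
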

\begin{proof}
  We will show the proposition for the $F$'s first and then for the
  $E$'s.  Assume first that $j<i-1$. Then clearly
  $[F_{\alpha_i},F_{\beta_j}]_q = [F_{\alpha_i},F_{\beta_j}]=0$ since
  $\alpha_i$ is not connected to any of the simple roots
  $\alpha_1,\dots,\alpha_j$ appearing in $\beta_j$.
 
  Then assume $j\geq i>2$. We must have $\alpha_i = \gamma_k$ for some
  $k>n$ since $\{\gamma_1,\dots,\gamma_N\}=\Phi^+$. By
  Theorem~\ref{thm:DP} $[F_{\alpha_i},F_{\beta_j}]_q$ is a linear
  combination of monomials of the form
  $F_{\gamma_{j+1}}^{a_{j+1}}\cdots F_{\gamma_{k-1}}^{a_{k-1}}$. For a
  monomial of this form to appear with nonzero coefficient we must
  have
  \begin{equation*}
    \sum_{h=j+1}^{k-1} a_h \gamma_h = \alpha_i + \beta_j = \alpha_1+\dots + \alpha_{i-1}+2\alpha_i +\alpha_{i+1}+\dots \alpha_j.
  \end{equation*}
  For this to be possible one of the positive roots $\gamma_s$,
  $j<s<k$ must be equal to $\alpha_1+\alpha_2+\dots+\alpha_m$ for some
  $m\leq j$ but $\alpha_1+\alpha_2+\dots+\alpha_m=\gamma_m$ by
  construction and $m\leq j<s$ so $m\neq s$. We conclude that this is
  not possible.
  
  Assume $j=i-1$. We have
  \begin{align*}
    [F_{\alpha_i},F_{\beta_{i-1}}]_q =& [T_{s_1}\cdots
    T_{s_{i-2}}(F_{\alpha_i}),T_{s_1}\cdots
    T_{s_{i-2}}(F_{\alpha_{i-1}})]_q
    \\
    =& T_{s_1}\cdots T_{s_{i-2}}
    \left([F_{\alpha_i},F_{\alpha_{i-1}}]_q\right)
    \\
    =& T_{s_1}\cdots T_{s_{i-2}} T_{s_{i-1}}(F_{\alpha_i})
    \\
    =& F_{\beta_i}.
  \end{align*}
  
  Assume $j=2=i$. Then
  \begin{align*}
    [F_{\alpha_2},F_{\beta_2}]_q =&
    F_{\alpha_2}F_{\beta_2}-F_{\beta_2}F_{\alpha_2}
    \\
    =&
    F_{\alpha_2}(F_{\alpha_2}F_{\alpha_1}-q^2F_{\alpha_1}F_{\alpha_2})-(F_{\alpha_2}F_{\alpha_1}-q^2
    F_{\alpha_1}F_{\alpha_2})F_{\alpha_2}
    \\
    =& (q^2 F_{\alpha_1}F_{\alpha_2}^2 - q [2]
    F_{\alpha_2}F_{\alpha_1}F_{\alpha_2} + F_{\alpha_2}^2
    F_{\alpha_1})
    \\
    =& [2] T_{s_2}\inv (F_{\alpha_1})
    \\
    =& [2] T_{s_2}\inv T_{s_2}T_{s_1}T_{s_2}(F_{\alpha_1})
    \\
    =& [2] T_{s_1}T_{s_2}(F_{\alpha_1})
    \\
    =& [2] F_{\alpha_1+2\alpha_2}.
  \end{align*}
  
  Assume $i=2$ and $j=3$. Then
  \begin{align*}
    F_{\alpha_1+2\alpha_2+\alpha_3} =& F_{\gamma_{n+2}}
    \\
    =& T_{s_1} T_{s_{2}}T_{s_{3}}T_{s_{1}}(F_{\alpha_{2}})
    \\
    =& T_{s_1} T_{s_2} T_{s_1} T_{s_3}(F_{\alpha_2})
    \\
    =& T_{s_1} T_{s_2} T_{s_1}
    (F_{\alpha_2}F_{\alpha_3}-qF_{\alpha_3}F_{\alpha_2})
    \\
    =& F_{\alpha_2}F_{\beta_3}-qF_{\beta_3}F_{\alpha_2}.
  \end{align*}
  
  Finally assume $i=2$ and $j>3$. We have
  \begin{align*}
    F_{\alpha_1+2\alpha_2+\alpha_3+\cdots+\alpha_j} =&
    F_{\gamma_{n+j-1}}
    \\
    =& T_{s_1}\cdots T_{s_{j-2}}T_{s_{j-1}}T_{s_{j}}T_{s_{1}}\cdots
    T_{s_{j-3}}T_{s_{j-2}}(F_{\alpha_{j-1}})
    \\
    =& T_{s_1}\cdots T_{s_{j-2}}T_{s_{1}}\cdots
    T_{s_{j-3}}T_{s_{j-1}}T_{s_{j-2}}T_{s_{j}}(F_{\alpha_{j-1}})
    \\
    =& T_{s_1}\cdots T_{s_{j-2}}T_{s_{1}}\cdots
    T_{s_{j-3}}T_{s_{j-1}}T_{s_{j-2}}(F_{\alpha_{j-1}}F_{\alpha_{j}}-qF_{\alpha_j}F_{\alpha_{j-1}})
    \\
    =& T_{s_1}\cdots T_{s_{j-2}}T_{s_{1}}\cdots
    T_{s_{j-3}}(F_{\alpha_{j-2}}T_{s_{j-1}}(F_{\alpha_{j}})-qT_{s_{j-1}}(F_{\alpha_j})F_{\alpha_{j-2}})
    \\
    =& F_{\alpha_2} F_{\beta_j} - q F_{\beta_j} F_{\alpha_2}
    \\
    =& [F_{\alpha_2},F_{\beta_j}]_q
  \end{align*}
  using the facts that
  $T_{s_{j-1}}T_{s_{j-2}}(F_{\alpha_{j-1}})=F_{\alpha_{j-2}}$ and
  $T_{s_1}\cdots T_{s_{j-2}}T_{s_1}\cdots
  T_{s_{j-3}}(F_{\alpha_{j-2}})=F_{\alpha_2}$ by Proposition~8.20
  in~\cite{Jantzen} (The proposition is about the $E$ root vectors but
  the proposition is true for the $F$'s as well).

  For the $E$'s: Assume first $j<i$: Since $F_{\beta_j}$ is a
  polynomial in $F_{\alpha_1},\dots,F_{\alpha_j}$, $E_{\alpha_i}$
  commutes with $F_{\beta_j}$ when $j<i$.

  Assume then $j=i$: We have by the above
  \begin{equation*}
    F_{\beta_i} = [F_{\alpha_i},F_{\beta_{i-1}}]_q
  \end{equation*}
  so
  \begin{align*}
    [E_{\alpha_i},F_{\beta_i}] =&
    [E_{\alpha_i},(F_{\alpha_i}F_{\beta_{i-1}}-q^{-(\beta_{i-1}|\alpha_i)}F_{\beta_{i-1}}F_{\alpha_i})]
    \\
    =& [E_{\alpha_i},F_{\alpha_i}] F_{\beta_{i-1}} - q_{\alpha_{i-1}}
    F_{\beta_{i-1}} [E_{\alpha_i},F_{\alpha_i}]
    \\
    =& \frac{ K_{\alpha_i}-K_{\alpha_i}\inv}{q-q\inv} F_{\beta_{i-1}}
    - q_{\alpha_{i-1}} F_{\beta_{i-1}}
    \frac{K_{\alpha_i}-K_{\alpha_i}\inv}{q-q\inv}
    \\
    =& F_{\beta_{i-1}} \frac{ q_{\alpha_{i-1}} K_{\alpha_i} -
      q_{\alpha_{i-1}}\inv K_{\alpha_i}\inv - q_{\alpha_{i-1}}
      K_{\alpha_i} + q_{\alpha_{i-1}} K_{\alpha_i}\inv}{q-q\inv}
    \\
    =& \frac{q_{\alpha_{i-1}}-q_{\alpha_{i-1}}\inv}{q-q\inv}
    F_{\beta_{i-1}}K_{\alpha_i}\inv
    \\
    =&
    \begin{cases}
      [2] F_{\beta_{i-1}}K_{\alpha_{i}}\inv, &\text{ if } i=2
      \\
      F_{\beta_{i-1}}K_{\alpha_{i}}\inv, &\text{ otherwise. }
    \end{cases}
  \end{align*}
  
  Finally assume $j>i$: Observe first that we have
  \begin{equation*}
    T_{s_{i+1}}\cdots T_{s_{j-1}}F_{\alpha_j} = \sum_{s=1}^m u_s F_{\alpha_{i+1}} u_s'
  \end{equation*}
  for some $m\in \setN$ and some $u_s,u_s'$ that are polynomials in
  $F_{\alpha_{i+2}},\dots F_{\alpha_{j}}$. Note that
  $T_{s_i}(u_s)=u_s$ and $T_{s_i}(u_s')=u_s'$ for all $s$ since
  $\alpha_i$ is not connected to any of the simple roots
  $\alpha_{i+2},\dots \alpha_j$. So
  \begin{align*}
    T_{s_i}T_{s_{i+1}}\cdots T_{s_{j-1}}F_{\alpha_j} =& T_{s_i}\left(
      \sum_{s=1}^m u_s F_{\alpha_{i+1}} u_s'\right)
    \\
    =& \sum_{s=1}^m u_s T_{s_i}(F_{\alpha_{i+1}}) u_s'
    \\
    =& \sum_{s=1}^m u_s
    (F_{\alpha_{i+1}}F_{\alpha_i}-qF_{\alpha_i}F_{\alpha_{i+1}}) u_s'
    \\
    =& \sum_{s=1}^m u_s F_{\alpha_{i+1}} u_s' F_{\alpha_i} - q
    F_{\alpha_i}\sum_{s=1}^m u_s F_{\alpha_{i+1}} u_s'
    \\
    =& T_{s_{i+1}}\cdots T_{s_{j-1}}(F_{\alpha_j}) F_{\alpha_i} - q
    F_{\alpha_i} T_{s_{i+1}}\cdots T_{s_{j-1}}(F_{\alpha_j}).
  \end{align*}
  Thus we see that
  \begin{align*}
    F_{\beta_j} =& T_{s_1}\dots T_{s_i}\cdots
    T_{s_{j-1}}(F_{\alpha_j})
    \\
    =&T_{s_{i+1}}\cdots T_{s_{j-1}}(F_{\alpha_j}) T_{s_{1}}\cdots
    T_{s_{i-1}}(F_{\alpha_i}) - qF_{\alpha_i} T_{s_{i+1}}\cdots
    T_{s_{j-1}}(F_{\alpha_j})
    \\
    =& T_{s_{i+1}}\cdots T_{s_{j-1}}(F_{\alpha_j}) F_{\beta_i} - q
    F_{\beta_i} T_{s_{i+1}}\cdots T_{s_{j-1}}(F_{\alpha_j})
  \end{align*}
  and therefore
  \begin{align*}
    [E_{\alpha_i},F_{\beta_j}] =& T_{s_{i+1}}\cdots
    T_{s_{j-1}}(F_{\alpha_j}) [E_{\alpha_i},F_{\beta_i}] - q
    [E_{\alpha_i},F_{\beta_i}]T_{s_{i+1}}\cdots
    T_{s_{j-1}}(F_{\alpha_j})
    \\
    =& [r_i] (T_{s_{i+1}}\cdots T_{s_{j-1}}(F_{\alpha_j})
    F_{\beta_{i-1}}K_{\alpha_i}\inv- q F_{\beta_{i-1}}K_{\alpha_i}\inv
    T_{s_{i+1}}\cdots T_{s_{j-1}}(F_{\alpha_j}) )
    \\
    =& [r_i] (F_{\beta_{i-1}} T_{s_{i+1}}\cdots
    T_{s_{j-1}}(F_{\alpha_j}) K_{\alpha_i}\inv - F_{\beta_{i-1}}
    T_{s_{i+1}}\cdots T_{s_{j-1}}(F_{\alpha_j}) K_{\alpha_i}\inv )
    \\
    =& 0
  \end{align*}
  where
  \begin{equation*}
    r_i = 
    \begin{cases}
      2 &\text{ if } i =2
      \\
      1 &\text{ otherwise. }
    \end{cases}
  \end{equation*}

\end{proof}

\begin{prop}
  \label{prop:33}
  Let $i\in \{2,\dots,n\}$. Let $a\in \setZ_{>0}$. Then
  \begin{equation*}
    [F_{\alpha_i},F_{\beta_{i-1}}^a]_q = [a]_{\beta_{i-1}} F_{\beta_{i-1}}^{a-1}F_{\beta_i}
  \end{equation*}
  and for $b\in \setC^*$
  \begin{equation*}
    \phi_{F_{\beta_{i-1}},b}(F_{\alpha_i}) =
    \begin{cases}
      b^2 F_{\alpha_2}+ \frac{b^2-b^{-2}}{q^2-q^{-2}}
      F_{\beta_{1}}\inv F_{\beta_2}, &\text{ if } i=2
      \\
      b F_{\alpha_i}+ \frac{b-b\inv}{q-q\inv} F_{\beta_{i-1}}\inv
      F_{\beta_i}, &\text{ otherwise. }
    \end{cases}
  \end{equation*}
\end{prop}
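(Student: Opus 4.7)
The plan is to prove the proposition by induction on $a$, in exactly the same spirit as Proposition~\ref{prop:27} in the type A section, but paying careful attention to the fact that when $i=2$ the root $\beta_{i-1}=\beta_1=\alpha_1$ is long, whereas for $i>2$ all roots $\beta_{i-1}$ are short.

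First I would establish the base case $a=1$, which is exactly Proposition~\ref{prop:32}: for $i=2$ we get $[F_{\alpha_2},F_{\beta_1}]_q = F_{\beta_2}$ and $[1]_{\beta_1}=1=[1]_{q^2}$, while for $i>2$ we get $[F_{\alpha_i},F_{\beta_{i-1}}]_q = F_{\beta_i}$ and $[1]_{\beta_{i-1}}=1$. For the induction step, the key commutation facts I need are (i) $F_{\alpha_i}F_{\beta_{i-1}} = q_{\beta_{i-1}}^2 F_{\beta_{i-1}}F_{\alpha_i}+F_{\beta_i}$ (rearranging the base case), and (ii) $F_{\beta_i}F_{\beta_{i-1}}=q^{-(\beta_i|\beta_{i-1})}F_{\beta_{i-1}}F_{\beta_i}$, which comes from $[F_{\beta_i},F_{\beta_{i-1}}]_q=0$ since $\Sigma$ is a commuting set. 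A quick root-system calculation gives $(\beta_i|\beta_{i-1})=1$ for $i>2$ (since both are short and $(\alpha_i|\alpha_{i-1})=-1$) and $(\beta_2|\beta_1)=(\alpha_1|\alpha_1)+(\alpha_2|\alpha_1)=4-2=2$.

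With these two facts I would carry out the induction precisely as in the proof of Proposition~\ref{prop:27}: multiply $F_{\alpha_i}F_{\beta_{i-1}}^a$ on the right by $F_{\beta_{i-1}}$, push the $F_{\alpha_i}$ past the trailing $F_{\beta_{i-1}}$ using (i), and push the $F_{\beta_i}$ past $F_{\beta_{i-1}}$ using (ii). The coefficient of $F_{\beta_{i-1}}^a F_{\beta_i}$ that emerges is $q_{\beta_{i-1}}^a + q_{\beta_{i-1}}^{-1}[a]_{\beta_{i-1}}$, which a standard manipulation identifies with $[a+1]_{\beta_{i-1}}$, completing the induction.

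The second claim follows by the same Laurent-polynomial argument used repeatedly in the paper: by Lemma~\ref{lemma:9} the map $b\mapsto \phi_{F_{\beta_{i-1}},b}(F_{\alpha_i})$ is a Laurent polynomial in $b$, and from the first claim one computes $\phi_{F_{\beta_{i-1}},q^a}(F_{\alpha_i})=F_{\beta_{i-1}}^{-a}F_{\alpha_i}F_{\beta_{i-1}}^a$ explicitly for $a\in\setZ$. For $i>2$ this gives $q^a F_{\alpha_i}+[a]F_{\beta_{i-1}}^{-1}F_{\beta_i}$, matching the right-hand side at $b=q^a$; for $i=2$ one gets $q^{2a}F_{\alpha_2}+[a]_{q^2}F_{\beta_1}^{-1}F_{\beta_2}$, matching the proposed formula at $b=q^a$ (note $b^2 = q^{2a}$ and $(b^2-b^{-2})/(q^2-q^{-2})=[a]_{q^2}$). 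Lemma~\ref{lemma:37} then forces equality of the two Laurent polynomials for all $b\in\setC^*$. The only real point of care is correctly tracking the scalar $q_{\beta_{i-1}}$ in the $i=2$ case, but this is routine rather than substantive.
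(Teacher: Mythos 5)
Your proposal is correct and follows exactly the route taken in the paper: induction on $a$ for the $q$-commutator identity (base case from Proposition~\ref{prop:32}, pushing $F_{\alpha_i}$ and $F_{\beta_i}$ past a trailing $F_{\beta_{i-1}}$), and then the standard Laurent-polynomial argument via Lemma~\ref{lemma:9} and Lemma~\ref{lemma:37} for the twist formula, with the exponent on $b$ for $i=2$ coming from $q_{\beta_1}=q^2$. One small slip: your stated commutation fact (i) should read $F_{\alpha_i}F_{\beta_{i-1}} = q_{\beta_{i-1}}\,F_{\beta_{i-1}}F_{\alpha_i}+F_{\beta_i}$ (with exponent $1$, not $2$, since $q^{-(\alpha_i|\beta_{i-1})}=q_{\beta_{i-1}}$ in both the long and short cases); your subsequent claim that the coefficient of $F_{\beta_{i-1}}^{a}F_{\beta_i}$ is $q_{\beta_{i-1}}^{a}+q_{\beta_{i-1}}^{-1}[a]_{\beta_{i-1}}$ is what the \emph{correct} form of (i) produces, so the computation you describe is right even though the formula as written is not.
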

\begin{proof}
  The first claim is proved by induction over $a$. $a=1$ is shown in
  Proposition~\ref{prop:26}. The induction step:
  \begin{align*}
    F_{\alpha_i}F_{\beta_{i-1}}^{a+1} =& \left( q_{\beta_{i-1}}^a
      F_{\beta_{i-1}}^a F_{\alpha_i} + [a]_{\beta_{i-1}}
      F_{\beta_{i-1}}^{a-1}F_{\beta_i}\right) F_{\beta_{i-1}}
    \\
    =& q_{\beta_{i-1}}^{a+1}F_{\beta_{i-1}}^{a+1}F_{\alpha_i}
    +q_{\beta_{i-1}}^a F_{\beta_{i-1}}^a F_{\beta_i} +
    q_{\beta_{i-1}}\inv [a]_{\beta_{i-1}} F_{\beta_{i-1}}^a F_{\beta_i}
    \\
    =& q_{\beta_{i-1}}^{a+1}F_{\beta_{i-1}}^{a+1}F_{\alpha_i} +
    [a+1]_{\beta_{i-1}} F_{\beta_{i-1}}^a F_{\beta_i}.
  \end{align*}
  So we have proved the first claim. We get then for $a\in \setZ_{>0}$:
  \begin{equation*}
    \phi_{F_{\beta_{i-1}},q^a}(F_{\alpha_i}) = F_{\beta_{i-1}}^{-a} F_{\alpha_i} F_{\beta_{i-1}}^a = q_{\beta_{i-1}}^a F_{\alpha_i} + \frac{q_{\beta_{i-1}}^a-q_{\beta_{i-1}}^{-a}}{q_{\beta_{i-1}}-q_{\beta_{i-1}}\inv} F_{\beta_{i-1}}\inv F_{\beta_i}.
  \end{equation*}
  Using the fact that $\phi_{F_{\beta_{i-1}},b}(F_{\alpha_i})$ is
  Laurent polynomial in $b$ we get the second claim of the
  proposition.
\end{proof}

\begin{prop}
  \label{prop:34}
  Let $i\in \{2,\dots,n\}$. Let $a\in \setZ_{>0}$. Then
  \begin{equation*}
    [E_{\alpha_i},F_{\beta_i}^a] =
    \begin{cases}
      q^{a-1}[2][a] F_{\beta_2}^{a-1}F_{\beta_{1}}K_{\alpha_2}\inv,
      &\text{ if } i=2
      \\
      q^{a-1}[a] F_{\beta_i}^{a-1}F_{\beta_{i-1}}K_{\alpha_i}\inv,
      &\text{ otherwise. }
    \end{cases}
  \end{equation*}
  and for $b\in \setC^*$
  \begin{equation*}
    \phi_{F_{\beta_{i}},b}(E_{\alpha_i}) =
    \begin{cases}
      E_{\alpha_2}+ q^{-1} [2]b \frac{b-b^{-1}}{q-q\inv}
      F_{\beta_2}\inv F_{\beta_{1}} K_{\alpha_2}\inv, &\text{ if } i=2
      \\
      E_{\alpha_i}+ q\inv b \frac{b-b\inv}{q-q\inv} F_{\beta_i}\inv
      F_{\beta_{i-1}} K_{\alpha_i}\inv, &\text{ otherwise. }
    \end{cases}
  \end{equation*}
\end{prop}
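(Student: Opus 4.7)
The plan is to mirror the proof of Proposition~\ref{prop:28} from the type A section, with the only complication being bookkeeping around the long root $\alpha_1$ when $i=2$. The first claim will be proved by induction on $a$ with the base case $a=1$ coming directly from Proposition~\ref{prop:32}. For the induction step I would write
\[
E_{\alpha_i} F_{\beta_i}^{a+1} = \bigl(E_{\alpha_i} F_{\beta_i}^{a}\bigr) F_{\beta_i},
\]
apply the induction hypothesis, and commute $F_{\beta_i}$ past the term $F_{\beta_i}^{a-1} F_{\beta_{i-1}} K_{\alpha_i}^{-1}$ using the two commutation relations
\[
K_{\alpha_i}^{-1} F_{\beta_i} = q^{(\alpha_i|\beta_i)} F_{\beta_i} K_{\alpha_i}^{-1}, \qquad F_{\beta_{i-1}} F_{\beta_i} = q^{(\beta_{i-1}|\beta_i)} F_{\beta_i} F_{\beta_{i-1}},
\]
the second of which is available because $\Sigma = \{\beta_1,\dots,\beta_n\}$ is our commuting set of roots. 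A direct computation of the inner products (using $\alpha_1$ long, $\alpha_2,\dots,\alpha_n$ short) gives $(\alpha_i|\beta_i) + (\beta_{i-1}|\beta_i) = 2$ for $i>2$ and the analogous total of $2$ for $i=2$, so in both cases the induction step reduces to the standard quantum identity $1 + q^{a+1}[a] = q^a[a+1]$.

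Combining the two contributions, the induction step yields $(1+q^{a+1}[a]) = q^a [a+1]$ as the coefficient in front of $F_{\beta_i}^a F_{\beta_{i-1}} K_{\alpha_i}^{-1}$, with an additional factor of $[2]$ kept intact in the $i=2$ case since it was already present in the base case $[E_{\alpha_2},F_{\beta_2}] = [2] F_{\beta_1} K_{\alpha_2}^{-1}$. This gives the stated commutator formula.

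For the formula for $\phi_{F_{\beta_i},b}(E_{\alpha_i})$, I would set $b=q^a$, use the relation $\phi_{F_{\beta_i},q^a}(E_{\alpha_i}) = F_{\beta_i}^{-a} E_{\alpha_i} F_{\beta_i}^a = E_{\alpha_i} + F_{\beta_i}^{-a} [E_{\alpha_i},F_{\beta_i}^a]$, and substitute the commutator formula just established to get
\[
\phi_{F_{\beta_i},q^a}(E_{\alpha_i}) = E_{\alpha_i} + q^{-1} q^a \frac{q^a-q^{-a}}{q-q^{-1}} F_{\beta_i}^{-1} F_{\beta_{i-1}} K_{\alpha_i}^{-1}
\]
(with an extra factor of $[2]$ for $i=2$). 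Since by Lemma~\ref{lemma:9} the assignment $b \mapsto \phi_{F_{\beta_i},b}(E_{\alpha_i})$ is a Laurent polynomial in $b$, and the right hand side with $q^a$ replaced by $b$ is also a Laurent polynomial agreeing with the left on the infinite set $\{q^a\mid a\in\setZ_{>0}\}$, Lemma~\ref{lemma:37} forces equality for all $b\in\setC^*$. No genuine obstacle is expected; the main care is in tracking the $[2]$ factor in the $i=2$ case and in verifying that the two scalar exponents picked up when commuting past $F_{\beta_i}$ conspire to give precisely the required $q^{a+1}$ in front of $[a]$.
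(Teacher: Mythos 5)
Your proof is correct and follows essentially the same approach as the paper's: induction on $a$ by right-multiplying the induction hypothesis by $F_{\beta_i}$, tracking the $q$-powers picked up when commuting $F_{\beta_i}$ past $K_{\alpha_i}^{-1}$ and $F_{\beta_{i-1}}$ (which you correctly compute sum to $q^2$ in both the $i=2$ and $i>2$ cases), and then extending to all $b\in\setC^*$ via the Laurent polynomial argument of Lemma~\ref{lemma:9} and Lemma~\ref{lemma:37}. As a minor note, the paper's proof cites Proposition~\ref{prop:26} for the base case $a=1$, which appears to be a typo for Proposition~\ref{prop:32}, the type $C$ analogue with the $[2]$ factor — you reference the correct one.
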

\begin{proof}
  The first claim is proved by induction over $a$. $a=1$ is shown in
  Proposition~\ref{prop:26}. The induction step: For $i>2$:
  \begin{align*}
    E_{\alpha_i} F_{\beta_i}^{a+1} =& \left( F_{\beta_i}^a
      E_{\alpha_i}+ q^{a-1} [a] F_{\beta_i}^{a-1}F_{\beta_{i-1}}
      K_{\alpha_i}\inv \right) F_{\beta_i}
    \\
    =& F_{\beta_i}^{a+1} E_{\alpha_i} + F_{\beta_i}^a F_{\beta_{i-1}}
    K_{\alpha_i}\inv + q^{a+1} [a] F_{\beta_i}^a
    F_{\beta_{i-1}}K_{\alpha_i}\inv
    \\
    =& F_{\beta_i}^{a+1} E_{\alpha_i} + q^{a} (q^{-a} +
    q[a])F_{\beta_i}^a F_{\beta_{i-1}}K_{\alpha_i}\inv
    \\
    =& F_{\beta_i}^{a+1} E_{\alpha_i} +
    q^{a}[a+1]_{\alpha_{i-1}}F_{\beta_i}^a
    F_{\beta_{i-1}}K_{\alpha_i}\inv.
  \end{align*}
  
  For $i=2$:
  \begin{align*}
    E_{\alpha_2} F_{\beta_2}^{a+1} =& \left( F_{\beta_2}^a
      E_{\alpha_2}+ q^{a-1}[2][a] F_{\beta_2}^{a-1}F_{\beta_{1}}
      K_{\alpha_2}\inv \right) F_{\beta_2}
    \\
    =& F_{\beta_2}^{a+1} E_{\alpha_2} + [2] F_{\beta_2}^a
    F_{\beta_{1}} K_{\alpha_2}\inv + q^{a+1} [2][a] F_{\beta_2}^a
    F_{\beta_{1}}K_{\alpha_2}\inv
    \\
    =& F_{\beta_2}^{a+1} E_{\alpha_2} + q^{a} [2](q^{-a} + q
    [a])F_{\beta_2}^a F_{\beta_{1}}K_{\alpha_2}\inv
    \\
    =& F_{\beta_2}^{a+1} E_{\alpha_2} + q^{a} [2][a+1] F_{\beta_2}^a
    F_{\beta_{1}}K_{\alpha_2}\inv.
  \end{align*}

  This proves the first claim. We get then for $a\in \setZ_{>0}$
  \begin{equation*}
    \phi_{F_{\beta_i},q^a}(E_{\alpha_i}) =F_{\beta_i}^{-a} E_{\alpha_i} F_{\beta_i}^a =
    \begin{cases}
      E_{\alpha_2} + q^{-2} q^{2a} \frac{q^{2a} - q^{-2a}}{q-q\inv}
      F_{\beta_2}\inv F_{\beta_{1}}K_{\alpha_2}\inv, &\text{ if } i=2
      \\
      E_{\alpha_i} + q\inv q^a \frac{q^a - q^{-a}}{q-q\inv}
      F_{\beta_i}\inv F_{\beta_{i-1}}K_{\alpha_i}\inv, &\text{
        otherwise. }
    \end{cases}
  \end{equation*}
  Using the fact that $\phi_{F_{\beta_{i}},b}(E_{\alpha_i})$ is
  Laurent polynomial in $b$ we get the second claim of the
  proposition.
\end{proof}

We combine the above propositions in the following proposition
\begin{prop}
  \label{prop:37}
  Let $i\in \{3,\dots,n\}$. For $\mathbf{b}=(b_1,\dots,b_n)\in
  (\setC^*)^n$
  \begin{align*}
    \phi_{F_\Sigma,\mathbf{b}}(F_{\alpha_{i}}) =&
    \phi_{F_{\beta_{i-1},b_{i-1}}}(F_{\alpha_i})
    \\
    =& b_{i-1}F_{\alpha_i}+ \frac{b_{i-1}-b_{i-1}\inv}{q-q\inv}
    F_{\beta_{i-1}}\inv F_{\beta_i}
    \\
    \phi_{F_\Sigma,\mathbf{b}}(E_{\alpha_i}) =&
    \phi_{F_{\beta_{i},b_{i}}}(E_{\alpha_i}) = E_{\alpha_i}+ q\inv b_i
    \frac{b_i-b_i\inv}{q-q\inv} F_{\beta_i}\inv F_{\beta_{i-1}}
    K_{\alpha_i}\inv.
  \end{align*}
  Furthermore
  \begin{equation*}
    \phi_{F_\Sigma,\mathbf{b}}(E_{\alpha_2})= E_{\alpha_2} + q^{-1}[2] b_2 \frac{b_2-b_2\inv}{q-q\inv} F_{\beta_2}\inv F_{\beta_1}K_{\alpha_2}\inv
  \end{equation*}
  
  and
  \begin{equation*}
    \phi_{F_\Sigma,\mathbf{b}}(F_{\alpha_1}) = b_2\cdots b_n
    F_{\alpha_1}.
  \end{equation*}
\end{prop}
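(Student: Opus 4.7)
The plan is to exploit that $\phi_{F_\Sigma,\mathbf{b}}=\phi_{F_{\beta_n},b_n}\circ\cdots\circ\phi_{F_{\beta_1},b_1}$ is a composition of pairwise commuting automorphisms (as noted before Definition~\ref{def:twist_by_weight}). For each generator $u$ I would identify via Proposition~\ref{prop:32} which $\phi_{F_{\beta_j},b_j}$ acts non-trivially on $u$ and which merely scales it, then compose. The non-trivial factors will be supplied by the explicit formulas in Propositions~\ref{prop:33} and~\ref{prop:34}, while the trivial ones must be tracked for possible scalar contributions.

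For $F_{\alpha_i}$ with $i\geq 3$, Proposition~\ref{prop:32} shows that the only non-vanishing $q$-commutator is $[F_{\alpha_i},F_{\beta_{i-1}}]_q=F_{\beta_i}$. For $j\neq i-1$ the relation $[F_{\alpha_i},F_{\beta_j}]_q=0$ forces $\phi_{F_{\beta_j},b_j}(F_{\alpha_i})=b_j^{-(\alpha_i|\beta_j)}F_{\alpha_i}$, and the action on the correction term $F_{\beta_{i-1}}^{-1}F_{\beta_i}$ produced by Proposition~\ref{prop:33} is read off from the identity $\phi_{F_{\beta_j},b}(F_{\beta_k})=b^{(\beta_k|\beta_j)}F_{\beta_k}$ (for $k<j$) stated before Definition~\ref{def:twist_by_weight}. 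The scalars acting on the two summands of $\phi_{F_{\beta_{i-1}},b_{i-1}}(F_{\alpha_i})$ combine through $\beta_i-\beta_{i-1}=\alpha_i$ and collapse once one uses that $(\alpha_i|\beta_j)=0$ for $j<i-1$ and $j>i$ in type $C$. For $E_{\alpha_i}$ with $i\in\{2,\dots,n\}$, Proposition~\ref{prop:32} states that $[E_{\alpha_i},F_{\beta_j}]$ is an \emph{ordinary} commutator that vanishes for $j\neq i$, so $\phi_{F_{\beta_j},b_j}(E_{\alpha_i})=E_{\alpha_i}$ with no scalar whatsoever. A parallel bookkeeping on the correction term $F_{\beta_i}^{-1}F_{\beta_{i-1}}K_{\alpha_i}^{-1}$ coming from Proposition~\ref{prop:34} shows it too is fixed by $\phi_{F_{\beta_j},b_j}$ for $j\neq i$, so the full composition reduces to $\phi_{F_{\beta_i},b_i}(E_{\alpha_i})$.

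For $F_{\alpha_1}=F_{\beta_1}\in\Sigma$ each factor $\phi_{F_{\beta_j},b_j}$ acts by a scalar determined by the pairing $(\beta_1|\beta_j)$; the $j=1$ contribution is trivial by Lemma~\ref{lemma:9} and the remaining factors multiply to give the announced product. The main obstacle throughout is the careful bookkeeping of these scalar factors, which is more delicate in type $C$ than in type $A$ because the long root $\alpha_1$ pairs asymmetrically with the short roots. All the verifications reduce to the identity $\beta_i-\beta_{i-1}=\alpha_i$ together with the fact that each $\beta_j$ is supported on an initial segment of $\Pi$, so the pairings $(\gamma|\beta_j)$ only depend on a handful of Cartan entries and can be organised into explicit cancellations.
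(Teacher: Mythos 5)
Your overall plan is the right one and matches the paper's implicit route (compose the individual twists, locate the nontrivial ones via Proposition~\ref{prop:32}, plug in Propositions~\ref{prop:33} and~\ref{prop:34}, and track scalars via Laurent-polynomiality). The $E_{\alpha_i}$ bookkeeping is fine: since $[E_{\alpha_i},F_{\beta_j}]$ is an \emph{ordinary} commutator vanishing for $j\neq i$, one does get $\phi_{F_{\beta_j},b_j}(E_{\alpha_i})=E_{\alpha_i}$ with no scalar, and a short check shows the scalars on the correction term $F_{\beta_i}^{-1}F_{\beta_{i-1}}K_{\alpha_i}^{-1}$ also multiply to $1$, so the full composite really is $\phi_{F_{\beta_i},b_i}(E_{\alpha_i})$.

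The $F$-bookkeeping, however, has a concrete gap. You argue that the scalars collapse ``once one uses that $(\alpha_i|\beta_j)=0$ for $j<i-1$ and $j>i$,'' but you never address $j=i$. There $(\alpha_i|\beta_i)=(\alpha_i|\alpha_{i-1})+(\alpha_i|\alpha_i)=1$, and since $[F_{\alpha_i},F_{\beta_i}]_q=0$ by Proposition~\ref{prop:32}, the factor $\phi_{F_{\beta_i},b_i}$ scales $F_{\alpha_i}$ by $b_i^{-1}$; the same scalar $b_i^{-(\beta_{i-1}|\beta_i)}=b_i^{-1}$ hits $F_{\beta_{i-1}}^{-1}F_{\beta_i}$. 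So a careful composition produces an overall prefactor $b_i^{-1}$ (compare with Proposition~\ref{prop:30}, where such a prefactor is retained). Your ``collapse'' claim thus silently drops a nontrivial scalar and does not actually reach the displayed formula as written. Similarly, for $F_{\alpha_1}=F_{\beta_1}$ you assert that ``the remaining factors multiply to give the announced product'' $b_2\cdots b_n$, but the relevant pairings are $(\beta_1|\beta_j)=(\alpha_1|\alpha_1)+(\alpha_1|\alpha_2)=4-2=2$ for $j\geq 2$ (here the long root $\alpha_1$ really matters), so each factor contributes $b_j^{(\beta_1|\beta_j)}=b_j^2$, not $b_j$. You flag precisely this type-$C$ asymmetry as ``the main obstacle'' and then do not carry out the computation that it forces; this is exactly where your proposal falls short. (For the purposes of the paper neither discrepancy is fatal, since only the nonscalar part of $\phi_{F_\Sigma,\mathbf{b}}(F_{\alpha_i})$ feeds into the injectivity arguments in Propositions~\ref{prop:7} and~\ref{prop:6}, but a proof of the proposition as stated must handle the $j=i$ contribution and the long-root pairing explicitly.)
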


With similar proof as the proof of Proposition~\ref{prop:4} we can
show
\begin{prop}
  \label{prop:7}
  Let $\lambda$ be a weight such that $\lambda(K_{\beta})\in \pm
  q^\setN$ for all short $\beta\in \Phi^+$ and $\lambda(K_{\gamma})
  \in \pm q^{1+2\setZ}$ for all long $\gamma \in \Phi^+$.  Let
  $\mathbf{b}=(b_1,\dots,b_n)\in (\setC^*)^n$. $E_{\alpha_2}$ acts
  injectively on the $U_q$-module
  $\phi_{F_\Sigma,\mathbf{b}}.L(\lambda)_{F_\Sigma}$ if and only if
  $b_2 \not \in \pm q^{\setZ}$.  Let $i\in\{3,\dots,n\}$. Then
  $E_{\alpha_i}$ acts injectively on the module
  $\phi_{F_\Sigma,\mathbf{b}}.L(\lambda)_{F_\Sigma}$ if and only if
  $b_i \not \in \pm q^{\setZ}$ and $F_{\alpha_i}$ acts injectively on
  $\phi_{F_\Sigma,\mathbf{b}}.L(\lambda)_{F_\Sigma}$ if and only if
  $b_{i-1}\not \in \pm q^{\setZ}$.
\end{prop}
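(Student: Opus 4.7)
The plan is to mirror the proof of Proposition~\ref{prop:4}, treating each of the assertions by proving both directions and using the explicit twist formulas of Proposition~\ref{prop:37}. For concreteness I focus on $E_{\alpha_i}$ (the cases $i=2$ and $i\geq 3$ only differ by a $[2]$ factor, which cancels), the $F_{\alpha_i}$ assertion being completely analogous via Proposition~\ref{prop:33}.

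For the ``only if'' direction, suppose $E_{\alpha_i}$ fails to act injectively on $\phi_{F_\Sigma,\mathbf{b}}.L(\lambda)_{F_\Sigma}$; a nonzero kernel element takes the form $\phi_{F_\Sigma,\mathbf{b}}.(F_{\beta_1}^{a_1}\cdots F_{\beta_n}^{a_n}\otimes v')$, and commuting $E_{\alpha_i}$ past the $F$-monomial reduces the condition to $\phi_{F_\Sigma,\mathbf{c}}(E_{\alpha_i})v'=0$ with $\mathbf{c}=(q^{a_1}b_1,\dots,q^{a_n}b_n)$. Substituting the formula from Proposition~\ref{prop:37} and multiplying on the left by $F_{\beta_i}$ produces
\begin{equation*}
F_{\beta_i}E_{\alpha_i}v' = -q^{-1}\kappa_i\,c_i\,\frac{c_i-c_i\inv}{q-q\inv}\,F_{\beta_{i-1}}K_{\alpha_i}\inv v',
\end{equation*}
where $\kappa_2=[2]$ and $\kappa_i=1$ for $i\geq 3$.

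Because $\lambda(K_{\alpha_i})\in \pm q^\setN$, the vector $E_{\alpha_i}$ acts locally nilpotently on the highest weight module $L(\lambda)$, so I choose the largest $r\geq 0$ with $E_{\alpha_i}^r v'\neq 0$ and $E_{\alpha_i}^{r+1}v'=0$. An easy induction from Proposition~\ref{prop:34} produces the iterated commutator $[E_{\alpha_i}^r,F_{\beta_i}]=q^{r-1}[r]\kappa_i F_{\beta_{i-1}}K_{\alpha_i}\inv E_{\alpha_i}^{r-1}$. Applying $E_{\alpha_i}^r$ to the identity above, using this commutator on the left and $E_{\alpha_i}^r K_{\alpha_i}\inv=q^{2r}K_{\alpha_i}\inv E_{\alpha_i}^r$ on the right, the common nonzero factor $\kappa_i F_{\beta_{i-1}}K_{\alpha_i}\inv E_{\alpha_i}^r v'$ cancels and I arrive at $q^{r-1}[r]=-q^{-1+2r}c_i(c_i-c_i\inv)/(q-q\inv)$, which simplifies to $c_i^2=q^{-2r}$, whence $c_i\in \pm q^\setZ$. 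Since $c_i\in q^\setZ b_i$, this forces $b_i\in \pm q^\setZ$, proving one implication. The case of $F_{\alpha_i}$ for $i\geq 3$ runs identically from Proposition~\ref{prop:33}: rearranging $\phi_{F_\Sigma,\mathbf{c}}(F_{\alpha_i})v'=0$ to $c_{i-1}F_{\beta_{i-1}}F_{\alpha_i}v'=-((c_{i-1}-c_{i-1}\inv)/(q-q\inv))F_{\beta_i}v'$ and applying a power of $E_{\alpha_i}$ yields the analogous scalar equation in $c_{i-1}$.

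For the converse, suppose $b_i\in \pm q^\setZ$. By Corollary~\ref{cor:6} the isomorphism class of $\phi_{F_\Sigma,\mathbf{b}}.L(\lambda)_{F_\Sigma}$ is preserved when $b_i$ is multiplied by any power of $q$, so I may assume $b_i=\pm 1$; then $b_i-b_i\inv$ vanishes in Proposition~\ref{prop:37}, giving $\phi_{F_\Sigma,\mathbf{b}}(E_{\alpha_i})=E_{\alpha_i}$, and hence $\phi_{F_\Sigma,\mathbf{b}}.(1\otimes v_\lambda)$ is a nonzero kernel element. Similarly, reducing $b_{i-1}$ to $\pm 1$ gives $\phi_{F_\Sigma,\mathbf{b}}(F_{\alpha_i})=F_{\alpha_i}$, and $\phi_{F_\Sigma,\mathbf{b}}.(1\otimes F_{\alpha_i}^{r_i}v_\lambda)$, where $r_i\geq 0$ is determined by $\lambda(K_{\alpha_i})=\pm q^{r_i}$, is killed by $F_{\alpha_i}$. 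The only real obstacle is the inductive bookkeeping of $[E_{\alpha_i}^r,F_{\beta_i}]$ together with the $K_{\alpha_i}$-commutation; the $[2]$ factor distinguishing $i=2$ from $i\geq 3$ appears symmetrically on both sides of the resulting scalar equation and cancels, so no separate computation is needed for $E_{\alpha_2}$.
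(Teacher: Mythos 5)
Your proposal is correct and follows essentially the same template as the paper's proof (which in turn mirrors the argument in Proposition~\ref{prop:4}): reduce a kernel element to the condition $\phi_{F_\Sigma,\mathbf{c}}(E_{\alpha_i})v'=0$ with $\mathbf{c}\in q^{\setZ^n}\mathbf{b}$, hit it with the appropriate power of $E_{\alpha_i}$ determined by local nilpotence, and extract a scalar equation forcing $c_i\in\pm q^\setZ$. The paper details only the $F_{\alpha_i}$ case and the ``only if'' direction, delegating the rest to ``shown similarly,'' whereas you detail the $E_{\alpha_i}$ case and make the converse explicit via Corollary~\ref{cor:6} (replace $b_i$ by $\pm1$, then $\phi_{F_\Sigma,\mathbf{b}}(E_{\alpha_i})=E_{\alpha_i}$ visibly kills $1\otimes v_\lambda$); both fillings-in are valid. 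One small inaccuracy: you attribute the iterated commutator $[E_{\alpha_i}^r,F_{\beta_i}]=q^{r-1}[r]\kappa_iF_{\beta_{i-1}}K_{\alpha_i}\inv E_{\alpha_i}^{r-1}$ to Proposition~\ref{prop:34}, but that proposition computes $[E_{\alpha_i},F_{\beta_i}^a]$ (iterating the $F$ factor). The formula you want iterates the $E$ factor and follows by a parallel induction from the base case $[E_{\alpha_i},F_{\beta_i}]$ in Proposition~\ref{prop:32}, using $[E_{\alpha_i},F_{\beta_{i-1}}]=0$ and $E_{\alpha_i}K_{\alpha_i}\inv=q^2K_{\alpha_i}\inv E_{\alpha_i}$; the formula itself is correct and the resulting scalar equation $c_i^2=q^{-2r}$ matches the paper's conclusion.
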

\begin{proof}
  By Proposition~\ref{prop:25} and Corollary~\ref{cor:6} a root vector
  acts injectively on the $U_q$-module
  \begin{equation*}
    \phi_{F_\Sigma,(b_1,\dots,b_n)}.L(\lambda)_{F_\Sigma}
  \end{equation*}
  if and only if it acts injectively on
  \begin{equation*}
    \phi_{F_\Sigma,(\epsilon_1
      q^{i_1}b_1,\dots,\epsilon_n q^{i_n}b_n)}.L(\lambda)_{F_\Sigma}
  \end{equation*}
  for any $i_1,\dots,i_n\in\setZ$ and $\epsilon_1,\dots,\epsilon_n\in
  \{\pm 1\}$.
  
  Assume there exists a $0\neq v \in
  \phi_{F_\Sigma,\mathbf{b}}.L(\lambda)_{F_\Sigma}$ such that
  $F_{\alpha_i}v=0$. We have $v=F_{\beta_1}^{a_1}\cdots
  F_{\beta_n}^{a_n} \tensor v'$ for some $a_1,\dots,a_n\in \setZ_{\leq 0}$ and
  some $v'\in L(\lambda)$. $F_{\alpha_i}v=0$ implies
  \begin{equation*}
    0=\phi_{F_\Sigma,\mathbf{b}}(F_{\alpha_i}) F_{\beta_1}^{a_1}\cdots F_{\beta_n}^{a_n}\tensor v' =  F_{\beta_1}^{a_1}\cdots F_{\beta_n}^{a_n}\tensor \phi_{F_\Sigma,\mathbf{c}}(F_{\alpha_i})v'
  \end{equation*}
  where $\mathbf{c}=(q^{a_1}b_1,\dots,q^{a_n}b_n)$. So there exists a
  $v'\in L(\lambda)$ such that
  $\phi_{F_\Sigma,\mathbf{c}}(F_{\alpha_i})v'=0$. That is
  \begin{equation*}
    \left(c_{i-1}F_{\alpha_i} +  \frac{c_{i-1}-c_{i-1}\inv}{q-q\inv} F_{\beta_{i-1}}\inv F_{\beta_{i}}\right) v' = 0
  \end{equation*}
  or equivalently
  \begin{equation*}
    \left(F_{\beta_{i-1}}F_{\alpha_i} +  c_{i-1}\inv \frac{c_{i-1} - c_{i-1}\inv}{q-q\inv} F_{\beta_i}\right) v' = 0.
  \end{equation*}
  Let $r\in \setN$ be such that $F_{\alpha_i}^{(r)}v'\neq 0$ and
  $F_{\alpha_i}^{(r+1)}v'=0$ (possible since $\lambda(K_{\alpha_i})\in
  \pm q^{\setN}$ so $-\alpha_i \in F_{L(\lambda)}$). So the above
  being equal to zero implies
  \begin{align*}
    0=&F_{\alpha_i}^{(r)}\left(F_{\beta_{i-1}}F_{\alpha_i} +
      c_{i-1}\inv \frac{c_{i-1} - c_{i-1}\inv}{q-q\inv}
      F_{\beta_i}\right) v'
    \\
    =& \left( [r] F_{\beta_i}F_{\alpha_i}^{(r)} + q^{-r}
      \frac{1-c_{i-1}^{-2}}{q-q\inv} F_{\beta_i}
      F_{\alpha_i}^{(r)}\right)v'
    \\
    =& \left( [r] + q^{-r} \frac{1-c_{i-1}^{-2}}{q-q\inv}\right)
    F_{\beta_i}F_{\alpha_i}^{(r)}v'.
  \end{align*}
  Since $F_{\beta_i}F_{\alpha_i}^{(r)}v'\neq 0$ this is equivalent to
  \begin{align*}
    0=q^r - q^{-r} +q^{-r} - q^{-r}c_{i-1}^{-2}= q^r -
    q^{-r}c_{i-1}^{-2}
  \end{align*}
  or equivalently $c_{i-1} = \pm q^{-r}$.

  The other claims are shown similarly.
\end{proof}

\begin{prop}
  \label{prop:6}
  Let $\lambda$ be a weight such that $\lambda(K_{\beta})\in \pm
  q^\setN$ for all short $\beta\in \Phi^+$ and $\lambda(K_{\gamma})
  \in \pm q^{1+2\setZ}$ for all long $\gamma \in \Phi^+$. Let
  $\mathbf{b}=(b_1,\dots,b_n)\in (\setC^*)^n$.  Then
  $F_{\alpha_1+2\alpha_2}$ acts injectively on the $U_q$-module
  $\phi_{F_\Sigma,\mathbf{b}}.L(\lambda)_{F_\Sigma}$.
\end{prop}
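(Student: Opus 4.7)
The plan is to reduce to an alternative commuting set of roots that contains \(\alpha_1+2\alpha_2\) itself. Set \(\Sigma' = \{\alpha_1+2\alpha_2,\,\beta_2,\,\beta_3,\dots,\beta_n\}\). First I would verify the structural properties of \(\Sigma'\): using Theorem~\ref{thm:DP} together with a weight-decomposition argument with respect to the reduced expression \(s_1 s_2\cdots s_n s_1\cdots s_{n-1}\), one sees that \([F_{\alpha_1+2\alpha_2},F_{\beta_j}]_q=0\) for every \(j\geq 2\) (the only candidate monomials on the right-hand side have weights that simply cannot match \(-\alpha_1-2\alpha_2-\beta_j\)); together with the standard \(q\)-commutation among the \(F_{\beta_i}\), this makes \(\Sigma'\) a commuting set of roots. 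The identity \(\alpha_1=2\beta_2-(\alpha_1+2\alpha_2)\) shows that \(\Sigma'\) is a \(\setZ\)-basis of \(Q\). Finally, \(-\Sigma'\subset T_{L(\lambda)}\): the inclusions \(-\beta_j\in T_{L(\lambda)}\) for \(j\geq 2\) come from the admissibility hypothesis, while \(-(\alpha_1+2\alpha_2)\in T_{L(\lambda)}\) follows because \(\lambda(K_{\alpha_1+2\alpha_2})\in \pm q^{1+2\setZ}\not\subset \pm q_{\alpha_1+2\alpha_2}^{\setN}=\pm q^{2\setN}\), so the Proposition~\ref{prop:9}-style calculation giving \(E_{\alpha_1+2\alpha_2}^{(i)}F_{\alpha_1+2\alpha_2}^{(i)}v_\lambda\) a nonzero scalar multiple of \(v_\lambda\) forces \(L(\lambda)^{[-(\alpha_1+2\alpha_2)]}\) to be zero by Proposition~\ref{prop:2} and simplicity of \(L(\lambda)\).

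With \(\Sigma'\) in place, Lemma~\ref{lemma:20} yields
\[
\Bigl(\bigoplus_{t\in (\setC^*)^n/q^{\setZ^n}} \phi_{F_\Sigma,t}.L(\lambda)_{F_\Sigma}\Bigr)^{ss}\iso \mathcal{EXT}(L(\lambda)) \iso \Bigl(\bigoplus_{t'\in (\setC^*)^n/q^{\setZ^n}} \phi_{F_{\Sigma'},t'}.L(\lambda)_{F_{\Sigma'}}\Bigr)^{ss},
\]
so every composition factor of \(\phi_{F_\Sigma,\mathbf{b}}.L(\lambda)_{F_\Sigma}\) is also a composition factor of some \(\phi_{F_{\Sigma'},t'}.L(\lambda)_{F_{\Sigma'}}\). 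On each \(\phi_{F_{\Sigma'},t'}.L(\lambda)_{F_{\Sigma'}}\), the action of \(F_{\alpha_1+2\alpha_2}\) is given by \(\phi_{F_{\Sigma'},t'}(F_{\alpha_1+2\alpha_2})\), which is invertible in \(U_{q(F_{\Sigma'})}\) since \(\phi_{F_{\Sigma'},t'}\) is an automorphism of the localization and \(F_{\alpha_1+2\alpha_2}^{-1}\in U_{q(F_{\Sigma'})}\); in particular \(F_{\alpha_1+2\alpha_2}\) acts bijectively on every weight space of \(\phi_{F_{\Sigma'},t'}.L(\lambda)_{F_{\Sigma'}}\).

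The hard part will be propagating this invertibility on the ambient module to injectivity on every simple composition factor, since a naive block-triangular argument on weight spaces does not in general upgrade invertibility on \(M\) to injectivity on subquotients. I would address this by using that each simple composition factor \(V\) is admissible of the same degree by Proposition~\ref{prop:15}, so \(\mathcal{EXT}(V)\iso \mathcal{EXT}(L(\lambda))\) by Theorem~\ref{thm:EXT_contains_highest_weight}, and by Theorem~\ref{thm:existence_adm_mod_type_C} the associated admissible highest weight \(\mu\) in the coherent family still satisfies \(\mu(K_{\alpha_1+2\alpha_2})\in \pm q^{1+2\setZ}\); combined with Proposition~\ref{prop:23}, the \(U_q\)-submodule \(\bigl(\bigoplus_{t\in\setC^*/q^\setZ}\phi_{F_{\alpha_1+2\alpha_2},t}.L(\mu)_{F_{\alpha_1+2\alpha_2}}\bigr)^{ss}\) of \(\mathcal{EXT}(L(\mu))\) has \(F_{\alpha_1+2\alpha_2}\) invertible on each summand, which rules out \((-(\alpha_1+2\alpha_2))\)-finiteness of \(V\) and therefore, by Proposition~\ref{prop:2} applied to the simple \(V\), forces \(V\) to be \((-(\alpha_1+2\alpha_2))\)-free.

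Once injectivity of \(F_{\alpha_1+2\alpha_2}\) on every composition factor is established, the conclusion follows: the module \(\phi_{F_\Sigma,\mathbf{b}}.L(\lambda)_{F_\Sigma}\) is admissible and therefore has finite Jordan-Hölder length by Lemma~\ref{lemma:10}; the standard short-exact-sequence argument (if \(0\to V\to M\to M/V\to 0\) and \(F_{\alpha_1+2\alpha_2}\) is injective on both \(V\) and \(M/V\), then \(F_{\alpha_1+2\alpha_2}m=0\) forces \(m\in V\) and then \(m=0\)) propagates injectivity from the simple composition factors up along the JH filtration to \(\phi_{F_\Sigma,\mathbf{b}}.L(\lambda)_{F_\Sigma}\) itself.
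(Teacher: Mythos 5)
Your proposal takes a genuinely different route from the paper's proof. The paper's argument is a direct computation: it writes out $\phi_{F_\Sigma,\mathbf{b}}(F_{\alpha_1+2\alpha_2})$ explicitly, observes that a kernel vector would force an expression of the form $\bigl(F_{\beta_1}F_{\alpha_1+2\alpha_2}+ (1-q^2)c_1^{-2}\tfrac{c_1^2-c_1^{-2}}{q^2-q^{-2}}F_{\beta_2}^{(2)}\bigr)v'$ to vanish for some $v'\in L(\lambda)$, and then derives a contradiction by applying the divided power $E_{\alpha_2}^{(r+2)}$ (with $r$ chosen so that $E_{\alpha_2}^{(r)}v'\ne 0$, $E_{\alpha_2}^{(r+1)}v'=0$): the result collapses to a nonzero scalar multiple of $F_{\beta_1}^2E_{\alpha_2}^{(r)}K_{\alpha_2}^{-2}v'$, which cannot be zero. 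Your argument instead tries to pass to an alternative commuting set $\Sigma'$ containing $\alpha_1+2\alpha_2$ itself and exploit invertibility of $F_{\alpha_1+2\alpha_2}$ on the localization $U_{q(F_{\Sigma'})}$.

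The structural parts of your setup are fine: $\Sigma'=\{\alpha_1+2\alpha_2,\beta_2,\dots,\beta_n\}$ is indeed a commuting set (this is close in spirit to the sets $\Sigma'$ and $\Sigma''$ used in the paper's proof of Theorem~\ref{thm:clas_C}), it is a basis of $Q$, $-\Sigma'\subset T_{L(\lambda)}$, and Lemma~\ref{lemma:20} does give you $\mathcal{EXT}(L(\lambda))\iso\bigl(\bigoplus_{t'}\phi_{F_{\Sigma'},t'}.L(\lambda)_{F_{\Sigma'}}\bigr)^{ss}$. The final reduction (injectivity on all composition factors implies injectivity on the whole module, by induction on the Jordan--H\"older length via Lemma~\ref{lemma:10}) is also sound.

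However, there is a genuine gap at exactly the step you flagged as the hard part. You correctly note that a naive argument does not upgrade invertibility of $F_{\alpha_1+2\alpha_2}$ on a $U_{q(F_{\Sigma'})}$-module to injectivity on a $U_q$-subquotient, but the workaround via Proposition~\ref{prop:23} does not close this. Two problems: first, Proposition~\ref{prop:23} only exhibits $\bigl(\bigoplus_{t\in\setC^*/q^{\setZ}}\phi_{F_{\alpha_1+2\alpha_2},t}.L(\mu)_{F_{\alpha_1+2\alpha_2}}\bigr)^{ss}$ as \emph{a} submodule of $\mathcal{EXT}(L(\mu))$, and for $n>1$ this is a proper submodule; there is no argument that your particular simple composition factor $V$ lies inside it. Second, and more fundamentally, even if $V$ were a composition factor of one summand $\phi_{F_{\alpha_1+2\alpha_2},t}.L(\mu)_{F_{\alpha_1+2\alpha_2}}$, this reproduces verbatim the same obstruction you started with: the semisimplification is taken as $U_q$-modules, the Jordan--H\"older factors need not be $U_{q(F_{\alpha_1+2\alpha_2})}$-submodules, and invertibility of $F_{\alpha_1+2\alpha_2}$ on the ambient localized module tells you nothing about the action on such a subquotient. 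So the detour is circular rather than a resolution. Note also that the paper uses a version of this ``compare simple \emph{submodules} across different localizations'' trick in Theorem~\ref{thm:clas_C}, but that theorem invokes Proposition~\ref{prop:6} as an input; you cannot reuse it here without a logical loop. The place where this class of argument becomes available is for the simple \emph{socle} of the localized module, where injectivity follows trivially by restriction; extending it to all composition factors requires either the paper's direct computation or a new idea you have not supplied.
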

\begin{proof}
  We can show similarly to the above calculations in this section that
  \begin{equation*}
    \phi_{F_\Sigma,\mathbf{b}}(F_{\alpha_1+2\alpha_2}) = b_2^2 F_{\alpha_1+2\alpha_2} + (1-q^2) b_2^2 b_1^{-2} \frac{b_1^2-b_1^{-2}}{q^2-q^{-2}} F_{\beta_1}\inv F_{\beta_2}^{(2)}.
  \end{equation*}
  
  By Proposition~\ref{prop:25} and Corollary~\ref{cor:6} a root vector
  acts injectively on the $U_q$-module
  \begin{equation*}
    \phi_{F_\Sigma,(b_1,\dots,b_n)}.L(\lambda)_{F_\Sigma}
  \end{equation*}
  if and only if it acts injectively on
  \begin{equation*}
    \phi_{F_\Sigma,(\epsilon_1
      q^{i_1}b_1,\dots,\epsilon_n q^{i_n}b_n)}.L(\lambda)_{F_\Sigma}
  \end{equation*}
  for any $i_1,\dots,i_n\in\setZ$ and $\epsilon_1,\dots,\epsilon_n\in
  \{\pm 1\}$.
  
  Assume there exists a $0\neq v \in
  \phi_{F_\Sigma,\mathbf{b}}.L(\lambda)_{F_\Sigma}$ such that
  $F_{\alpha_1+2\alpha_2}v=0$. We have $v=F_{\beta_1}^{a_1}\cdots
  F_{\beta_n}^{a_n} \tensor v'$ for some $a_1,\dots,a_n\in \setZ$ and
  some $v'\in L(\lambda)$. So $F_{\alpha_1+2\alpha_2}v=0$ implies
  \begin{equation*}
    0=\phi_{F_\Sigma,\mathbf{b}}(F_{\alpha_1+2\alpha_2}) F_{\beta_1}^{a_1}\cdots F_{\beta_n}^{a_n}\tensor v' =  F_{\beta_1}^{a_1}\cdots F_{\beta_n}^{a_n}\tensor \phi_{F_\Sigma,\mathbf{c}}(F_{\alpha_1+2\alpha_2})v'
  \end{equation*}
  where $\mathbf{c}=(q^{a_1}b_1,\dots,q^{a_n}b_n)$. So there exists a
  $v'\in L(\lambda)$ and $a_1,\dots,a_n\in \setZ$ such that for
  $\mathbf{c}=(q^{a_1}b_1,\dots,q^{a_n}b_n)$,
  $\phi_{F_\Sigma,\mathbf{c}}(F_{\alpha_1+2\alpha_2})v'=0$. That is
  \begin{equation*}
    \left(c_2^2 F_{\alpha_1+2\alpha_2} + (1-q^2) c_2^2 c_1^{-2} \frac{c_1^2-c_1^{-2}}{q^2-q^{-2}} F_{\beta_1}\inv F_{\beta_2}^{(2)}\right) v' = 0
  \end{equation*}
  or equivalently
  \begin{equation*}
    F_{\beta_1}F_{\alpha_1+2\alpha_2}v' + (1-q^2) c_1^{-2} \frac{c_1^2-c_1^{-2}}{q^2-q^{-2}} F_{\beta_2}^{(2)}v'=0.
  \end{equation*}
  
  So to prove our claim it is enough to prove that
  \begin{equation*}
    \left(F_{\beta_1}F_{\alpha_1+2\alpha_2} + (1-q^2) c_1^{-2}
      \frac{c_1^2-c_1^{-2}}{q^2-q^{-2}} F_{\beta_2}^{(2)}\right)v'\neq
    0
  \end{equation*}
  for any $v'\in L(\lambda)$ and any $c_1 \in \setC^*$.
  
  So let $v'\in L(\lambda)$ and let $c_1\in \setC^*$. Let $r\in \setN$
  be such that $E_{\alpha_2}^{(r)}v'\neq 0$ and
  $E_{\alpha_2}^{(r+1)}v'=0$ (possible since $L(\lambda)$ is a highest
  weight module).  It is straightforward to show that for $a\in
  \setN$:
  \begin{equation*}
    [E_{\alpha_2}^{(a)},F_{\alpha_1+2\alpha_2}] = q^{-a+1}[2]F_{\beta_2}E_{\alpha_2}^{(a-1)}K_{\alpha_2}\inv + q^{4-2a} F_{\beta_1} E_{\alpha_2}^{(a-2)}K_{\alpha_2}^{-2}
  \end{equation*}
  and
  \begin{equation*}
    [E_{\alpha_2}^{(a)},F_{\beta_2}^{(2)}] = q^{2-a}[2] F_{\beta_2}F_{\beta_1} E_{\alpha_2}^{(a-1)}K_{\alpha_2}\inv + q^{3-2a}[2] F_{\beta_1}^2 E_{\alpha_2}^{(a-2)}K_{\alpha_2}^{-2}.
  \end{equation*}
  Using this we get
  \begin{align*}
    E_{\alpha_2}^{(r+2)}&\left(F_{\beta_1}F_{\alpha_1+2\alpha_2} +
      (1-q^2) c_1^{-2} \frac{c_1^2-c_1^{-2}}{q^2-q^{-2}}
      F_{\beta_2}^{(2)}\right)v'
    \\
    =& \left( q^{-2r} + q^{-1-2r}[2]
      (1-q^2)c_1^{-2}\frac{c_1^2-c_1^{-2}}{q^2-q^{-2}}\right)
    F_{\beta_1}^2 E_{\alpha_2}^{(r)}K_{\alpha_2}^{-2}v'
    \\
    =& q^{-2r}c_1^{-4} F_{\beta_1}^2
    E_{\alpha_2}^{(r)}K_{\alpha_2}^{-2}v'
    \\
    \neq& 0
  \end{align*}
  since $F_{\beta_1}$ acts injectively on $L(\lambda)$. Thus
  \begin{equation*}
    \left(F_{\beta_1}F_{\alpha_1+2\alpha_2} +
      (1-q^2) c_1^{-2} \frac{c_1^2-c_1^{-2}}{q^2-q^{-2}}
      F_{\beta_2}^{(2)}\right)v'\neq 0.
  \end{equation*}
\end{proof}

\begin{thm}
  \label{thm:clas_C}
  Let $\lambda$ be a weight such that $\lambda(K_{\beta})\in \pm
  q^\setN$ for all short $\beta\in \Phi$ and $\lambda(K_{\gamma}) \in
  \pm q^{1+2\setZ}$ for all long $\gamma \in \Phi$. Let
  $\mathbf{b}=(b_1,\dots,b_n)\in (\setC^*)^n$.  Then the $U_q$-module
  $\phi_{F_\Sigma,\mathbf{b}}.L(\lambda)_{F_\Sigma}$ is simple and
  torsion free if and only if $b_i\not \in \pm q^\setZ$, $i=2,\dots,
  n$ and $b_1^2 b_2\cdots b_n \not \in \pm q^\setZ$.
\end{thm}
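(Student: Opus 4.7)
The plan is to mirror the structure of the Type A proof (Theorem \ref{thm:clas_of_b_such_that_twist_is_torsion_free}): prove torsion-freeness by checking that every simple root vector acts injectively on $\phi_{F_\Sigma,\mathbf{b}}.L(\lambda)_{F_\Sigma}$, using Proposition \ref{prop:7} for most of the simple root vectors and a reflection argument via Proposition \ref{prop:9} and Lemma \ref{lemma:1} for the two ``exceptional'' ones. Since $\lambda(K_{\alpha_1})\in \pm q^{1+2\setZ}\not\subset \pm q_{\alpha_1}^{\setN}$, the vector $F_{\alpha_1}$ acts injectively on $L(\lambda)$, and Proposition \ref{prop:37} then shows that it acts injectively on the twist for every $\mathbf{b}$; Proposition \ref{prop:7} characterises injectivity of $E_{\alpha_i}$ ($i\ge 2$) and of $F_{\alpha_i}$ ($i\ge 3$) as exactly $b_i\notin\pm q^{\setZ}$ for $i=2,\dots,n$; and Proposition \ref{prop:6} gives injectivity of $F_{\alpha_1+2\alpha_2}$ free of charge. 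The only simple root vectors left to treat are $E_{\alpha_1}$ and $F_{\alpha_2}$, which must together account for the single extra constraint $b_1^2 b_2\cdots b_n\notin \pm q^{\setZ}$.

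For the reflection step I would apply Proposition \ref{prop:9} to the long root $\alpha_1$ with $a=\tfrac12$: letting $\mu\in\setC^*$ be a fixed square root of $\lambda(K_{\alpha_1})$, $L(\lambda)$ is a subquotient of ${^{\bar{s_1}}}\phi_{F_\Sigma,(\mu,1,\dots,1)}.L(\lambda)_{F_\Sigma}$. A direct computation of $s_1$ on $\Sigma$ gives $s_1(\beta_1)=-\beta_1$ and $s_1(\beta_j)=\beta_j-\beta_1$ for $j\ge 2$, so the change-of-basis map of Lemma \ref{lemma:1} is $f(\mathbf{b})=(b_1\inv b_2\inv\cdots b_n\inv,b_2,\dots,b_n)$. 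Lemma \ref{lemma:1} then produces, for every $\mathbf{b}\in (\setC^*)^n$, an isomorphism
\[
\bigl(\phi_{F_\Sigma,\mathbf{b}}.L(\lambda)_{F_\Sigma}\bigr)^{ss}\iso{^{\bar{s_1}}}\bigl(\phi_{F_\Sigma,(\mu b_1\inv\cdots b_n\inv,b_2,\dots,b_n)}.L(\lambda)_{F_\Sigma}\bigr)^{ss}.
\]
Because $T_{s_1}\inv(E_{\alpha_1})=-K_{\alpha_1}\inv F_{\alpha_1}$, injectivity of $E_{\alpha_1}$ on the left is equivalent to injectivity of $F_{\alpha_1}$ on the right, which already holds; so $E_{\alpha_1}$ imposes no further condition and the only remaining constraint has to come from $F_{\alpha_2}$.

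The main obstacle is $F_{\alpha_2}$. Under the $\bar{s_1}$-twist, $F_{\alpha_2}$ pulls back to $T_{s_1}\inv(F_{\alpha_2})=[F_{\alpha_1},F_{\alpha_2}]_q$, which is \emph{not} a simple root vector, so Proposition \ref{prop:7} cannot be applied blindly. I would proceed by a direct injectivity analysis, expanding $\phi_{F_\Sigma,\mathbf{b}}(F_{\alpha_2})$ as a finite sum using Propositions \ref{prop:32}, \ref{prop:33} and the scaling $\phi_{F_{\beta_i},b}(F_{\beta_j})=b^{-(\beta_i|\beta_j)}F_{\beta_j}$. The resulting expression is a linear combination of $F_{\alpha_2}$, $F_{\beta_1}\inv F_{\beta_2}$ and iterated corrections involving the long root vectors $F_{\alpha_1+2\alpha_2+\alpha_3+\cdots+\alpha_j}$ recorded in Proposition \ref{prop:32}, each with a Laurent coefficient in $\mathbf{b}$. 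Following the strategy used in the proofs of Propositions \ref{prop:6} and \ref{prop:7}, I would evaluate $\phi_{F_\Sigma,\mathbf{b}}(F_{\alpha_2})v'$ on a weight vector $v'\in L(\lambda)$, apply a suitably high divided power $E_{\alpha_2}^{(r)}$ to collapse the result to a scalar multiple of a known nonzero vector (using the injectivity of $F_{\beta_1}$), and read off the vanishing locus of the remaining scalar. By analogy with the sibling calculations in Proposition \ref{prop:4} the locus is expected to be exactly $b_1^2 b_2\cdots b_n\in \pm q^{\setZ}$; Proposition \ref{prop:5} then descends the injectivity from the semisimplification to $\phi_{F_\Sigma,\mathbf{b}}.L(\lambda)_{F_\Sigma}$ itself.

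Once all simple root vectors act injectively, Proposition \ref{prop:3} gives torsion-freeness, and simplicity is automatic by the argument used in the proof of Theorem \ref{thm:existence_of_torsion_free_modules}: any simple submodule is admissible of the same degree $d$ by Proposition \ref{prop:15}, and has full weight space dimension $d$ by injectivity, hence equals the whole module. The ``only if'' direction runs in reverse: torsion-freeness of $\phi_{F_\Sigma,\mathbf{b}}.L(\lambda)_{F_\Sigma}$ forces all simple root vectors to act injectively, whereupon Proposition \ref{prop:7} delivers $b_i\notin \pm q^{\setZ}$ for $i=2,\dots,n$ and the $\bar{s_1}$-reflection combined with the injectivity analysis of $F_{\alpha_2}$ delivers $b_1^2 b_2\cdots b_n\notin \pm q^{\setZ}$.
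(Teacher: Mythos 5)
Your high‐level plan — reduce to injectivity of simple root vectors, dispatch $E_{\alpha_i}$ and $F_{\alpha_i}$ for $i\geq 3$ and $E_{\alpha_2}$ via Proposition~\ref{prop:7}, dispatch $F_{\alpha_1}$ trivially, and dispatch $E_{\alpha_1}$ by a Weyl twist — is the right one and closely follows the paper's. Your treatment of $E_{\alpha_1}$ via the single $\bar{s_1}$ reflection, exploiting $T_{s_1}\inv(E_{\alpha_1})=-K_{\alpha_1}\inv F_{\alpha_1}$ together with the bijectivity of the inverted $F_{\alpha_1}$, is in fact a genuine simplification of the paper's argument (which reaches the same conclusion via a third commuting set $\Sigma''=\{\alpha_1+2\alpha_2,\alpha_2,\dots\}$ and the injectivity of $F_{\alpha_1+2\alpha_2}$ from Proposition~\ref{prop:6}).

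However, the crux of the theorem — the constraint coming from $F_{\alpha_2}$ — is not actually established in your proposal. You propose to compute $\phi_{F_\Sigma,\mathbf{b}}(F_{\alpha_2})$ directly in the basis $\Sigma=\{\beta_1,\dots,\beta_n\}$ and ``collapse'' the vanishing locus by analogy with Propositions~\ref{prop:4} and~\ref{prop:7}. But that analogy breaks down: in those sibling calculations $\phi_{F_\Sigma,\mathbf{b}}(F_{\alpha_i})$ (resp.\ $E_{\alpha_i}$) has exactly two terms, whereas here $[F_{\alpha_2},F_{\beta_j}]_q\neq 0$ for \emph{every} $j\in\{1,\dots,n\}$ by Proposition~\ref{prop:32}, so iterating the $\phi_{F_{\beta_j},b_j}$ produces an $(n{+}1)$-term expression of the shape
\begin{equation*}
\phi_{F_\Sigma,\mathbf{b}}(F_{\alpha_2})= c_0 F_{\alpha_2}+c_1\, F_{\beta_1}\inv F_{\beta_2}+\sum_{j=2}^{n}d_j\, F_{\beta_j}\inv F_{\alpha_1+2\alpha_2+\alpha_3+\cdots+\alpha_j}
\end{equation*}
with Laurent coefficients in $\mathbf{b}$. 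Applying one divided power of $E_{\alpha_2}$ or $F_{\alpha_2}$ does not obviously reduce this to a single scalar times a nonzero vector, and the expected vanishing locus $b_1^2b_2\cdots b_n\in\pm q^{\setZ}$ is asserted but never derived. The paper sidesteps precisely this difficulty by changing to the commuting set $\Sigma'=\{\alpha_1+\alpha_2,\alpha_1+2\alpha_2,\alpha_1+2\alpha_2+\alpha_3,\dots\}$, in which $F_{\alpha_2}$ $q$-commutes with all but one of the inverted root vectors: there $\phi_{F_{\Sigma'},\mathbf{c}}(F_{\alpha_2})$ is two-term, the injectivity analysis goes through exactly as in Proposition~\ref{prop:7}, and Lemma~\ref{lemma:1} with the explicit $\Sigma\to\Sigma'$ transition function $f$ translates the resulting condition $c_1\notin\pm q^{\setZ}$ back into $b_1^2b_2\cdots b_n\notin\pm q^{\setZ}$. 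Without either carrying out the $(n{+}1)$-term analysis in the $\Sigma$ basis or switching to a basis like $\Sigma'$, your proof of the necessity and sufficiency of the product condition is incomplete. (A minor additional point: the descent of injectivity from the semisimplification back to $\phi_{F_\Sigma,\mathbf{b}}.L(\lambda)_{F_\Sigma}$ is the converse of what Proposition~\ref{prop:5} states; the converse is true by a short composition-series argument, but should be said rather than cited.)
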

\begin{proof}
  Let $i\in \{2,\dots,n\}$. By Proposition~\ref{prop:7},
  $E_{\alpha_i}$ acts injectively on
  $\phi_{F_{\Sigma},\mathbf{b}}.L(\lambda)_{F_\Sigma}$ if and only if
  $b_i\not \in \pm q^{\setZ}$. If
  $\phi_{F_\Sigma,\mathbf{b}}.L(\lambda)_{F_\Sigma}$ is torsion free
  then every root vector acts injectively. So
  $\phi_{F_\Sigma,\mathbf{b}}.L(\lambda)_{F_\Sigma}$ being torsion
  free implies $b_i \not \in \pm q^{\setZ}$.

  Let $\Sigma'=\{\beta_1',\dots,\beta_n'\}$ denote the set of
  commuting roots with $\beta_1' = \alpha_1+\alpha_2$,
  $\beta_2'=\alpha_1+2\alpha_2$,
  $\beta_j'=\alpha_1+2\alpha_2+\alpha_3+\cdots+\alpha_j$,
  $j=3,\dots,n$. Let $F'_{\beta_1'} :=
  T_{s_1}(F_{\alpha_2})=F_{\beta_2}$,
  $F'_{\beta_2'}:=T_{s_1}T_{s_2}(F_{\alpha_1})=F_{\alpha_1+2\alpha_1}$,
  $F'_{\beta_j'} := T_{s_1}\cdots T_{s_n}T_{s_1}\cdots
  T_{s_{j-2}}(F_{\alpha_{j-1}})=T_{s_2}(F_{\beta_{j}})=F_{\alpha_1+2\alpha_2+\alpha_3+\cdots+\alpha_j}$,
  $j=3,\dots,n$ (in this case we actually have
  $F'_{\beta_j'}=F_{\beta_j'}$) and $F_{\Sigma'}$ the Ore subset
  generated by $F'_{\beta_1'},\dots,F'_{\beta_n'}$. Similarly to the
  above calculations in this section we can show that for
  $\mathbf{c}\in (\setC^*)^n$
  \begin{align*}
    \phi_{F_{\Sigma'},\mathbf{c}}(F_{\alpha_2}) = c_n\inv \cdots
    c_3\inv c_2^{-2} \left( F_{\alpha_2} + q [2] c_1\inv
      \frac{c_1-c_1\inv}{q-q\inv} (F'_{\beta_1'})\inv
      F'_{\beta_2'}\right).
  \end{align*}
  Let $v\in L(\lambda)$ and let $r\in \setN$ be such that
  $F_{\alpha_2}^{(r)}v \neq 0$ and $F_{\alpha_2}^{(r+1)}v =0$
  (possible since $\lambda(K_{\alpha_2})\in\pm q^{\setN}$). Then we
  see like in the proof of Proposition~\ref{prop:7} that
  $\phi_{F_{\Sigma'},\mathbf{c}}(F_{\alpha_2})v =0$ if and only if
  $c_1=\pm q^{-r}$ thus
  $\phi_{F_{\Sigma'},\mathbf{c}}.L(\lambda)_{F_{\Sigma'}}$ is not
  torsion free whenever $c_1 \in \pm q^\setZ$ by
  Proposition~\ref{prop:25} and Corollary~\ref{cor:6}.

  Set $f(\mathbf{b})=(b_1^2 b_2\cdots b_n,b_1\inv b_3\inv\cdots
  b_n,b_3,\dots,b_n)$. Then by Lemma~\ref{lemma:1}
  \begin{align*}
    \left(
      \phi_{F_\Sigma,\mathbf{b}}.L(\lambda)_{F_\Sigma}\right)^{ss}
    \iso \left(
      \phi_{F_{\Sigma'},f(\mathbf{b})}.L(\lambda)_{F_{\Sigma'}}\right)^{ss}.
  \end{align*}
  If $\phi_{F_\Sigma,\mathbf{b}}.L(\lambda)_{F_\Sigma}$ is torsion
  free then it is simple so
  \begin{align*}
    \phi_{F_\Sigma,\mathbf{b}}.L(\lambda)_{F_\Sigma} \iso& \left(
      \phi_{F_\Sigma,\mathbf{b}}.L(\lambda)_{F_\Sigma}\right)^{ss}
    \\
    \iso& \left(
      \phi_{F_{\Sigma'},f(\mathbf{b})}.L(\lambda)_{F_{\Sigma'}}\right)^{ss}
    \\
    \iso& \phi_{F_{\Sigma'},f(\mathbf{b})}.L(\lambda)_{F_{\Sigma'}}.
  \end{align*}
  We see that $\phi_{F_\Sigma,\mathbf{b}}.L(\lambda)_{F_\Sigma}$ being
  torsion free implies $b_1^2 b_2\cdots b_n \not \in \pm q^{\setZ}$.

  Now assume $b_i\not \in \pm q^\setZ$, $i=2,\dots, n$ and $b_1^2
  b_2\cdots b_n \not \in \pm q^\setZ$. By Proposition~\ref{prop:7} and
  Proposition~\ref{prop:5} $E_{\alpha_i}$ and $F_{\alpha_i}$,
  $i=3,\dots,n$ act injectively on all composition factors of
  $\phi_{F_\Sigma,\mathbf{b}}.L(\lambda)_{F_\Sigma}$.
  
  Let $L_1$ be a simple submodule of
  $\phi_{F_\Sigma,\mathbf{b}}.L(\lambda)_{F_\Sigma}$ and let $L_2$ be
  a simple submodule of
  $\phi_{F_{\Sigma'},f(\mathbf{b})}.L(\lambda)_{F_{\Sigma'}}$. By
  Proposition~\ref{prop:6}, $F_{\alpha_1+2\alpha_2}$ acts injectively
  on $\phi_{F_\Sigma,\mathbf{b}}.L(\lambda)_{F_\Sigma}$. Now clearly
  $\{-\alpha_1-\alpha_2,-\alpha_1-2\alpha_2,\alpha_3,\dots,\alpha_n\}\subset
  T_{L_1}\cap T_{L_2}$ so $C(L_1)\cap C(L_2)$ generates $Q$. This
  implies that $C(L_1)-C(L_2)=Q$. Since $\left(
    \phi_{F_\Sigma,\mathbf{b}}.L(\lambda)_{F_\Sigma}\right)^{ss} \iso
  \left(
    \phi_{F_{\Sigma'},f(\mathbf{b})}.L(\lambda)_{F_{\Sigma'}}\right)^{ss}$
  we have $\wt L_k \subset q^Q (\mathbf{b}\inv)^{\Sigma}\lambda$,
  $k=1,2$. Choose $\mu_1,\mu_2\in Q$ such that
  $q^{\mu_1}(\mathbf{b}\inv)^\Sigma \lambda \in \Suppess(L_1)$ and
  $q^{\mu_2}(\mathbf{b}\inv)^\Sigma \lambda \in \Suppess(L_2)$. Then
  obviously $q^{C(L_1)+\mu_1}(\mathbf{b}\inv)^\Sigma \lambda\subset
  \Suppess(L_1)$ and $q^{C(L_2)+\mu_2}(\mathbf{b}\inv)^\Sigma
  \lambda\subset \Suppess(L_2)$. By the above
  $q^{C(L_1)+\mu_1}(\mathbf{b}\inv)^\Sigma \lambda \cap
  q^{C(L_2)+\mu_2}(\mathbf{b}\inv)^\Sigma \lambda \neq \emptyset$ so
  $\Suppess(L_1)\cap \Suppess(L_2)\neq \emptyset$. Let $\nu \in
  \Suppess(L_1)\cap \Suppess(L_2)$. By Proposition~\ref{prop:15},
  $L_1$ and $L_2$ are admissible of the same degree as
  $L(\lambda)$. So we have as $(U_q)_0$-modules (using that
  $(L_1)_\nu$ and $(L_2)_\nu$ are simple $(U_q)_0$-modules by
  Theorem~\ref{thm:Lemire})
  \begin{align*}
    (L_1)_\nu &= \left(
      \phi_{F_\Sigma,\mathbf{b}}.L(\lambda)_{F_\Sigma}\right)_\nu \iso
    \left(
      \left(\phi_{F_\Sigma,\mathbf{b}}.L(\lambda)_{F_\Sigma}\right)_\nu\right)^{ss}
    \\
    &\iso \left(
      \left(\phi_{F_{\Sigma'},f(\mathbf{b})}.L(\lambda)_{F_{\Sigma'}}\right)_\nu\right)^{ss}
    \iso
    \left(\phi_{F_{\Sigma'},f(\mathbf{b})}.L(\lambda)_{F_{\Sigma'}}\right)_\nu
    = (L_2)_\nu.
  \end{align*}
  By Theorem~\ref{thm:Lemire} this implies $L_1\iso L_2$.

  Let $\Sigma''=\{ \beta_1'',\dots,\beta_n''\}$ denote the set of
  commuting roots with $\beta_1'' = \alpha_1+2\alpha_2$,
  $\beta_2''=\alpha_2$,
  $\beta_j''=\alpha_1+2\alpha_2+\alpha_3+\cdots+\alpha_j$,
  $j=3,\dots,n$. Let $F''_{\beta_1''} :=
  T_{s_1}T_{s_2}(F_{\alpha_1})$, $F''_{\beta_2''}:=F_{\alpha_2}$,
  $F''_{\beta_j''} := T_{s_2}T_{s_1}T_{s_2}T_{s_3}\cdots
  T_{s_{j-1}}(F_{\alpha_{j}})=T_{s_1}T_{s_2}(F_{\beta_j})$,
  $j=3,\dots,n$ and $F_{\Sigma''}$ the Ore subset generated by
  $F''_{\beta_1''},\dots,F''_{\beta_n''}$. Note that
  $F''_{\beta_j''}=T_{s_1}T_{s_2}(F_{\beta_j})$ for all $j\in
  \{1,\dots,n\}$. The root vectors
  $F''_{\beta_1''},\dots,F''_{\beta_n''}$ act injectively on
  ${^{\bar{s_2 s_1}}}L(\lambda)$. By
  Theorem~\ref{thm:EXT_contains_highest_weight} and
  Proposition~\ref{prop:19} $L(\lambda)$ is a submodule of
  $\left(\phi_{F_{\Sigma''},\mathbf{d}}.({^{\bar{s_2
          s_1}}}L(\lambda))_{F_{\Sigma''}}\right)^{ss}$ for some
  $\mathbf{d}\in (\setC^*)^n$.  Then by Lemma~\ref{lemma:1}
  \begin{equation*}
    \left(\phi_{F_\Sigma,\mathbf{b}}.L(\lambda)_{F_\Sigma}\right)^{ss} \iso \left(\phi_{F_{\Sigma''},g(\mathbf{b})\mathbf{d}}.({^{\bar{s_2s_1}}}L(\lambda))_{F_{\Sigma''}}\right)^{ss}  
  \end{equation*}
  for some $g(\mathbf{b})\in (\setC^*)^n$.

  Observe that for $a_1,\dots,a_n\in \setN$:
  \begin{align*}
    \phi_{F_{\Sigma''},(q^{a_1},\dots,q^{a_n})}&(-K_{\alpha_1}E_{\alpha_1})
    \\
    =&
    \phi_{F_{\Sigma''},(q^{a_1},\dots,q^{a_n})}(T_{s_1}T_{s_2}(F_{\alpha_1+2\alpha_2}))
    \\
    =& \left(F''_{\beta_1''}\right)^{-a_1}\cdots
    \left(F''_{\beta_n''}\right)^{-a_n}
    T_{s_1}T_{s_2}(F_{\alpha_1+2\alpha_2})
    \left(F''_{\beta_n''}\right)^{a_n} \cdots
    \left(F''_{\beta_1''}\right)^{a_1}
    \\
    =& T_{s_1}T_{s_2}\left( F_{\beta_1}^{-a_n}\cdots
      F_{\beta_n}^{-a_n} F_{\alpha_1+2\alpha_2}
      F_{\beta_n}^{a_n}\cdots F_{\beta_1}^{a_1}\right)
    \\
    =& T_{s_1}T_{s_2}\left(
      \phi_{F_\Sigma,(q^{a_1},\dots,q^{a_n})}(F_{\alpha_1+2\alpha_2})\right).
  \end{align*}
  Since $\phi_{F_{\Sigma''},\mathbf{c}}(-K_{\alpha_1}E_{\alpha_1})$
  and
  $T_{s_1}T_{s_2}\left(\phi_{F_\Sigma,\mathbf{c}}(F_{\alpha_1+2\alpha_2})\right)$
  are both Laurent polynomial in $\mathbf{c}$ we get by
  Lemma~\ref{lemma:37} that
  $\phi_{F_{\Sigma''},\mathbf{c}}(-K_{\alpha_1}E_{\alpha_1})
  =T_{s_1}T_{s_2}\left(\phi_{F_\Sigma,\mathbf{c}}(F_{\alpha_1+2\alpha_2})\right)$
  for any $\mathbf{c}\in (\setC^*)^n$.
  $T_{s_1}T_{s_2}\left(\phi_{F_\Sigma,\mathbf{c}}(F_{\alpha_1+2\alpha_2})\right)$
  acts injectively on ${^{\bar{s_2s_1}}}L(\lambda)$ for any
  $\mathbf{c}\in (\setC^*)^n$ by Proposition~\ref{prop:6}. This
  implies that $-K_{\alpha_1}E_{\alpha_1}$ acts injectively on
  $\phi_{F_{\Sigma''},g(\mathbf{b})\mathbf{d}}.({^{\bar{s_2s_1}}}L(\lambda))_{F_{\Sigma''}}$
  and this implies that $E_{\alpha_1}$ acts injectively.

  Let $L_3$ be a simple submodule of
  $\phi_{F_{\Sigma''},g(\mathbf{b})\mathbf{d}}.({^{\bar{s_2s_1}}}L(\lambda))_{F_{\Sigma''}}$.
  We see that
  $\{-\alpha_2,-\alpha_1-2\alpha_2,\alpha_3,\dots,\alpha_n\}\subset
  T_{L_3}\cap T_{L_2}$ so $C(L_2)\cap C(L_3)$ generates $Q$
  ($\{\alpha_3,\dots,\alpha_n\}\subset T_{L_3}$ because of
  Proposition~\ref{prop:5} and the fact that $L_3$ is a composition
  factor of
  $\phi_{F_\Sigma,\mathbf{b}}.L(\lambda)_{F_\Sigma}$). Arguing as
  above this implies that $L_2\iso L_3$. We have shown that $L_1\iso
  L_2 \iso L_3$. Above we have shown that $E_{\alpha_1}$ acts
  injectively on $L_3$, $F_{\alpha_2}$ acts injectively on $L_2$ and
  $F_{\alpha_1},E_{\alpha_2},F_{\alpha_i},E_{\alpha_i}$, $i=3,\dots,n$
  act injectively on $L_1$. In conclusion we have shown that all root
  vectors act injectively on the simple submodule $L_1$ of
  $\phi_{F_\Sigma,\mathbf{b}}.L(\lambda)_{F_\Sigma}$ thus $\wt L_1 =
  \Suppess(L_1)=q^Q (\mathbf{b}\inv)^{\Sigma}\lambda$ and therefore
  $L_1 = \phi_{F_\Sigma,\mathbf{b}}.L(\lambda)_{F_\Sigma}$. This shows
  that $\phi_{F_\Sigma,\mathbf{b}}.L(\lambda)_{F_\Sigma}$ is simple
  and torsion free with our assumptions on $\mathbf{b}$.
\end{proof}

\bibliography{lit} \bibliographystyle{amsalpha}

\end{document}